\documentclass[11pt]{amsart}

\newlength{\myhmargin} \setlength{\myhmargin}{1in} \addtolength{\myhmargin}{18pt}
\usepackage[textheight=574pt, textwidth=445pt, marginparwidth=50pt, centering]{geometry}

 \setlength{\parskip}{3pt}

\usepackage{amsmath,amssymb,amsthm,amsfonts,amscd,flafter,
graphicx,verbatim,pinlabel,mathrsfs,caption}
\usepackage[all]{xy}
\usepackage{epstopdf}
\usepackage{verbatim}
\epstopdfsetup{suffix=}
\usepackage[colorlinks=false]{hyperref}
\usepackage[all]{hypcap}
\usepackage{xcolor}
\usepackage{url}
\AtBeginDocument{\addtocontents{toc}{\protect\setlength{\parskip}{0pt}}}



\newcommand{\longcomment}[2]{#2}

\DeclareFontFamily{U}{mathx}{\hyphenchar\font45}
\DeclareFontShape{U}{mathx}{m}{n}{
      <5> <6> <7> <8> <9> <10>
      <10.95> <12> <14.4> <17.28> <20.74> <24.88>
      mathx10
      }{}
\DeclareSymbolFont{mathx}{U}{mathx}{m}{n}
\DeclareFontSubstitution{U}{mathx}{m}{n}
\DeclareMathAccent{\widecheck}{0}{mathx}{"71}
\newcommand{\HMto}{\widecheck{\mathit{HM}}}

\longcomment{
    \RequirePackage{rotating}                   
    \def\HMto{%
       \setbox0=\hbox{$\widehat{\mathit{HM}}$}
       \setbox1=\hbox{$\mathit{HM}$}
       \dimen0=1.1\ht0
       \advance\dimen0 by 1.17\ht1
       \smash{\mskip2mu\raise\dimen0\rlap{%
          \begin{turn}{180}
              {$\widehat{\phantom{\mathit{HM}}}$}
           \end{turn}} \mskip-2mu    
                \mathit{HM}
    }{\vphantom{\widehat{\mathit{HM}}}}{}}
}
    
    \newcommand*\oline[1]{%
  \vbox{%
    \hrule height 0.35pt
    \kern0.1ex
    \hbox{%
      \kern-0.0em
      \ifmmode#1\else\ensuremath{#1}\fi
      \kern-0.1em
    }
  }
}

\newtheorem{theorem}{Theorem}[section]
\newtheorem{lemma}[theorem]{Lemma}

\newtheorem{conjecture}[theorem]{Conjecture}
\newtheorem{corollary}[theorem]{Corollary}
\newtheorem{proposition}[theorem]{Proposition}

\theoremstyle{definition}
\newtheorem{definition}[theorem]{Definition}

\newtheorem{remark}[theorem]{Remark}

\makeatletter
\newtheorem*{rep@thm}{\rep@title}
\newcommand{\newreptheorem}[2]{%
\newenvironment{rep#1}[1][0,0]{%
\def\rep@title{#2##1}%
\begin{rep@thm}}%
{\end{rep@thm}}}
\makeatother
\newreptheorem{theorem}{}

\begin{document}

\title{GRID invariants in universally tight lens spaces}

\author[Lev Tovstopyat-Nelip]{Lev Tovstopyat-Nelip}
\address{Department of Mathematics \\ Michigan State University}
\email{tovstopy@msu.edu}


\begin{abstract} 
We define combinatorial invariants of Legendrian and transverse links in universally tight lens spaces using grid diagrams, generalizing \cite{grid} and prove that they are equivalent to the invariants defined in \cite{equiv} and \cite{LOSS}. We use these combinatorial invariants to characterize index one grid diagrams for knots in lens spaces which admit surgeries to the 3-sphere and discuss a potential application to the Berge conjecture.

 \end{abstract}

\maketitle

\vspace{-1cm}
\section{introduction}
\label{sec:intro}

Using grid diagrams Ozsv\'{a}th, Szab\'{o} and Thurston \cite{grid} defined combinatorial invariants of Legendrian and transverse links in the tight 3-sphere. These invariants have been shown to be effective, meaning they distinguish some Legendrian and transverse knots having identical classical invariants. Grid diagrams for links in lens spaces have been studied in \cite{lensgridcomb}, and their relationship with Legendrian links in universally tight lens spaces laid out in \cite{lensgridleg}. 
Roughly speaking, such a grid diagram is just the usual toroidal Heegaard diagram for $L(p,q)$ with $n$ parallel copies of the $\alpha$ and $\beta$ curves, along with some choice of $2n$ basepoints in their complements; $n$ is the \emph{index} of the grid diagram. 
The invariants of \cite{grid} admit natural generalizations to the case of links in universally tight lens spaces:

\begin{theorem}
\label{thm:grid}
For a grid diagram $G$ encoding a link $K\subset L(p,q)$, let $L \subset (L(p,q),\xi_{UT})$ denote the corresponding oriented Legendrian representative of $K$. There are two associated cycles $\bold{x}^+,\bold{x}^-\in CFK^- (G)$ supported in Maslov gradings
\begin{align*}
M(\bold{x}^+) = tb_\mathbb{Q}(L) - rot_\mathbb{Q}(L) +\frac{1}{p} -d(p,q,q-1)\\
M(\bold{x}^-) = tb_\mathbb{Q}(L) + rot_\mathbb{Q}(L) +\frac{1}{p}-d(p,q,q-1).
\end{align*}
If $K$ is a knot, the cycles are supported in Alexander gradings
\begin{align*}
A(\bold{x}^+) = \frac{1}{2}\Big{(}tb_\mathbb{Q}(L) - rot_\mathbb{Q}(L) +1 \Big{)}\\
A(\bold{x}^-) =\frac{1}{2}\Big{(}tb_\mathbb{Q}(L) + rot_\mathbb{Q}(L) +1 \Big{)}.
\end{align*}
The homology classes $[x^+]$ and $[x^-]$ in $HFK^- (L(p,q),K)$, denoted $\lambda ^+ (L)$ and $\lambda ^- (L)$, are invariants of the oriented Legendrian isotopy class of $L$. Let $L^-$ (respectively $L^+$) denote the negative (respectively positive) Legendrian stabilization of $L$ in $(L(p,q),\xi_{UT})$. We have that 
\begin{align*}
\lambda^+ (L^-) = \lambda^+(L) \quad \quad \quad  \quad  \lambda^- (L^-) = U\cdot\lambda^-(L)\\
\lambda^+ (L^+) = U\cdot \lambda^+(L) \quad \quad \quad  \quad \lambda^- (L^+) = \lambda^-(L).
\end{align*}
\end{theorem}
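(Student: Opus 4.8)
The plan is to follow the blueprint of Ozsv\'ath--Szab\'o--Thurston \cite{grid}, transported to the twisted toroidal grids of \cite{lensgridcomb} and the Legendrian dictionary of \cite{lensgridleg}. First I would define $\mathbf{x}^+$ (respectively $\mathbf{x}^-$) to be the generator of $CFK^-(G)$ occupying, in a fixed fundamental domain for the torus, the north-east (respectively south-west) corner of each square carrying an $X$-marking. To see these are cycles I would run the usual marking-avoidance argument: any rectangle having points of $\mathbf{x}^+$ as its south-west and north-east corners necessarily contains an $X$-marking, hence is not counted in the differential of $CFK^-$ (which counts only $X$-disjoint rectangles, weighted by $U$ to the number of enclosed $O$'s), so $\partial\mathbf{x}^+=0$, and symmetrically $\partial\mathbf{x}^-=0$. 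This argument is local to each $X$-marking and so is insensitive to the winding of the $\alpha$- and $\beta$-curves.

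Next I would compute the gradings. Substituting $\mathbf{x}^\pm$ into the combinatorial Maslov and Alexander grading formulas for lens-space grid homology from \cite{lensgridcomb} yields expressions in the grid data of $G$; on the other hand \cite{lensgridleg} expresses $tb_\mathbb{Q}(L)$ and $rot_\mathbb{Q}(L)$ in terms of the same data (writhe-type and rotation-type counts over a fundamental domain, corrected by the $p,q$-dependent self-linking defect). Matching the two gives the stated formulas; the additive constant $\tfrac1p - d(p,q,q-1)$ is exactly the normalization that aligns the combinatorial Maslov grading with the absolute $\mathbb{Q}$-grading on $HFK^-(L(p,q),K)$ in the Spin$^c$ structure carried by $\mathbf{x}^\pm$, and the factor $\tfrac12$ in the Alexander formulas reflects the half-integral normalization of the Alexander grading on a knot complement in $L(p,q)$. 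I expect this step to be bookkeeping-heavy but conceptually routine once the conventions of \cite{lensgridcomb,lensgridleg} are fixed.

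For invariance I would invoke the Legendrian grid-move theorem of \cite{lensgridleg}: two grids present oriented Legendrian-isotopic links in $(L(p,q),\xi_{UT})$ iff they are connected by commutation moves together with (de)stabilizations of the two types that fix the Legendrian class. For a commutation I would use the pentagon-counting quasi-isomorphism between the two grid complexes and check it carries $\mathbf{x}^\pm$ to $\mathbf{x}^\pm$; the point, exactly as in \cite{grid}, is that the canonical generators sit at $X$-corners, which the move leaves untouched, so no nontrivial pentagon is counted. For the Legendrian-class-preserving (de)stabilizations I would use the mapping-cone destabilization quasi-isomorphism of grid homology and track the image of $\mathbf{x}^\pm$; since the new $X$-marking is introduced adjacent to the relevant canonical corner, that image is $\mathbf{x}^\pm$ with no $U$-power, so $\lambda^\pm$ is preserved. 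This establishes that $\lambda^\pm(L)$ depends only on the oriented Legendrian isotopy class.

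Finally, the stabilization formulas come from applying the same quasi-isomorphism to the two remaining (de)stabilization types, which \cite{lensgridleg} identifies with negative and positive Legendrian stabilization; here the new $X$-marking sits on the opposite side of one canonical corner from the class-preserving case, so tracking $\mathbf{x}^+$ and $\mathbf{x}^-$ separately, one of them maps to the canonical generator of the new grid while the other maps to $U$ times it, and the asymmetry between positive and negative stabilization swaps which is which, giving precisely $\lambda^+(L^-)=\lambda^+(L)$, $\lambda^-(L^-)=U\cdot\lambda^-(L)$, $\lambda^+(L^+)=U\cdot\lambda^+(L)$, $\lambda^-(L^+)=\lambda^-(L)$. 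The main obstacle I anticipate is making the generator-tracking through the stabilization quasi-isomorphism genuinely rigorous on the twisted torus, where domains can wrap around in ways with no planar analogue: one must confirm that the destabilization filtration still isolates the canonical generators and that no wrap-around domain contributes an unexpected $U$-power. Should this prove delicate, an alternative is to establish the equivalence with the \cite{LOSS} invariant first and import its known stabilization behavior, but I would prefer to keep the argument combinatorial and self-contained.
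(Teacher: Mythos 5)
Your proposal follows the paper's route closely in every step but one: the same cycle argument (every empty rectangle emanating from $\mathbf{x}^\pm$ meets a marking), the same pentagon-counting map $P$ for commutations, the same destabilization quasi-isomorphism for the W:NE/W:SW moves, and the same generator-tracking through the maps $\mathcal{H}^I_{w_1}$ and $e$ of Proposition \ref{prop:stabcomb} for the Legendrian stabilization formulas. The wrap-around worry you raise is legitimate in principle, but it is resolved in the paper exactly as in the planar case: Figure \ref{fig:legstabinv} exhibits the only rectangles from $\mathbf{x}^\pm(G^\pm)$ that can avoid all of $\mathbf{w}\smallsetminus w_1$, and they are local to the stabilization region, so no wrap-around term contributes.

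The genuinely different route is the grading computation. You propose to plug $\mathbf{x}^\pm$ into the absolute combinatorial grading formulas of \cite{lensgridcomb} and match them against the front-projection formulas for $tb_\mathbb{Q}$ and $rot_\mathbb{Q}$ (these are actually from \cite{BTI}, not \cite{lensgridleg}). The paper instead passes to the $p$-fold contact universal cover: Lemma \ref{lem:cover} and \cite{coversmaslov} reduce the relative Maslov grading to the known $S^3$ statement from \cite{grid}, Lemma \ref{lem:Acovers} does the same for the Alexander grading, and the additive constant is pinned by computing $M(\theta(\tau_1\circ\delta^{q/p}))$ on an index-one diagram, importing $-d(p,q,q-1)$ from \cite{dinvt}. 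Your direct matching approach is plausible and would keep the argument more self-contained, but as written it treats the appearance of the constant $\tfrac1p - d(p,q,q-1)$ as an assertion rather than a derivation; the cover argument buys a clean multiplicativity lemma plus one cited base case in exchange for that bookkeeping.
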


In the case of a link, we compute the rational Alexander multi-gradings in Proposition \ref{prop:agradingcomp}.


Transverse isotopy classes correspond to Legendrian isotopy classes modulo negative Legendrian (de)stabilizations \cite{transverseapprox}. Let $T$ denote the positive transverse push-off of $L$ and define $\theta (T)$ to be $\lambda ^+ (L)$, the corollary below follows immediately:

\begin{corollary}
\label{corollary:grid}
The homology class $\theta(T)$ is an invariant of the transverse isotopy class of $T$, and is supported in Maslov grading
\begin{align*}
M(\theta(T)) = sl_{\mathbb{Q}}(T) +\frac{1}{p}-d(p,q,q-1).
\end{align*}
If $T$ is a knot, then the invariant is supported in Alexander grading
\[
A(\theta(T)) = \frac{1}{2}\Big{(}sl_\mathbb{Q}(T)  +1\Big{)}.
\]
\end{corollary}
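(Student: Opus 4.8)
The plan is to deduce everything from Theorem \ref{thm:grid} together with the transverse--Legendrian correspondence of \cite{transverseapprox}. Recall that every transverse link is the positive transverse push-off of some Legendrian link, and that two Legendrian links have transversely isotopic positive push-offs precisely when they become Legendrian isotopic after a sequence of negative stabilizations applied to each. Thus, to see that $\theta(T) := \lambda^+(L)$ is a well-defined invariant of the transverse isotopy class of $T$, I would simply note that Theorem \ref{thm:grid} already supplies the two facts needed: $\lambda^+$ is an invariant of the oriented Legendrian isotopy class of $L$, and $\lambda^+(L^-) = \lambda^+(L)$, i.e.\ $\lambda^+$ is unchanged under negative Legendrian stabilization. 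Hence $\lambda^+$ is constant on each fiber of the push-off map, so $\theta(T)$ depends only on $T$.

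For the grading formulas I would invoke the rational analogue of the classical identity $sl = tb - rot$ relating the self-linking number of the positive transverse push-off to the Thurston--Bennequin and rotation invariants of the Legendrian; in the rational homology sphere setting this reads $sl_\mathbb{Q}(T) = tb_\mathbb{Q}(L) - rot_\mathbb{Q}(L)$. Substituting this into the formula $M(\mathbf{x}^{+}) = tb_\mathbb{Q}(L) - rot_\mathbb{Q}(L) + \tfrac1p - d(p,q,q-1)$ of Theorem \ref{thm:grid} gives $M(\theta(T)) = sl_\mathbb{Q}(T) + \tfrac1p - d(p,q,q-1)$ at once, and the same substitution into $A(\mathbf{x}^{+}) = \tfrac12\big(tb_\mathbb{Q}(L) - rot_\mathbb{Q}(L) + 1\big)$ yields $A(\theta(T)) = \tfrac12\big(sl_\mathbb{Q}(T) + 1\big)$ in the knot case. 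Everything else is formal.

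The one point requiring care — and the main obstacle — is the identity $sl_\mathbb{Q}(T) = tb_\mathbb{Q}(L) - rot_\mathbb{Q}(L)$, since the rational versions of these invariants are defined via rational Seifert surfaces and relative Euler class computations rather than honest linking numbers in $S^3$, and one must check that the various $\tfrac1p$-corrections are normalized compatibly with the conventions used in Theorem \ref{thm:grid}. I would handle this either by citing the existing treatment of rational classical invariants for Legendrian and transverse knots in rational homology spheres, or by a short direct computation in a standard contact neighborhood of $L$: express $tb_\mathbb{Q}$, $rot_\mathbb{Q}$ and $sl_\mathbb{Q}$ in terms of the contact framing of $L$ and the winding of the contact planes along a rational Seifert surface, exactly as in the integral case, and verify the subtraction. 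Once this relation is in place, Corollary \ref{corollary:grid} follows immediately.
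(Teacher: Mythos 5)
Your proposal is correct and matches the paper's argument: the paper likewise defines $\theta(T)=\lambda^+(L)$, observes that invariance follows from Theorem~\ref{thm:grid} together with the correspondence between transverse isotopy classes and Legendrian isotopy classes modulo negative stabilization, and obtains the grading formulas by the substitution $sl_\mathbb{Q}(T)=tb_\mathbb{Q}(L)-rot_\mathbb{Q}(L)$. The one point you flag as needing care is indeed addressed in the paper, via the contact universal cover (Lemma~\ref{lem:cover} and the remark following it), so no gap remains.
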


We refer to the invariants $\lambda^\pm, \theta$ as the GRID invariants.

Suppose that $(B,\pi)$ is an open book supporting a contact manifold $(Y,\xi)$. Any link $K$ braided about this open book admits a natural transverse representative, and in fact via the general transverse Markov theorem \cite{pav} all transverse links arise as braids about $(B,\pi)$.
Using this perspective Baldwin, Vela-Vick and V\'{e}rtesi \cite{equiv} defined the BRAID invariant of transverse links in $(Y,\xi)$:
\[
t(K) \in HFK^- (-Y,K).
\]

The contact class in Heegaard Floer homology is characterized in terms of the Alexander filtration induced by the binding of any supporting open book on the Heegaard Floer chain complex \cite{contactclass}. The BRAID invariant admits a similar characterization. Suppose that $Y$ is a rational homology sphere. If $K$ is braided about $(B,\pi)$, we may consider the filtration on $CFK^- (-Y,K)$ induced by $-B$:
\[
\emptyset = \mathcal{F}^{-B}_i\subset\mathcal{F}^{-B}_{i+1}\subset \dots \subset \mathcal{F}^{-B}_j = {CFK}^-(-Y,K).
\] 

Set
$bot := min\{j| H_* (\mathcal{F}_j^{-B})\ne 0 \}$
and let $H_{top}(\mathcal{F}^{-B}_{bot})$ denote the summand of $H_*(\mathcal{F}^{-B}_{bot})$ of maximal rational Maslov grading. We prove in Subsection \ref{subsec:reformulation} that $H_{top}(\mathcal{F}^{-B}_{bot})$ is a rank one $\mathbb{F}[U_1,\dots, U_m]$-module, where $m$ is the number of components of $K$.
The invariant $t(K)$ is the image of a generator under the natural map $H_{top}(\mathcal{F}^{-B}_{bot}) \to HFK^- (-Y,K)$. \footnote{This is not exactly right, see Subsection \ref{subsec:reformulation} for precise statements.}

The above reformulation of $t(K)$ was first proven in the case of a braid about the unknot in $S^3$ \cite{equiv}. There, the authors show that the GRID invariant $\theta (K)$ for a transverse link $K\subset (S^3,\xi_{std})$ also admits such a reformulation, and use this to prove the equivalence of $t(K)$ and $\theta (K)$ in this special case. Generalizing their approach we prove the following:

\begin{theorem}
\label{thm:equivalence}
Let $K\subset (L(p,q),\xi_{UT})$ be a transverse link, then the GRID and BRAID invariants are equivalent. There exists a graded isomorphism of $\mathbb{F}[U_1,\dots,U_m]$-modules
\[
HFK^-(-L(p,q),K)\to HFK^-(-L(p,q),K)
\]
mapping the class $\theta(K)$ to $t(K)$.
\end{theorem}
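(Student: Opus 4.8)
The plan is to prove Theorem~\ref{thm:equivalence} by giving both $\theta(K)$ and $t(K)$ a common description as the image of the generator of a distinguished rank‑one $\mathbb{F}[U_1,\dots,U_m]$‑module sitting at the ``bottom'' of an Alexander‑type filtration, and then to connect the two models by a filtered, $\mathbb{F}[U_1,\dots,U_m]$‑equivariant quasi‑isomorphism; this generalizes the strategy that \cite{equiv} carried out for braids about the unknot in $(S^3,\xi_{std})$. Fix a grid diagram $G$ for $K$. By the dictionary of \cite{lensgridcomb,lensgridleg}, $G$ is a multi‑pointed toroidal Heegaard diagram for $L(p,q)$ in which $K$ is braided about the genus‑one open book $(B,\pi)$ whose binding $B$ is a core curve of the genus‑one splitting. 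On the braid side I would use the reformulation of Subsection~\ref{subsec:reformulation}: after equipping $CFK^-(-L(p,q),K)$ with the filtration $\mathcal{F}^{-B}_\bullet$ induced by $-B$, the class $t(K)$ is (up to the caveats there) the image of a generator of the rank‑one module $H_{top}(\mathcal{F}^{-B}_{bot})$ under the natural map $H_*(\mathcal{F}^{-B}_{bot})\to \HFKm(-L(p,q),K)$.

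Next I would establish the analogous reformulation for the GRID invariant on the grid side. On $CFK^-(G)$ there is a natural ``braid‑axis'' filtration: one distinguished family of the $n$ basepoints records the binding, and filtering by it produces a bounded filtration $\emptyset=\mathcal F_i\subset\cdots\subset\mathcal F_j=CFK^-(G)$. Using the explicit form of the canonical generators $x^+$ and $x^-$ associated to the two families of markings, one checks, as in \cite{grid,equiv}, that $x^+$ represents a nonzero class in the lowest filtration level $\mathcal F_{bot}$ with nonzero homology, that $H_*(\mathcal F_{bot})$ has a rank‑one top‑Maslov‑grading summand, and that $[x^+]$ generates it; hence $\lambda^+(L)=\theta(K)$ is exactly the image of that generator under $H_*(\mathcal F_{bot})\to\HFKm(-L(p,q),K)$. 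The grading formulas of Theorem~\ref{thm:grid}, Corollary~\ref{corollary:grid} and Proposition~\ref{prop:agradingcomp}, together with the rational Maslov grading of the braid/contact class and the shift $\frac{1}{p}-d(p,q,q-1)$, are used here to certify that the ``bottom'' of the braid‑axis filtration lands in the same bigrading as $H_{top}(\mathcal F^{-B}_{bot})$.

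It then remains to identify the two filtered complexes. The heart of the argument is to produce an $\mathbb{F}[U_1,\dots,U_m]$‑equivariant, $\mathbb{Q}$‑grading‑preserving filtered quasi‑isomorphism between $(CFK^-(G),\mathcal F_\bullet)$ and $(CFK^-(-L(p,q),K),\mathcal F^{-B}_\bullet)$. Following \cite{equiv}, this is done by realizing both as multi‑pointed Heegaard diagrams subordinate to the common open book $(B,\pi)$ and connecting them through a sequence of Heegaard moves --- (de)stabilizations, isotopies, and handleslides --- all supported in the complement of the basepoints that encode the relevant filtration, so that each move induces a filtered quasi‑isomorphism. Two features specific to the lens‑space case must be handled: the page of $(B,\pi)$ has positive genus, so reducing the toroidal grid to an open‑book‑compatible diagram requires more stabilizations and handleslides than in $S^3$, and the rational Maslov grading (with its $\frac{1}{p}-d(p,q,q-1)$ correction) must be tracked through every move. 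I expect this step --- engineering the filtered move sequence from the toroidal grid to an open‑book diagram while keeping the binding filtration controlled and the induced map equivariant for all of $U_1,\dots,U_m$ simultaneously --- to be the main obstacle. Once it is in place, the quasi‑isomorphism carries $H_{top}(\mathcal F_{bot})$ onto $H_{top}(\mathcal F^{-B}_{bot})$, hence $\theta(K)$ to $t(K)$, and passing to homology yields the asserted graded $\mathbb{F}[U_1,\dots,U_m]$‑module isomorphism of $\HFKm(-L(p,q),K)$.
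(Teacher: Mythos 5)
Your high‑level strategy — reformulate both invariants as images of the generator of a rank‑one module in the bottom of an Alexander‑type filtration, then connect the two models by a filtered quasi‑isomorphism built from basepoint‑avoiding Heegaard moves, generalizing \cite{equiv} — is indeed the skeleton of the paper's argument. However, there is a substantive gap in what you propose to do on the grid side, and a factual error about the open book.

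First the factual error: the open book $(B,\pi)$ supporting $(L(p,q),\xi_{UT})$ has \emph{disk} pages (it is the rational open book arising from $-p/q$‑surgery on the fibered unknot), not positive genus. The Heegaard splitting has genus one, but the fiber surface does not; so the ``positive genus'' you flag as a lens‑space feature is not present, and the actual features that must be handled are that $(B,\pi)$ is merely a \emph{rational} open book and that $B$ has order $p$ in $H_1(L(p,q))$.

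This brings out the genuine gap. You propose to equip $CFK^-(G)$ with the filtration induced by the binding $-B$ itself and to match it against the $\mathcal{F}^{-B}_\bullet$ filtration in the open‑book model. The difficulty is that $B$ is \emph{not} null‑homologous in $L(p,q)$: it is a rationally fibered knot of order $p$, so the $-B$‑filtration is $\tfrac{1}{p}\mathbb{Z}$‑valued and $B$ cannot be added to a toroidal grid with basepoints compatible with the grid structure in any evident way. The paper circumvents this by an extra step you have not noticed: Section~\ref{sec:altchara} re‑reformulates $t(K)$ in terms of the filtration induced by $-U$, the unknotted \emph{Seifert cable} of $B$. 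Since $U$ bounds a disk in $L(p,q)$, the $-U$‑filtration is $\mathbb{Z}$‑valued, $U$ has a rectilinear projection that can be drawn directly on the toroidal grid, and the bottom filtration level of the enlarged grid diagram $(T^2,\boldsymbol{\beta},\boldsymbol{\alpha},\bold{w},\bold{z}\cup\bold{z}_{-U})$ can be computed explicitly (Lemma~\ref{lem:needaname} and Lemma~\ref{lem:thetagen}). The comparison in Subsection~\ref{subsec:equiv} then proceeds by noticing that the braid diagram $\mathcal{D}_0$ and the enlarged grid diagram $\mathcal{S}_0$ both encode $K\cup -U$, so they are related by Heegaard moves avoiding the $-U$‑basepoints, which is what produces the desired filtered map. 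The passage from the $-B$‑filtration to the $-U$‑filtration on the braid side (Section~\ref{sec:altchara}, following the cabling argument of Hedden--Plamenevskaya in~\cite{QOB}, and Propositions~\ref{prop:gen}, \ref{prop:relate}, and \ref{prop:name2}) is the crucial bridge; without it the filtered quasi‑isomorphism you want between $(CFK^-(G),\mathcal{F}_\bullet)$ and $(CFK^-(-L(p,q),K),\mathcal{F}^{-B}_\bullet)$ has no construction, because you have no diagram for $K\cup -B$ in which both the grid structure and the $-B$‑filtration are simultaneously visible. So the ``main obstacle'' you anticipate is real, but the paper does not overcome it by brute‑force Heegaard moves in the $-B$‑filtered category — it sidesteps it entirely by changing the filtration to $-U$ first.
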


Our proof involves generalizing the reformulation of $t(K)$ to braids about rational open books having connected binding. 
The lens space $L(p,q)$ can be obtained by $-p/q$ surgery on the unknot in $S^3$; the core of the filling torus is a rationally fibered knot $B\subset L(p,q)$ with $D^2$ fibers.  We let $(B,\pi)$ denote this rational open book; the monodromy $\pi$ is a $2\pi q/p$ boundary twist. $(B,\pi)$ supports a universally tight contact structure $\xi_{UT}$ on $L(p,q)$ \cite{CCSMCM}.  We refer to a braid about $(B,\pi)$ as a \emph{lens space braid}. 

In Section \ref{sec:altchara} we prove an alternative reformulation of $t(K)$ involving the Alexander filtration induced by the unknotted Seifert cable of the rational binding $B$. We show in Section \ref{sec:gridchara} that the GRID invariant $\theta(K)$ of Corollary \ref{corollary:grid} satisfies the same reformulation involving the Seifert cable of the binding, allowing us to prove Theorem \ref{thm:equivalence}.

Lisca, Ozsv\'{a}th, Stipsicz and Szab\'{o} \cite{LOSS} defined invariants of null-homologous Legendrian and transverse knots in a contact manifold $(Y,\xi)$.
\[
\mathcal{L}(K),\mathcal{T}(K)\in HFK^- (-Y,K).
\] 
These are referred to as the LOSS invariants. For a transverse knot $K$, it is proven directly in \cite{equiv} that $\mathcal{T}(K) = t(K)$; Theorem \ref{thm:equivalence} allows us to conclude that for a transverse knot $K\subset (L(p,q),\xi_{UT})$ the invariants $\theta (K)$ and $\mathcal{T}(K)$ are equivalent. The equivalence of Legendrian invariants also follows immediately:

\begin{corollary}
\label{corollary:same}
If $K\subset (L(p,q),\xi_{UT})$ is a Legendrian knot , the Legendrian GRID and LOSS invariants are equivalent, i.e. there 
exists a graded isomorphism of $\mathbb{F}[U]$-modules
\[
HFK^-(-L(p,q),K)\to HFK^-(-L(p,q),K)
\]
mapping the class $\lambda^{+} (K)$ to $\mathcal{L}(K)$ and the class $\lambda^- (K)$ to $\mathcal{L}(-K)$, where $-K$ denotes the knot with reversed orientation.
\end{corollary}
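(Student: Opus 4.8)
The plan is to deduce the Legendrian equivalence formally from the transverse equivalence of Theorem~\ref{thm:equivalence}, using the standard dictionary relating Legendrian and transverse invariants on both the GRID and the LOSS sides. Write $L$ for the Legendrian knot in question (the corollary's $K$); for a knot (the case $m=1$) the isomorphism of Theorem~\ref{thm:equivalence} is a graded automorphism of the $\mathbb{F}[U]$-module $HFK^-(-L(p,q),K)$.

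First I would assemble three ingredients. \emph{(i)} By the definition of $\theta$ given just before Corollary~\ref{corollary:grid}, $\theta(T_+(L))=\lambda^+(L)$, where $T_+(L)$ is the positive transverse push-off of $L$; moreover reversing the orientation of $L$ interchanges $\lambda^+$ and $\lambda^-$ (in a grid diagram, reversing the knot orientation interchanges the associated cycles $\bx^+$ and $\bx^-$), so $\lambda^-(L)=\lambda^+(-L)=\theta(T_+(-L))=\theta(T_-(L))$, the negative transverse push-off. \emph{(ii)} By \cite{LOSS}, $\mathcal{T}(T_+(L))=\mathcal{L}(L)$ for every Legendrian knot; applied to $-L$, this gives $\mathcal{T}(T_-(L))=\mathcal{L}(-L)$. \emph{(iii)} By \cite{equiv}, $\mathcal{T}(T)=t(T)$ for every transverse knot $T$.

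Applying Theorem~\ref{thm:equivalence} to the transverse knot $T_+(L)$ produces an isomorphism $\Phi$ of $HFK^-(-L(p,q),K)$ carrying $\lambda^+(L)=\theta(T_+(L))$ to $t(T_+(L))=\mathcal{T}(T_+(L))=\mathcal{L}(L)$, by (i)--(iii). Applying Theorem~\ref{thm:equivalence} to $T_-(L)$ carries $\lambda^-(L)=\theta(T_-(L))$ to $t(T_-(L))=\mathcal{T}(T_-(L))=\mathcal{L}(-L)$, using the canonical identification $HFK^-(-L(p,q),-K)\cong HFK^-(-L(p,q),K)$ already implicit in the statement of the corollary.

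The one point requiring care --- and the main, though minor, obstacle --- is that the corollary asks for a \emph{single} isomorphism realizing both correspondences at once, whereas the argument invokes Theorem~\ref{thm:equivalence} for two differently co-oriented transverse push-offs. I would settle this by checking that the isomorphism produced in the proof of Theorem~\ref{thm:equivalence} (assembled in Sections~\ref{sec:altchara} and~\ref{sec:gridchara} from the Seifert-cable reformulations of $t$ and of $\theta$) depends only on the underlying knot $K\subset L(p,q)$ and its grid/braid presentation, not on the transverse co-orientation; equivalently, that $\Phi$ commutes with the orientation-reversal symmetry of $HFK^-$. Granting this, the single map $\Phi$ sends $\lambda^+(K)\mapsto\mathcal{L}(K)$ and $\lambda^-(K)\mapsto\mathcal{L}(-K)$ simultaneously, which is the content of Corollary~\ref{corollary:same}.
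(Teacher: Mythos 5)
Your argument is essentially the route the paper intends: the paper offers no proof for the corollary beyond asserting it ``follows immediately'' from Theorem~\ref{thm:equivalence} together with the identification $\mathcal{T}=t$ from \cite{equiv} and the standard dictionary $\mathcal{L}(L)=\mathcal{T}(T_+(L))$, $\lambda^+(L)=\theta(T_+(L))$, $\lambda^-(L)=\theta(T_-(L))$, which is exactly the chain you lay out. The single-isomorphism concern you flag at the end is legitimate but has a cleaner resolution than the one you sketch: by Proposition~\ref{prop:spincagrees} the classes $\lambda^+(L)$ and $\lambda^-(L)$ (and likewise $t(T_+(L))$ and $t(T_-(L))$) lie in the conjugate $\mathrm{Spin}^c$ summands $\mathfrak{s}_{\xi_{UT}}$ and $\mathfrak{s}_{\overline{\xi_{UT}}}$, and the isomorphisms produced by Theorem~\ref{thm:equivalence} are assembled from Heegaard moves and hence respect the $\mathrm{Spin}^c$-decomposition; so when $\mathfrak{s}_{\xi_{UT}}\neq\mathfrak{s}_{\overline{\xi_{UT}}}$ one can simply splice the two isomorphisms along this decomposition rather than proving orientation-independence of the whole map, with the self-conjugate case (e.g.\ $q=p-1$) needing the extra observation that the construction of Sections~\ref{sec:altchara}--\ref{sec:gridchara} depends only on the grid/braid data of the unoriented $K$.
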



\subsection{The Berge conjecture and simple knots}
These new invariants may give some insight into the Berge conjecture, which proposes an exhaustive list of knots in the 3-sphere admitting Dehn surgeries to lens spaces.
Dually, the Berge conjecture is equivalent to showing that any knot in a lens space admitting a surgery to the 3-sphere is \emph{simple}; i.e. can be encoded with an index one grid diagram.
Given a grid diagram for a knot $K\subset L(p,q)$, one can \emph{dualize} to obtain a grid diagram for the mirror $K\subset -L(p,q)$. Via this construction, a grid diagram gives rise to Legendrian representatives of the encoded knot $K$ and its mirror.
The GRID invariants can be used to characterize index one grid diagrams for knots admitting $S^3$ surgeries:
\begin{theorem} 
\label{thm:gridmirror}
Suppose $K\subset L(p,q)$ admits an $S^3$ surgery. If a grid diagram $\mathcal{G}$ for $K$ gives rise to Legendrian representatives $L_0\subset (L(p,q),\xi_{UT})$ and $L_1\subset (L(p,p-q),\xi_{UT})$ then
\[
\mathcal{G} \text{ is an index one diagram} \iff \widehat{\lambda}^+(L_0),\widehat{\lambda}^-(L_0),\widehat{\lambda}^+(L_1),\widehat{\lambda}^-(L_1)\ne 0.
\]
\end{theorem}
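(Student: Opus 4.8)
The plan is to convert the nonvanishing question into the $\zt[U]$-module structure of $\HFKm$ and then compare the Maslov gradings of the four GRID cycles with the index of $\mathcal{G}$. The forward implication is the easy one: if $\mathcal{G}$ has index one the underlying toroidal diagram is a single pair of curves meeting in $p$ points, so $CFK^-(\mathcal{G})$ has exactly $p$ generators, one per $\Sc$ structure. The differential respects $\Sc$ structures, hence vanishes, as it does after setting $U=0$; so $\HFK(L(p,q),K)$ is freely generated by the grid states, giving $[\bx^+],[\bx^-]\ne 0$ and thus $\widehat{\lambda}^\pm(L_0)\ne 0$. Dualizing preserves the index of a grid diagram, so the same argument applied to the dual grid gives $\widehat{\lambda}^\pm(L_1)\ne 0$.

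For the reverse implication I would first invoke that a knot in $L(p,q)$ admitting an $S^3$ surgery is Floer simple — as is its mirror in $L(p,p-q)=-L(p,q)$ — so $\HFKm(\pm L(p,q),K)$ is a free rank-one $\zt[U]$-module in each $\Sc$ structure. This gives two things. First, a class in such a module has nonzero image in $\HFK$ precisely when its Maslov grading is the top (``top-of-tower'') grading of that summand, which for a Floer simple knot is exactly the unique Maslov grading in which $\HFK$ is supported in that $\Sc$ structure; so the hypothesis is equivalent to $M(\bx^\pm)$ and $M(\by^\pm)$ each being these top gradings, where $\by^\pm$ are the GRID cycles of the dual grid $\mathcal{G}^*$. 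Second, since $\widehat{HFK}(-Y,K,\spc)$ is the dual of $\widehat{HFK}(Y,K,\spc)$, its unique supporting grading negates under $Y\mapsto -Y$; because dualizing the grid carries the pair $\{\bx^+,\bx^-\}$ to $\{\by^+,\by^-\}$ after conjugating $\Sc$ structures, the four top gradings sum to $0$. Hence the hypothesis forces
\[
M(\bx^+)+M(\bx^-)+M(\by^+)+M(\by^-)=0 .
\]

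Now I would evaluate the left side using Theorem \ref{thm:grid}, once for $\mathcal{G}$ and once for $\mathcal{G}^*$: the rotation numbers cancel in pairs, and since $-L(p,q)=L(p,p-q)$ and $d(-Y,\spc)=-d(Y,\spc)$ the correction terms $d(p,q,q-1)$ and $d(p,p-q,p-q-1)$ cancel as well (modulo the $\Sc$-conjugation bookkeeping), leaving
\[
0=2\big(tb_\Q(L_0)+tb_\Q(L_1)\big)+\tfrac{4}{p}.
\]
The remaining and decisive step is a combinatorial identity: running through the rational grading conventions of \cite{lensgridleg,lensgridcomb} and counting the corner types of $\mathcal{G}$ and $\mathcal{G}^*$, $tb_\Q(L_0)+tb_\Q(L_1)$ should equal an explicit function of $(p,q)$ minus a positive multiple of the index of $\mathcal{G}$, and that function should equal $-2/p$ exactly when the index is one — the lens-space analogue of $tb(L_0)+tb(L_1)=-n$ for an $S^3$ grid of index $n$. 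Combined with the displayed equation this forces the index of $\mathcal{G}$ to be $1$.

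I expect the main obstacle to be precisely this combinatorial identity for $tb_\Q(L_0)+tb_\Q(L_1)$, together with the $\Sc$-structure bookkeeping that matches $\bx^\pm$ with $\by^\pm$ (and $d(p,q,q-1)$ with $d(p,p-q,p-q-1)$) under $L(p,q)\leftrightarrow L(p,p-q)$: that is where all of the arithmetic of $p$ and $q$ concentrates. The Floer-simplicity input, the ``survives to $\HFK$ iff top of tower'' dictionary, and the sign behavior of $\widehat{HFK}$ under mirroring are then formal.
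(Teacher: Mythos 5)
Your high-level strategy — Floer simplicity plus a grading computation plus a combinatorial identity of the form $tb_\Q(L_0)+tb_\Q(L_1)\sim -(\text{index})$ — is close in spirit to the paper's, and you correctly anticipated that such an identity is the decisive combinatorial input (it is Proposition \ref{prop:index}: $tb_\Q(L_0)+tb_\Q(L_1)=-g$, with no $(p,q)$-dependent correction). The forward implication is handled the same way.

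However, the reverse implication as you've set it up has a genuine gap. Your key intermediate step is that the four top Maslov gradings sum to zero, i.e.\ $M(\bx^+)+M(\bx^-)+M(\by^+)+M(\by^-)=0$. Plugging in Theorem \ref{thm:grid} this reads
\[
2\bigl(tb_\Q(L_0)+tb_\Q(L_1)\bigr)+\tfrac{4}{p}-2d(p,q,q-1)-2d(p,p-q,p-q-1)=0,
\]
and the $d$-invariant terms do not cancel in general: already for an index-one diagram in $L(2,1)$ one has $tb_\Q(L_0)+tb_\Q(L_1)=-1$ and $d(2,1,0)=-\tfrac14$ twice, so the left-hand side is $1$, not $0$. (Equivalently, your displayed equation $0=2(tb_\Q(L_0)+tb_\Q(L_1))+\tfrac{4}{p}$ would force $tb_\Q(L_0)+tb_\Q(L_1)=-\tfrac{2}{p}$, which contradicts Proposition \ref{prop:index} for $p>2$.) The reason the symmetry argument breaks down is that $\by^\pm$ are not the images of $\bx^\pm$ under the duality $\widehat{HFK}(Y,K,\spc)\cong\widehat{HFK}(-Y,K,\spc)$; they live in the $\Sc$ structures of $\xi_{UT}$ and $\overline{\xi_{UT}}$ on $L(p,p-q)$, and these need not coincide with, nor be conjugate to, the $\Sc$ structures of $\xi_{UT}$ and $\overline{\xi_{UT}}$ on $L(p,q)$, so the $d$-invariants do not pair off.

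The paper sidesteps all of this by working with Alexander gradings rather than Maslov gradings, so the $d$-invariant corrections never enter, and by comparing against the simple knot $K'$ in the same homology class rather than against a self-duality: the Floer-simplicity isomorphism $\widehat{HFK}(-L(p,q),K)\cong\widehat{HFK}(-L(p,q),K')$ is $(\text{Alexander},\Sc)$-graded, and since the nonvanishing GRID classes generate the rank-one summands in the $\Sc$ structures $\spc_{\xi_{UT}}$, $\spc_{\overline{\xi_{UT}}}$ (Proposition \ref{prop:spincagrees}), matching Alexander gradings gives $tb_\Q(L_0)=tb_\Q(S_0)$ and $tb_\Q(L_1)=tb_\Q(S_1)$, after which Proposition \ref{prop:index} applied to both diagrams gives $-g=-1$ immediately. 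I would redo your reverse implication this way: replace the Maslov-grading symmetry argument with the Alexander-grading comparison, which is where the $p$-dependent terms genuinely drop out.
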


The forward direction is immediate, if $\mathcal{G}$ is an index one diagram the two complexes used to define the quadruple of invariants have trivial differential. The reverse direction utilizes the Floer simplicity of $K$, see Section \ref{sec:gridmirror}.
\noindent 

We formulate two conjectures:

\begin{conjecture}
\label{conj:rep}
Any knot $K\subset L(p,q)$ admitting a surgery to the 3-sphere has a Legendrian representative $L\subset (L(p,q),\xi_{UT})$  such that $\widehat{\lambda}^+(L),\widehat{\lambda}^-(L)\ne 0$. 
\end{conjecture}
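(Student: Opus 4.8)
The plan is to reduce Conjecture~\ref{conj:rep} to a non-vanishing statement for a hat GRID invariant of one well-chosen Legendrian representative, and to try to force that non-vanishing out of the Floer-theoretic rigidity imposed by the $S^3$-surgery hypothesis. First I would record that since $K\subset L(p,q)$ admits a surgery to the three-sphere, $K$ is \emph{Floer simple}: $\widehat{HFK}(-L(p,q),K)$ has total rank $p=\mathrm{rk}\,\widehat{HF}(-L(p,q))$, with exactly one generator in each $\mathrm{Spin}^c$ structure (Hedden, Rasmussen, Greene, Ni). Equivalently, each $\mathrm{Spin}^c$ summand of $\widehat{HFK}(-L(p,q),K)$ is a single copy of $\mathbb{F}$ concentrated in one $(M,A)$ bigrading, and hence (by the rationally null-homologous form of Ni's fiberedness detection theorem) $K$ is rationally fibered. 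Note that Conjecture~\ref{conj:rep} is formally weaker than the Berge conjecture -- it only asks for one representative on the $(L(p,q),\xi_{UT})$ side, not a single diagram controlling both sides as in Theorem~\ref{thm:gridmirror} -- so there is no circularity in invoking Floer simplicity here.

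Next I would pin down the representative. The candidate is the Legendrian $L$ whose positive transverse push-off $T$ is realized as a lens space braid of minimal index, so that the Alexander grading $A(\theta(T))=\tfrac12(sl_\mathbb{Q}(T)+1)$ of Corollary~\ref{corollary:grid} lies at the bottom of the Alexander filtration of $K$ induced by the unknotted Seifert cable of the rational binding $B$. With this choice the reformulation of Section~\ref{sec:altchara} identifies $\widehat{\lambda}^+(L)=\widehat{\theta}(T)$ with the image, under the natural map to $\widehat{HFK}(-L(p,q),K)$, of a generator of the top-Maslov-graded part of $H_*$ of the bottom level of the Seifert-cable filtration; the Floer simplicity of the previous step forces that part to be rank one. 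For $\widehat{\lambda}^-(L)$ I would run the orientation-reversed version of the same argument, using $\widehat{\lambda}^-(L)=\widehat{\mathcal{L}}(-L)$ from Corollary~\ref{corollary:same}.

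The crux -- and the main obstacle -- is the last step of each argument: showing that this distinguished generator of the bottom filtration level genuinely \emph{survives}, i.e. is a cycle representing a nonzero class in $\widehat{HFK}(-L(p,q),K)$, rather than merely bounding the rank of an associated-graded piece, since even a minimal knot Floer group can absorb it via cancellation in the spectral sequence. I see two plausible routes. One is to understand the grid complex differential directly for Floer simple $K$; this is immediate when $\mathcal{G}$ is an index one diagram, but that is precisely what one must not assume. The other is a fillability argument: show that $\widehat{\lambda}^+(L)$ maps under the ``forget the cable'' map to the contact class $c(\xi_{UT})\in\widehat{HF}(-L(p,q))$, which is nonzero because every tight contact structure on a lens space is Stein fillable. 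Making either route rigorous -- in particular controlling the interaction between the Alexander filtration of $K$ and the binding/cable filtration tightly enough to deduce survival -- appears to be of the same order of difficulty as the corresponding case of the Berge conjecture itself, which is why the statement is recorded here only as a conjecture.
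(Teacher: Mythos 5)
This statement is Conjecture~\ref{conj:rep} in the paper: it is stated as an open problem, not proved, so there is no internal proof to compare your argument against. What I can do is assess your sketch on its own terms.

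Your opening reductions are sound and match the paper's own framing. Floer simplicity of $K$ (Rasmussen, Hedden, Ni) is exactly what the paper uses in the \emph{companion} result Theorem~\ref{thm:gridmirror}, and the observation that $K$ is rationally fibered is stated verbatim in the paragraph following the two conjectures. The paper also suggests the same general direction you do: ``Progress on Conjecture~\ref{conj:rep} is likely to occur first for knots supporting a universally tight contact structure, and will probably appeal to properties of the LOSS invariant $\widehat{\mathcal{L}}$ via Corollary~\ref{corollary:same}.'' Your second proposed route (relate $\widehat{\lambda}^{+}$ to the nonvanishing contact class $c(\xi_{UT})$, which exists because lens space contact structures are Stein fillable) is a specific way one might try to exploit that, and is a reasonable heuristic.

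Two things to flag. First, a technical mix-up in the ``pin down the representative'' step: $A(\theta(T))=\tfrac12(sl_\mathbb{Q}(T)+1)$ from Corollary~\ref{corollary:grid} is the Alexander grading of $\theta(T)$ with respect to $K$ itself, whereas the ``bottom'' in the reformulation of Section~\ref{sec:altchara} refers to the Alexander filtration on $CFK^{-}(-L(p,q),K)$ induced by the Seifert cable $-U$ of the binding. These are two different filtrations on the same complex, and the reformulation's ``bottom-ness'' holds automatically for \emph{any} transverse braid representative of $K$; it is not a property you obtain by choosing a minimal-index braid. Minimizing braid index also does not by itself maximize $sl_\mathbb{Q}$, so this choice does not distinguish a transverse class in the way the sketch suggests. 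What you actually want is a transverse representative maximizing $sl_\mathbb{Q}(T)$ (equivalently, a Legendrian at the top of its mountain range), and even then the reformulation alone controls only what $\theta(T)$ \emph{is}, not whether it is nonzero in $\widehat{HFK}$.

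Second, and more importantly, you have correctly located the genuine gap, and your honesty about it is the right call. Floer simplicity bounds $\widehat{HFK}(-L(p,q),K)$ tightly, and the reformulation produces a distinguished cycle in the bottom level of the $-U$ filtration whose image is $\theta(T)$, but neither fact by itself controls whether that cycle survives to a nonzero class mod the $U$-action -- i.e.\ whether $\widehat{\lambda}^{+}(L)\neq 0$. Both of your proposed routes (direct analysis of the grid differential; a fillability/contact-class argument) would have to tame the interaction between the $K$-Alexander filtration and the $-U$ filtration on $CFK^{-}$ in a way that is not currently understood, and you are right that the remaining difficulty is commensurate with the corresponding case of the Berge conjecture -- which is precisely why the statement appears in the paper only as a conjecture.
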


\begin{conjecture}
\label{conj:grid}
Given any Legendrian representatives (in universally tight lens spaces) of $K\subset L(p,q)$ and its mirror $m(K)\subset L(p,p-q)$ there exists a grid diagram giving rise to the pair of Legendrians.
\end{conjecture}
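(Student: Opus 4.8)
The plan is to reduce Conjecture~\ref{conj:grid} to three ingredients — a grid realization statement for Legendrians in $(L(p,q),\xi_{UT})$, a dictionary translating grid moves into Legendrian moves on the \emph{pair} of knots a grid gives rise to, and a Fuchs--Tabachnikov-type uniqueness statement for Legendrian representatives in universally tight lens spaces — the last of which is where I expect the real difficulty to lie.

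The first step is \textbf{grid realization}: every oriented Legendrian $L\subset(L(p,q),\xi_{UT})$ should be Legendrian isotopic to the knot $L_0(\mathcal G)$ associated to some grid diagram $\mathcal G$ for the underlying knot type. Following \cite{lensgridleg}, and on the model of \cite{grid}, one puts $L$ in front position with respect to the open book $(B,\pi)$ — equivalently, braids its transverse push-off about $(B,\pi)$ and recovers $L$ — then discretizes the front to a rectilinear diagram on the grid torus of \cite{lensgridcomb} and checks that the resulting $O$/$X$ placement is a legitimate grid with $L_0(\mathcal G)\simeq L$. Dualizing $\mathcal G$ produces the second Legendrian $L_1(\mathcal G)\subset(L(p,p-q),\xi_{UT})$ representing $m(K)$. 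One should also verify that any two grids realizing $L$ are connected by grid moves that leave $L_0(\cdot)$ fixed up to Legendrian isotopy, so that the entire discussion takes place relative to a fixed $L_0$.

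The crux is the second step, the \textbf{decoupling dictionary}. Generalizing the Ozsv\'{a}th--Szab\'{o}--Thurston correspondence to the toroidal setting, I would track the simultaneous effect of each grid move on both $L_0(\mathcal G)$ and $L_1(\mathcal G)$: commutations should induce a Legendrian isotopy of both knots, while the elementary stabilizations should split into two families — one realizing a positive or negative Legendrian (de)stabilization of $L_0(\mathcal G)$ while fixing $L_1(\mathcal G)$ up to Legendrian isotopy, and the other doing the reverse, fixing $L_0(\mathcal G)$ up to Legendrian isotopy while realizing a positive or negative Legendrian (de)stabilization of $L_1(\mathcal G)$. Since dualization acts on the grid by a reflection, which exchanges the two ``diagonal'' families of mark positions, the stabilizations that are Legendrian-trivial for $L_0$ are precisely those that stabilize $L_1$, and vice versa — this is exactly what makes the conjecture plausible, as it shows that keeping $L_0\simeq L_0(\mathcal G)$ fixed one can move $L_1(\mathcal G)$ freely within its stabilization-equivalence class of Legendrian representatives of $m(K)$, passing to a positive or negative stabilization or, running the move backwards, to a destabilization.

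Granting all this, realizing the prescribed pair $(L_0,L_1)$ comes down to connecting the Legendrian $L_1(\mathcal G)$ that the realization step happens to produce to the target $L_1$ by a sequence of Legendrian isotopies and positive/negative stabilizations and destabilizations; equivalently, it would follow from knowing that any two Legendrian representatives of a fixed knot type in $(L(p,p-q),\xi_{UT})$ admit a common stabilization — the analogue of the Fuchs--Tabachnikov theorem. This is the main obstacle: I do not know it in the required generality, and even over $S^3$ it rests on delicate convex surface theory and the tightness of Legendrian knot complements. The most promising attack is to pass to the universal cover, where $\xi_{UT}$ lifts to $\xi_{std}$ on $S^3$ and the two representatives lift to $\mathbb{Z}/p$-equivariant Legendrian links, and to upgrade Fuchs--Tabachnikov to an equivariant common-stabilization statement before descending; alternatively one could work directly in the tight complement via Colin's convex-surface techniques, or try to extract what is needed from the general transverse Markov theorem \cite{pav} together with Legendrian approximation \cite{transverseapprox} — though transverse Markov alone connects only transversely isotopic braids and so does not by itself produce the common stabilization required here. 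Combining grid realization, the decoupling dictionary, and any one of these uniqueness inputs would then yield Conjecture~\ref{conj:grid}.
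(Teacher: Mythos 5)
The statement you are addressing is Conjecture \ref{conj:grid}: the paper offers no proof of it and explicitly records it as open, noting only that the $S^3$ case is a theorem of Dynnikov and Prasolov \cite{jonescon}. So there is no argument in the paper to compare yours against, and your proposal must be judged as an attack on an open problem. As such it is a sensible reduction, and your decoupling dictionary is consistent with the paper's conventions (under dualization a W:SE move on $\mathcal G$ becomes a Z:NE move on $\mathcal G_*$, which is W:SW up to commutations and hence Legendrian-isotopy-preserving for $L_1$, and so on), but the gap is genuine and is in fact larger than the single missing ingredient you name.

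First, the Fuchs--Tabachnikov-type common-stabilization statement for $(L(p,p-q),\xi_{UT})$ is unproven, and the equivariant-lift strategy is delicate: an equivariant common stabilization upstairs need not descend to one that is realized by grid moves downstairs. Second, and more seriously, even granting that input your scheme still needs the \emph{converse} of your dictionary. To pass from the common stabilization $S$ of $L_1(\mathcal G)$ and $L_1$ back down to $L_1$, you must realize an arbitrary abstract Legendrian destabilization of $L_1(\cdot)$ by an actual grid destabilization (after preparatory commutations) while fixing $L_0(\cdot)$; your step 2 supplies only the forward direction, that each grid move induces a Legendrian move on the pair. The statement that every abstractly admissible Legendrian destabilization is visible on some grid presenting the pair is precisely the monotonic-simplification/bypass content of the Dynnikov--Prasolov proof in $S^3$, and it does not follow from Theorem \ref{thm:leg} or from a common-stabilization theorem. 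Until both of these are supplied, the proposal is a plausible roadmap rather than a proof.
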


Assuming the validity of these conjectures, the Berge conjecture follows readily via an application of Theorem \ref{thm:gridmirror}. 
Suppose $K\subset L(p,q)$ admits a surgery to the 3-sphere. Conjecture \ref{conj:grid} produces a grid diagram $\mathcal{G}$ for $K$ giving rise to the Legendrian representatives of $K$ and its mirror guaranteed by Conjecture \ref{conj:rep}, where the associated quadruple of invariants is non-vanishing. Theorem \ref{thm:gridmirror} implies that $\mathcal{G}$ is an index one diagram, hence $K$ is a simple knot.

Any knot $K\subset L(p,q)$ which admits a surgery to the 3-sphere is rationally fibered and supports a tight contact structure. Progress on Conjecture \ref{conj:rep} is likely to occur first for knots supporting a universally tight contact structure, and will probably appeal to properties of the LOSS invariant $\widehat{\mathcal{L}}$ via Corollary \ref{corollary:same}. Conjecture \ref{conj:grid} is known to hold for knots in the tight 3-sphere, this is a result of Dynnikov and Prasolov \cite{jonescon}.

\subsection{Acknowledgements} We thank John Baldwin for many helpful discussions. 

\section{Contact Preliminaries}
\label{sec:contact}
\subsection{Contact Geometry}
\label{subsec:contact}
We assume the reader has a certain knowledge of contact geometry. 
For an introduction to the Giroux correspondence and open books consult the wonderful notes of Etnyre \cite{openbooks}. For a reference on transverse and Legendrian links we point the reader to Etnyre's survey \cite{legtransverse}. 


Let $(Y,\xi)$ be a contact 3-manifold. Suppose that $(B,\pi)$ is an open book supporting $(Y,\xi)$. $B$ sits naturally as a transverse link. Any link braided about $B$ is also naturally a transverse link, as the contact plane field is very close to the plane field tangent to the pages away from the binding $B$. The following is a generalization of a theorem of Bennequin \cite{ben}

\begin{theorem} \cite{pav}
Suppose $(B,\pi)$ is an open book supporting $(Y,\xi)$. Every transverse link in $(Y,\xi)$ is transversely isotopic to a braid with respect to $(B,\pi)$.
\end{theorem}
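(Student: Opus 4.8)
The plan is to run Alexander's braiding algorithm in the contact category, thereby generalizing the proof of Bennequin's theorem. Fix a contact form $\alpha$ for $\xi$ adapted to $(B,\pi)$: $\alpha$ is positive along $B$ and $d\alpha$ restricts to a positive area form on the interior of every page. Near the binding choose coordinates $B \times D^2$ in which $B = B \times \{0\}$, the fibration $\pi$ agrees with the angular coordinate $\theta$ on the $D^2$ factor, and $\alpha$ agrees with a standard model (such as $dz + r^2\, d\theta$) under which $B$ is positively transverse and the contact planes rotate monotonically about $B$ as $r \to 0$; this is the usual model for a neighborhood of a transverse knot.

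First I would make the transverse link $T$ disjoint from $B$ by a transverse isotopy. Small $C^\infty$-perturbations of $T$ through transverse links are available (openness of the transversality condition together with the standard neighborhood theorem for transverse links), and since $\dim T + \dim B < 3$ a generic such perturbation achieves $T \cap B = \emptyset$. Now $T$ lies in the mapping-torus region $Y \setminus B$, on which $\pi$ is an honest fibration over $S^1$. After a further generic transverse perturbation, $\pi|_T$ is a Morse function with finitely many critical points, and away from small arcs around those critical points $T$ is either positively or negatively transverse to the pages.

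Call an arc of $T$ \emph{bad} if it is not positively transverse to every page it meets, i.e.\ it contains a critical point of $\pi|_T$ or an arc along which $\pi$ decreases. For each bad arc I would perform the transverse analogue of Alexander's move ``throw the strand over the axis'': isotope the arc so that it enters the neighborhood $B \times D^2$ and winds once about $B$ in the direction of increasing $\theta$. Because the contact planes spin positively about the transverse binding $B$, this winding can be realized by an isotopy \emph{through transverse links} --- it is precisely a positive transverse stabilization carried out relative to $B$ --- and the modified arc is then positively transverse to all of the pages it traverses. Performing one such move per bad arc (finitely many, since $\pi|_T$ has finitely many critical points) makes $\pi|_T$ a submersion with the correct co-orientation, so that $T$ has become a braid about $(B,\pi)$, transversely isotopic to the original link.

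The main obstacle is the transversality bookkeeping in the last step: one must verify that the ``throw over the binding'' isotopy is genuinely transverse rather than merely smooth, that it converts a bad arc into a positively transverse one, and that it does not spoil the good behavior of $T$ elsewhere. All of this is checked in the explicit local model near $B$, where the contact form, the fibration $\pi$, and the winding isotopy can be written down simultaneously; keeping careful track of these local-to-global compatibilities, and bounding the number of moves needed, is the technical core of the argument.
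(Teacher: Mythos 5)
The paper cites this result from \cite{pav} without giving a proof, so there is no in-paper argument to compare against; what you have written is a reconstruction. The scaffolding is right: a contact version of Alexander's braiding, in the spirit of Bennequin's proof for $(S^3,\xi_{std})$, is exactly the approach Pavelescu takes, and your reduction to throwing finitely many bad arcs over the binding is the correct skeleton.

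There is, however, a genuine gap in the ``throw over the binding'' step, and it is more than transversality bookkeeping. You verify, correctly, that near $B$ the local model $dz + r^2\,d\theta$ makes an arc winding positively about $B$ simultaneously positively transverse to $\xi$ and to the pages. But the replaced arc must also travel through the mapping-torus region away from $B$, and the adapted condition on $\alpha$ ($\alpha > 0$ along $B$, $d\alpha > 0$ on pages) by itself says nothing about the angle between the contact planes and the page-tangent planes there. Away from $B$, ``positively transverse to pages'' and ``positively transverse to $\xi$'' are a priori unrelated constraints, so it is not clear that the new arc can be chosen to satisfy both, nor that the isotopy carrying the old arc into the neighborhood of $B$ can be made transverse along the way. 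Transverse isotopy is rigid (it preserves $sl_{\mathbb{Q}}$, for instance), so one cannot simply invoke openness of transversality to upgrade a smooth move to a transverse one. The standard remedy---implicit in the paper's own remark that ``the contact plane field is very close to the plane field tangent to the pages away from the binding''---is to first deform $\xi$, through contact structures supported by $(B,\pi)$ and hence isotopic to $\xi$ by Gray stability, so that outside a small neighborhood of $B$ the contact planes are $C^0$-close to the page-tangent distribution. After that deformation, positive transversality to pages implies positive transversality to $\xi$ away from $B$, the Alexander moves performed among links positively transverse to pages become transverse isotopies essentially for free, and the argument closes. As written, your proposal omits this deformation and so does not quite establish the theorem. (A smaller point: the move you describe is a transverse isotopy near the binding, closer in spirit to a positive \emph{Markov} stabilization; transverse stabilization in the usual sense drops $sl$ by two and changes the transverse isotopy class.)
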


There is a notion of positive Markov stabilization for braids with respect to an arbitrary open book, defined in \cite{pav}. This operation increases the braid index by one, but preserves the transverse isotopy class of the braid. The following is a generalization of the transverse Markov theorem of Wrinkle \cite{wrinkle}.

\begin{theorem} \cite{pav}
Suppose $K_1$ and $K_2$ are braids with respect to an open book $(B,\pi)$ supporting $(Y,\xi)$. $K_1$ and $K_2$ are transversely isotopic if and only if they admit positive Markov stabilizations $K_1 ^+$ and $K_2 ^+$ which are braid isotopic with respect to $(B,\pi)$.
\end{theorem}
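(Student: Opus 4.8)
The plan is to prove the two implications separately, with essentially all of the content living in the forward (``only if'') direction. For the reverse direction: a braid isotopy is by definition an ambient isotopy through braids about $(B,\pi)$, and any braid about $(B,\pi)$ is transverse to $\xi$ because the contact planes are $C^0$-close to the tangent planes of the pages away from the binding; so braid-isotopic braids are transversely isotopic. Positive Markov stabilization, as defined in \cite{pav}, is modeled on inserting a small positively framed loop near a point of a page and is arranged precisely so that $K^+$ is transversely isotopic to $K$. Chaining these, $K_1\simeq K_1^+\simeq K_2^+\simeq K_2$ transversely.

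For the forward direction, let $\{L_t\}_{t\in[0,1]}$ be a transverse isotopy with $L_0=K_1$ and $L_1=K_2$ both braids about $(B,\pi)$. First I would put the isotopy in general position relative to the pages and the binding. Being a braid is the open condition that $L_t$ be disjoint from $B$ and everywhere transverse to the fibers of $\pi$, and a generic isotopy with braided endpoints meets the complementary ``non-braid'' stratum at finitely many times $t_1<\dots<t_k$, at each of which the failure is one of two codimension-one local models: either $L_t$ acquires a single quadratic tangency with one page at one point, or $L_t$ meets $B$ transversally at one point. Near each $t_i$ one reads off the movie of intersection points of $L_t$ with the pages: a page-tangency event is a finger (Reidemeister-II-type) move that creates or cancels a cancelling pair of strands, while a binding event is a Markov (de)stabilization. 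The essential point --- and the reason only \emph{positive} stabilizations appear --- is that since $\xi$ is the contact structure supported by $(B,\pi)$ and $L_t$ stays transverse to $\xi$ throughout, the characteristic-foliation and convexity data near the page force every such pair to be inserted in the positive configuration; I would establish this from the local normal form of a supported contact structure near a page, together with the standard model near the binding $B$.

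With the local analysis in hand, the complementary intervals contribute braid isotopies, so the isotopy exhibits $K_1$ and $K_2$ as related by a finite word in braid isotopies and positive Markov (de)stabilizations. The last step is to eliminate the destabilizations: using that positive stabilization commutes with braid isotopy up to braid isotopy (and with positive stabilization along another strand), one slides every destabilization to the end of the word, where it cancels against a matching stabilization performed on both braids. After equalizing braid indices by trivial positive stabilizations, this yields positive Markov stabilizations $K_1^+$ of $K_1$ and $K_2^+$ of $K_2$ that are braid isotopic, which is the assertion.

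I expect the main obstacle to be the sign bookkeeping in the local analysis: showing that transversality to the supported contact structure pins down the orientation of every finger move and every (de)stabilization event, so that no negative stabilization is ever needed. In the classical case of $S^3$ with the disk open book this is the content of Wrinkle's theorem \cite{wrinkle}; for a general open book one must check that the picture near an arbitrary page is uniform enough and handle the binding $B$ carefully, since the contact planes agree with the page tangent planes only in the limit away from $B$ and degenerate near it. A secondary point is making the general-position step honest --- pinning down exactly which degenerations of a transverse isotopy are codimension one and ruling out worse ones by a dimension count.
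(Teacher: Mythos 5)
The paper does not prove this theorem---it simply cites \cite{pav}---so there is no internal argument to compare against, and the question is whether your outline stands on its own. It has a genuine gap in the general-position step. You claim that a generic transverse isotopy $\{L_t\}$ between braids leaves the braid locus at only finitely many times $t_i$, with a codimension-one local model at each. But ``$L_t$ is a braid,'' i.e.\ $\pi|_{L_t}$ has no critical points, is an open condition that is far from dense: any embedding on which $\pi$ has a nondegenerate critical point remains a non-braid under small perturbation, so the non-braid locus has nonempty interior and a generic path spends whole \emph{intervals}, not isolated instants, outside the braid locus. The standard picture of a positive Markov stabilization already exhibits this: a min--max pair is born (your page-tangency event), a finger is then carried around the binding during a positive-length interval throughout which $L_t$ is \emph{not} a braid, and finally the pair dies. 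Your finitely-many-events framing captures only the birth/death and binding-crossing moments; bridging the non-braid intervals is where the content of any Markov-type theorem actually lives, and it requires a genuinely global device --- the foliation-theoretic analysis of the trace annulus $A\subset Y\times[0,1]$ as in Orevkov--Shevchishin, Wrinkle \cite{wrinkle}, and Pavelescu \cite{pav}.

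Two secondary points. The positivity claim is presented as a local consequence of a normal form near a page, but it is really extracted from global properties of the foliation on the trace surface; it is precisely what separates the transverse Markov theorem from the smooth one, and (as you rightly worry) the region near $B$, where $\xi$ is not close to the page tangent planes, has to be handled separately. And the final ``slide all destabilizations to the end'' step rests on commutation and cancellation lemmas whose proofs need the same trace-surface machinery that is missing from the main step. The overall shape of your outline --- elementary events, positivity from the contact structure, eliminate destabilizations --- does match the proofs in the literature, but the first step is incorrect as stated and the others are asserted rather than argued.
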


The binding $B$ of an open book supporting $(Y,\xi)$ sits naturally as a transverse link. 
A copy of $B$ may be braided about the underlying open book, resulting in a braid of index $n$, where $n$ is the number of components of $B$.

Recall that the neighborhood of a transverse knot in any contact manifold is standard. If $K$ is transverse it admits a neighborhood contactomorphic to 
\[
N_\epsilon = \{(r,\theta,z):r<\epsilon\} \subset \mathbb{R}^2 \times S^1
\]
where $\xi = ker(\alpha) = ker(dz+r^2 d\theta)$, and $K$ is identified with $(0,0)\times S^1$. In these coordinates, $K$ admits a parametrization $\gamma (t) = (0,t,t)$, where $t\in [0,2\pi)$. Consider the following transverse isotopy
\[
\Gamma _s (t) = (s,t,t)
\]
from $\gamma _0 (t)$ to $\gamma_{\epsilon /2} (t)$. Applying this isotopy to each component of $B$ realizes a copy of $B$ as an index $n$ braid.

We will also use the notion of a rational open book and how one supports a contact structure; the original reference is \cite{CCSMCM}. Whenever an open book is rational we will emphasize it, otherwise open books are assumed to be integral. Any link braided about a rational open book also sits naturally as a transverse link in the supported contact manifold.

The unknot $U\subset S^3$ is fibered with disk pages. We take the convention that the lens space $L(p,q)$ is obtained by $-p/q$ surgery on the unknot. Let $B\subset L(p,q)$ be the core of the filling torus in the Dehn surgery.
$B$ is the binding of a rational open book decomposition $(B,\pi)$ for $L(p,q)$. The open book has $D^2$ pages, and the monodromy $\pi$ is a counter-clockwise $2\pi q/p$-rotation, which we denote by $\delta ^{q/p}$.

Honda \cite{contact1} classifies the universally tight contact structures (those which lift to the tight contact structure on $S^3$) on lens spaces. There are at most two such contact structures on $L(p,q)$. The structure is unique if  $q=p-1$, otherwise the two universally tight contact structures are related by co-orientation reversal. 

We let $\xi _{UT}$ denote the contact structure on $L(p,q)$ supported by the rational open book $(B,\pi)$, see \cite{CCSMCM} for a proof that the contact structure is universally tight. $\xi_{UT}$ is also constructed explicitly as the kernel of a globally defined 1-form in section 3 of \cite{lensgridleg}.

We will need the classical invariants for rationally null-homologous Legendrian and transverse links, studied in \cite{RLCG}.

\begin{definition}
Let $L\subset Y$ be an oriented, rationally null-homologous link, which is partitioned into sub links $L_1\cup\dots \cup L_l$. Let $r$ denote the least common multiple of the orders of components of $L$ in $H_1 (Y;\mathbb{Z})$. Let $i:\Sigma \to Y$ be a \emph{uniform rational Seifert surface} for $L$, by which we mean an oriented surface whose boundary wraps $r$ times around each component of $L$ (even those components whose orders in homology are less than $r$).
\begin{itemize}
\item Given another oriented link $L'$, define the \emph{rational linking number} of $L$ with $L'$ to be
\[
lk_{\mathbb{Q}} (L,L') = \frac{1}{r}\Sigma \cdot  L'
\]
where $\Sigma \cdot L'$ is the algebraic intersection number. In general the rational linking number may depend on the relative homology class of rational Seifert surface $\Sigma$ for $L$. We will only consider rational homology spheres, so we continue to suppress $\Sigma$ from the notation.
\item If $L$ is a Legendrian link in $(Y,\xi)$ let $L'$ denote the longitude for $L$ specified by the contact framing $\xi |_L \cap \nu (L)$. We define the \emph{rational Thurston-Bennequin number} of $L$ to be
\[
tb_{\mathbb{Q}}(L) = lk_\mathbb{Q} (L,L').
\]
\item Consider a trivialization of $\xi|_{\Sigma} \simeq \Sigma \times \mathbb{R}^2$. Since $L$ is oriented, the positive unit tangent vectors to $L$ give rise to a nonzero section $\sigma$ of this trivial bundle restricted to $\partial \Sigma$. We define the \emph{rational rotation number} of $L$ to be the winding number of $\sigma$ divided by $r$.
\[
rot_\mathbb{Q} (L) = \frac{1}{r} winding(\sigma,\mathbb{R}^2)
\]
We also denote the winding of the section $\sigma$ restricted to the component of $\partial \Sigma$ which wraps around the sub-link $L_i\subset L$ by 
\[
rot^i_\mathbb{Q}(L).
\]
\item Suppose $L$ is a transverse link in $(Y,\xi)$. The map $i|_{\partial \Sigma} : \partial \Sigma \to L$ is an $r$-fold covering map. The map $i$ induces a map from a small neighborhood of the zero-section of $i^* \xi$ to a small neighborhood of the zero section of $\xi |_L$, which in turn is naturally identified with a neighborhood $\nu(L)$ of $L$ (because $L$ is transverse). The bundle $i^* \xi$ is trivial,  so there exists a nonvanishing section $v$. A small and generic choice of section $v$ gives rise to a link $L'$ sitting in $\nu(L)\smallsetminus L$. We define the \emph{rational self-linking number} of $L$ to be
\[
sl_\mathbb{Q} (L) = \frac{1}{r}lk_\mathbb{Q}(L,L')
\]
the coefficient $\frac{1}{r}$ appears because $L'$ is an $r$-fold push off of $L$.
We define
\[
sl^i_\mathbb{Q}(L) = \frac{1}{r}lk_\mathbb{Q}(L_i,L').
\]

\end{itemize}
\end{definition}

\subsection{Grid diagrams for links in lens spaces}
\label{subsec:contactgrid}

Grid diagrams for links in lens spaces were first studied in \cite{lensgridcomb}, and subsequently from a contact geometric perspective in \cite{lensgridleg}. 

Let $T^2$ be the standard torus $\mathbb{R}^2/\mathbb{Z}^2$, where $\mathbb{Z}^2$ is the standard lattice generated by $(1,0)$ and $(0,1)$. Let $\pi :\mathbb{R}^2 \to \mathbb{R}^2/\mathbb{Z}^2 = T^2$ denote the quotient map.

\begin{definition}
Suppose that $0<q<p$. A \emph{grid diagram} for a link $K\subset L(p,q)$, with \emph{index} $n$ is a Heegaard diagram $\mathcal{G} = (T^2,\boldsymbol{\alpha},\boldsymbol{\beta},\bold{z},\bold{w})$ where
\begin{itemize}
\item $\boldsymbol{\alpha} = \{\alpha_0,\dots,\alpha_{n-1}\}$, where $\alpha_i$ is the image of the line $y=i/n$ under the map $\pi$. The $n$ annular components of $T^2 \smallsetminus \boldsymbol{\alpha}$ are called the \emph{rows} of $\mathcal{G}$. For $0\le i < n$ the row between $\alpha_{i}$ and $\alpha_{i+1}$ is called the $i^{th}$ row.
\item $\boldsymbol{\beta} = \{\beta_0,\dots,\beta_{n-1}\}$, where $\beta_i$ is the image of the line $y= -\frac{p}{q}(x-\frac{i}{pn})$ under the map $\pi$. The $n$ annular components of $T^2 \smallsetminus \boldsymbol{\beta}$ are called the \emph{columns} of $\mathcal{G}$. For $0\le i < n$ the row between $\beta_{i}$ and $\beta_{i+1}$ is called the $i^{th}$ column.
\item $\bold{z} = \{z_0,z_1,\dots,z_{n-1}\}$. For each $i$, the basepoint $z_i$ is in the $i^{th}$ column.
\item $\bold{w} = \{w_0,w_1,\dots,w_{n-1}\}$. For each $i$, the basepoint $w_i$ is in the $i^{th}$ column.
\item Each region of $T^2 \smallsetminus \boldsymbol{\alpha} \smallsetminus \boldsymbol{\beta}$ contains at most one basepoint of $\bold{z}\cup\bold{w}$.

\item Each row contains two basepoints, one from $\bold{z}$ and one from $\bold{w}$. 
\end{itemize}
\end{definition}

In Proposition 4.3 of \cite{lensgridcomb} is it shown that every link $K\subset L(p,q)$ is represented by a grid diagram. 
Baker and Grigsby \cite{lensgridleg} introduce the notion of a toroidal front diagram; these diagrams are similar to regular front diagrams for knots in $(\mathbb{R}^3, \xi_{std})$: 

\begin{proposition}(Proposition 3.3 of \cite{lensgridleg})
A toroidal front diagram uniquely specifies a Legendrian link in $(L(p,q),\xi_{UT})$ up to Legendrian isotopy.
\end{proposition}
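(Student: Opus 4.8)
The plan is to turn the toroidal front projection into an honest submersion and then run the standard front-reconstruction argument, paying attention to the lens-space topology. First I would fix the explicit model for $\xi_{UT}$ used in \cite{lensgridleg}: write $L(p,q)=(T^2\times[0,1])/{\sim}$, where $T^2\times\{0\}$ and $T^2\times\{1\}$ are collapsed along the circles of slope $0$ and of slope $-p/q$ respectively (the $\alpha$- and $\beta$-directions of a grid diagram), and take $\xi_{UT}=\ker\alpha_{UT}$ for the globally defined $1$-form whose restriction to $T^2\times(0,1)$, in coordinates $(x,y,t)$, is the rotating form $\cos f(t)\,dx-\sin f(t)\,dy$ for a strictly monotone $f$ carrying $(0,1)$ onto the arc of directions strictly between the $\alpha$- and $\beta$-slopes (so that $\alpha_{UT}\wedge d\alpha_{UT}=f'(t)\,dx\wedge dy\wedge dt\neq 0$). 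The toroidal front projection is $\Pi\colon T^2\times(0,1)\to T^2$, $(x,y,t)\mapsto(x,y)$, and the one computation that drives everything is the usual one: a curve $s\mapsto(x(s),y(s),t(s))$ is tangent to $\xi_{UT}$ exactly when $\cos f(t(s))\,\dot x(s)=\sin f(t(s))\,\dot y(s)$, so the coordinate $t$ is determined at each point by the direction of the projected curve, and monotonicity of $f$ makes this determination unambiguous.

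Given a toroidal front diagram $D$ — an immersed multicurve on $T^2$ with finitely many generic double points carrying over/under information, finitely many cusps, and strand slopes confined to the allowed arc — I would reconstruct a link $L_D$ as follows. Over the complement of small disks around the cusps and crossings, lift $D$ to $T^2\times(0,1)$ by the slope-to-$t$ rule above; near a crossing, push the two sheets apart in the $t$-direction according to the recorded over/under data; near a cusp, match the diagram to the explicit local model and check that the lift extends to a smooth embedded arc whose tangent still lies in $\xi_{UT}$. The resulting curve is Legendrian by construction and embedded by genericity of $D$; this establishes that a toroidal front diagram does specify a Legendrian link in $(L(p,q),\xi_{UT})$.

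It remains to see that the Legendrian isotopy class of $L_D$ depends only on $D$. The reconstruction is canonical away from compact neighborhoods of the cusps and crossings, and any two admissible choices inside those neighborhoods (how far the sheets are separated, the precise local profiles) are joined by a compactly supported, arbitrarily $C^\infty$-small Legendrian isotopy; hence $L_D$ is well defined up to Legendrian isotopy. Moreover, an isotopy from $D_0$ to $D_1$ through toroidal front diagrams — an ambient isotopy of $T^2$ that neither creates nor destroys crossings or cusps — yields, by applying the reconstruction in families, a Legendrian isotopy from $L_{D_0}$ to $L_{D_1}$; so the class of $L_D$ depends only on $D$ up to toroidal-front isotopy, which is the assertion. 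As an independent check one can lift everything to the $p$-fold cover $(S^3,\xi_{std})$, where $T^2$ becomes the Clifford torus and $D$ lifts $\Z/p$-equivariantly, but the direct argument above suffices.

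The main obstacle is the analysis near the two collapsed tori, not the front calculus in the interior. One must verify that the slope-to-$t$ map is a diffeomorphism from the interior of the allowed arc onto $(0,1)$ and, more delicately, that cusps whose strand slopes approach the extreme ($\alpha$- or $\beta$-) directions lift compatibly with the collapsing, so that $L_D$ closes up to a genuinely smooth embedded link in $L(p,q)$ and meets the two core circles cleanly (or misses them). A secondary point, if one wants the correspondence to be a bijection rather than merely well defined, is surjectivity: every Legendrian link in $(L(p,q),\xi_{UT})$ should be Legendrian isotopic to one in \emph{toroidal front position}, which requires a transversality argument to put an arbitrary Legendrian into that position together with a Legendrian isotopy-extension argument showing that two Legendrians with the same front are isotopic — routine in $\R^3$, but again needing care near the collapsed tori.
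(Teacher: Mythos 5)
This proposition is not proven in the paper at all: it is quoted verbatim as Proposition~3.3 of Baker--Grigsby \cite{lensgridleg}, and the paper simply cites it (together with Propositions~3.4, 4.5, 4.6, and Theorem~5.1 of the same reference) without reproducing an argument. So there is no ``paper's own proof'' to compare your attempt against.

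That said, your outline is faithful to the argument that Baker--Grigsby actually give. Writing $L(p,q)$ as $T^2\times[0,1]$ with the two boundary tori collapsed along the $\alpha$- and $\beta$-slopes, taking $\xi_{UT}$ to be the kernel of a rotating $1$-form in the interior, and recovering the $t$-coordinate from the slope of the projected curve is exactly their front-reconstruction scheme. You also correctly identify the place where the toroidal setting differs from $(\R^3,\xi_{std})$: the behaviour near the two core circles, where the admissible slopes degenerate, is the nontrivial part; and you correctly separate out surjectivity (that every Legendrian has a toroidal front, which is the content of the \emph{next} cited proposition) from the well-definedness claim you were asked to address. One small imprecision: at a cusp of a toroidal front the slope takes a value in the \emph{interior} of the allowed arc (a cusp is tangency of the Legendrian to $\partial_t$, hence $\dot x=\dot y=0$ with $t$ in the open interval), so ``cusps whose strand slopes approach the extreme directions'' is not the right picture --- the extreme slopes are only reached as $t\to 0$ or $t\to 1$, i.e.\ near the collapsed cores, which is a separate issue from cusp behaviour. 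This does not affect the structure of your argument, only the way you phrase the delicate case. Since the paper does not prove this statement, your sketch, while not a complete proof, is a reasonable reconstruction of the argument it cites.
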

\begin{proposition}(Proposition 3.4 of \cite{lensgridleg})
Every Legendrian isotopy class in ($L(p,q),\xi_{UT}$) admits a representative admitting a toroidal front projection.
\end{proposition}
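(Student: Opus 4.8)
The plan is to reduce this to a standard general-position argument for Legendrian fronts, transplanted from the plane to the Heegaard torus. First I would set up the explicit model of $(L(p,q),\xi_{UT})$ from Section~3 of \cite{lensgridleg}: there is a front projection $\Pi$ onto the Heegaard torus $T^2$ and, on the complement of a fixed Legendrian circle, local coordinates lifting to $\R^2\times\R$ in which the contact form is $\alpha = dz - y\,dx$ and $\Pi$ forgets the fiber coordinate $y$. In these terms a Legendrian arc is precisely a curve along which $y = dz/dx$, so it is reconstructed from its image under $\Pi$ together with the over/under data at double points; this reconstruction, and the fact that the resulting Legendrian isotopy class is well defined, is exactly the content of the preceding proposition.

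Given an arbitrary Legendrian representative $L_0$ of the isotopy class, I would perturb it by a $C^\infty$-small Legendrian isotopy so that $\Pi|_{L_0}$ is in general position. The Legendrian condition exhibits $L_0$ as the $1$-jet lift of a multifunction $z$ defined on arcs of $T^2$, so small perturbations of the underlying front, followed by taking $1$-jet lifts, move through Legendrian isotopic links, while the isotopy extension theorem keeps the perturbed link in the class of $L_0$. By Thom transversality I can then arrange simultaneously: (i) $\Pi|_{L_0}$ is an immersion away from finitely many semicubical cusps, occurring where $dx$ vanishes along $L_0$; (ii) the only other singularities of $\Pi|_{L_0}$ are transverse double points, none of them at a cusp; (iii) $L_0$ is disjoint from the grid curves $\boldsymbol\alpha\cup\boldsymbol\beta$ at all cusps and crossings and meets these curves transversally elsewhere; (iv) no arc of the front runs along the degenerate fiber direction in which the reconstruction of $y$ would fail. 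Each forbidden configuration is a finite union of positive-codimension subsets of the relevant jet space, so a generic small perturbation avoids all of them.

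It then remains to check that the resulting picture is a toroidal front diagram in the precise combinatorial sense of \cite{lensgridleg}: one verifies that at each crossing the over-strand is the one of larger slope---exactly as in $(\R^3,\xi_{std})$, since the sign of $y$ at a double point records the sign of $dz/dx$ there---that every cusp is of the admissible (leftward-pointing) type, and that cutting $T^2$ open along a meridian and a longitude produces arcs meeting the boundary of the fundamental domain with the matching conditions demanded of a toroidal front. By construction the $1$-jet lift of this diagram is the perturbed copy of $L_0$, which is Legendrian isotopic to $L_0$; the preceding proposition then pins down the isotopy class in terms of the combinatorics of the diagram, so the claim follows.

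The main obstacle is genuinely items (iii)--(iv): in the planar front calculus one never has to worry about the projection interacting with a cut locus, whereas here the front is drawn on a fundamental domain for $T^2$, and one must simultaneously keep the supporting isotopy small enough to stay within the isotopy class while pushing every cusp, crossing, and tangency off the grid curves and out of the degenerate fiber directions. Once the model contact form and projection of \cite{lensgridleg} are fixed, however, this amounts to careful bookkeeping of finitely many codimension-one conditions rather than to any new contact geometry.
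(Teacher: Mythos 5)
The paper you are reading does not prove this statement: it is imported verbatim as a citation to Proposition~3.4 of \cite{lensgridleg}, so there is no in-paper argument to compare your proposal against. Your general-position/transversality argument is, as far as the cited source goes, the intended proof strategy: Baker and Grigsby set up an explicit model of $\xi_{UT}$ on the complement of the two Heegaard cores, observe that the front projection to $T^2$ together with the slope-to-fiber relation recovers the Legendrian (their Proposition~3.3, which you correctly identify as the ``preceding proposition''), and then show that a generic small Legendrian perturbation yields a front with only semicubical cusps and transverse double points that misses the forbidden configurations.

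One caution about the framing rather than the substance. Your write-up leans on the $1$-jet space picture $\alpha = dz - y\,dx$ with the front forgetting the fiber coordinate $y$. The toroidal front projection in \cite{lensgridleg} forgets the radial coordinate $t$ transverse to the Heegaard torus $T^2$, and the Legendrian condition says that the \emph{slope} of the projected curve in $T^2$ determines $t$ via a nonlinear (not $y = dz/dx$) relation, with the degenerate slopes being exactly those of the $\alpha$- and $\beta$-curves, corresponding to $t\to 0$ and $t\to 1$ (the two cores). So it is not literally a $1$-jet front, and ``every cusp is leftward-pointing'' is not quite the right way to phrase admissibility: cusps occur where the front slope reaches one of the two extremal grid slopes, and there are two flavours accordingly. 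This is why the paper remarks that $0<q<p$ is needed to keep the column arcs negatively sloped, and why avoiding the cores (your item (iv)) is the genuinely codimension-positive condition that makes everything else go through. The transversality bookkeeping you outline---push cusps, crossings, and tangencies off $\boldsymbol\alpha\cup\boldsymbol\beta$ and off the extremal slopes by a $C^\infty$-small Legendrian isotopy---is correct once this slope-to-$t$ dictionary is substituted for the $1$-jet language, so I would regard your proposal as essentially the right proof modulo that bit of translation.
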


To a grid diagram $\mathcal{G}$ for $K$ one can associate several rectilinear projections of $K$ onto $T^2$. Baker and Grigsby describe how to canonically perturb a rectilinear projection of $K$ into a Legendrian front, they prove:
\begin{proposition}(Lemma 4.5 and Proposition 4.6 of \cite{lensgridleg})
\label{prop:front}
A rectilinear projection associated to a grid diagram $\mathcal{G}$ for $K\subset L(p,q)$ uniquely specifies a toroidal front for $L$, a Legendrian in $(L(p,q),\xi_{UT})$. Moreover, the Legendrian isotopy class of 
$L$ is independent of choice of rectilinear projection coming from $\mathcal{G}$, hence a grid diagram uniquely specifies a Legendrian representative of $K$.
\end{proposition}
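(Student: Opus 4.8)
The plan is to prove the two assertions in turn: first that a single rectilinear projection determines a toroidal front, hence a Legendrian isotopy class; then that all rectilinear projections arising from a fixed grid diagram $\mathcal{G}$ produce Legendrian isotopic representatives. I would begin by recalling precisely the data of a rectilinear projection: in each row of $\mathcal{G}$ (an annulus in $T^2$) one joins the two basepoints of $\mathbf{z}\cup\mathbf{w}$ in that row by an embedded horizontal arc, similarly in each column by an embedded vertical arc, and at each resulting double point one declares the vertical strand to lie over the horizontal one. Since rows and columns are annuli rather than intervals, the only discrete freedom is the winding of each connecting arc around the core of its annulus; combined with the grid axioms (at most one basepoint per complementary region, exactly one $\mathbf{z}$- and one $\mathbf{w}$-basepoint per row and per column) this shows the set of rectilinear projections coming from $\mathcal{G}$ is finite up to planar isotopy, and that any two are related by a finite sequence of \emph{elementary rerouting moves}, each sliding one connecting arc across a single $\alpha$- or $\beta$-curve, or dragging it once around the torus.

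Next I would recall the Baker--Grigsby perturbation turning a rectilinear projection into a toroidal front: near each corner one either smooths the arc or inserts a cusp according to the corner's NE/NW/SE/SW type, and near each double point one resolves using the vertical-over-horizontal convention. This recipe is purely local and canonical, so a rectilinear projection determines a toroidal front on the nose; Proposition 3.3 of \cite{lensgridleg} then assigns to that front a well-defined Legendrian isotopy class in $(L(p,q),\xi_{UT})$. This settles the first assertion.

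The substantive content is the second assertion, and here I would show that each elementary rerouting move changes the associated toroidal front by a Legendrian isotopy. The natural strategy is to realize each type of rerouting, after the canonical perturbation, as a finite sequence of toroidal Legendrian Reidemeister moves, checking case by case that no cusp pair is created or destroyed, i.e. that the move is a genuine Legendrian isotopy rather than a (de)stabilization; the grid axioms are precisely what excludes the stabilizing configurations, exactly as in the treatment of the $S^3$ case in \cite{grid}. One must also verify throughout that monotonicity of the front between consecutive cusps (needed for the front to define an honest Legendrian) is preserved.

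I expect the main obstacle to be the rerouting move that carries a connecting arc once around the torus, past the seam of the chosen fundamental domain: there the cusp and crossing bookkeeping is most delicate, and a local Reidemeister analysis in a single fundamental domain is awkward. The cleanest route I foresee is to pass to the universal cover $\mathbb{R}^2 \to T^2$ (or the finite cover on which $\xi_{UT}$ pulls back to the standard tight structure on $S^3$, using the explicit defining $1$-form of \cite{lensgridleg}): there the grid becomes a doubly periodic standard grid, the perturbed fronts lift to fronts for the standard tight contact structure, the $S^3$-type local moves of \cite{grid} apply to show the lifted Legendrians are isotopic, and one then pushes the isotopy back down to $L(p,q)$, using that the covering map is a local contactomorphism. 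Assembling these verifications shows the Legendrian isotopy class depends only on $\mathcal{G}$, completing the proof.
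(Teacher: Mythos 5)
Note first that the paper itself does not prove this proposition: it is stated explicitly as a quotation of Lemma~4.5 and Proposition~4.6 of Baker and Grigsby's paper \cite{lensgridleg}, and is invoked as a black box. There is therefore no in-text proof for your proposal to be matched against; what follows is an assessment of the proposal on its own terms.

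Your outline is a sensible reconstruction of the Baker--Grigsby argument: the discrete freedom in a rectilinear projection is the winding of each row- and column-arc in its annulus, the canonical local perturbation produces a toroidal front, and what remains is to show that changing a winding number by one yields a Legendrian-isotopic front. Two caveats, though. First, your description of the elementary move as ``sliding one connecting arc across a single $\alpha$- or $\beta$-curve'' is slightly off: an arc joining the two basepoints of a given row is confined to that annulus and never crosses an $\alpha$-curve; the real move changes the winding number by one, which changes the intersection of the arc with \emph{every} $\beta$-curve meeting that row by one. Second, and more substantively, the proposed shortcut of passing to the contact universal cover $\pi\colon (S^3,\xi_{std})\to (L(p,q),\xi_{UT})$ and invoking the $S^3$ Reidemeister analysis of \cite{grid} cannot conclude simply by ``pushing the isotopy back down.'' A Legendrian isotopy of $\pi^{-1}(L)$ in $S^3$ descends to $L(p,q)$ only if it is equivariant under the $\mathbb{Z}/p$ deck group. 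Because the lifted grid and the lifted rerouting are themselves $\mathbb{Z}/p$-equivariant and the local moves occur in disjoint Darboux balls, one can in fact arrange an equivariant isotopy by performing all $p$ conjugate moves simultaneously --- but this is an extra verification you must state, not a formal consequence of the cover being a local contactomorphism. The alternative, and what \cite{lensgridleg} actually does, is to analyze the rerouting directly in $(L(p,q),\xi_{UT})$ using their explicit global contact form, which sidesteps equivariance entirely.
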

The assumption that $0<q<p$ is needed to obtain a toroidal front diagram, the pieces of the rectilinear projection inside the columns of the diagram must be negatively sloped.

\begin{remark}
\label{remark:ORIENTATIONS}
Baker and Grigsby actually show that a grid diagram for $K\subset L(p,q)$ induces a Legendrian representative of the mirror $K\subset -L(p,q)$. Their approach fits nicely with the conventions established in \cite{grid} and \cite{lensgridcomb} and can be easily modified to give the results stated in this paper. Our conventions will be more natural in proving Theorem \ref{thm:equivalence}.
\end{remark}

Baker and Grigsby classify the grid moves which preserve topological and Legendrian isotopy classes.

\begin{theorem}(Theorem 5.1 of \cite{lensgridleg})
\label{thm:leg}
Let $\mathcal{G}$ and $\mathcal{G} '$ be grid diagrams for $K$ and $K'$ in $L(p,q)$. Let $L$ and $L'$ denote the induced Legendrian representatives of $K$ and $K'$ by $\mathcal{G}$ and $\mathcal{G} '$ respectively. 
\begin{itemize}
\item $K$ and $K'$ are isotopic if and only if $\mathcal{G}$ and $\mathcal{G} '$ are related by a sequence of elementary grid moves.
\item $L$ and $L'$ are Legendrian isotopic if and only if $\mathcal{G}$ and $\mathcal{G}'$ are related by a sequence of elementary Legendrian grid moves.
\end{itemize}
\end{theorem}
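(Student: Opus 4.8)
The plan is to adapt to the toroidal setting the proof of the corresponding statement for planar grid diagrams of links in $S^3$---Cromwell's arc-presentation theorem in the topological case, together with its Legendrian refinement in the contact case. Each bullet splits into an easy direction and a hard direction, and the easy directions require only bookkeeping: a commutation and a (de)stabilization are each supported in an explicit local model on $T^2$, and in that model one reads off an ambient isotopy of the associated link in $L(p,q)$. Passing to the canonical perturbation of a rectilinear projection into a toroidal front supplied by Proposition \ref{prop:front}, one checks in addition that the \emph{Legendrian} elementary moves induce Legendrian isotopies of $L\subset (L(p,q),\xi_{UT})$, whereas the remaining stabilizations induce Legendrian (de)stabilizations.

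For the hard direction of the first bullet I would first pass from a grid diagram to its associated toroidal rectilinear projection of $K$: a piecewise-linear multicurve on $T^2$ built from horizontal segments and segments of slope $-p/q$, with corners at the basepoints of $\mathbf{z}\cup\mathbf{w}$. Since $L(p,q)$ carries a genus-one Heegaard splitting and $K$ may be isotoped into a collar of the Heegaard torus, a generic ambient isotopy between two such links projects onto a finite sequence of toroidal Reidemeister moves relating their rectilinear projections; this is a routine general-position argument, the one feature absent from the planar story being that arcs of the projection may wind around $T^2$. The combinatorial heart is then to show that each toroidal Reidemeister move, after refining the grid by finitely many stabilizations, is realized by a sequence of commutations and (de)stabilizations---this is the move-by-move analysis of \cite{lensgridcomb}, which I would follow.

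For the Legendrian bullet I would invoke the toroidal front calculus of Baker and Grigsby. Propositions 3.3 and 3.4 of \cite{lensgridleg} assert that a toroidal front determines a Legendrian in $(L(p,q),\xi_{UT})$ and that every Legendrian isotopy class is so represented; upgrading these to a Reidemeister-type statement gives that two toroidal fronts present Legendrian isotopic links if and only if they are related by the toroidal analogues of the three Legendrian Reidemeister moves together with the moves forced by the topology of $T^2$. Composing this with the assignment of a well-defined toroidal front to a grid diagram (Proposition \ref{prop:front}), the problem reduces---exactly as in the topological case---to realizing each Legendrian toroidal Reidemeister move by elementary Legendrian grid moves. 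The extra care needed here is to verify that the stabilizations one uses are those that alter the toroidal front only up to Legendrian isotopy, since the complementary stabilizations are precisely what produce Legendrian (de)stabilizations and are therefore excluded from the list of elementary Legendrian moves.

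The step I expect to be the main obstacle is proving completeness of the generating sets in the two hard directions. As with every grid-move theorem, ruling out the need for additional generators requires an exhaustive local case analysis; the new difficulty relative to the $S^3$ story is that the $\boldsymbol{\beta}$-curves (of slope $-p/q$) and the fact that grid arcs can wind around $T^2$ produce configurations with no planar analogue---for instance a (de)stabilization involving an arc that wraps all the way around the torus. A priori such configurations might demand new moves; the content of the theorem is precisely that commutations and (de)stabilizations still suffice, and that in the contact case their Legendrian subset suffices for Legendrian isotopy. Verifying this---reducing the toroidal picture to a controlled family of planar-looking local models---is the technical core I would expect to occupy most of the work.
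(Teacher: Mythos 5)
The paper does not prove Theorem~\ref{thm:leg}: it is imported verbatim from Baker--Grigsby (Theorem~5.1 of \cite{lensgridleg}), after a stated convention change (Remark~\ref{remark:ORIENTATIONS}). So there is no paper-internal proof to match your proposal against; the most one can do is judge whether your reconstruction plausibly tracks the proof in \cite{lensgridleg}, which it does at the level of strategy.

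Your outline---easy directions by local-model bookkeeping, topological hard direction via rectilinear projections, toroidal Reidemeister moves, and then the move-by-move realization of \cite{lensgridcomb}, and Legendrian hard direction via a toroidal-front Reidemeister calculus and Proposition~\ref{prop:front}---is the correct skeleton and matches what Baker and Grigsby actually do. That said, the proposal is largely a routing diagram: the two pieces you yourself flag as the ``technical core'' (completeness of the move set for toroidal rectilinear projections, and the list of Legendrian toroidal-front moves ``forced by the topology of $T^2$'') are precisely the content of Theorem~5.1 and its supporting lemmas in \cite{lensgridleg}, and you defer them rather than supply them. Two specific places you should not wave past if you wanted to complete this: (i) in the lens-space setting the ``projection'' of a generic isotopy is not literally a projection to $T^2$---$K$ must first be isotoped into a collar of the Heegaard torus and the position of a strand relative to the two solid tori becomes part of the data, so the over/under information of the planar story is replaced by an $\alpha$-handlebody/$\beta$-handlebody assignment, and the Reidemeister-type moves must be enumerated in that language; and (ii) your proposal never addresses the convention discrepancy recorded in Remark~\ref{remark:ORIENTATIONS} (Baker--Grigsby produce a Legendrian in $-L(p,q)$, not $L(p,q)$), which the present paper explicitly acknowledges has to be corrected for. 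Neither of these is a fatal flaw in the strategy, but both must be carried out for the argument to close, and your sketch currently relies on the cited references to do exactly that.
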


The \emph{elementary grid moves} consist of eight different types of (de)stabilizations along with row and column commutations. See Figures \ref{fig:stab} and \ref{fig:commutation}.

\begin{figure}[h]
\def\svgwidth{175pt}
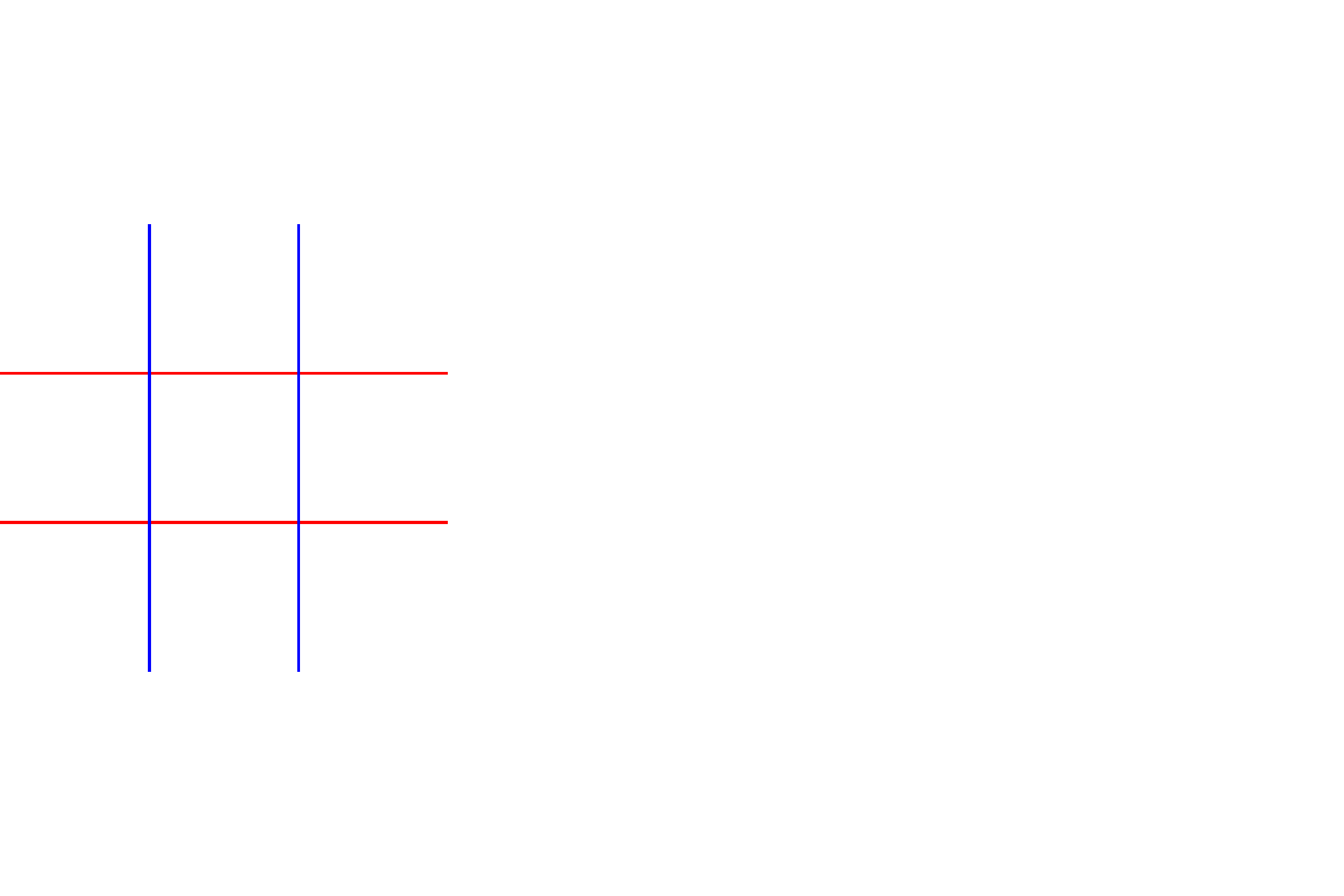
\caption{A W:NW stabilization is pictured. A new pair of curves, $\alpha '$ and $\beta '$, is added, in addition to a pair of basepoints. The ordinal direction of the stabilization indicates which slot is basepoint free after stabilizing. The corresponding destablization is the inverse of this procedure.}
\label{fig:stab}
\end{figure}

\begin{figure}[h]
\def\svgwidth{200pt}
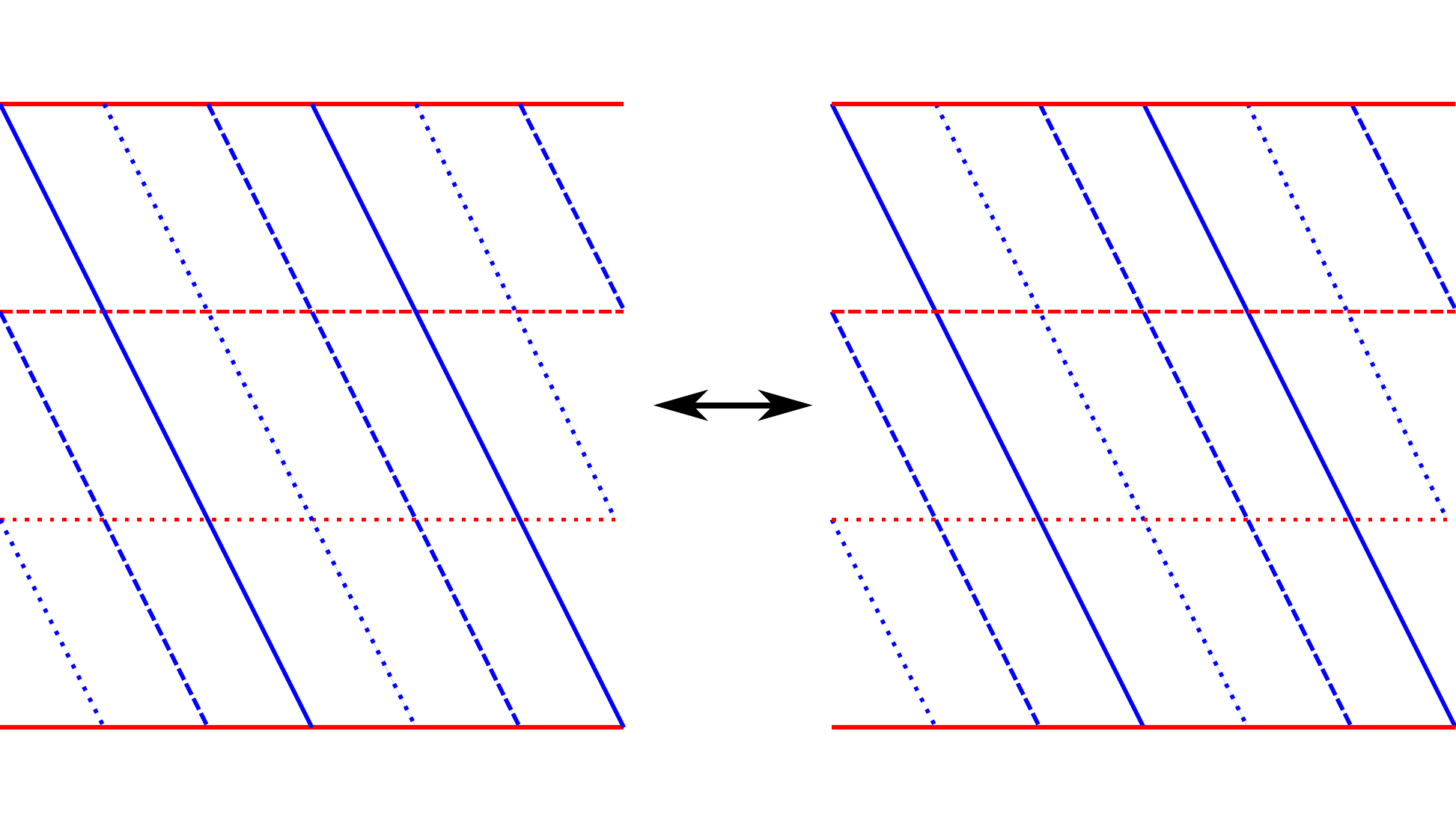
\caption{A commutation of the first and second columns on a index three diagram for a link in $L(2,1)$. Because the basepoint pairs in the columns do not interleave we are able to perform an exchange of the basepoints. Row commutations are the analogue for two adjacent rows. These commutations can be performed so long as the markings do not interleave.}
\label{fig:commutation}
\end{figure}

\begin{lemma} (Compare with Lemma 4.2 of \cite{grid})
A stabilization of type Z:SE (respectively Z:NE, Z:NW, or Z:SW) is equivalent to a stabilization of type W:NW(respectively W:SW, W:SE, or W:NE) followed by a sequence of commutation moves on the torus.
\end{lemma}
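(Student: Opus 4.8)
The plan is to argue exactly as for Lemma 4.2 of \cite{grid}, reducing everything to an explicit picture supported in a small disk on $T^2$; the only genuinely new feature in the lens space setting is the cyclic, slope-$(-p/q)$ geometry of the columns, which affects only the bookkeeping.

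First I would fix a local model and tabulate the data of the two stabilizations. A W:NW stabilization at a basepoint $w$ lying in row $i$ and column $c$ amounts to: inserting a curve $\alpha'$ splitting the $i$th row into a top and a bottom half; inserting a curve $\beta'$ parallel to $\bs\beta$ (hence of slope $-p/q$) splitting the $c$th column into a left and a right half; placing a new $z$ and a new $w$ in the resulting $2\times 2$ block of small regions so that the NW region is left empty (the original $w$ occupying a third region); and relocating the unique basepoint that previously shared row $i$, and the unique basepoint that previously shared column $c$, into the appropriate half-row and half-column so that all the grid-diagram axioms are restored. I would record the precise position of every basepoint on $T^2$, and do the same bookkeeping for a Z:SE stabilization (at the $z$ neighbouring $w$).

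Next I would exhibit the intervening commutations. As in \cite{grid}, the curve $\beta'$ together with the original $\beta$-curve it was split off from cobounds a thin annular region of $T^2\ssm\bs\alpha\ssm\bs\beta$, one of whose two wide pieces contains no basepoint; consequently the two basepoints living in the new column do not interleave with those of an adjacent column, so the new column may be commuted past its neighbour, and likewise $\alpha'$ past an adjacent row when needed. Iterating these elementary commutations transports the local $2\times 2$ picture cyclically through the rows and columns, and after an explicitly described finite sequence of moves the identifications of $T^2$ present the transported configuration as precisely the local picture of a Z:SE stabilization of the original diagram. The remaining three equivalences then follow from the symmetries $N\leftrightarrow S$, $E\leftrightarrow W$, $z\leftrightarrow w$.

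The main obstacle—and the only place the argument departs from \cite{grid}—is the bookkeeping forced by the toroidal geometry: since the $\beta$-curves wind with slope $-p/q$ rather than being straight, a ``column'' is not a vertical strip and ``adjacent column'' must be interpreted cyclically, so at each stage of the commutation sequence one must check that the relevant basepoint pairs genuinely fail to interleave. This is exactly where the standing hypothesis $0<q<p$ is used, guaranteeing that the rectilinear arcs inside the columns are negatively sloped; once the non-interleaving is verified at every step, the remainder of the argument is purely formal and identical in spirit to the planar case.
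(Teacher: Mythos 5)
Your argument is correct and matches the paper's one-line proof, which likewise simply performs commutations after a stabilization near a $Z$ basepoint; you have filled in the local picture and the non-interleaving check in the spirit of Lemma~4.2 of \cite{grid}. One small correction: the hypothesis $0<q<p$ plays no role in the non-interleaving verification (which is local to the thin stabilized column and independent of $p,q$); in this paper that hypothesis is used only to guarantee the rectilinear arcs inside columns are negatively sloped so that a toroidal front exists, so your last paragraph slightly misattributes its role.
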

\begin{proof}
After performing a stabilization near a $Z$ basepoint, we can perform a sequence of commutation moves to get the desired diagram.
\end{proof}
The \emph{elementary Legendrian grid moves} are comprised of commutations along with (de)stabilizations of types W:NE, and W:SW. 

It is easy to see which elementary grid moves correspond to Legendrian (de)stabilization:

\begin{lemma}
(De)stabilizations of type W:SE and W:NW correspond to negative and positive Legendrian (de)stabilization respectively.
\end{lemma}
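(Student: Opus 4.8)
The plan is to establish this by a direct front-diagram computation, working entirely on the torus. Recall that a grid diagram $\mathcal{G}$ for $K\subset L(p,q)$ canonically determines a rectilinear projection of $K$, which Baker and Grigsby (Proposition \ref{prop:front}) perturb to a toroidal Legendrian front for $L\subset(L(p,q),\xi_{UT})$; a stabilization of $\mathcal{G}$ alters this rectilinear projection only in a small neighborhood of one basepoint, by inserting a new $\alpha'$--$\beta'$ pair and a new pair of basepoints, as in Figure \ref{fig:stab}. So the first step is to pin down exactly what the W:NW (resp. W:SE) stabilization does to the canonical rectilinear projection near the affected $W$ basepoint, and then to pass through Baker--Grigsby's perturbation recipe to see the corresponding local change in the toroidal front.

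The second step is to recognize this local change as the toroidal analogue of the classical front stabilization move. In a planar front for a Legendrian in $(\mathbb{R}^3,\xi_{std})$, a stabilization replaces a strand by a small zig-zag (a cusp pair); adding a down-cusp pair is the negative stabilization (it decreases $tb$ by one and decreases $rot$ by one, or increases it, depending on orientation conventions), while an up-cusp pair is the positive stabilization. Because the pieces of the rectilinear projection inside the columns of a lens-space grid diagram are negatively sloped (as emphasized after Proposition \ref{prop:front}), the two ordinal flavors W:NW and W:SE of the grid stabilization produce exactly the two flavors of cusp insertion. I would verify this by checking the effect on the classical invariants: using the formulas for $tb_{\mathbb{Q}}$ and $rot_{\mathbb{Q}}$ in terms of the toroidal front (and consistency with Theorem \ref{thm:grid}, where $\lambda^+(L^-)=\lambda^+(L)$ while $\lambda^-(L^-)=U\cdot\lambda^-(L)$ forces a drop of $1$ in $tb_{\mathbb{Q}}$ and an appropriate shift in $rot_{\mathbb{Q}}$, and the Maslov-grading shifts recorded there), one sees that W:NW must be the move lowering $tb_{\mathbb{Q}}$ without lowering $tb_{\mathbb{Q}}-rot_{\mathbb{Q}}$, i.e. positive stabilization, and W:SE the negative one.

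Concretely the argument runs: (i) draw the local picture of the rectilinear projection before and after a W:NW stabilization; (ii) apply the Baker--Grigsby perturbation to get the local front pictures, observing that a single cusp-pair (zig-zag) appears, of a definite up/down type dictated by the negative slope constraint in the columns; (iii) identify that picture with the standard positive Legendrian stabilization front move, hence conclude W:NW $=$ positive Legendrian (de)stabilization; (iv) run the mirror-image version of (i)--(iii) for W:SE to get negative stabilization; alternatively, once one case is done, deduce the other from the fact that W:SE and W:NW are interchanged by the orientation-reversing symmetry of the grid torus that swaps the two universally tight contact structures. The main obstacle I anticipate is step (ii): carefully tracking the canonical perturbation of the rectilinear projection into a front through the corner where the new $\beta'$ curve meets the old configuration, and making sure the cusp that appears is genuinely a single zig-zag of the claimed type rather than something that must be simplified by Legendrian Reidemeister/commutation moves first — in other words, matching conventions between \cite{lensgridleg} and the stabilization picture of Figure \ref{fig:stab} precisely enough to read off the sign. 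Everything else is a routine local verification.
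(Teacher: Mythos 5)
Your primary route --- redraw the local rectilinear projection before and after a W:SE (resp.\ W:NW) stabilization, push it through the Baker--Grigsby smoothing, and observe that a single cusp pair of a definite up/down type appears, then identify this with Legendrian stabilization in a Darboux ball --- is exactly the paper's argument (the content of Figure~\ref{fig:negstab}), and the 180-degree rotation from W:SE to W:NW is how the paper handles the second case. Two caveats in the auxiliary parts of your write-up, though. First, the ``verification via classical invariants'' has the signs backwards: it is W:SE, not W:NW, that preserves $tb_{\mathbb{Q}}-rot_{\mathbb{Q}}$. Indeed, by Proposition~\ref{prop:classical}, adding two upward cusps (W:SE) increases $c$ by 2 and $c_u$ by 2, so $tb_{\mathbb{Q}}$ and $rot_{\mathbb{Q}}$ each drop by 1 and $tb_{\mathbb{Q}}-rot_{\mathbb{Q}} = sl_{\mathbb{Q}}$ is preserved --- precisely negative stabilization (which must preserve the transverse push-off). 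Positive stabilization preserves $tb_{\mathbb{Q}}+rot_{\mathbb{Q}}$ instead; your parenthetical ``without lowering $tb_{\mathbb{Q}}-rot_{\mathbb{Q}}$, i.e.\ positive stabilization'' is internally inconsistent even though it lands on the correct assignment of labels. Second, using Theorem~\ref{thm:grid}'s stabilization formulas $\lambda^{\pm}(L^{\mp})$ as input here would be circular: the proof of those formulas starts from the very identification of W:SE/W:NW with negative/positive stabilization that this lemma is establishing. The $tb_{\mathbb{Q}},rot_{\mathbb{Q}}$ check is independently legitimate (it only uses Proposition~\ref{prop:classical} from \cite{BTI}), but it should be corrected as above and decoupled from Theorem~\ref{thm:grid}. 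A last minor point: the 180-degree rotation of the square is orientation-preserving, not orientation-reversing; the correct statement is that this rotation swaps upward and downward cusps (and hence swaps W:SE with W:NW and negative with positive stabilization), not that it is the symmetry exchanging the two universally tight contact structures.
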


\begin{proof}
Consider the rectilinear projection $\lambda$ of $K$ constructed in the proof of Proposition \ref{prop:braiding}. This projection may be smoothed to a toroidal front projection of $L$, the associated Legendrian representative of $K$, having zero cusps. A stabilization of type W:SE has the effect of locally adding two upward oriented cusps, see Figure \ref{fig:negstab}. This can be thought of as taking place in a small Darboux ball, hence corresponds to negative Legendrian stabilization.

\begin{figure}[h]
\def\svgwidth{200pt}
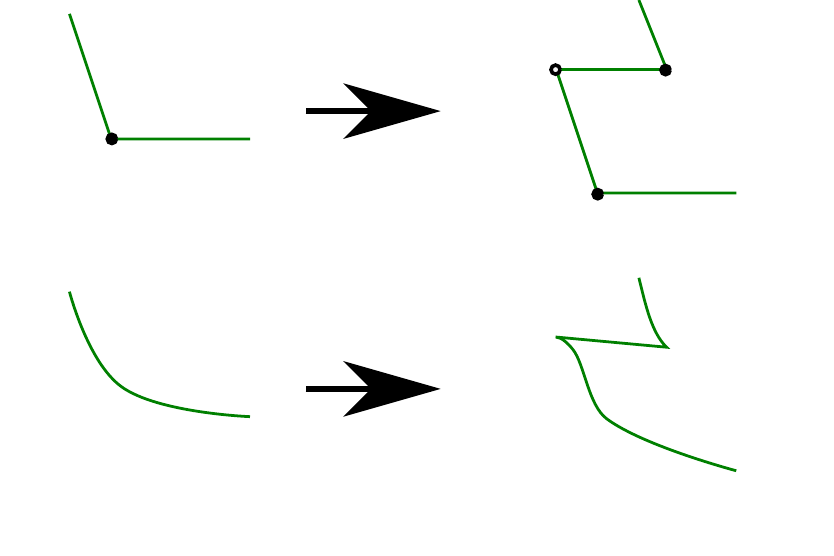
\caption{The effect of a W:SE stabilization on an associated rectilinear projection of $K$ is pictured on top. The effect on the associated toroidal front projection is pictured below.}
\label{fig:negstab}
\end{figure}

Likewise, by rotating the figures 180 degrees, a stabilization of type W:NW can be seen to have the effect of adding two downward oriented cusps, and hence corresponds to positive Legendrian stabilization.
\end{proof}

\begin{remark}
Since $(B,\pi)$ supports $\xi_{UT}$, the contact planes are oriented so that the ``upward" braid induced by a grid diagram is a positive transverse link, see Proposition \ref{prop:braiding}. Reversing co-orientation preserves the set of elementary Legendrian grid moves and exchanges the two moves corresponding to positive and negative Legendrian (de)stabilization; the set of elementary transverse grid moves is not preserved under co-orientation reversal. Let $\overline{\xi_{UT}}$ denote $\xi_{UT}$ with the reverse co-orientation.
\end{remark}

Transverse isotopy classes are in one to one correspondence with Legendrian isotopy classes up to negative Legendrian (de)stabilization \cite{transverseapprox}. The transverse isotopy class associated to a Legendrian link is obtained via positive transverse push-off. A grid diagram for $K$ gives rise to a transverse representative $T$ by taking the positive transverse push-off of $L$. (De)stabilizations of type W:SE in addition to the elementary Legendrian grid moves comprise the \emph{elementary transverse grid moves}. The following is evident:

\begin{proposition}
\label{prop:trans}
Let $\mathcal{G}$ and $\mathcal{G} '$ be grid diagrams for $K$ and $K'$ in $L(p,q)$. Let $T$ and $T'$ denote the induced transverse representatives of $K$ and $K'$ by  $\mathcal{G}$ and $\mathcal{G} '$ respectively. 
$T$ and $T'$ are transversely isotopic if and only if $\mathcal{G}$ and $\mathcal{G} '$ are related by a sequence of elementary transverse grid moves.
\end{proposition}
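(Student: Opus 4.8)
The plan is to deduce this from the Legendrian statement Theorem~\ref{thm:leg} together with the transverse--Legendrian correspondence of \cite{transverseapprox} and the lemma identifying W:SE (de)stabilizations with negative Legendrian (de)stabilizations. Throughout, write $L,L'$ for the Legendrian representatives induced by $\mathcal{G},\mathcal{G}'$ and $T,T'$ for their positive transverse push-offs.

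\emph{Forward direction.} It suffices to treat a single elementary transverse grid move relating $\mathcal{G}$ and $\mathcal{G}'$. If the move is one of the elementary Legendrian grid moves, then $L$ and $L'$ are Legendrian isotopic by Theorem~\ref{thm:leg}, so their positive transverse push-offs $T$ and $T'$ are transversely isotopic. If the move is a W:SE (de)stabilization, then by the lemma above $L'$ is a negative Legendrian (de)stabilization of $L$; since the positive transverse push-off is insensitive to negative Legendrian (de)stabilization, again $T$ and $T'$ are transversely isotopic. Concatenating over a sequence of moves gives the implication.

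\emph{Converse.} Suppose $T$ and $T'$ are transversely isotopic. By the form of the transverse--Legendrian correspondence in \cite{transverseapprox}, after negatively stabilizing each some common number $N$ of times the Legendrians $L$ and $L'$ become Legendrian isotopic; call the stabilized knots $L_N$ and $L'_N$. Performing $N$ successive W:SE stabilizations on $\mathcal{G}$ yields, by the lemma and Proposition~\ref{prop:front}, a grid diagram $\mathcal{G}_N$ whose induced Legendrian is Legendrian isotopic to $L_N$, and likewise $N$ such stabilizations on $\mathcal{G}'$ yield $\mathcal{G}'_N$ inducing $L'_N$. Since $L_N$ and $L'_N$ are Legendrian isotopic, Theorem~\ref{thm:leg} provides a sequence of elementary Legendrian grid moves from $\mathcal{G}_N$ to $\mathcal{G}'_N$. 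The composite
\[
\mathcal{G} \rightsquigarrow \mathcal{G}_N \rightsquigarrow \mathcal{G}'_N \rightsquigarrow \mathcal{G}',
\]
consisting of $N$ W:SE stabilizations, a string of elementary Legendrian grid moves, and $N$ W:SE destabilizations, is a sequence of elementary transverse grid moves.

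The one point that genuinely uses input beyond Theorem~\ref{thm:leg} is the precise shape of the transverse--Legendrian correspondence: one needs that two Legendrians with transversely isotopic positive push-offs admit a \emph{common} negative stabilization, so that the negative destabilizations occurring in a sequence of contact moves between $T$ and $T'$ can be collected at the two ends. This is exactly the content of \cite{transverseapprox}, so no real obstacle remains; the remaining bookkeeping — that iterating the W:SE grid stabilization realizes iterated negative Legendrian stabilization, independently of which $W$-basepoint is used — is immediate from the lemma and Proposition~\ref{prop:front}.
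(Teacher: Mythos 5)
The paper offers no written proof of this proposition --- it is stated as ``The following is evident'' --- so there is no argument of the author's to compare against. Your route, deducing the transverse statement from Theorem~\ref{thm:leg} via the lemma identifying W:SE (de)stabilization with negative Legendrian (de)stabilization and the transverse--Legendrian correspondence of \cite{transverseapprox}, is exactly the intended deduction, and the forward direction is handled cleanly.

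There is, however, a small error in your converse. You assert that $L$ and $L'$ become Legendrian isotopic after negatively stabilizing each ``some common number $N$ of times.'' That is not what \cite{transverseapprox} gives, and it is false in general: since $T$ and $T'$ being transversely isotopic only forces $tb_\mathbb{Q}(L)-rot_\mathbb{Q}(L)=tb_\mathbb{Q}(L')-rot_\mathbb{Q}(L')$, one can perfectly well have $tb_\mathbb{Q}(L)\neq tb_\mathbb{Q}(L')$ (e.g.\ take $L'=S_-(L)$), in which case no equal number of negative stabilizations of the two can ever yield Legendrian isotopic knots. (You in fact state the correct condition in your closing remark --- a \emph{common negative stabilization}, i.e.\ a single Legendrian reachable from both by iterated negative stabilization --- but then conflate it with a common stabilization \emph{count} in the body of the argument.) The fix is one line: \cite{transverseapprox} gives $N_1,N_2\ge 0$ with $S_-^{N_1}(L)$ Legendrian isotopic to $S_-^{N_2}(L')$; perform $N_1$ W:SE stabilizations on $\mathcal{G}$ and $N_2$ on $\mathcal{G}'$, apply Theorem~\ref{thm:leg} to the resulting diagrams, and concatenate. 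With that correction your proof is complete.
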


\begin{proposition}
\label{prop:braiding}
Let $(B,\pi)$ denote the rational open book supporting ($L(p,q),\xi_{UT}$) described in subsection \ref{subsec:contact}.
Each grid diagram $\mathcal{G}$ naturally induces a braiding $\mathcal{B}$ of $K$ about $B$, and hence a transverse representative $T$ of $K$.
\end{proposition}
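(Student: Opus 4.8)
The plan is to exhibit the braiding explicitly from the combinatorics of $\mathcal{G}$ and then invoke the fact recalled in Subsection~\ref{subsec:contact} that a link braided about a rational open book is canonically transverse in the supported contact structure. First I would pin down how $(B,\pi)$ sits relative to the genus-one Heegaard splitting $L(p,q)=V_1\cup_{T^2}V_2$ underlying grid diagrams. With $L(p,q)$ realized as $-p/q$-surgery on $U$, the handlebody $V_1$ in which the $\boldsymbol{\alpha}$ curves bound is the unknot complement $S^3\setminus\nu(U)$, while $V_2=\nu(B)$ is the filling torus; on the complement $V_1\cong D^2\times S^1$ of $\nu(B)$ the open book is just the product disk-fibration of the unknot complement, with pages the meridian disks $D^2\times\{\mathrm{pt}\}$. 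In the grid coordinates this means the pages meet $T^2$ in the horizontal curves $y=\mathrm{const}$ (parallel to $\boldsymbol{\alpha}$) and $\pi$ restricts on $T^2$ to the coordinate $y$. Hence a link disjoint from $B$ is braided about $(B,\pi)$ precisely when, after being isotoped into $\operatorname{int}V_1$, it is a closed braid there, equivalently when it is everywhere strictly monotone in the coordinate $y$.

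Next I would construct $\mathcal{B}$. Form $K$ from $\mathcal{G}$ in the usual way: join the two basepoints in each row by an embedded arc contained in that row, join the two basepoints in each column by an embedded arc contained in that column, and push the row arcs and the column arcs to opposite sides of $T^2$; the union of these arcs on $T^2$ is a rectilinear projection $\lambda$ of the resulting embedded link $K$. Now isotope $K$ off $B$ into $\operatorname{int}V_1$ and put it in braid position. Each row arc lies in a horizontal annular component of $T^2\setminus\boldsymbol{\alpha}$ which, by the defining properties of a grid diagram, contains exactly one $\mathbf{z}$- and one $\mathbf{w}$-basepoint, so the arc may be tilted within that strip to be strictly monotone in $y$; each column arc lies in a component of $T^2\setminus\boldsymbol{\beta}$ whose core has slope $-p/q\neq0$ and is therefore automatically transverse to the horizontal page curves, and, winding it suitably along its strip, it can likewise be made strictly monotone in $y$ and consistently oriented with the row arcs. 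Routing all $2n$ arcs coherently and keeping the standard over/under convention at the corners, $K$ becomes a closed braid $\mathcal{B}$ about $(B,\pi)$, and hence a transverse link $T$. Since the same rectilinear projection $\lambda$ smooths to the toroidal front of the Legendrian $L$ associated to $\mathcal{G}$ by Proposition~\ref{prop:front}, the standard relationship between a Legendrian front, its positive transverse push-off, and the associated braid identifies $T$ with the positive transverse push-off of $L$.

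Finally I would check that the braiding is well defined up to transverse isotopy: the remaining freedom — the over/under convention, the tilt of the row arcs, and the winding of the column arcs along their strips — alters $\mathcal{B}$ only through isotopies within the class of closed braids in $V_1$, so the transverse isotopy class of $T$ is unchanged; and one verifies that the co-orientation of $\xi_{UT}$ furnished by $(B,\pi)$ is the one for which this ``upward'' braid is positively transverse. The hard part will be precisely this last identification — matching the combinatorially defined braid with the positive transverse push-off of the Baker--Grigsby Legendrian, which forces one to track orientations and co-orientations carefully through the explicit defining $1$-form for $\xi_{UT}$ of \cite{lensgridleg} — together with the care required to keep $\pi$ monotone through the corners where $K$ turns from a row arc to a column arc near a basepoint.
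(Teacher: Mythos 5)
Your core argument --- constructing the rectilinear projection $\lambda$ from the grid, noting that the pages of $(B,\pi)$ meet the Heegaard torus in curves parallel to $\alpha_0$, and making $\lambda$ positively transverse to those curves via a small isotopy --- is exactly the paper's proof of this proposition. The identification of $T$ with the positive transverse push-off of the Baker--Grigsby Legendrian, which you flag as the hard part, is not actually part of Proposition~\ref{prop:braiding}; that compatibility is the content of the separate Proposition~\ref{prop:commute}, so the argument may stop once the braiding $\mathcal{B}$ has been produced.
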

\begin{proof}

Given a grid diagram $\mathcal{G} = (T^2,\boldsymbol{\alpha},\boldsymbol{\beta},\bold{z},\bold{w})$, we specify a longitude $\lambda$ for $K$ in the following way. In each column draw an oriented arc $\gamma _i ^\beta$ upward (i.e. having tangent vector $-q\frac{d}{dx} +p\frac{d}{dy}$) from $w_i$ to $z_i$. For each $i$, draw an arc in the $i^{th}$ row $\gamma^\alpha_i$ from a point of $\bold{z}$ to a point of $\bold{w}$ oriented right to left, (i.e. having tangent vector $-d/dx$). We push all horizontal arcs slightly into the $\boldsymbol{\alpha}$ handlebody, all vertical arcs into the $\boldsymbol{\beta}$ handlebody and set $\lambda = \bigcup_{i=0}^{n-1}( \gamma^\alpha_i \cup \gamma^\beta_i)$. The pages of $(B,\pi)$ meet $T^2$ in parallel copies of $\alpha_0$.
Note that $\lambda$ may be made positively transverse to all parallel copies of $\alpha_0$ in $T^2$ via a small isotopy, realizing $K$ as a braid about $B$.

\end{proof}


The \emph{elementary braid grid moves} consist of commutations along with (de)stabilizations of type W:SE and W:SW. These are precisely the grid moves which leave $\mathcal{B}$ unchanged. 
It is elementary to check the following proposition.
\begin{proposition}
\label{prop:braid}
Let $\mathcal{G}$ and $\mathcal{G} '$ be grid diagrams for $K$ in $L(p,q)$. Let $\mathcal{B}$ and $\mathcal{B}'$ denote the induced braid representatives of $K$ by  $\mathcal{G}$ and $\mathcal{G} '$ respectively. 
$\mathcal{B}$ and $\mathcal{B}'$ are braid isotopic if and only if $\mathcal{G}$ and $\mathcal{G} '$ are related by a sequence of elementary braid grid moves.
\end{proposition}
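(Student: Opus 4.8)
The plan is to read both implications directly off the explicit description of the braid $\mathcal{B}$ given in the proof of Proposition~\ref{prop:braiding}. Recall that $\mathcal{B}$ is obtained from the rectilinear longitude $\lambda=\bigcup_{i}(\gamma_i^\alpha\cup\gamma_i^\beta)$ by a $C^0$-small isotopy making $\lambda$ positively transverse to every parallel copy of $\alpha_0$ on $T^2$, and that these parallel copies of $\alpha_0$ are exactly the intersections of the pages of $(B,\pi)$ with $T^2$. I would begin by recording the (essentially combinatorial) fact that the assignment $\mathcal{G}\mapsto\mathcal{B}$ is onto braid isotopy classes: any braid about $(B,\pi)$ can be isotoped, through braids, so that its projection to $T^2$ is rectilinear with all non-horizontal segments parallel to the $\boldsymbol{\beta}$-curves, and such a projection is the $\lambda$ of some grid diagram; this is the braid refinement of Proposition~4.3 of \cite{lensgridcomb}. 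With surjectivity in hand, the proposition asserts precisely that the equivalence relation generated by elementary braid grid moves equals the one induced on grid diagrams by braid isotopy.

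\textbf{Elementary braid grid moves preserve braid isotopy.} This is a move-by-move inspection of the effect on $\lambda$. A commutation of two adjacent rows (or columns) is permitted exactly when the two basepoint pairs do not interleave, and in that case the relevant sub-arcs of $\lambda$ can be slid past one another by an isotopy supported near those two rows (columns) which keeps $\lambda$ transverse to the copies of $\alpha_0$, hence is a braid isotopy. A (de)stabilization of type W:SE or W:SW changes $\lambda$ only inside a small bigon near the relevant $w$-basepoint, and the W:SE/W:SW placement of the new basepoint pair allows this bigon to be chosen disjoint from the traces of the pages of $(B,\pi)$; sliding $\lambda$ across it therefore alters neither the braid index nor the braid isotopy class. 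For the remaining stabilization types one of two things happens: either the type reduces to a W:SE or W:SW move followed by commutations, by the lemma relating $Z$-type and $W$-type stabilizations stated above, or the corresponding bigon unavoidably meets a page of $(B,\pi)$ and the move is a Markov stabilization of $\mathcal{B}$ rather than a braid isotopy (W:NW and W:NE being the negative and positive Markov stabilizations). This is the structural reason why the elementary braid grid moves are exactly commutations together with W:SE and W:SW (de)stabilizations.

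\textbf{Braid-isotopic braids come from braid-grid-equivalent diagrams.} Suppose $\mathcal{B}$ and $\mathcal{B}'$ are braid isotopic, and fix grid diagrams $\mathcal{G}$, $\mathcal{G}'$ inducing them. Put the braid isotopy in general position. Since the entire track of the isotopy remains a braid about $B$, a standard position argument---identical in form to the one underlying the grid/braid correspondence in $S^3$ in \cite{grid}---realizes the resulting one-parameter family of rectilinear projections by a finite sequence of moves of four kinds: (i) moves not changing the combinatorial type; (ii) commutations of two adjacent rows or columns whose basepoint pairs do not interleave; (iii) W:SE or W:SW (de)stabilizations; and (iv) moves in which a portion of a single strand is pushed across one of the basepoints $z_i$ or $w_i$. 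Moves of types (i)--(iii) are elementary braid grid moves by definition, and, exactly as in \cite{grid}, each move of type (iv) can be written as a W:SE or W:SW (de)stabilization, a sequence of commutations, and the inverse (de)stabilization. Hence $\mathcal{G}$ and $\mathcal{G}'$ are joined by a sequence of elementary braid grid moves, and the proposition follows.

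\textbf{Where the work is.} The forward direction is a genuinely routine inspection; the substance lies in the reverse direction, and there the delicate point is the general-position step: one must check that the four-move reduction above is complete and, in particular, that the ``finger'' move (iv) is generated by commutations and W:SE/W:SW (de)stabilizations even in a neighborhood of the rational binding $B$, where the $2\pi q/p$ boundary twist of the open book $(B,\pi)$ is felt. This is the only place the lens-space geometry enters, and the verification runs parallel to---and is no harder than---the corresponding one for grid diagrams in $S^3$ carried out in \cite{grid}; this is the sense in which the proposition is ``elementary to check''.
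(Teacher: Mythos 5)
The paper provides no proof of this proposition, asserting only that it ``is elementary to check,'' after having recorded as fact that commutations together with W:SE and W:SW (de)stabilizations are precisely the grid moves that leave $\mathcal{B}$ unchanged. Your move-by-move inspection for the forward direction and general-position/finger-move reduction for the reverse direction are the natural way to carry out that elementary check, and your sorting of the remaining moves (W:NE as positive Markov, W:NW as negative Markov, the $Z$-type stabilizations reducing via the stated lemma) is consistent with the paper's surrounding discussion.
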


A (de)stabilization of type W:NE corresponds to a generalized positive Markov (de)stabilization in the rational setting. A generalized positive Markov stabilization has the effect of connect summing the braid with a braid index p unknot along a positively half twisted band.

The following proposition is a generalization of the analogous statement for grid diagrams in $S^3$, proven in \cite{gridcommutative}. Their proof generalizes to our setting.
\begin{proposition}
\label{prop:commute}
The two transverse links associated to a grid diagram coincide, i.e. the diagram
\[
\xymatrix{
\mathcal{G} \ar[d] \ar[r] & L \ar[d]\\
\mathcal{B} \ar[r]&T}
\]
commutes.
\end{proposition}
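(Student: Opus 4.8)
The statement is that the transverse link $T$ obtained from a grid diagram $\mathcal{G}$ in two ways — first perturbing the induced rectilinear projection to a Legendrian front $L$ and then taking the positive transverse push-off, versus first reading off the braid $\mathcal{B}$ about $(B,\pi)$ of Proposition \ref{prop:braiding} and then invoking that braids about supporting open books are naturally transverse — coincide up to transverse isotopy. The strategy is to follow the proof in \cite{gridcommutative} for the $S^3$ case and check that each step survives the passage to $L(p,q)$, since the only essential features used there are local (near basepoints, rows, and columns) and all of the relevant local pictures take place in coordinate charts on $T^2$ that look exactly as in the planar grid case.

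First I would fix the rectilinear projection $\lambda$ of $K$ onto $T^2$ associated to $\mathcal{G}$ as in the proof of Proposition \ref{prop:braiding}: vertical arcs $\gamma_i^\beta$ running upward from $w_i$ to $z_i$ inside the $i^{th}$ column and horizontal arcs $\gamma_i^\alpha$ running right-to-left inside the $i^{th}$ row, pushed off into the respective handlebodies. This single diagram is the common input for both constructions. On the Legendrian side, Baker--Grigsby (Proposition \ref{prop:front}) canonically smooth $\lambda$ to a toroidal front for $L$, whose positive transverse push-off is $T$; in a standard contact chart the push-off is obtained by the usual front-to-transverse-braid algorithm — resolving each right cusp and orienting so that all strands are ``upward''. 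On the braid side, $\lambda$ is already positively transverse to all parallel copies of $\alpha_0$ after a small isotopy, which is precisely the braiding $\mathcal{B}$; and the transverse structure on a braid about an open book is, by definition, the one coming from the plane field close to the pages. So the two transverse representatives differ only by: (i) the local front-smoothing perturbation of Baker--Grigsby near the ``corners'' of $\lambda$, and (ii) the choice of how far to push off / which contact-standard model one uses near the binding versus near the pages.

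The key steps, in order, are: (1) recall explicitly the front-smoothing recipe of \cite{lensgridleg} and observe that it is supported in disjoint disks, one near each NW/SE corner of $\lambda$, each contained in a Darboux chart; (2) in each such chart, identify the Baker--Grigsby smoothing followed by positive transverse push-off with a small transverse isotopy of the original rectilinear arc, using the local model $\xi = \ker(dz + r^2\,d\theta)$ near a transverse arc and the standard front/transverse dictionary in $(\mathbb{R}^3,\xi_{std})$ — this is exactly the computation carried out in \cite{gridcommutative} and is purely local, so it transfers verbatim; (3) globally, interpolate between the ``close to the pages'' plane field (which makes $\mathcal{B}$ transverse) and the contact structure $\xi_{UT}$ near the binding $B$, using that $(B,\pi)$ supports $\xi_{UT}$: by definition of a rational open book supporting a contact structure, $\xi_{UT}$ can be isotoped rel a neighborhood of $B$ to agree with such a near-page plane field on the complement of $B$, and this isotopy carries $\mathcal{B}$ to its transverse representative; (4) combine (2) and (3) to conclude that both constructions produce transversely isotopic links. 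I would also remark that the two transverse push-offs considered are ``upward'' in the same sense, consistent with the Remark following the proof that W:SE corresponds to negative Legendrian stabilization, so the orientation conventions match.

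The main obstacle I anticipate is bookkeeping in step (3): making the comparison between the near-page plane field used to see $\mathcal{B}$ as transverse and the explicit $\xi_{UT}$ (either the Honda model or the globally defined $1$-form of \cite{lensgridleg}) genuinely rigorous near the rational binding $B$, where the monodromy is a fractional boundary twist $\delta^{q/p}$ rather than the identity. In the $S^3$ case \cite{gridcommutative} this is essentially trivial because the binding is an unknot with disk pages and trivial monodromy; here one must use the content of \cite{CCSMCM} that $(B,\pi)$ does support $\xi_{UT}$, i.e. that the contact structure admits a compatible Reeb-like vector field transverse to the pages and tangent to $B$, and then quote the standard fact (generalized to rational open books in \cite{CCSMCM, pav}) that any braid about such an open book has a well-defined transverse isotopy class independent of the small perturbation chosen. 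Once that is granted, steps (1)--(2) are local and routine, and the proof closes exactly as in \cite{gridcommutative}. In the write-up I would therefore state the local lemma (the chart computation) carefully and be brief about (3), citing \cite{gridcommutative}, \cite{CCSMCM}, and \cite{pav}.
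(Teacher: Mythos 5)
Your plan is a valid generalization of the \cite{gridcommutative} argument, and the published text does say ``their proof generalizes to our setting,'' so you are on the intended track. However, the paper's own argument is shorter and avoids all local Darboux-chart analysis. It fixes $T$ as a concrete embedding --- a small positive transverse push-off of $L$ --- and observes that the construction of $L$ in \cite{lensgridleg} keeps $L$, and hence $T$, in the complement of a neighborhood $\nu(B)$ of the binding. It then isotopes $\xi_{UT}$ through contact structures to a plane field $\xi$ close to the tangent planes of the pages away from $\nu(B)$, and notes that openness of transversality keeps the fixed embedding $T$ transverse throughout. Once $\xi$ is near the pages, $T$ and the braid $\mathcal{B}$ are both small perturbations of the rectilinear projection $\lambda$, so $T$ is the transverse representative of $\mathcal{B}$ in $(L(p,q),\xi)$, and hence by the contact isotopy also in $(L(p,q),\xi_{UT})$. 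This sidesteps your steps (1)--(2) entirely and, more importantly, dissolves the bookkeeping concern you flag in step (3) about the fractional boundary twist near $B$: by arranging everything to happen in $L(p,q)\smallsetminus\nu(B)$, the rational monodromy never has to be confronted. Your approach buys an explicit front-to-transverse dictionary in charts; the paper's buys brevity and complete insulation from the binding.
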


We have a rational transverse Markov theorem in $(L(p,q),\xi_{UT})$:
\begin{theorem}
\label{theorem:Markov}
Two braids about $(B,\pi)$ represent transversely isotopic knots in $(L(p,q),\xi_{UT})$ if and only if they are related by a sequence of braid isotopies and generalized positive Markov (de)stabilizations.
\end{theorem}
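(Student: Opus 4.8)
The plan is to reduce the statement to the combinatorics of grid diagrams assembled above, imitating the proof of the transverse Markov theorem in $S^3$ via rectangular diagrams in \cite{wrinkle, gridcommutative}. The one ingredient not yet in place is a \emph{braid grid position} lemma: every braid about $(B,\pi)$ is braid isotopic to the braid induced, via Proposition \ref{prop:braiding}, by some grid diagram $\mathcal{G}$. In $S^3$ this is the familiar fact that any closed braid can be put into rectangular position through an isotopy of braids; here I would obtain it by inspecting the proof that every link in $L(p,q)$ is carried by a grid diagram (Proposition 4.3 of \cite{lensgridcomb}) and checking that, when one starts from a braid about $(B,\pi)$, the isotopy moving the link onto the Heegaard torus can be taken through braids. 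I expect this lemma to be the main technical point of the argument.

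Granting it, here is how I would handle the nontrivial direction. Let $\mathcal{B}_0$ and $\mathcal{B}_1$ be transversely isotopic braids about $(B,\pi)$. Using the lemma, replace each $\mathcal{B}_i$, up to braid isotopy, by the braid induced by a grid diagram $\mathcal{G}_i$. By Proposition \ref{prop:commute} the transverse link determined by $\mathcal{G}_i$ through its Legendrian representative and positive transverse push-off is this same grid braid, so the transverse links attached to $\mathcal{G}_0$ and $\mathcal{G}_1$ are transversely isotopic. Proposition \ref{prop:trans} then exhibits $\mathcal{G}_0$ and $\mathcal{G}_1$ as related by a sequence of elementary transverse grid moves, namely commutations together with (de)stabilizations of types W:SE, W:SW and W:NE. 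By Proposition \ref{prop:braid} a commutation or a W:SE or W:SW (de)stabilization changes the induced braid only by a braid isotopy, whereas a W:NE (de)stabilization realizes precisely a generalized positive Markov (de)stabilization. Concatenating the effect of the moves along the sequence connects the braid of $\mathcal{G}_0$ to that of $\mathcal{G}_1$, hence $\mathcal{B}_0$ to $\mathcal{B}_1$, through braid isotopies and generalized positive Markov (de)stabilizations.

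For the forward direction, a braid about $(B,\pi)$ is automatically positively transverse to $\xi_{UT}$, so a braid isotopy is in particular a transverse isotopy. For a generalized positive Markov stabilization of an arbitrary braid I would again pass to grid position and arrange the half-twisted band to occupy a grid-compatible slot, so that the operation becomes a W:NE grid (de)stabilization taking $\mathcal{G}$ to $\mathcal{G}'$; Propositions \ref{prop:trans} and \ref{prop:commute} then show that the two braids, being the transverse links attached to $\mathcal{G}$ and $\mathcal{G}'$, are transversely isotopic. Alternatively this is visible directly: the half-twisted band can be absorbed by a transverse isotopy supported in a neighborhood of the band, exactly as for ordinary positive Markov stabilization \cite{wrinkle, pav}.

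The step I expect to require the most care is the braid grid position lemma, and inside it the verification that the standard moves rectifying a braid on the Heegaard torus can be performed through braids about $(B,\pi)$ rather than through an unrestricted isotopy. A logically equivalent alternative, which I would fall back on if the grid route became unwieldy, is to resolve the rational open book $(B,\pi)$ into an honest open book of $L(p,q)$ supporting $\xi_{UT}$ — for instance using the unknotted Seifert cable of $B$ featured in Section \ref{sec:altchara} — set up the correspondence between braids and their (de)stabilizations on the two sides, and then quote Pavelescu's transverse Markov theorem \cite{pav} for honest open books; in that approach the main obstacle shifts to making the dictionary between rational and integral braids precise, in particular identifying the generalized positive Markov stabilization with an ordinary Markov stabilization on the resolved open book.
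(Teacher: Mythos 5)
Your argument follows the paper's approach exactly: the paper's proof is only a couple of lines, observing that the elementary transverse grid moves are the elementary braid grid moves together with W:NE (de)stabilizations and then citing Propositions \ref{prop:trans} and \ref{prop:braid}. You are more careful, and in particular you correctly flag the braid grid position lemma---that every braid about $(B,\pi)$ is braid isotopic to one induced by a grid diagram---as an ingredient that those two propositions alone do not supply; the paper's terse proof leaves this implicit, and your plan to extract it from Proposition 4.3 of \cite{lensgridcomb} by checking that the isotopy onto the Heegaard torus can be performed through braids is the right fix. Your fallback via Pavelescu's transverse Markov theorem for an integral open book supporting $\xi_{UT}$ is a genuinely different route not taken in the paper; it would trade the grid combinatorics for the dictionary between rational and integral braids, as you say.
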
 
\begin{proof}
The elementary transverse grid moves are the elementary braid grid moves in addition to (de)stabilization of type W:NE; this (de)stabilization corresponds to positive Markov (de)stabilization. Propositions \ref{prop:trans} and \ref{prop:braid} give the result.
\end{proof}

\subsection{Classical invariants and grid diagrams}

As in the previous subsection, let $\mathcal{G}$ denote a grid diagram for a link $K\subset L(p,q)$, and let $L$ and $T$ denote the induced Legendrian and transverse representatives. Cornwell \cite{BTI} has derived combinatorial formulas for the classical invariants of $L$ and $T$ coming from the grid diagram $\mathcal{G}$. These formulas generalize the well known formulas for classical invariants of Legendrian and transverse links in the tight three sphere coming from front projections.

Let $P$ denote a toroidal front projection of $K$ as in Proposition \ref{prop:front}. Let $w$ denote the writhe of this projection. Let $m$ denote the algebraic intersection of $\alpha _0$ with $\lambda$, and $l$ the algebraic intersection of $\beta_0$ with $\lambda$. There will be some cusps in the projection, let $c_d$ denote the number of downward oriented cusps, $c_u$ the number of upward oriented cusps, and $c$ the total number of cusps. 

\begin{proposition} (Propositions 3.2, 3.6, and Corollary 3.7 of \cite{BTI})
\label{prop:classical}
Let $L$ and $T$ denote the induced Legendrian and transverse representatives of $K$ in $(L(p,q),\xi_{UT})$. We have the following formulas for the classical invariants of $L$ and $T$:
\begin{align*}
tb_\mathbb{Q} (L) = w - \frac{c}{2} - \frac{ml}{p}\quad\quad\quad
rot_\mathbb{Q} (L) = \frac{1}{2} (c_d - c_u) - \frac {l-m}{p}\\
sl_\mathbb{Q} (T) = w - c_d - \frac{ml +(m-l)}{p}
\end{align*}
\end{proposition}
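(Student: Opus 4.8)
The plan is to cite and adapt Cornwell's combinatorial formulas from \cite{BTI}, but since the statement asks us to reprove them in the present conventions, I would proceed as follows. The key geometric input is the explicit longitude $\lambda = \bigcup_i (\gamma^\alpha_i \cup \gamma^\beta_i)$ constructed in the proof of Proposition \ref{prop:braiding}, together with the toroidal front projection $P$ obtained by smoothing a rectilinear projection as in Proposition \ref{prop:front}. First I would set up a uniform rational Seifert surface $\Sigma$ for $L$ by capping off $r$ parallel pushed-off copies of $\lambda$; here $r$ is the order of $[K]$ in $H_1(L(p,q);\Z)$, and the combinatorics of $\Sigma$ reduces to counting how $\lambda$ wraps around the two handlebody cores, which is exactly recorded by the intersection numbers $m = \alpha_0 \cdot \lambda$ and $l = \beta_0 \cdot \lambda$.

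The rational Thurston--Bennequin number is computed as $tb_\Q(L) = lk_\Q(L, L')$, where $L'$ is the contact pushoff. The contact framing differs from the blackboard (toroidal-page) framing by the number of cusps: each cusp contributes $-1/2$ to the difference, giving the $-c/2$ term. The blackboard-framed self-linking is then the rational writhe, which splits as the ``honest'' writhe $w$ of the crossings in $P$ minus the correction $ml/p$ coming from the fact that $\lambda$ is not null-homologous and its pushoff links it through the lens space; this $-ml/p$ term is the rational-linking analogue of the integer writhe and I would derive it by intersecting the pushoff with $\Sigma$. For $rot_\Q(L)$, I would trivialize $\xi|_\Sigma$ and count the winding of the tangent vector to $\partial\Sigma$: the cusps of the front contribute $\tfrac12(c_d - c_u)$ exactly as in the classical $S^3$ formula, while the term $-(l-m)/p$ is the contribution of the monodromy $\delta^{q/p}$ of $(B,\pi)$ to the framing of the trivialization across the pages — it measures the defect between the page framing and the global trivialization and is again computed from $m$ and $l$. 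Finally, $sl_\Q(T)$ follows from the transverse pushoff relation: for the positive transverse pushoff one has $sl_\Q(T) = tb_\Q(L) - rot_\Q(L)$ when the front has only upward cusps, and in general $sl_\Q(T) = w - c_d - (ml + (m-l))/p$, which I would obtain by substituting the two formulas above and simplifying (the $(m-l)/p$ correction is the push-off defect special to the transverse, rather than Legendrian, normal framing).

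The main obstacle I anticipate is bookkeeping the rational correction terms $-ml/p$ and $-(l-m)/p$ correctly: unlike in $S^3$, the pushoffs used to define $tb_\Q$, $rot_\Q$, and $sl_\Q$ are genuinely different from the obvious toroidal framing, and one must carefully track a uniform rational Seifert surface and the trivialization of $\xi$ over it, making sure the $r$-fold covering $\partial\Sigma \to L$ is accounted for with the right sign and normalization. Rather than redo all of this from scratch, the cleanest route is to invoke Cornwell's Propositions 3.2, 3.6 and Corollary 3.7 of \cite{BTI} directly, noting (as in Remark \ref{remark:ORIENTATIONS}) that the only discrepancy with \cite{BTI} is the mirroring convention $K \subset L(p,q)$ versus $K \subset -L(p,q)$, which affects neither $w$, $c$, $c_d$, $c_u$ nor the intersection numbers $m, l$, so the formulas transfer verbatim.
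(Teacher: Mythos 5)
Your bottom line---cite Cornwell's Propositions 3.2, 3.6 and Corollary 3.7 of \cite{BTI} directly---is exactly what the paper does; the proposition is stated as a direct quotation of Cornwell's results with no independent proof supplied. One minor quibble: your reason for the formulas transferring verbatim (that the combinatorial quantities $w$, $c$, $c_d$, $c_u$, $m$, $l$ are unchanged by the mirror convention) does not by itself rule out sign corrections to $tb_\mathbb{Q}$, $rot_\mathbb{Q}$, $sl_\mathbb{Q}$, which are genuine contact quantities sensitive to the ambient orientation---what actually licenses the direct citation is that Cornwell's conventions in \cite{BTI} are compatible with the present paper's, not that a mirror is invisible to the classical invariants.
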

Let $L=L_1\cup\dots \cup L_l$ and 
let $w^i, c_d ^i, c_u ^i, l^i$ and $m^i$ denote the contributions to $w,c_d,c_u,l$ and $m$ coming from the $i^{th}$ component $L_i\subset L$. The proof of Proposition \ref{prop:classical} in fact shows us that
\begin{align*}
rot^i_\mathbb{Q} (L) = \frac{1}{2} (c_d^i - c_u ^i) - \frac {l^i-m^i}{p}\\
sl^i_\mathbb{Q} (T) = w^i - c_d^i - \frac{m^il^i +(m^i-l^i)}{p}.
\end{align*}


Let $\pi :(S^3,\xi_{std})\to (L(p,q),\xi_{UT})$ denote the contact universal cover.
By taking $p$ copies of a grid diagram $\mathcal{G}$ for $K\subset L(p,q)$ and stacking them vertically (see Figure \ref{fig:stacking}), we obtain a grid diagram $\mathcal{G}'$ for $K' \subset S^3$. Let $L, T, L'$, and $T'$ denote the Legendrian and transverse representatives induced by $\mathcal{G}$ and $\mathcal{G}'$, respectively. By virtue of how a toroidal front projection induces Legendrian and transverse representatives (Proposition \ref{prop:front}) we have that $L' = \pi^{-1} (L)$ and $T' = \pi^{-1} (T)$. 

\begin{figure}[h]
\def\svgwidth{250pt}
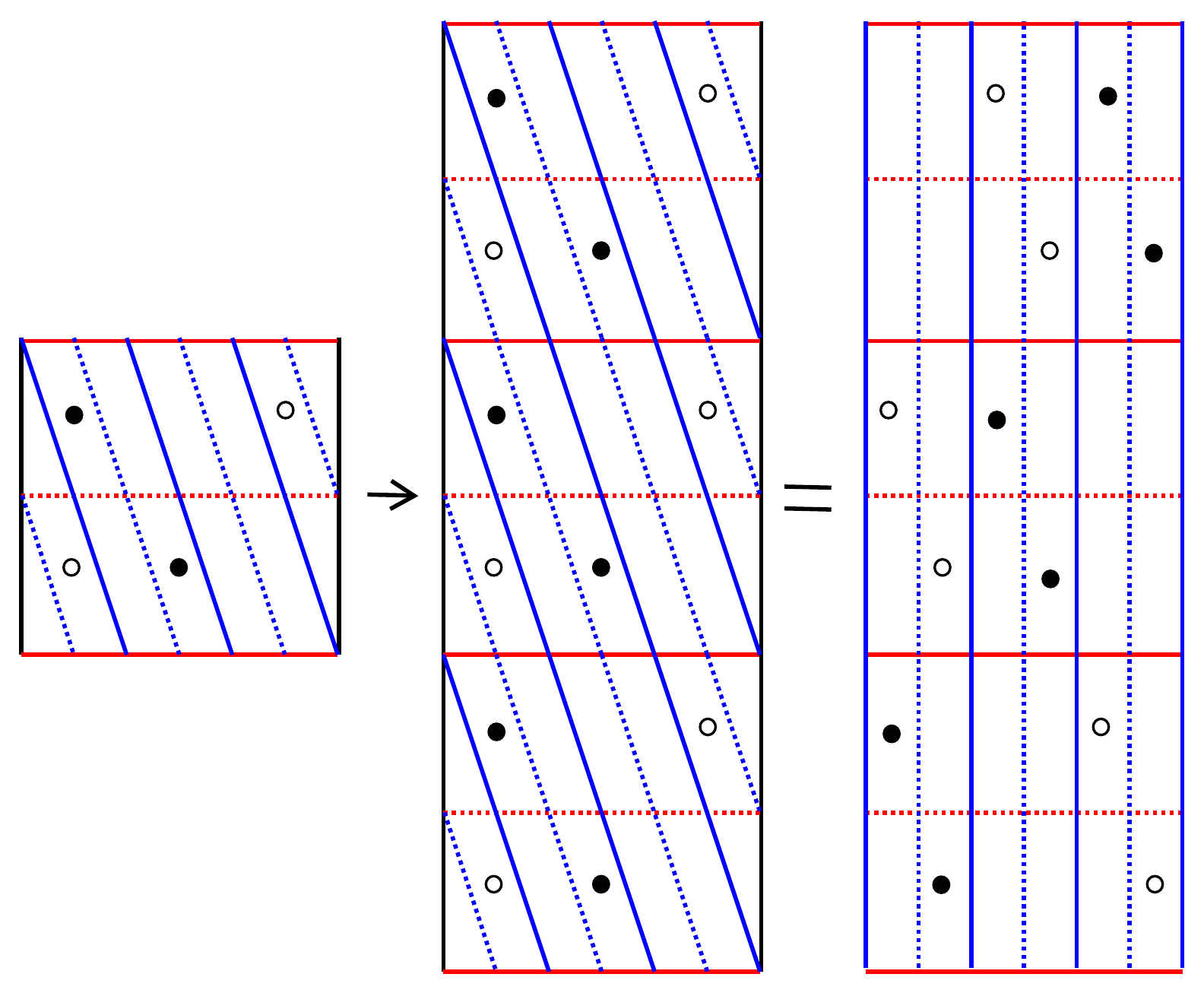
\caption{Stacking an index two diagram $\mathcal{G}$ for a link $K\subset L(3,1)$ to obtain an index six diagram $\mathcal{G}'$ for $K'\subset S^3$. The solid and hollow dots depict $\bold{w}$ and $\bold{z}$ basepoints, respectively.}
\label{fig:stacking}
\end{figure}

The writhe and the number of downward or upward oriented cusps are all multiplied by $p$ when passing from $\mathcal{G}$ to $\mathcal{G}'$; the quantities $m$ and $l$ are preserved. Combining these facts with the formulas of Proposition \ref{prop:classical} it is easy to see how the classical invariants behave under this contact universal cover:

\begin{lemma}
\label{lem:cover}
Let $\pi :(S^3,\xi_{std})\to (L(p,q),\xi_{UT})$ denote the contact universal cover. Let $L, T \subset (L(p,q),\xi_{UT})$ be Legendrian and tranverse links. Let $L' = \pi^{-1} (L)$ and $T' = \pi^{-1} (T)$. Then we have the following:
\begin{align*}
tb_\mathbb{Q} (L)  = \frac{1}{p} tb(L')\quad\quad\quad
rot_\mathbb{Q} (L) = \frac{1}{p} rot(L')\quad\quad\quad
sl_\mathbb{Q} (T) = \frac{1}{p} sl(T')
\end{align*}
and 
\begin{align*}
rot^i_\mathbb{Q} (L) = \frac{1}{p} rot^i(L')\quad\quad\quad
sl^i_\mathbb{Q} (T) = \frac{1}{p} sl^i(T')
\end{align*}
where the lift $L'$ has been partitioned into sublinks $L_1'\cup\dots \cup L_l '$.
\end{lemma}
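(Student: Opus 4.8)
The plan is to reduce the entire lemma to the combinatorial formulas of Proposition \ref{prop:classical} together with a careful bookkeeping of how the inputs to those formulas transform under the operation of vertically stacking $p$ copies of a grid diagram. Concretely, fix a grid diagram $\mathcal{G}$ for $K\subset L(p,q)$ and let $\mathcal{G}'$ be the index-$np$ diagram for $K'\subset S^3$ obtained by stacking $p$ copies as in Figure \ref{fig:stacking}; by Proposition \ref{prop:front} the induced Legendrian and transverse representatives satisfy $L'=\pi^{-1}(L)$ and $T'=\pi^{-1}(T)$, so it suffices to compare the classical invariants computed from $\mathcal{G}$ with those computed from $\mathcal{G}'$. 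First I would record, as the excerpt already asserts, that the writhe $w$ and the cusp counts $c,c_d,c_u$ each get multiplied by $p$ under stacking (each of the $p$ stacked copies contributes an isomorphic piece of the toroidal front), while the intersection numbers $m=\alpha_0\cdot\lambda$ and $l=\beta_0\cdot\lambda$ are unchanged — here one uses that in $\mathcal{G}'$ the relevant curve playing the role of $\alpha_0$ is the single bottom horizontal circle, which meets the lifted longitude exactly as $\alpha_0$ met $\lambda$ in $\mathcal{G}$, and similarly for $\beta_0$ (this is the one point that deserves a sentence of justification, since one must check the stacking does not introduce or cancel intersection points).

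Next I would substitute into the formulas of Proposition \ref{prop:classical}. Since $L'\subset S^3$ is null-homologous, its rational invariants are honest integer invariants, i.e. $tb_\mathbb{Q}(L')=tb(L')$, $rot_\mathbb{Q}(L')=rot(L')$, $sl_\mathbb{Q}(T')=sl(T')$, and the formulas of Proposition \ref{prop:classical} specialize (with $p$ replaced by the relevant quantity — here one should note that in the $S^3$ case the ``$p$'' appearing in those formulas becomes $1$, since $K'$ is null-homologous, so the $\tfrac{ml}{p}$-type terms become $ml$, etc.). Then
\[
tb(L') = (pw) - \frac{pc}{2} - ml = p\Big(w-\frac{c}{2}-\frac{ml}{p}\Big) = p\, tb_\mathbb{Q}(L),
\]
and the analogous one-line computations give $rot(L')=p\,rot_\mathbb{Q}(L)$ and $sl(T')=p\,sl_\mathbb{Q}(T)$. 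For the per-component statements I would run the same substitution using the componentwise formulas for $rot^i_\mathbb{Q}$ and $sl^i_\mathbb{Q}$ displayed just after Proposition \ref{prop:classical}, noting that under stacking the $i$-th component $L_i$ of $L$ lifts to a sublink whose contributions $w^i,c_d^i,c_u^i$ are each multiplied by $p$ while $m^i,l^i$ are preserved, yielding $rot^i(L')=p\,rot^i_\mathbb{Q}(L)$ and $sl^i(T')=p\,sl^i_\mathbb{Q}(T)$; rearranging gives the boxed identities.

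The main obstacle, and the only genuinely non-formal step, is verifying that $m$ and $l$ are unchanged under stacking and that the cusp/writhe data scale by exactly $p$ — in other words, pinning down precisely which curves in $\mathcal{G}'$ play the roles of $\alpha_0$ and $\beta_0$ and checking the lifted longitude $\lambda'$ interacts with them correctly. Once that geometric input is in hand, everything else is the arithmetic of dividing Proposition \ref{prop:classical}'s formulas by $p$. I would also remark that an alternative, formula-free derivation is available: rational linking numbers, $tb_\mathbb{Q}$, $rot_\mathbb{Q}$ and $sl_\mathbb{Q}$ are all defined via a uniform rational Seifert surface, and the preimage under the $p$-fold cover $\pi$ of such a surface for $L$ is an honest Seifert surface for $L'=\pi^{-1}(L)$; the factor $\tfrac1p$ in the definitions then propagates directly to the factor $\tfrac1p$ in the statement. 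Either route suffices, but the grid-diagram argument via Proposition \ref{prop:classical} is the most self-contained given what has been set up.
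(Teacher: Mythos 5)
Your proof is correct and follows essentially the same route as the paper: the paragraph preceding Lemma~\ref{lem:cover} records exactly the facts you use (writhe and cusp counts scale by $p$ under stacking, while $m$ and $l$ are preserved) and then cites Proposition~\ref{prop:classical} to conclude, which is precisely your substitution argument. Your extra remarks — flagging the identification of the curves playing the roles of $\alpha_0,\beta_0$ in $\mathcal{G}'$ as the one step deserving a sentence, and sketching the alternative route via pulling back a uniform rational Seifert surface — go slightly beyond what the paper writes but do not constitute a different method.
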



If $T$ is the positive transverse push-off of a null-homologous Legendrian link $L$, it is well known that $sl(T)=tb(L)-rot(L)$. By the above Lemma, this equality holds for links in universally tight lens spaces as well.

Recall that $\delta^{q/p}$ denotes the counterclockwise $2\pi q/p$ boundary twist on the disk, which is the monodromy of a rational open book supporting $(L(p,q),\xi_{UT})$. If $\beta\in B_n$ is an element of the braid group, then we may consider the corresponding braid $\beta \circ \delta^{q/p}$ about this rational open book. The closure of such a braid is naturally a transverse link, just as in the integral case. We will often not distinguish the braid from its closure.

Let $\Delta_n\in B_n$ denote the Garside element
\[
\Delta_n = (\sigma_1 \dots \sigma_{n-1})(\sigma_1,\dots,\sigma_{n-2})\dots(\sigma_1\sigma_2)(\sigma_1),
\]
which has square
\[
\Delta_n ^2 = (\sigma_1,\dots\sigma_{n-1})^n
\]
the full twist on $n$ strands. Recall that the self-linking number of a braid $\beta\in B_n$ is given by $w(\beta)-n$, where $w(\beta)$ is the writhe.

\begin{lemma}
\label{lem:QSL}
The rational self-linking number of $\beta\circ\delta^{q/p}$ in $(L(p,q),\xi_{UT})$ is given by 
\[
sl_{\mathbb{Q}}(\beta\circ\delta^{q/p})= w(\beta) +\frac{1}{p}(qn^2 -qn - n).
\]
\end{lemma}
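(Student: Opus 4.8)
The plan is to compute $sl_{\mathbb{Q}}(\beta\circ\delta^{q/p})$ by lifting the braid to the universal cover $S^3$ and applying Lemma \ref{lem:cover}, which tells us that $sl_{\mathbb{Q}}(T) = \tfrac{1}{p}sl(T')$ where $T' = \pi^{-1}(T)$. First I would identify the lift of the rational braid $\beta\circ\delta^{q/p}$: pulling back through the $p$-fold cover, the boundary twist $\delta^{q/p}$ on the disk page lifts to an honest full set of twists on the $p$-fold cyclic branched cover of the disk, contributing a predictable number of positive crossings, while the braid part $\beta\in B_n$ lifts to $p$ stacked copies of $\beta$, now living in $B_{pn}$ (since the cover has $p$ sheets and $\beta$ has $n$ strands). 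Concretely, I expect $\pi^{-1}(\beta\circ\delta^{q/p})$ to be braid isotopic in $S^3$ to something like $(\beta^{(p)})\cdot(\text{full twists coming from } \delta^{q/p})$ where $\beta^{(p)}$ denotes the $p$-fold stacking, and the rotation/twisting by $2\pi q/p$ around the binding lifts to $q$ full twists $\Delta_{pn}^{2q}$ appropriately interleaved — but the exact combinatorics of how the $\delta^{q/p}$ page-twist interacts with the branched cover is the crux.

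Alternatively, and probably cleaner, I would go directly through the grid-diagram picture: by Proposition \ref{prop:braiding} and Proposition \ref{prop:commute}, realize $\beta\circ\delta^{q/p}$ as the transverse link $T$ coming from a grid diagram $\mathcal{G}$ of index $n$ (after absorbing $\beta$ into the grid), then stack $p$ copies vertically as in Figure \ref{fig:stacking} to get a grid diagram $\mathcal{G}'$ for $T'\subset S^3$, and apply the combinatorial formula $sl(T') = w(\beta') - (pn)$ for a braid on $pn$ strands, where $w(\beta')$ is the writhe of the lifted braid. Then I would track how the writhe changes: the $\beta$-part contributes $p\cdot w(\beta)$ worth of crossings (one copy per sheet), and the monodromy $\delta^{q/p}$, which on the $p$-fold cover becomes a genuine rotation realized by the Garside-type full twists on $pn$ strands, contributes a count that I expect to work out to $qn^2 - qn - n + pn - p w(\beta)\cdot 0 \ldots$ — more precisely, after dividing by $p$ via Lemma \ref{lem:cover}, the non-$w(\beta)$ terms must assemble into $\tfrac{1}{p}(qn^2 - qn - n)$. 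The key bookkeeping: the full twist $\Delta_{pn}^2$ on $pn$ strands has writhe $pn(pn-1)$, and the fractional power corresponding to $\delta^{q/p}$ picks out exactly $q$ of the $p$ possible "units", giving writhe contribution $qn(pn-1) = qpn^2 - qn$ (roughly), so that $sl(T') = p\,w(\beta) + qpn^2 - qn - pn$, and dividing by $p$ gives $w(\beta) + qn^2 - \tfrac{qn}{p} - n$; I'd need to reconcile this with the stated $w(\beta) + \tfrac{1}{p}(qn^2-qn-n)$, which suggests the correct twist count is $qn^2 - \tfrac{qn+n}{p}$ after the division, i.e.\ the lift of $\delta^{q/p}$ contributes writhe $p\,qn^2 - qn$ and the $-n$ comes from the braid-index normalization $-pn$ divided by $p$.

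The main obstacle I anticipate is getting the lift of the fractional boundary twist $\delta^{q/p}$ exactly right — both the number of strands of the lifted braid and the precise writhe it contributes under the $p$-fold branched cover of the disk page. This is a genuinely geometric computation about how the $2\pi q/p$ rotation of a disk with $n$ marked points lifts to its $p$-fold cyclic branched cover (a disk with $pn$ marked points), and orientation/sign conventions will matter. Once that lift is pinned down, the rest is the writhe formula for the Garside full twist plus the self-linking formula $sl = w - (\text{braid index})$ for braids in $S^3$, combined with Lemma \ref{lem:cover}; I would double-check the final constant against the $n=1$, $\beta = \mathrm{id}$ case, where $\beta\circ\delta^{q/p}$ should be the binding $B$ itself and the formula should return $sl_{\mathbb{Q}}(B) = \tfrac{1}{p}(q - q - 1) = -\tfrac{1}{p}$, providing a useful sanity check on the constants.
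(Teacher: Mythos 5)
Your proposal has a genuine gap, and you have in fact flagged it yourself: the identification of the lift of $\beta\circ\delta^{q/p}$ under the universal cover is exactly where your computation goes off the rails. You guess that the lift is a braid on $pn$ strands, obtained by some sort of $p$-fold branched cover of the disk page, and your bookkeeping then yields $w(\beta) + qn^2 - \tfrac{qn}{p} - n$, which does not match the target $w(\beta) + \tfrac{1}{p}(qn^2 - qn - n)$. The reconciliation you defer is not a small fix: the picture of the cover is wrong. The covering $S^3 \to L(p,q)$ does \emph{not} induce a branched covering of the $D^2$ page. Because the binding $B$ has order $p$ in $H_1(L(p,q))$, its preimage is the connected unknot $U \subset S^3$, and the covering restricted to a neighborhood of the page fibration carries one $D^2$ page of the $S^3$ open book \emph{diffeomorphically} onto one $D^2$ page of the $L(p,q)$ open book; the unwrapping happens in the $S^1$ direction transverse to the pages. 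Consequently the lift of an $n$-strand braid about $B$ is again an $n$-strand braid about $U$, namely $\beta^p \circ \Delta_n^{2q}$ (one goes around $p$ times, and $(\delta^{q/p})^p = \delta^q = \Delta_n^{2q}$). It is not a $pn$-strand braid, and there is no $\Delta_{pn}$ in the picture.

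With the lift correctly identified, the calculation is immediate and is exactly the paper's proof: by Lemma~\ref{lem:cover},
\[
sl_{\mathbb{Q}}(\beta\circ\delta^{q/p}) \;=\; \tfrac{1}{p}\,sl(\beta^p\circ\Delta_n^{2q}) \;=\; \tfrac{1}{p}\bigl(w(\beta^p\circ\Delta_n^{2q}) - n\bigr) \;=\; \tfrac{1}{p}\bigl(p\,w(\beta) + qn(n-1) - n\bigr),
\]
which is the stated formula. Note the braid index in the $S^3$ self-linking formula is $n$, not $pn$, and the full twist contributes writhe $q\,n(n-1)$, not $qn(pn-1)$. Your grid-stacking intuition (passing to a $pn$-index grid) conflates grid index with braid index; the stacked index-$pn$ grid still encodes an $n$-strand braid about the unknot. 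Your sanity check at $n=1$, $\beta=\mathrm{id}$ is a good instinct and does confirm the stated formula gives $sl_{\mathbb{Q}}(B) = -\tfrac{1}{p}$, but your own derivation does not reproduce this, which should have been taken as a sign that the strand count of the lift was wrong rather than a constant to be reconciled later.
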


\begin{proof}
Consider the contact universal cover $\pi :(S^3,\xi_{std})\to (L(p,q),\xi_{UT})$. The braid $\beta\circ\delta^{q/p}$ lifts to the braid $\beta^p\circ \Delta_n ^{2q}$. By Lemma \ref{lem:cover} we have that 
\[
sl_{\mathbb{Q}}(\beta\circ\delta^{q/p}) = \frac{1}{p}(sl(\beta^p\circ \Delta_n ^{2q}))= \frac{1}{p}(w(\beta^p\circ \Delta_n ^{2q}) - n) = w(\beta) +\frac{1}{p}(qn^2 -qn - n).
\]
\end{proof}

\subsection{Dual Grid diagrams}
\label{subsec:dual}
We describe how to \emph{dualize} a grid diagram encoding a link to obtain one encoding the link's mirror. Our conventions differ from those of \cite{lensgridleg}.

Suppose $\mathcal{G}$ is a grid diagram encoding $K\subset L(p,q)$.
Reflecting $\mathcal{G}$ about the horizontal line $y=1/2$ and exchanging the sets of $\bold{z}$ and $\bold{w}$ basepoints gives rise to a Heegaard diagram $H$ encoding the mirror $K\subset -L(p,q) = L(p,p-q)$. 
The $\boldsymbol{\beta}$-curves are now positively sloped, so $H$ is not a grid diagram.
Performing a shear homeomorphism to $T^2$, so that the $\boldsymbol{\beta}$-curves have slope $-\frac{p}{p-q}$ gives a grid diagram $\mathcal{G}_*$.

Suppose that the grid diagrams $\mathcal{G}$ and its dual $\mathcal{G}_*$ give rise to Legendrians $L_0$ and $L_1$, respectively. Topologically, $L_1$ is the mirror of $L_0$.  Let $g$ denote the index of $\mathcal{G}$.
\begin{proposition} (compare with Proposition 6.9 of \cite{lensgridleg})
\label{prop:index}
\[
tb_{\mathbb{Q}}(L_0)+tb_{\mathbb{Q}}(L_1) = -g
\]
\end{proposition}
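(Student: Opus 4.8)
The plan is to derive the identity directly from Cornwell's combinatorial formula for $tb_{\mathbb{Q}}$ (Proposition \ref{prop:classical}) by tracking how all of the input data change under the dualization operation. Fix a rectilinear projection of $K$ associated to $\mathcal{G}$, with writhe $w$, total cusp number $c$, and intersection numbers $m=\alpha_0\cdot\lambda$ and $l=\beta_0\cdot\lambda$ for the longitude $\lambda$ of Proposition \ref{prop:braiding}, so that $tb_{\mathbb{Q}}(L_0)=w-c/2-ml/p$. Reflecting this projection across the line $y=1/2$, exchanging the roles of the $\mathbf{z}$ and $\mathbf{w}$ basepoints, and then applying the shear used to construct $\mathcal{G}_*$ produces a rectilinear projection of $K$ associated to $\mathcal{G}_*$; denote its writhe, cusp number, and intersection numbers by $w^*,c^*,m^*,l^*$, so that $tb_{\mathbb{Q}}(L_1)=w^*-c^*/2-m^*l^*/p$ (the denominator is still $p$ since $L_1\subset L(p,p-q)$).

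The argument then reduces to three bookkeeping claims. First, the reflection reverses the orientation of the projection torus while preserving the over/under convention, so every crossing changes sign, giving $w^*=-w$; the subsequent shear, being orientation preserving, does not affect this. Second, reflection across a horizontal axis interchanges upward and downward cusps and the shear introduces none, so $c^*=c$. Third — and this is the delicate step — one computes $m^*$ and $l^*$ in terms of $m$, $l$, and the index $g$, using that $\alpha_0$ is fixed setwise by the reflection, that $\beta_0$ is carried to its sheared reflection, and that the longitude of $\mathcal{G}_*$ is, up to the orientation subtlety caused by the basepoint exchange, the sheared reflection of $\lambda$. Substituting into the sum gives
\[
tb_{\mathbb{Q}}(L_0)+tb_{\mathbb{Q}}(L_1)=-c-\frac{ml+m^*l^*}{p},
\]
and the transformation law for $m,l$ is precisely what forces $c+(ml+m^*l^*)/p=g$, yielding the claimed $-g$.

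I expect the third claim to be the main obstacle: the reflection, basepoint exchange, and shear interact so that signs and winding numbers are easy to get wrong, and one must pin down which orientation of $K$ the dual diagram actually encodes (cf. Remark \ref{remark:ORIENTATIONS}). A useful cross-check, and a viable alternative proof, is to pass to the contact universal cover $\pi$: stacking $p$ copies of $\mathcal{G}$ and of $\mathcal{G}_*$ yields grid diagrams $\mathcal{G}'$ and $(\mathcal{G}')_*$ for $\pi^{-1}(K)$ and its mirror in $S^3$ of index $pg$ — stacking commutes with the reflect--exchange--shear operation because the $p$ stacked copies are identical — and the $S^3$ case of the identity (essentially Proposition 6.9 of \cite{lensgridleg}, or a direct consequence of the front formulas in \cite{grid}) gives $tb(\pi^{-1}(L_0))+tb(\pi^{-1}(L_1))=-pg$; dividing by $p$ via Lemma \ref{lem:cover} recovers the statement.
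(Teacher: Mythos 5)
Your main argument has a genuine gap, and it is in the cusp count, not the intersection numbers. You claim $c^* = c$ on the grounds that the reflection merely swaps upward and downward cusps. But a rectilinear projection does not come with cusps; the cusps only appear when the $2g$ corners (one at each basepoint) are smoothed into a toroidal front. The key observation — which the paper uses — is that each corner becomes a cusp in exactly one of the two fronts associated to $P$ and $P_*$, so the correct relation is $c + c_* = 2g$, not $c_* = c$. Your version of the bookkeeping leads to $tb_{\mathbb{Q}}(L_0)+tb_{\mathbb{Q}}(L_1) = -c - (ml + m^*l^*)/p$, and you then assert that the transformation law for $m,l$ "forces $c + (ml+m^*l^*)/p = g$." That assertion is not consistent with what the transformation law actually gives: from the construction of $P_*$ one reads off $w_* = -w$, $m_* = m$, $l_* = -l$, hence $ml + m_*l_* = 0$. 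Under your hypothesis $c_* = c$ this would require $c = g$, which is false in general. In fact the entire index $g$ comes from the cusps: the intersection-number contributions cancel, and $-(c+c_*)/2 = -g$.

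Your proposed cross-check via the contact universal cover is the right idea and does lead to a valid alternative proof, since stacking commutes with the reflect--swap--shear construction of the dual diagram and the $S^3$ identity $tb(L)+tb(\text{mirror}) = -(\text{grid number})$ divided by $p$ (using Lemma \ref{lem:cover}) gives the statement. But as written it is only a sketch; to count as a proof you would need to verify the commutation carefully and cite or prove the $S^3$ case. Either route is fine; the one you fleshed out is the one that is broken.
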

\begin{proof}
Let $P$ denote some rectilinear projection of $L_0$ coming from $\mathcal{G}$, and let $w,l,m$ and $c$ be as in Proposition \ref{prop:classical}.
Let $P_*$ be the rectilinear projection of $L_1$ obtained by reflecting $P$ about $y=\frac{1}{2}$ and reversing the orientation of the link; $\mathcal{G}_*$ gives rise to $P_*$.
Let $w_*,l_*,m_*$ and $c_*$ be the corresponding numbers for $P_*$.
By the construction of $P_*$ it is clear that
\[
w=-w_*, l=-l_*, \text{ and }m=m_*
\]
Moreover, each basepoint of $\mathcal{G}$ gives rise to a cusp of the toroidal front induced by either $P$ or $P_*$, i.e.
\[
c+c_* = 2g.
\]
Applying Proposition \ref{prop:classical}
\[
tb_{\mathbb{Q}}(L_0)+tb_{\mathbb{Q}}(L_1) = (w-\frac{c}{2}-\frac{ml}{p})+(w_*-\frac{c_*}{2}-\frac{m_*l_*}{p}) = -\frac{c+c_*}{2}=-g.
\]
\end{proof}

\section{knot Floer background}
\label{sec:HFK}
\subsection{Knot Floer Homology}
\label{subsec:HFK}
We provide a brief overview of knot Floer homology, working with $\mathbb{F}=\mathbb{Z}_2$ coefficients throughout the entire paper.

Let $L\subset Y$ be an rationally null-homologous, oriented link in a closed, oriented 3-manifold.
A multi-pointed Heegaard diagram for $(Y,L)$ is an ordered tuple $\mathcal{H} = (\Sigma,\boldsymbol{\alpha},\boldsymbol{\beta},\bold{z},\bold{w}\cup\bold{w}_F)$ where
\begin{itemize}
\item $\Sigma$ is a genus $g$ Riemann surface,
\item $\boldsymbol{\alpha} = \{\alpha_1,\dots,\alpha_{g+m+n-1}\}$ and $\boldsymbol{\beta} = \{\beta_1,\dots,\beta_{g+m+n-1}\}$ are sets of disjoint, simple closed curves on $\Sigma$ such that $\boldsymbol{\alpha}$ and $\boldsymbol{\beta}$ each span half dimensional subsets of $H_1 (\Sigma;\mathbb{Z})$, 
\item $\bold{z}$ and $\bold{w}$ are sets of $m$ \emph{linked} basepoints. Each component of $\Sigma \smallsetminus \{\alpha_1,\dots,\alpha_{g+m-1}\}$ and $\Sigma \smallsetminus \{\beta_1,\dots,\beta_{g+m-1}\}$ contains exactly one element of $\bold{z}$ and one of $\bold{w}$,
\item $\bold{w}_F$ is a set of $n$ \emph{free} basepoints. Every component of $\Sigma \smallsetminus \boldsymbol{\alpha}$ and $\Sigma\smallsetminus\boldsymbol{\beta}$ contains exactly one element of $\bold{w}\cup\bold{w}_F$.
\end{itemize}

$Y$ is specified by the Heegaard diagram $(\Sigma,\boldsymbol{\alpha},\boldsymbol{\beta})$. The link $L$ is obtained in the usual way as follows. Connect the points in $\bold{z}$ to those in $\bold{w}$ with $m$ oriented, disjoint, embedded arcs in $\Sigma \smallsetminus \boldsymbol{\alpha}$; form $\{\gamma^\alpha _1,\dots,\gamma^\alpha _{m}\}$ by pushing the interiors of the arcs into the $\boldsymbol{\alpha}$ handlebody. Likewise, connect the points in $\bold{w}$ to those in $\bold{z}$ with $m$ oriented, disjoint, embedded arcs in $\Sigma \smallsetminus \boldsymbol{\beta}$; form $\{\gamma^\beta _1,\dots,\gamma^\beta _{m}\}$ by pushing the interiors of the arcs into the $\boldsymbol{\beta}$ handlebody. The union
\[
L=\gamma^\alpha _1\cup\dots\cup\gamma^\alpha _{m}\cup\gamma^\beta _1\cup\dots\cup\gamma^\beta _{m}
\]
forms the link. 

To each $w\in \bold{w}\cup\bold{w}_F$ we associate a formal variable $U_w$. Consider the totally real tori $\mathbb{T}_{\boldsymbol{\alpha}} = \alpha_1 \times \dots \times \alpha_{g+m+n-1}$ and $\mathbb{T}_{\boldsymbol{\beta}}= \beta_1 \times \dots \times \beta_{g+m+n-1}$ in the symmetric product $Sym^{g+m+n-1}(\Sigma)$. $CFK^- (\mathcal{H})$, the knot Floer complex, is a free $\mathbb{F} [\{ U_w \} _{w\in \bold{w}\cup\bold{w}_F} ]$-module generated by the intersections of $\mathbb{T}_{\boldsymbol{\alpha}}$ with $\mathbb{T}_{\boldsymbol{\beta}}$.

Let $\bold{x},\bold{y}\in\mathbb{T}_{\boldsymbol{\alpha}}\cap \mathbb{T}_{\boldsymbol{\beta}}$, $\phi\in\pi_2(\bold{x},\bold{y})$ be a Whitney disk, and suppose there is a suitable path of almost complex structures on $Sym^{g+m+n-1}(\Sigma)$; we denote the moduli space of pseudo-holomorphic representatives of $\phi$ by $\mathcal{M} (\phi)$. The formal dimension of $\mathcal{M}(\phi)$ is given by the Maslov index $\mu (\phi)$. $\widehat{\mathcal{M}} (\phi)$ denotes the quotient of $\mathcal{M}(\phi)$ by the natural translation action of $\mathbb{R}$. 
For $p\in \Sigma$, we let 
 $n_p (\phi)$ denote the multiplicity of $D(\phi)$, the domain of $\phi$, at the point $p$. For a finite set of points $\bold{p} = \{p_1,\dots, p_k\}\subset \Sigma$, $n_\bold{p} (\phi)$ denotes the sum $n_{p_1} (\phi) +\dots +n_{p_{k}}(\phi)$.

\subsection{$Spin^C$ structures}
The correspondence between $Spin^C$-structures and homology classes of non-vanishing vector fields for 3-manifolds was first introduced by Turaev \cite{turaev}.

Ozsv\'{a}th and Szab\'{o} \cite{relspinc} generalized this construction to 3-manifolds having torus boundary components. If $L\subset Y$ is a link, a \emph{relative} $Spin^C$-\emph{structure} is a homology class of non-vanishing vector field $v$ on $Y\smallsetminus \nu(L)$ such that the vector field $v$ points outwards along the boundary of $Y\smallsetminus \nu(L)$; we denote the set of such relative $Spin^C$-structures by $Spin^C (Y,L)$. There is an affine correspondence between $Spin^C (Y,L)$ and classes of $H^2 (Y,L;\mathbb{Z})$ which is analogous to the correspondence between $Spin^C (Y)$ and $H^2 (Y;\mathbb{Z})$; in particular there is an action of relative cohomology classes on relative $Spin^C$-structures.

There is a filling map
\[
G_{Y,L}: Spin^C (Y,L) \to Spin^C (Y)
\]
defined as follows. Let $v_L$ be a vector field on $Y\smallsetminus \nu(L)$ representing $\mathfrak{s}_L \in Spin^C(Y,L)$. Identifying $\nu (L)$ with $L\times D^2$, it is easy to see that there is a unique vector field $v_{\nu(L)}$, up to homotopy, on $\nu(L)$ which points inward along the boundary, is everywhere transverse to the $D^2$ factor, and has $L$ as an oriented closed orbit. Let $v$ denote the vector field on $Y$ obtained by gluing $v_L$ to $v_{\nu(L)}$.
We define $G_{Y,L} (\mathfrak{s}_L)$ to be the homology class of $v$.

This filling map is equivariant with respect to the action of cohomology, meaning that if $\eta \in H^2 (Y,L;\mathbb{Z})$ and $i:Y\smallsetminus L \to Y$ is the inclusion map, then 
\[
G_{Y,L}(\mathfrak{s}_L +\eta) = G_{Y,L} (\mathfrak{s}_L) + i^* \eta.
\]

Let $\mathcal{H} = (\Sigma,\boldsymbol{\alpha},\boldsymbol{\beta},\bold{z},\bold{w}\cup\bold{w}_F)$ be a Heegaard diagram encoding $(Y,L)$. Ozsv\'{a}th and Szab\'{o} define a map 
\[
\mathfrak{s}_{z,w} : \mathbb{T}_{\boldsymbol{\alpha}}\cap \mathbb{T}_{\boldsymbol{\beta}}\to Spin^C(Y,L)
\]
by explicitly constructing a vector field representing $\mathfrak{s}_{z,w}(\bold{x})$, the construction is similar to that of the map
\[
\mathfrak{s}_{z} : \mathbb{T}_{\boldsymbol{\alpha}}\cap \mathbb{T}_{\boldsymbol{\beta}}\to Spin^C(Y)
\]
in their earlier work.

These maps behave nicely with respect to the filling map defined above, in particular for a generator $\bold{x}\in  \mathbb{T}_{\boldsymbol{\alpha}}\cap \mathbb{T}_{\boldsymbol{\beta}}$
\[
G_{Y,L} (\mathfrak{s}_{z,w} (\bold{x})) = \mathfrak{s}_{z} (\bold{x}).
\]

The complex $CFK^-(\mathcal{H})$ splits as a direct sum over both $Spin^C$ and relative $Spin^C$-structures. 
The relative homological grading is called the Maslov grading. It is specified by 
 \[
 M(\bold{x})-M(\bold{y}) = \mu(\phi)-2n_{\bold{w}\cup \bold{w}_F}(\phi)
 \]
 for $\bold{x},\bold{y}\in\mathbb{T}_{\boldsymbol{\alpha}}\cap\mathbb{T}_{\boldsymbol{\beta}}$ and any Whitney disk $\phi\in \pi_2 (\bold{x},\bold{y})$, and the fact that multiplication by each of the formal variables $U_w$ lowers Maslov grading by two. 
If we are working in a summand corresponding to a torsion $Spin^C$ structure $\mathfrak{s}\in Spin^C (Y)$, the relative Maslov grading can be enhanced to an absolute $\mathbb{Q}$ grading \cite{absgrading}.

\subsection{The Alexander grading}
The set of relative $Spin^C$-structures determines a filtration of the chain complex $CFK^-(\mathcal{H})$ called the Alexander filtration. If $L$ is null-homologous, the filtration levels can be identified with the integers via the Alexander grading (\cite{holknots},\cite{ras}). Ni \cite{coversalex} later generalized this construction to rationally null-homologous links.

\begin{definition}
Let $L= L_1\cup\dots\cup L_l \subset Y$ be a rationally null-homologous link represented by a multi-pointed Heegaard diagram $\mathcal{H} = (\Sigma,\boldsymbol{\alpha},\boldsymbol{\beta},\bold{z},\bold{w}\cup\bold{w}_F)$. Let $F$ be a rational Seifert surface for $L$. For a generator $\bold{x}\in\mathbb{T}_{\boldsymbol{\alpha}}\cap\mathbb{T}_{\boldsymbol{\beta}}$ the \emph{$i^{th}$ Alexander grading of $\bold{x}$ with respect to F} is given by
\begin{align*}
A_{L_i} ^F (\bold{x}) =\frac{1}{2[\mu_i]\cdot[F]} \Big(<c_1(\mathfrak{s}_{z,w}(\bold{x})) - (2n_i - 1)PD([\mu_i]),[F]>\Big)\\ =\frac{<c_1(\mathfrak{s}_{z,w}(\bold{x})),[F]>}{2[\mu_i]\cdot[F]} - (n_i - \frac{1}{2}).
\end{align*}
where $\mu_i$ is an oriented meridian for $L_i$ and $n_i$ is the number of basepoint pairs used to encode $L_i$.
\end{definition}

The Alexander grading only depends on the Seifert surface through its relative homology class. In this paper we will primarily be studying the case that $Y$ is a rational homology 3 sphere where the choice of $F$ is irrelevant, or we will be studying the Alexander grading induced by a binding of a rational open book, where the fiber will be the preferred rational Seifert surface, so we often suppress $F$ from the notation.

Multiplication by $U_w$, for any $w\in \bold{w}_{L_i}$, lowers $A_{L_i}$ by one, multiplication by the other formal variables does not change $A_{L_i}$. 
We denote the sum $A_{L_1} (\bold{x}) + \dots + A_{L_l}(\bold{x})$ by $A_L(\bold{x})$. The bigrading on the knot Floer homology of a link is comprised of the Maslov and collapsed Alexander gradings.

\begin{definition}
Let $K\subset Y$ be a rationally null-homologous knot. 

We define the \emph{complexity} of $K$ to be 
\[
||K|| = inf \Big{\{} \frac{-\chi(F)}{2[\mu]\cdot [F]}\Big{\}}
\]
where the infimum is taken over all rational Seifert surfaces $F$ for $K$ having no sphere components.
\end{definition}

Ni \cite{coversalex} has proven that knot Floer homology of links in rational homology spheres detects the Thurston norm of the link complement.
This result specializes to the following theorem for knots; this is a generalization of the analogous Theorem in the $S^3$ setting due to Ozsv\'{a}th and Szab\'{o} \cite{genusdetection}.

\begin{theorem}
\label{thm:genusdetection}
Let $K$ be a knot in a rational homology sphere $Y$.
Let $A_{max}$ denote the maximal Alexander grading among all non-zero classes in $\widehat{HFK}(Y,K)$. Then
\[
||K|| = A_{max}- \frac{1}{2}
\]
\end{theorem}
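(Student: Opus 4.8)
\textbf{Proof proposal for Theorem~\ref{thm:genusdetection}.} The plan is to deduce the knot statement from Ni's general Thurston-norm detection result for links in rational homology spheres, which is quoted just above. First I would recall precisely what Ni's theorem says: for a rationally null-homologous link $L\subset Y$ with $Y$ a rational homology sphere, the maximal collapsed Alexander grading supporting a nonzero class in $\widehat{HFK}(Y,L)$ computes the (suitably normalized) Thurston norm of a homology class dual to the Seifert surface of $L$. Specializing to a single knot $K$, the relevant homology class in $H_2(Y,\nu(K))$ is the one carried by a rational Seifert surface $F$, and the Thurston norm of that class, after dividing by $[\mu]\cdot[F]$ to account for the rational multiplicity, is exactly $\inf\{-\chi(F)/(2[\mu]\cdot[F])\} = \|K\|$ in the notation of the complexity definition above (here one uses that a norm-minimizing surface has no sphere or disk components unless $K$ bounds one, the standard case analysis).

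The key steps, in order, would be: (1) identify the top nonvanishing Alexander grading $A_{\max}$ of $\widehat{HFK}(Y,K)$ with the evaluation of Ni's norm-type quantity on the Seifert class; (2) match the normalization constants — Ni's formula naturally produces a quantity of the form $(x([F]) + [\mu]\cdot[F])/(2[\mu]\cdot[F])$ or similar, and one must check this equals $\|K\| + \tfrac12$; (3) handle the degenerate cases where $K$ is rationally fibered of small genus or bounds a disk, where the Thurston norm vanishes and one checks directly that $A_{\max}=\tfrac12$; (4) conclude $\|K\| = A_{\max} - \tfrac12$. Throughout, the symmetry $\widehat{HFK}(Y,K,i)\cong\widehat{HFK}(Y,K,-i)$ (in appropriate normalization) guarantees that ``maximal Alexander grading'' is the right invariant to look at.

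The main obstacle I expect is step (2), the bookkeeping of normalizations: Ni's Alexander grading for rationally null-homologous knots (as recorded in the Alexander-grading definition above, following \cite{coversalex}) carries the factors $1/(2[\mu_i]\cdot[F])$ and the shift $-(n_i - \tfrac12)$, and one has to track how these interact with the Euler-characteristic normalization in the definition of $\|K\|$ so that the $-\tfrac12$ in the statement comes out exactly right rather than off by a multiple of $1/([\mu]\cdot[F])$. This is essentially a careful unwinding of definitions rather than new geometry, but it is the step where an error would most easily creep in. The genus-detection input itself (that the top group is nonzero and detects the norm) is the substantive content, and that I would simply cite from Ni's work; the contribution here is the clean reformulation for knots, parallel to the Ozsv\'ath--Szab\'o statement \cite{genusdetection} in $S^3$.
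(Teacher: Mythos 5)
Your approach matches the paper's exactly: the paper gives no proof of this theorem, presenting it as a direct specialization of Ni's Thurston-norm detection result cited from \cite{coversalex}, which is precisely what you propose, with the normalization-matching in your step (2) being the only real content. One small slip to flag: in your step (3), when $K$ bounds a rational Seifert surface $F$ that is a disk, the definition gives $||K|| = -\chi(F)/(2[\mu]\cdot[F]) = -1/(2[\mu]\cdot[F]) < 0$, so $A_{\max} = \tfrac{1}{2} - 1/(2[\mu]\cdot[F])$ rather than $\tfrac{1}{2}$ — exactly the kind of normalization bookkeeping you already identified as the delicate part.
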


The following notion was introduced in \cite{QOB} and is very useful for studying the relative Alexander grading.
\begin{definition}
Let $L_1\cup \dots\cup L_l = L \subset Y$ be an $l$ component link, and let 
\[
(\Sigma, \boldsymbol{\alpha},\boldsymbol{\beta},\bold{z}_{1}\cup\dots\cup\bold{z}_{l},\bold{w}_{1}\cup\dots\cup\bold{w}_{l})
\]
be a Heegaard diagram for $(Y,L)$ where the basepoints $\bold{z}_{i}$ and $\bold{w}_{i}$ encode the link component $L_i$. Suppose that $[L_i]$ has order $r$ in $H_1(Y)$. Let $\lambda_i \subset \Sigma$ be a longitude for $L_i$ constructed as above. Let $D_1,\dots,D_r$ denote the closures of components of $\Sigma \setminus (\lambda_i\cup\boldsymbol{\alpha}\cup\boldsymbol{\beta})$. A \emph{relative periodic domain} is a 2-chain $\mathcal{P} = \Sigma a_i D_i$, whose boundary satisfies
\[
\partial \mathcal{P} = r\lambda _i +\sum n_i \alpha _i + \sum m_i \beta_i.
\]
\end{definition}

A relative periodic domain $\mathcal{P}$ naturally corresponds to a homology class in $H_2 (Y\smallsetminus \nu (L_i),\partial (Y\smallsetminus \nu (L_i)))$.

\begin{lemma} (see Lemma 2.3 of \cite{QOB})
\label{lemma:relperiodic}
Let $L_i\subset L$ be as in the definition above. Let $\mathcal{P}$ be a relative periodic domain whose homology class agrees with that of some rational Seifert surface $F$ for $L_i$. For $\bold{x},\bold{y}\in \mathbb{T}_{\boldsymbol{\alpha}}\cap \mathbb{T}_{\boldsymbol{\beta}}$, we have
\[
A_{L_i}(\bold{x})-A_{L_i}(\bold{y}) =  \frac{1}{r} (n_\bold{x} (\mathcal{P})-n_\bold{y}(\mathcal{P}))
\]
where the Alexander grading above is defined using the surface $F$.
\end{lemma}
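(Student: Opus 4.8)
The plan is to reduce the statement to the definition of the Alexander grading together with a relative version of the Ozsv\'ath--Szab\'o first Chern class formula for (relative) periodic domains. First, in the defining formula
\[
A_{L_i}^F(\mathbf{x}) = \frac{\langle c_1(\mathfrak{s}_{z,w}(\mathbf{x})),[F]\rangle}{2[\mu_i]\cdot[F]} - \Big(n_i - \tfrac12\Big),
\]
the correction term $n_i - \tfrac12$ depends only on the Heegaard diagram, not on the generator, so it cancels when forming the difference:
\[
A_{L_i}(\mathbf{x}) - A_{L_i}(\mathbf{y}) = \frac{1}{2[\mu_i]\cdot[F]}\big\langle c_1(\mathfrak{s}_{z,w}(\mathbf{x})) - c_1(\mathfrak{s}_{z,w}(\mathbf{y})),\,[F]\big\rangle .
\]
A rational Seifert surface for $L_i$ wraps $r$ times around $L_i$, so its boundary meets an oriented meridian $\mu_i$ in $r$ points and, with the standing orientation conventions, $[\mu_i]\cdot[F] = r$. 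By hypothesis $\mathcal{P}$ and $F$ represent the same class in $H_2(Y\smallsetminus\nu(L_i),\partial(Y\smallsetminus\nu(L_i)))$, so we may replace $[F]$ by $[\mathcal{P}]$ in the pairing above.

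It then remains to prove the identity
\[
\big\langle c_1(\mathfrak{s}_{z,w}(\mathbf{x})) - c_1(\mathfrak{s}_{z,w}(\mathbf{y})),\,[\mathcal{P}]\big\rangle = 2\big(n_{\mathbf{x}}(\mathcal{P}) - n_{\mathbf{y}}(\mathcal{P})\big).
\]
Since the first Chern class is affine-linear over the $H^2(Y,L;\mathbb{Z})$-action with derivative multiplication by $2$, this is equivalent to
\[
\big\langle \mathfrak{s}_{z,w}(\mathbf{x}) - \mathfrak{s}_{z,w}(\mathbf{y}),\,[\mathcal{P}]\big\rangle = n_{\mathbf{x}}(\mathcal{P}) - n_{\mathbf{y}}(\mathcal{P}),
\]
i.e. to computing an algebraic intersection number, where the difference class $\mathfrak{s}_{z,w}(\mathbf{x}) - \mathfrak{s}_{z,w}(\mathbf{y})\in H^2(Y,L;\mathbb{Z})\cong H_1(Y\smallsetminus L;\mathbb{Z})$ is represented by the $1$-cycle $\gamma(\mathbf{x},\mathbf{y})$ obtained by joining $\mathbf{x}$ to $\mathbf{y}$ by a path in $\mathbb{T}_{\boldsymbol\alpha}$, joining $\mathbf{y}$ to $\mathbf{x}$ by a path in $\mathbb{T}_{\boldsymbol\beta}$, and projecting to $\Sigma$ (this is exactly how the vector field representing $\mathfrak{s}_{z,w}$ is constructed in \cite{relspinc}). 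Pushing the $\boldsymbol\alpha$-portion of $\gamma(\mathbf{x},\mathbf{y})$ slightly off the $\alpha$-curves and intersecting it with $\mathcal{P}$ on $\Sigma$ contributes $n_{\mathbf{x}}(\mathcal{P}) - n_{\mathbf{y}}(\mathcal{P})$, and the $\boldsymbol\beta$-portion contributes the matching amount (the $\alpha$- and $\beta$-boundary multiplicities of $\mathcal{P}$ along $\partial\mathcal{P}$ are arranged so that the two contributions agree); this is the same local count that underlies the closed-manifold first Chern class formula for periodic domains, now applied to the relative periodic domain $\mathcal{P}$. When $\pi_2(\mathbf{x},\mathbf{y})\neq\emptyset$ one may instead invoke the relative first Chern class formula of \cite{QOB} verbatim, and the general case reduces to this after adding a suitable periodic domain to move $\mathbf{y}$ into the $Spin^c$-structure of $\mathbf{x}$ over $Y$.

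Combining the two steps gives
\[
A_{L_i}(\mathbf{x}) - A_{L_i}(\mathbf{y}) = \frac{1}{2r}\cdot 2\big(n_{\mathbf{x}}(\mathcal{P}) - n_{\mathbf{y}}(\mathcal{P})\big) = \frac{1}{r}\big(n_{\mathbf{x}}(\mathcal{P}) - n_{\mathbf{y}}(\mathcal{P})\big),
\]
which is the assertion. The one point requiring genuine care is the intersection count in the middle paragraph: identifying the homological pairing against $[\mathcal{P}]$ with the combinatorial quantity $n_{\mathbf{x}}(\mathcal{P}) - n_{\mathbf{y}}(\mathcal{P})$ and nailing down the sign. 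Everything else is formal manipulation of the affine structures on $Spin^c(Y,L_i)$ together with the identity $[\mu_i]\cdot[F]=r$.
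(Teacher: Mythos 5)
The paper does not supply a proof of this lemma; it cites Lemma 2.3 of \cite{QOB} (Hedden--Plamenevskaya), so there is no in-paper proof to compare against. Your outline --- cancel the generator-independent constant $n_i - \tfrac12$, use $[\mu_i]\cdot[F]=r$, replace $[F]$ by $[\mathcal{P}]$ on homological grounds, and appeal to the Chern class difference formula
\[
\big\langle c_1(\mathfrak{s}_{z,w}(\mathbf{x})) - c_1(\mathfrak{s}_{z,w}(\mathbf{y})),\,[\mathcal{P}]\big\rangle = 2\big(n_{\mathbf{x}}(\mathcal{P}) - n_{\mathbf{y}}(\mathcal{P})\big)
\]
(the relative analogue of Theorem 4.9 of Ozsv\'{a}th--Szab\'{o}) --- is exactly the reduction used in \cite{QOB}. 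The evaluation formula, which you mention only as a fallback at the end, is the load-bearing step here, not the intersection heuristic in your middle paragraph.

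That heuristic, as written, does not hold up. First, a $1$-chain and the $2$-chain $\mathcal{P}$ do not have a well-defined intersection number \emph{on $\Sigma$}; the pairing $\langle\mathfrak{s}_{z,w}(\mathbf{x}) - \mathfrak{s}_{z,w}(\mathbf{y}),[\mathcal{P}]\rangle$ must be computed as an intersection in $Y\smallsetminus\nu(L_i)$, with the $\boldsymbol\alpha$-arcs of $\gamma(\mathbf{x},\mathbf{y})$ pushed into one handlebody, the $\boldsymbol\beta$-arcs into the other, and $\mathcal{P}$ capped off by compressing disks along $\sum n_i\alpha_i + \sum m_i\beta_i$. Second, if the $\boldsymbol\alpha$-portion of $\gamma(\mathbf{x},\mathbf{y})$ really contributed $n_{\mathbf{x}}(\mathcal{P}) - n_{\mathbf{y}}(\mathcal{P})$ \emph{and} the $\boldsymbol\beta$-portion contributed a matching amount, the pairing would equal $2\big(n_{\mathbf{x}}(\mathcal{P}) - n_{\mathbf{y}}(\mathcal{P})\big)$; combined with the extra factor of $2$ from $c_1(\mathfrak{s}+h)=c_1(\mathfrak{s})+2h$, you would land on $A_{L_i}(\mathbf{x}) - A_{L_i}(\mathbf{y}) = \tfrac{2}{r}\big(n_{\mathbf{x}}(\mathcal{P}) - n_{\mathbf{y}}(\mathcal{P})\big)$, off by a factor of two. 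The two halves of $\gamma$ together account for $n_{\mathbf{x}}(\mathcal{P}) - n_{\mathbf{y}}(\mathcal{P})$ once one tracks the averaged local multiplicities over the four quadrants that actually define $n_{\mathbf{x}}(\mathcal{P})$; doing that carefully is precisely a proof of the evaluation formula, so cite it directly rather than rederive it loosely.
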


Ni has shown the relative Alexander grading to behave nicely under covers.
Let $\mathcal{H} = (\Sigma,\boldsymbol{\alpha},\boldsymbol{\beta},\bold{z},\bold{w})$ be a Heegaard diagram for $L_1\cup \dots\cup L_l = L \subset Y$. If $Y$ is a $\mathbb{Q}HS^3$, the universal cover $\pi: S^3\to Y$ is of some finite index $p$. We may take a $p$-fold cover of $\mathcal{H}$ to get a diagram $\widetilde{\mathcal{H}} = (\widetilde{\Sigma},\widetilde{\boldsymbol{\alpha}},\widetilde{\boldsymbol{\beta}},\widetilde{\bold{z}},\widetilde{\bold{w}})$ for $(S^3,\widetilde{L})$, where $\widetilde{L} = \pi ^{-1} (L)$. We let $\widetilde{L}_i = \pi^{-1}(L_i)$, note that $\widetilde{L}_i$ is a link having $p/r_i$ components, where $r_i$ is again the order of $[L_i]$ in $H_1 (Y;\mathbb{Z})$. 
A generator $\bold{x}\in \mathbb{T}_{\boldsymbol{\alpha}}\cap \mathbb{T}_{\boldsymbol{\beta}}$ lifts to a generator $\widetilde{\bold{x}}\in \mathbb{T}_{\boldsymbol{\alpha}}\cap \mathbb{T}_{\boldsymbol{\beta}}$. 

\begin{lemma}(see Lemma 4.2 of \cite{coversalex})
For $\bold{x},\bold{y}\in \mathbb{T}_{\boldsymbol{\alpha}}\cap \mathbb{T}_{\boldsymbol{\beta}}$,
\[
A_{L_i}(\bold{x})-A_{L_i}(\bold{y}) = \frac{1}{p} (A_{\widetilde{L_i}}(\widetilde{\bold{x}}) - A_{\widetilde{L_i}}(\widetilde{\bold{y}}))
\]
\end{lemma}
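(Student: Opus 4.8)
The plan is to reduce the statement to the relative periodic domain formula of Lemma~\ref{lemma:relperiodic} and then track how relative periodic domains, and the multiplicities of generators inside them, transform under the covering $\pi\colon S^3\to Y$. Recall (the collapsed analogue of Lemma~\ref{lemma:relperiodic}, proved identically by summing the single‑component statements) that if $\mathcal{P}$ is a relative periodic domain on $\Sigma$ for $L_i$ whose homology class agrees with that of a uniform rational Seifert surface $F$ for $L_i$, so that $\partial\mathcal{P}=r\lambda_i+\sum n_k\alpha_k+\sum m_k\beta_k$, then $A_{L_i}(\bold{x})-A_{L_i}(\bold{y})=\tfrac1r\big(n_\bold{x}(\mathcal{P})-n_\bold{y}(\mathcal{P})\big)$ for all generators. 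So it suffices to produce, upstairs, a relative periodic domain $\widetilde{\mathcal{P}}$ for $\widetilde{L}_i$ which (i) represents the homology class of a uniform rational Seifert surface for $\widetilde{L}_i$ whose boundary wraps $r$ times around each component, and (ii) satisfies $n_{\widetilde{\bold{x}}}(\widetilde{\mathcal{P}})=p\,n_\bold{x}(\mathcal{P})$ for every generator $\bold{x}$ with canonical lift $\widetilde{\bold{x}}$.

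The natural candidate is $\widetilde{\mathcal{P}}:=\pi^{-1}(\mathcal{P})$, the full‑preimage $2$‑chain obtained by replacing each region $D_k$ appearing in $\mathcal{P}$ with coefficient $a_k$ by all of its $p$ lifts, each with coefficient $a_k$. Since $\pi|_{\widetilde{\Sigma}}$ is a $p$‑fold covering and preimage commutes with taking boundaries, $\partial\widetilde{\mathcal{P}}=r\,\pi^{-1}(\lambda_i)+\sum n_k\,\pi^{-1}(\alpha_k)+\sum m_k\,\pi^{-1}(\beta_k)$, and the latter two terms are integer combinations of the $\widetilde{\boldsymbol{\alpha}}$‑ and $\widetilde{\boldsymbol{\beta}}$‑curves. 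The key local computation is that $\pi^{-1}(\lambda_i)$ is the disjoint union of the $p/r$ longitudes $\widetilde{\lambda}_{i,1},\dots,\widetilde{\lambda}_{i,p/r}$ of the components of $\widetilde{L}_i$: the class $[\lambda_i]$ has order $r$ in the deck group, so $\lambda_i$ has exactly $p/r$ preimage components, each an $r$‑fold cover of $\lambda_i$, which is precisely the once‑around longitude of the corresponding component $\widetilde{L}_{i,j}$ (whose projection to $L_i$ is $r$‑to‑one). Thus $\partial\widetilde{\mathcal{P}}=r\sum_j\widetilde{\lambda}_{i,j}+(\widetilde{\boldsymbol{\alpha}},\widetilde{\boldsymbol{\beta}}\text{ terms})$, so $\widetilde{\mathcal{P}}$ is a relative periodic domain for $\widetilde{L}_i$ whose homology class is represented by $\pi^{-1}(F)$; since each $\widetilde{L}_{i,j}$ is null‑homologous in $S^3$, $\pi^{-1}(F)$ is homologous to $r$ parallel copies of an honest Seifert surface, hence is a uniform rational Seifert surface of wrapping number $r$, and — because the Alexander grading of the Definition is unchanged when $F$ is replaced by a positive multiple — the grading $A_{\widetilde{L}_i}$ it computes is the usual $S^3$‑normalized collapsed grading of the statement. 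Property (ii) is then immediate: $\widetilde{\bold{x}}$ is the full preimage of $\bold{x}$ (the canonical generator of $\widehat{CFK}(\widetilde{\mathcal{H}})$, as in Figure~\ref{fig:stacking}), and $\pi$ is a local homeomorphism, so the local multiplicity of $\widetilde{\mathcal{P}}$ at each of the $p$ points of $\widetilde{\bold{x}}$ lying over a point of $\bold{x}$ equals the local multiplicity of $\mathcal{P}$ there; summing gives $n_{\widetilde{\bold{x}}}(\widetilde{\mathcal{P}})=p\,n_\bold{x}(\mathcal{P})$.

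Assembling the pieces, $A_{\widetilde{L}_i}(\widetilde{\bold{x}})-A_{\widetilde{L}_i}(\widetilde{\bold{y}})=\tfrac1r\big(n_{\widetilde{\bold{x}}}(\widetilde{\mathcal{P}})-n_{\widetilde{\bold{y}}}(\widetilde{\mathcal{P}})\big)=\tfrac{p}{r}\big(n_\bold{x}(\mathcal{P})-n_\bold{y}(\mathcal{P})\big)=p\big(A_{L_i}(\bold{x})-A_{L_i}(\bold{y})\big)$, which rearranges to the claimed identity. The main obstacle is the careful bookkeeping in the second paragraph: one must verify that $\pi^{-1}(\lambda_i)$ is genuinely the union of the component longitudes of $\widetilde{L}_i$ and that $\pi^{-1}(F)$ is genuinely a uniform rational Seifert surface of wrapping number $r$, since it is exactly the order $r$ of $[L_i]$ that simultaneously produces the component count $p/r$, the wrapping number $r$ upstairs, and the cancellation $\tfrac1r\cdot r=1$ that leaves the net factor $p$; one should also confirm that the ambiguity of a relative periodic domain by an ambient periodic domain is harmless here because $b_1(S^3)=0$. (An alternative, slightly more computational route avoids relative periodic domains entirely by using the $c_1$ formula of the Definition together with $\mathfrak{s}_{\widetilde z,\widetilde w}(\widetilde{\bold{x}})=\pi^\ast\mathfrak{s}_{z,w}(\bold{x})$ and $\pi_\ast[\pi^{-1}(F)]=p[F]$.)
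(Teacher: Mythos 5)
The paper does not actually prove this lemma; it states it as a citation to Lemma 4.2 of \cite{coversalex}. What the paper \emph{does} prove is the closely related absolute version, Lemma~\ref{lem:Acovers}, which implies the present relative statement by taking differences, and that proof goes through the $c_1$ formula of the Definition together with the pullback identity $\pi^*(c_1(\mathfrak{s}_{z,w}(\bold{x}))) = c_1(\mathfrak{s}_{\widetilde{z},\widetilde{w}}(\widetilde{\bold{x}}))$ and the intersection--number bookkeeping $p[\mu_i]\cdot[F] = r_i[m_1]\cdot[\widetilde{F}]$. Your proof via relative periodic domains is a genuinely different route from the one the paper actually records: you replace the $c_1$-pairing computation with the combinatorial multiplicity formula of Lemma~\ref{lemma:relperiodic}. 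The two routes require the same underlying topological input (the preimage $\pi^{-1}(L_i)$ has $p/r$ components, each covering $L_i$ $r$-fold, so $\pi^{-1}(F)$ wraps $r$ times around each lifted component), and you correctly identify this, together with the $r$-versus-$p/r$ cancellation and the harmlessness of periodic-domain ambiguity since $b_1(S^3)=0$, as the crux. Your argument is correct; the one place you gloss slightly is the ``collapsed analogue'' of Lemma~\ref{lemma:relperiodic}: obtaining $A_{\widetilde{L}_i}(\widetilde{\bold{x}}) - A_{\widetilde{L}_i}(\widetilde{\bold{y}}) = \tfrac{1}{r}(n_{\widetilde{\bold{x}}}(\widetilde{\mathcal{P}}) - n_{\widetilde{\bold{y}}}(\widetilde{\mathcal{P}}))$ from the single-component statements requires decomposing $\widetilde{\mathcal{P}}$ as $r\sum_j\mathcal{P}_j$ plus a multiple of $[\widetilde{\Sigma}]$ (each $\mathcal{P}_j$ a wrapping-number-one relative periodic domain for $K_j$), which is what the $b_1(S^3)=0$ remark you make is implicitly doing; spelling this out would close the gap. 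Your closing parenthetical correctly identifies the $c_1$ route as the alternative, which is in fact the argument the paper carries out for Lemma~\ref{lem:Acovers}.
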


We will need to understand the behavior of the absolute grading under covers.

\begin{lemma}
\label{lem:Acovers}
\[
A_{L_i}(\bold{x})=\frac{1}{p} A_{\widetilde{L_i}}(\widetilde{\bold{x}}) +\frac{1}{2}\Big(1-\frac{1}{r_i}\Big)
\]
where $r_i$ denotes the order of $L_i$ in $H_1(Y;\mathbb{Z})$.
\end{lemma}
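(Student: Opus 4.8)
The plan is to reduce the statement to pinning down a single additive constant, and then to compute that constant directly from the defining formula for the absolute Alexander grading. By Lemma 4.2 of \cite{coversalex} (the relative statement quoted above), the quantity $c_i := A_{L_i}(\bold{x}) - \tfrac{1}{p}A_{\widetilde{L_i}}(\widetilde{\bold{x}})$ is independent of the generator $\bold{x}\in\mathbb{T}_{\boldsymbol\alpha}\cap\mathbb{T}_{\boldsymbol\beta}$; moreover, since the deck group permutes the components of $\widetilde{L_i}$ while preserving the collapsed grading $A_{\widetilde{L_i}} = \sum_j A_{\widetilde{L_i}^{(j)}}$, it is also independent of which lift $\widetilde{\bold{x}}$ is chosen. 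So it suffices to show $c_i = \tfrac12\bigl(1-\tfrac1{r_i}\bigr)$, which I would do by evaluating both absolute gradings using the $c_1$-formula from the Definition of the Alexander grading.

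Fix, for convenience, a Heegaard diagram $\mathcal{H}$ for $(Y,L)$ in which $L_i$ is encoded by a single basepoint pair, and let $\widetilde{\mathcal{H}}$ be its $p$-fold cover. Then $\widetilde{L_i} = \pi^{-1}(L_i)$ splits into $p/r_i$ components $\widetilde{L_i}^{(1)},\dots,\widetilde{L_i}^{(p/r_i)}$, each a null-homologous knot in $S^3$ covering $L_i$ with degree $r_i$; since the cover is regular with deck group $H_1(Y)\cong\mathbb{Z}/p$, the stabilizer of each component has order $r_i$ and acts simply transitively on the lifts of the basepoint pair lying on it, so each $\widetilde{L_i}^{(j)}$ is encoded by exactly $r_i$ basepoint pairs. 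Taking $F_i$ to be the uniform rational Seifert surface for $L_i$ (so $[\mu_i]\cdot[F_i] = r_i$) and $\widetilde{F_i}^{(j)}$ the genuine Seifert surface of $\widetilde{L_i}^{(j)}$ (so $[\widetilde{\mu_i}^{(j)}]\cdot[\widetilde{F_i}^{(j)}] = 1$), the defining formula gives
\[
A_{L_i}(\bold{x}) = \frac{\langle c_1(\mathfrak{s}_{z,w}(\bold{x})),[F_i]\rangle}{2r_i} - \tfrac12,
\qquad
A_{\widetilde{L_i}^{(j)}}(\widetilde{\bold{x}}) = \frac{\langle c_1(\mathfrak{s}_{\widetilde z,\widetilde w}(\widetilde{\bold{x}})),[\widetilde{F_i}^{(j)}]\rangle}{2} - \bigl(r_i - \tfrac12\bigr).
\]

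To combine these I would invoke naturality of the relative $\mathrm{Spin}^c$ assignment under the covering, $\mathfrak{s}_{\widetilde z,\widetilde w}(\widetilde{\bold{x}}) = \pi^*\mathfrak{s}_{z,w}(\bold{x})$ (this is exactly the ingredient underlying the covering arguments of \cite{coversalex}), so that $c_1(\mathfrak{s}_{\widetilde z,\widetilde w}(\widetilde{\bold{x}})) = \pi^*c_1(\mathfrak{s}_{z,w}(\bold{x}))$ and, by the projection formula, $\langle c_1(\mathfrak{s}_{\widetilde z,\widetilde w}(\widetilde{\bold{x}})), \textstyle\sum_j [\widetilde{F_i}^{(j)}]\rangle = \langle c_1(\mathfrak{s}_{z,w}(\bold{x})), \pi_*\textstyle\sum_j[\widetilde{F_i}^{(j)}]\rangle$. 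A homology computation identifies $\pi_*\sum_j[\widetilde{F_i}^{(j)}]$ with $\tfrac{p}{r_i}[F_i]$ up to torsion: $\partial\bigl(\sum_j\widetilde{F_i}^{(j)}\bigr) = \widetilde{L_i}$ maps under $\pi$ to $p[L_i] = \tfrac{p}{r_i}\,\partial[F_i]$, so the two classes differ by an element in the image of $H_2(Y)$, which is torsion because $Y$ is a rational homology sphere and hence pairs trivially with $c_1$. Summing the second displayed equation over $j = 1,\dots,p/r_i$, dividing by $p$, and subtracting from the first then yields $c_i = -\tfrac12 - \bigl(-1 + \tfrac1{2r_i}\bigr) = \tfrac12 - \tfrac1{2r_i} = \tfrac12\bigl(1-\tfrac1{r_i}\bigr)$, as claimed. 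The genuinely delicate points are the naturality statement $\mathfrak{s}_{\widetilde z,\widetilde w}(\widetilde{\bold{x}}) = \pi^*\mathfrak{s}_{z,w}(\bold{x})$ and the identification of $\pi_*[\widetilde{F_i}]$ modulo torsion; both are routine but need care, and it is the basepoint bookkeeping in the cover (each lifted component carrying $r_i$ pairs, hence the $-(r_i - \tfrac12)$ correction) that actually produces the $\tfrac12(1-\tfrac1{r_i})$ shift.
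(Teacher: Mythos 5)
Your proof is correct, and the underlying approach is the same as the paper's: evaluate the defining $c_1$-formula for the rational Alexander grading on both sides and use the naturality $\pi^*\mathfrak{s}_{z,w}(\bold{x}) = \mathfrak{s}_{\widetilde z,\widetilde w}(\widetilde{\bold{x}})$ of the relative $\mathrm{Spin}^c$ assignment under the covering. Where you diverge is in organization and choice of surfaces. You first invoke Ni's relative statement to pin the difference $c_i$ down as a diagram- and generator-independent constant, and then compute that constant in a convenient diagram; the paper instead directly sums the formula for $A_{K_j}(\widetilde{\bold{x}})$ over the lifted components and simplifies. More substantively, you evaluate the lifted gradings with genuine once-wrapping Seifert surfaces $\widetilde{F_i}^{(j)}$ for each component $\widetilde{L_i}^{(j)}\subset S^3$ and then relate $\pi_*\sum_j[\widetilde{F_i}^{(j)}]$ to $\tfrac{p}{r_i}[F_i]$ via the boundary map, whereas the paper works directly with $\widetilde{F}=\pi^{-1}(F)$ and passes through the winding-number identity $p[\mu_i]\cdot[F] = [\pi^{-1}(\mu_i)]\cdot[\widetilde{F}] = r_i[m_1]\cdot[\widetilde{F}]$. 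Your route avoids having to apply the grading formula to a pullback surface that need not be a Seifert surface in the cover, which is arguably the cleaner way to package this step. The basepoint bookkeeping (the deck group acts freely transitively on the $p$ lifted basepoint pairs, and the stabilizer of each $\widetilde{L_i}^{(j)}$ has order $r_i$, so each lifted component carries $r_i$ pairs) is exactly right and is what produces the shift.

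One small imprecision in your homology step: you say $\pi_*\sum_j[\widetilde{F_i}^{(j)}]$ and $\tfrac{p}{r_i}[F_i]$ differ by an element in the image of $H_2(Y)$ ``which is torsion.'' In fact $H_2(Y;\mathbb{Z})\cong H^1(Y;\mathbb{Z})$ is free abelian for any closed $3$-manifold, and it vanishes when $Y$ is a $\mathbb{Q}\mathrm{HS}^3$, so $\partial\colon H_2(Y,L)\to H_1(L)$ is injective and the two classes are \emph{exactly} equal; the ``pairs trivially with $c_1$'' caveat is unnecessary. This is a strengthening, not a gap.
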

\begin{proof}
Suppose that $F$ is a rational Seifert surface for the link $L$. We may use $\widetilde{F} = \pi ^{-1} (F)$ to compute the Alexander grading with respect $\widetilde{L}$, even though it may not be a Seifert surface.

By construction of the relative $Spin^C$-structures it is clear that 
if $v$ is a vector field representing $\mathfrak{s}_{z,w} (\bold{x})$, then we may pull back $v$ to a vector field $\pi ^* v$ on $S^3 \smallsetminus \widetilde{L}$ representing $\mathfrak{s}_{\widetilde{z},\widetilde{w}} (\widetilde{\bold{x}})$.

It follows that $\pi ^* (c_1(\mathfrak{s}_{z,w} (\bold{x}))) = c_1 (\mathfrak{s}_{\widetilde{z},\widetilde{w}} (\widetilde{\bold{x}}))$ and 
\begin{align*}
<c_1 (\mathfrak{s}_{\widetilde{z},\widetilde{w}} (\widetilde{\bold{x}})), [\widetilde{F}]>\\ = <\pi ^* (c_1(\mathfrak{s}_{z,w} (\bold{x}))), [\pi ^{-1}(F)]>\\= p <c_1(\mathfrak{s}_{z,w} (\bold{x})),[F]>.
\end{align*}
Let $K_1\dots K_m$ be the components of $\widetilde{L}_i$, where $m= p/r_i$. Let $m_j$ denote a meridian for $K_j$, and $\widetilde{n}_j$ denote the number of basepoint pairs encoding $K_j$. Note that
\begin{align*}
p[\mu_i]\cdot[F] = [\pi^{-1}(\mu_i)]\cdot[\widetilde{F}] = r_i[m_1]\cdot[\widetilde{F}].
\end{align*}

We can now evaluate,
\begin{align*}
A_{\widetilde{L_i}}(\widetilde{\bold{x}}) = \sum_{j=1}^{p/r_i} A_{K_j}(\widetilde{\bold{x}}) \\
= \sum_{j=1}^{p/r_i} \Big(\frac{<c_1 (\mathfrak{s}_{\widetilde{z},\widetilde{w}} (\widetilde{\bold{x}})),[\widetilde{F}]>}{2[m_j]\cdot[\widetilde{F}]} - (\widetilde{n}_j - 1/2)\Big)\\
= \sum_{j=1}^{p/r_i} \Big(\frac{p <c_1(\mathfrak{s}_{z,w} (\bold{x})),[F]>}{2[m_j]\cdot[\widetilde{F}]}\Big) - (n_ip -\frac{p}{2r_i})\\
= p\Big( \frac{p}{r_i} \frac{<c_1(\mathfrak{s}_{z,w} (\bold{x})),[F]>}{2[m_1]\cdot[\widetilde{F}]} - (n_i - \frac{1}{2r_i})\Big)\\
= p\Big(\frac{<c_1(\mathfrak{s}_{z,w} (\bold{x})),[F]>}{2[\mu_i]\cdot[F]} - (n_i - \frac{1}{2r_i})\Big)\\
= p\Big(\frac{<c_1(\mathfrak{s}_{z,w} (\bold{x})),[F]>}{2[\mu_i]\cdot[F]} - (n_i - \frac{1}{2}) - \frac{1}{2}+\frac{1}{2r_i}\Big)\\
= p \Big(A_{L_i} (\bold{x}) - \frac{1}{2}(1-\frac{1}{r_i})\Big).
\end{align*}
\end{proof}

\subsection{Knot Floer complexes and stabilizations}

The differential $\partial ^- : CFK^- (\mathcal{H})\to CFK^- (\mathcal{H})$ is defined as follows on generators
\[
\partial ^- (\bold{x}) := \sum\limits_{\bold{y}\in \mathbb{T}_{\boldsymbol{\alpha}}\cap \mathbb{T}_{\boldsymbol{\beta}}}  \sum_{\substack{\phi\in\pi_2 (\bold{x},\bold{y})\\ \mu(\phi)=1\\ n_z(\phi) = 0\ \  \forall z\in\bold{z}}}  \# \widehat{\mathcal{M}}(\phi) \cdot \prod\limits_{w\in \bold{w}\cup \bold{w}_F} U_w ^{n_w (\phi)}\cdot \bold{y},
\]
and extends linearly to the entire complex. We define the minus version of knot Floer homology to be
\[
HFK^- (Y,L):= HFK^- (\mathcal{H}) = H_* ( CFK^- (\mathcal{H}),\partial^-).
\]

If $w$ and $w'$ are in the same $\bold{w}_{L_i}$ for some $i$, the formal variables $U_w$ and $U_{w'}$ act identically on $HFK^- (Y,L)$. Each of the formal variables corresponding to free basepoints also act identically on $HFK^- (Y,L)$. Letting $U_i$ denote the action of $U_w$ for $w\in \bold{w}_{L_i}$, and $w_f \in \bold{w}_F$ be some free basepoint, one can show that $HFK^- (Y,L)$ is an invariant of $L\subset Y$, which is well defined up to graded $\mathbb{F}[U_1,\dots,U_l,U_{w_f}]$-module isomorphism.

$\widehat{CFK}(\mathcal{H})$ is the chain complex obtained by setting $U_w=0 $ for exactly on $w$ in each $\bold{w}_{L_i}$. We let $\widehat{\partial}$ denote the induced differential on $\widehat{CFK}(\mathcal{H})$, and let
\[
p:CFK^- (\mathcal{H})\to \widehat{CFK}(\mathcal{H})
\]
denote the natural projection. The hat version of knot Floer homology, 
\[
\widehat{HFK}(Y,L):=\widehat{HFK}(\mathcal{H}):=H_* (\widehat{CFK}(\mathcal {H}),\widehat{\partial}),
\]
is an invariant of $(Y,L)$ up to graded $\mathbb{F}$-module isomorphism.

Setting each $U_w = 0$ for all $w\in \bold{w}_F$ one obtains another chain complex, $CFK^{-,\bold{w}_F} (\mathcal{H})$. This complex plays a key role in our reformulation of the transverse invariant in subsequent sections. The homology
\[
HFK^{-,n} (Y,L):= HFK^{-,n} (\mathcal{H}):= H_* (CFK^{-,\bold{w}_F} (\mathcal{H}),\partial ^-)
\]
is an invariant of $(Y,L)$ and the number, $n$, of free basepoints up to graded $\mathbb{F}[U_1,\dots ,U_l]$-module isomorphism. 

Any pair of multi-pointed Heegaard diagrams for $(Y,L)$ is related by a sequence of Heegaard moves in the complement of all basepoints. The moves are isotopy, handleslide, index 1/2 (de)stabilization, linked index 0/3 (de)stabilization, and free index 0/3 (de)stabilization. A pair of such diagrams with equal number of free basepoints may be related by a sequence of Heegaard moves not including free 0/3 (de)stabilization. 

Isotopies and handleslides induce chain maps, via pseudo-holomorphic triangle counts, which induce isomorphisms on homology. Index 1/2 (de)stabilization induces an isomorphism of chain complexes. We describe the maps associated to linked and free 0/3 (de)stabilizations and their relationship to certain basepoint actions on the complex.

Suppose that $D$ is a region of $\Sigma \smallsetminus \boldsymbol{\beta}$ containing some $z\in \bold{z}$ and $w\in\bold{w}$. Performing a linked index 0/3 stabilization consists of adding a basepoints $z'$ to $\bold{z}$ and $w'$ to $\bold{w}$, and curves $\alpha '$ to $\boldsymbol{\alpha}$ and $\beta '$ to $\boldsymbol{\beta}$ as depicted in Figure \ref{fig:linked03}. The two intersections of $\alpha ' $ and $\beta '$, denoted $x'$ and $y'$, must be in the region of $\Sigma \smallsetminus \boldsymbol{\alpha} \smallsetminus \boldsymbol{\beta}$ which contains $z$. 

\begin{figure}[h]
\def\svgwidth{300pt}
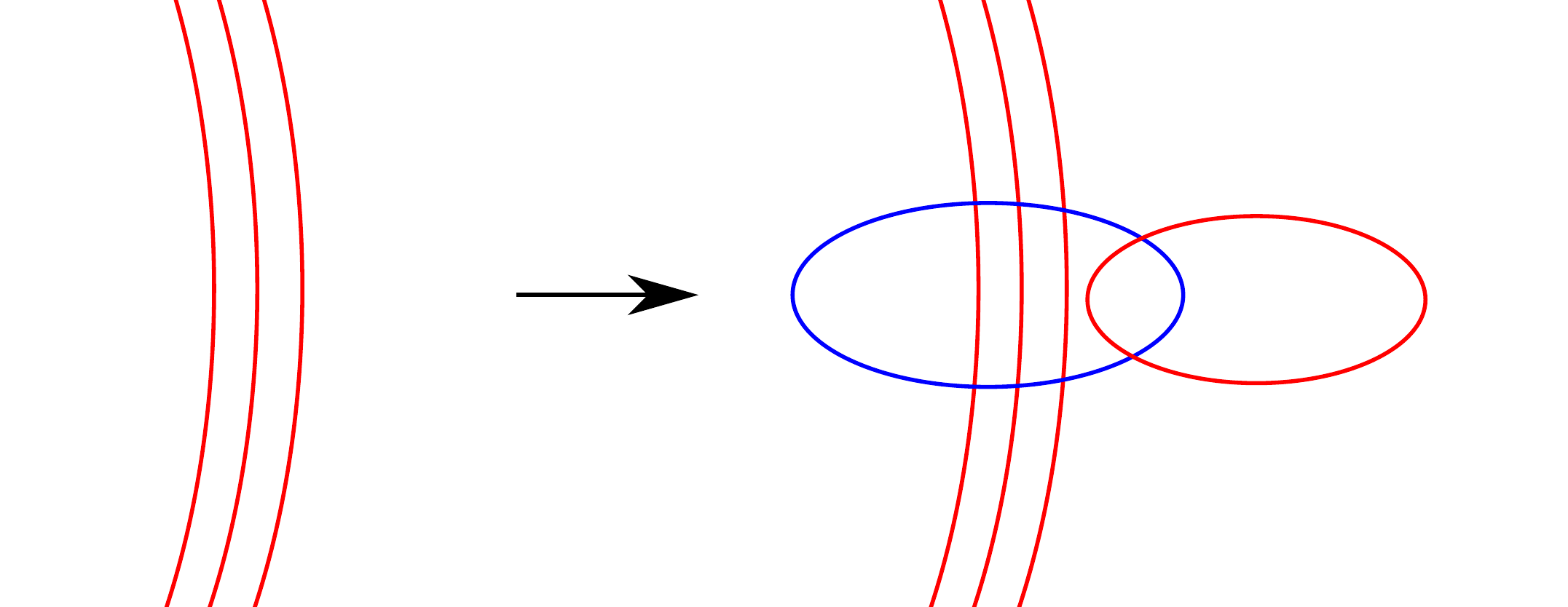
\caption{Before and after a linked 0/3 stabilization.}
\label{fig:linked03}
\end{figure}

Let $CFK^{-,n}(\mathcal{H}')_2$ be the subcomplex of $CFK^{-,n}(\mathcal{H}')$ generated by elements of the form $\bold{x}\cup\{y'\}$, and let $CFK^{-,n}(\mathcal{H}')_1$ denote the quotient complex generated by elements of the form $\bold{x}\cup\{x'\}$, where $\bold{x}\in \mathbb{T}_{\boldsymbol{\alpha}}\cap \mathbb{T}_{\boldsymbol{\beta}}$. Define $f:CFK^{-,n}(\mathcal{H}')_1\to CFK^{-,n}(\mathcal{H}')_2$ by 
\[
f(\bold{x}\cup \{x'\}) = (U_w + U_{w'})(\bold{x}\cup \{y'\}).
\]
$CFK^{-,n} (\mathcal{H}')$ is isomorphic to the mapping cone of $f$, and it follows that the map from $CFK^{-,n}(\mathcal{H})$ to $CFK^{-,n}(\mathcal{H})$ defined on generators by sending $\bold{x}$ to $\bold{x}\cup \{y'\}$ induces an isomorphism on homology. Linked index 0/3 destablization induces the inverse of this isomorphism.

Given any $z'\in \bold{z}$ we define a chain map $\Psi_{z'}: CFK^{-,n}(\mathcal{H})\to CFK^{-,n}(\mathcal{H})$ by counting holomorphic disks which pass exactly once through $z'$.
On generators the map is defined as follows:
\[
\Psi_{z'}(\bold{x}) := \sum\limits_{\bold{y}\in \mathbb{T}_{\boldsymbol{\alpha}}\cap \mathbb{T}_{\boldsymbol{\beta}}}  \sum_{\substack{\phi\in\pi_2 (\bold{x},\bold{y})\\ \mu(\phi)=1\\ n_{z'}(\phi) =1\\ n_z(\phi) = 0\ \  \forall z\in\bold{z} \smallsetminus \{z'\}}}  \# \widehat{\mathcal{M}}(\phi) \cdot \prod\limits_{w\in \bold{w}\cup \bold{w}_F} U_w ^{n_w (\phi)}\cdot \bold{y}.
\]
Studying degenerations of holomorphic disks shows that $\Psi_{z'}$ is a chain map; let $\psi_{z'}$ denote the induced map on homology. More degeneration arguments involving disks show that $\psi_{z'}^2 = 0$ and if $z'\ne z \in \bold{z}$ then $\psi_{z'}\psi_{z} = \psi_{z}\psi_{z'}$. Further standard degeneration arguments involving triangles show that $\psi_{z'}$ commutes with the isomorphisms associated to isotopies and handleslides. $\psi_{z'}$ commutes with the map associated to free 0/3 (de)stabilization, and linked 0/3 (de)stabilization so long as $z'$ is not the basepoint being added (or removed).

 Note that for the diagram $\mathcal{H}'$, obtained from $\mathcal{H}$ by linked 0/3 stabilization, we have that $CFK^{-,n} (\mathcal{H}')_1 = ker\  \Psi_{z'}$ and $CFK^{-,n} (\mathcal{H}')_2 = coker\ \Psi_{z'}$. Thus the summand $\bigcap_{z\in \bold{z}} coker(\psi_{z})$ is preserved by the isomorphism induced by any Heegaard move.

Free index 0/3 stabilization consists of adding a free basepoint $w'$ to $\bold{w}_F$, one curve $\alpha '$ to $\boldsymbol{\alpha}$ and one curve $\beta '$ to $\boldsymbol{\beta}$, in a region of $\Sigma\smallsetminus\boldsymbol{\alpha}\smallsetminus\boldsymbol{\beta}$ containing a point of $\bold{z}$, as depicted in Figure \ref{fig:free03}, to obtain a new diagram $\mathcal{H}'$. We say that $\alpha '$ and $\beta '$ form a \emph{small configuration} about $w'$.

\begin{figure}[h]
\def\svgwidth{200pt}
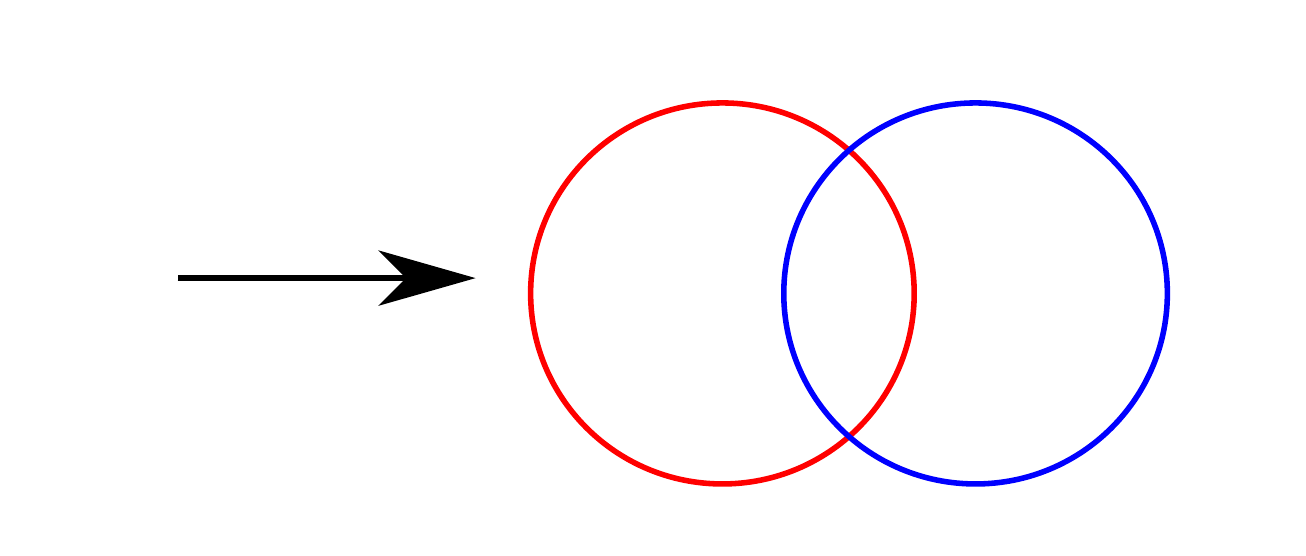
\caption{Before and after a free 0/3 stabilization.}
\label{fig:free03}
\end{figure}

Let $CFK^{-,n+1}(\mathcal{H}')_1$, $CFK^{-,n+1}(\mathcal{H}')_2$, be the subcomplex generated by elements of the form $\bold{x}\cup \{x'\}$, $\bold{x}\cup \{y'\}$, respectively, for $\bold{x}\in \mathbb{T}_{\boldsymbol{\alpha}}\cap \mathbb{T}_{\boldsymbol{\beta}}$.
$CFK^{-,n+1}(\mathcal{H}')$ splits as a direct sum of complexes,
\[
CFK^{-,n+1}(\mathcal{H}') = CFK^{-,n+1}(\mathcal{H}')_2 \oplus CFK^{-,n+1}(\mathcal{H}')_2.
\]
The inclusion $i:CFK^{-,n}(\mathcal{H})\to CFK^{-,n+1}(\mathcal{H}')$ which sends $\bold{x}$ to $\bold{x}\cup \{x'\}$, is an isomorphism from $CFK^{-,n}(\mathcal{H})$ to $CFK^{-,n+1}(\mathcal{H}')_1 [1]$, where the $[1]$ indicates that the Maslov grading has been increased by 1. The projection $j$, sending generators $\bold{x}\cup \{x'\}$ to $\bold{x}$, and all others to zero, restricts to the inverse of $i$ on $CFK^{-,n+1}(\mathcal{H}')_1[1]$.

Given any $w\in \bold{w}_F$ we define a chain map $\Psi_{w}: CFK^{-,n}(\mathcal{H})\to CFK^{-,n}(\mathcal{H})$ by counting holomorphic disks which pass exactly once through $w$.
On generators the map is defined as follows:
\[
\Psi_{w}(\bold{x}) := \sum\limits_{\bold{y}\in \mathbb{T}_{\boldsymbol{\alpha}}\cap \mathbb{T}_{\boldsymbol{\beta}}}  \sum_{\substack{\phi\in\pi_2 (\bold{x},\bold{y})\\ \mu(\phi)=1\\ n_{w}(\phi) =1\\ n_z(\phi) = 0\ \  \forall z\in\bold{z} }}  \# \widehat{\mathcal{M}}(\phi) \cdot \prod\limits_{w\in \bold{w}} U_w ^{n_w (\phi)}\cdot \bold{y}.
\]
$\Psi_{w}$ is a chain map; let $\psi_{w}$ denote the induced map on homology and refer to it as the free basepoint action associated to $w$. Standard degeneration arguments show that the basepoint actions associated to two distinct free basepoints commute, any free basepoint action squares to zero, and that $\psi_w$ will commute with maps induced by all Heegaard moves, including 0/3 free (de)stabilization so long as $w$ is not the free basepoint being added (or removed).

 Note that for the diagram $\mathcal{H}'$, obtained from $\mathcal{H}$ by free 0/3 stabilization, we have that $CFK^{-,n+1} (\mathcal{H}')_1 = coker\  \Psi_{w'}$ and $CFK^{-,n+1} (\mathcal{H}')_2 = ker\ \Psi_{w'}$. Thus the splitting in (1) gives rise to the splitting on homology,
\[
HFK^{-,n+1} (\mathcal{H}') = coker\  \psi_{w'} \oplus ker\  \psi_{w'}.
\]

The inclusion $i_*$ induces, and the projection $j_*$ restricts to, isomorphisms which are inverses of each other:
\[
\begin{split}
i_*:HFK^{-,n}(\mathcal{H})\to coker\ \psi_{w'}[1]&\\
j_*: coker\ \psi_{w'}[1]\to HFK^{-,n}(\mathcal{H}).
\end{split}
\]

Suppose now that $\mathcal{H}'$ is obtained from $\mathcal{H}$ by $k$ free index 0/3 stabilizations. Let $w_1,\dots,w_k$ denote the free basepoints which are added in the stabilizations. Let $i^k$ and $j^k$ denote the obvious compositions of inclusion and projection maps
\[
\begin{split}
i^k:CFK^{-,n}(\mathcal{H})\to CFK^{-,n+k}(\mathcal{H}')&\\
j^k:CFK^{-,n+k}(\mathcal{H}')\to CFK^{-,n}(\mathcal{H}).
\end{split}
\]
The compositions $i^k_{*}$, $j^k_{*}$, induce and restrict to, respectively, isomorphisms which are inverses of each other: 
\[
\begin{split}
i^k_*:HFK^{-,n}(\mathcal{H})\to \Big( \bigcap_{i=1}^k coker\ \psi_{w_i}\Big)[k]&\\
j^k_*: \Big( \bigcap_{i=1}^k coker\ \psi_{w_i}\Big)[k]\to HFK^{-,n}(\mathcal{H}).
\end{split}
\]

\subsection{Combinatorial knot Floer homology}
\label{subsec:combmaps}

Grid diagrams have been used to give a combinatorial definition of link Floer homology, first for links in $S^3$ \cite{MOST}, and subsequently for links in lens spaces \cite{lensgridcomb}. Also see the text \cite{gridhom} for a comprehensive treatment of grid homology for links in $S^3$.

Given a grid diagram $\mathcal{G} = (T^2,\boldsymbol{\alpha},\boldsymbol{\beta},\bold{z},\bold{w})$, consider the diagram $G = (T^2, \boldsymbol{\beta},\boldsymbol{\alpha},\bold{w},\bold{z})$ for $(-L(p,q),K)$. In this section we refer to $G$ as a grid diagram for $K$. 

If $G$ has index $n$, the generators of $CFK^-(G)$ can be identified with $S_n \oplus \mathbb{Z}/p\mathbb{Z}$, where $S_n$ denotes the symmetric group on $n$ elements. The differential on $CFK^-(G)$ is defined by counting certain pseudo-holomorphic disks, c.f. Subsection \ref{subsec:HFK}, for a grid diagram all of the appropriate disks contributing to the differential have domains which are rectangles. Computing the homology $HFK^-(G)$ is a combinatorial task, this is the basic idea behind grid homology. 

\begin{definition}
Fix $\bold{x},\bold{y} \in \mathbb{T}_{\boldsymbol{\beta}}\cap\mathbb{T}_{\boldsymbol{\alpha}}$. A \emph{rectangle} from $\bold{x}$ to $\bold{y}$ is an embedded disk $r\subset T^2$ whose boundary consists of four arcs, each of which lies along some $\boldsymbol{\beta}$ or $\boldsymbol{\alpha}$ curve, satisfying the conditions:
\begin{itemize}
\item Four corners of $p$ are in $\bold{x}\cup\bold{y}$. Moreover $\bold{x}$ and $\bold{y}$ agree away from these four corners.
\item The portion of $\partial p$ along the $\boldsymbol{\alpha}$ curves is an oriented path from $\bold{y}$ to $\bold{x}$.
\end{itemize}
\end{definition}

The set of rectangles from $\bold{x}$ to $\bold{y}$ is denoted $Rect(\bold{x},\bold{y})$, and is either empty or consists of two rectangles. A rectangle $x\in Rect(\bold{x},\bold{y})$ is called empty if its interior is disjoint from $\bold{x}$ and $\bold{y}$. The space of empty rectangles from $\bold{x}$ to $\bold{y}$ is denoted $Rect^\circ (\bold{x},\bold{y})$.

The differential on $CFK^-(G)$ can be expressed as

\[
\partial ^- (\bold{x}) := \sum\limits_{\bold{y}\in \mathbb{T}_{\boldsymbol{\beta}}\cap \mathbb{T}_{\boldsymbol{\alpha}}}  \sum_{\substack{r\in Rect^\circ (\bold{x},\bold{y})\\ r\cap \bold{w} = \emptyset}} U_0 ^{z_0 (r)} U_1^{z_1(r)}\dots U_{n-1}^{z_{n-1}(r)} \cdot \bold{y},
\]
where $z_i(r)$ denotes the intersection number of $z_i$ with $r$. 

As explained in Subsection \ref{subsec:contactgrid}, a grid diagram naturally gives rise to Legendrian and transverse representatives of the link. In Section \ref{sec:invariants} we will use grid diagrams to define invariants of Legendrian and transverse links in universally tight lens spaces, naturally extending the invariants defined in \cite{grid}.

In \cite{MOST} not only is a combinatorial method of computing $HFK^{-}(S^3,K)$ given, a combinatorial proof of invariance is presented. There are quasi-isomorphisms associated to commutations and the various stabilizations. These quasi-isomorphisms admit natural extensions to grid homology for links in lens spaces. We will later use properties of these extensions to prove invariance of the GRID invariants defined in Section \ref{sec:invariants}.


We now turn to the definition of the chain map for a column commutation, the case of a row commutation is similar. Suppose that $G'$ is obtained by commuting two adjacent columns of $G$. It is useful to draw both $G$ and $G'$ on a $T^2$ simultaneously. Note that replacing $\beta _i \in \boldsymbol{\beta}$ by the curve $\gamma_i$ depicted in Figure \ref{fig:COMMcomb} gives the diagram $G'$. The curve $\gamma_{i}$ intersects $\beta_i$ in two points. Let $\theta\in\gamma_i\cap\beta_i$ denote the point at the top of the bigon in $T^{2}\smallsetminus \{\gamma_i\cup\beta_i\}$ whose left boundary consists of an arc along $\beta_i$.

We set $\boldsymbol{\gamma} = \boldsymbol{\beta}\smallsetminus \beta_i \cup \gamma_i$, so that $G' = (T^2, \boldsymbol{\gamma},\boldsymbol{\alpha},\bold{w},\bold{z})$.

\begin{figure}[h]
\def\svgwidth{200pt}
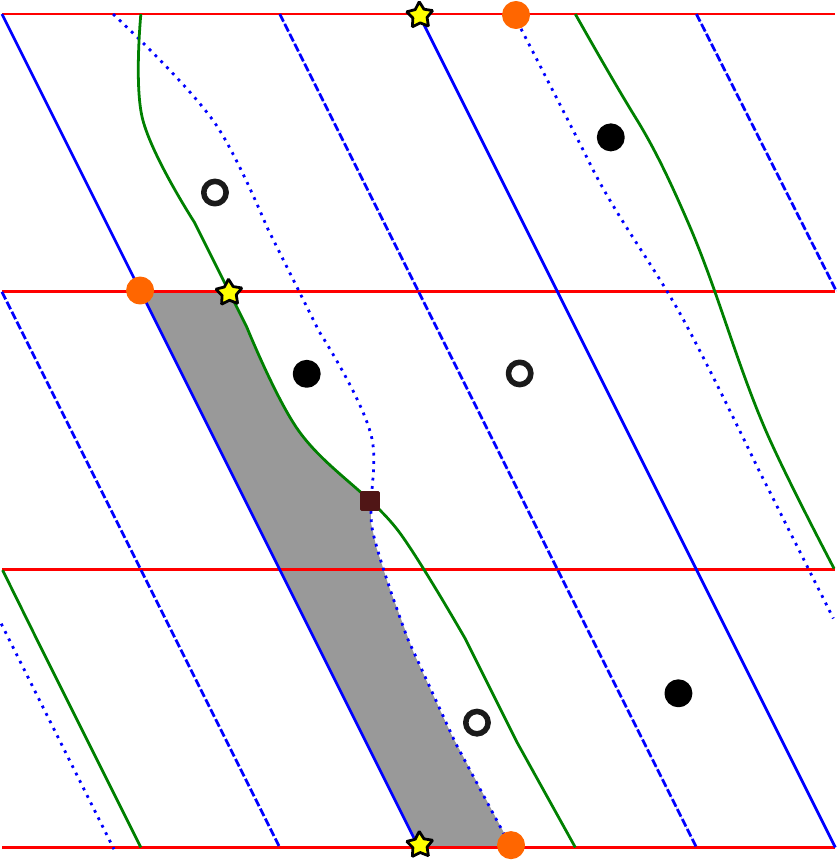
\caption{The solid and hollow dots depict $\bold{w}$ and $\bold{z}$ basepoints, respectively. The brown square depicts $\theta$. The green curve is $\gamma_i$. The domain of a pentagon is shaded.}
\label{fig:COMMcomb}
\end{figure}

\begin{definition}
Fix $\bold{x}\in \mathbb{T}_{\boldsymbol{\beta}}\cap\mathbb{T}_{\boldsymbol{\alpha}}$ and $\bold{y}\in \mathbb{T}_{\boldsymbol{\gamma}}\cap\mathbb{T}_{\boldsymbol{\alpha}}$. A \emph{pentagon} from $\bold{x}$ to $\bold{y}$ is an embedded disk $p\subset T^2$ whose boundary consists of five arcs, each of which lies along some $\boldsymbol{\beta},\boldsymbol{\gamma},$ and $\boldsymbol{\alpha}$ curve, satisfying the conditions:
\begin{itemize}
\item Four corners of $p$ are in $\bold{x}\cup\bold{y}$.
\item $p$ has multiplicity $1/4$ at each of its corners.
\item The portion of $\partial p$ along the $\boldsymbol{\alpha}$ curves is an oriented path from $\bold{y}$ to $\bold{x}$.
\end{itemize}
The set of pentagons from $\bold{x}$ to $\bold{y}$ is denoted $Pent(\bold{x},\bold{y})$.
\end{definition}

Note that $Pent(\bold{x},\bold{y})$ is empty unless $\bold{x}$ and $\bold{y}$ share $n-2$ components. Moreover the set of pentagons between two generators consists of at most one element. The fifth corner of any pentagon must be $\theta$.

A pentagon $p\in Pent(\bold{x},\bold{y})$ is said to be \emph{empty} if the interior of $p$ is disjoint from $\bold{x}$ and $\bold{y}$, the set of such pentagons is denoted $Pent^\circ (\bold{x},\bold{y})$.

We define a map $P: CFK^{-}(G)\to CFK^{-}(G')$ by 
\[
P(\bold{x}) = \sum\limits_{\bold{y}\in \mathbb{T}_{\boldsymbol{\gamma}}\cap \mathbb{T}_{\boldsymbol{\alpha}}}  \sum_{\substack{p\in Pent^\circ(\bold{x},\bold{y})\\ p\cap\bold{w}=\emptyset}}   U_0 ^{z_0 (p)} U_1^{z_1(p)}\dots U_{n-1}^{z_{n-1}(p)} \cdot \bold{y}.
\]
$P$ is a chain map, and induces an isomorphism on homology. The proof of these facts is a straightforward adaptation of the arguments appearing in \cite{MOST}.

We avoid defining the quasi-isomorphisms associated to destabilizations, instead we define their restrictions to certain subcomplexes. 

\begin{remark}
The chain maps, and all facts asserted about them, are discussed in detail in Chapter 5 of \cite{gridhom}. Slightly different conventions are used, and only grid diagrams for knots in $S^3$ are considered. Translating to our conventions and generality is just a matter of changing notation.
\end{remark}

Let $G'$ be obtained by performing a stabilization near a $\bold{w}$ basepoint of $G$. By renumbering the variables we think of $CFK^{-}(G)$ as an $\mathbb{F}[U_1,\dots,U_n]$-module, and $CFK^{-}(G')$ as an $\mathbb{F}[U_0,\dots,U_n]$-module. Let $CFK^{-}(G)[U_0]$ denote the bigraded complex $CFK^{-}(G)\otimes_{\mathbb{F}[U_1,\dots,U_n]} \mathbb{F}[U_0,\dots,U_n]$, where $x\otimes U_0 ^k$ has bigrading $(d-2k,s-k)$ for a homogenous $x\in CFK^- (G)$ having bigrading $(d,k)$. 


There is a natural projection
\[
\pi : H_* (CFK^- (G)[U_0]) \simeq HFK^-(G)[U_0]\to \frac{HFK^-(G)[U_0]}{U_0+U_1}\simeq HFK^- (G).
\]

Let $\beta '$ and $\alpha '$ be the pair of curves introduced in the stabilization. There is a distinguished intersection point $\eta \in\beta' \cap\alpha '$. Let $I(G')$ denote the points of $\mathbb{T}_{\boldsymbol{\beta '}}\cap \mathbb{T}_{\boldsymbol{\alpha '}}$ having $\eta$ as a component, and let $N(G')$ denote the complement of $I(G')$. Let $\bold{I}$ and $\bold{N}$ denote the submodules of $CFK^- (G')$ generated by $I(G')$ and $N(G')$ over $\mathbb{F}[U_0,\dots,U_n]$, respectively.

There is a natural one-to-one correspondence between $I(G')$ and $\mathbb{T}_{\boldsymbol{\beta}}\cap\mathbb{T}_{\boldsymbol{\alpha}}$. This extends to give an isomorphism of $\mathbb{F}[U_0,\dots,U_n]$-modules
\[
e:\bold{I}\to CFK^- (G)[U_0].
\]
If we are considering a stabilization of type W:NE or W:SW then $e$ is a bigraded map, otherwise it is homogenous of degree $(1,1)$.

For stabilizations of type W:NW or W:SE, $\bold{N}$ is easily seen to be a subcomplex of $CFK^-(G')$. In these cases we will need a chain homotopy equivalence
\[
\mathcal{H}^I_{w_1}:\bold{N}\to\bold{I}
\]
defined by 
\[
\mathcal{H}^I_{w_1}(x) =  \sum\limits_{\bold{y}\in I(G')}  \sum_{\substack{r\in Rect^\circ (\bold{x},\bold{y})\\ r\cap\bold{w}=w_1}}   U_0 ^{z_0 (p)} U_1^{z_1(p)}\dots U_{n}^{z_{n}(p)} \cdot \bold{y}.
\]

\begin{proposition}(See Proposition 5.4.1 of \cite{gridhom})
\label{prop:stabcomb}
If $G'$ is obtained from $G$ by a stabilization, then there is an isomorphism of bigraded $\mathbb{F}[U]$-modules from $HFK^{-}(G')\to HFK^{-}(G)$. In particular:
\begin{itemize}
\item
If the stabilization is of type W:NW or W:SE, the restriction of the above isomorphism to cycles coming from the subcomplex $\bold{N}$ is the map $\pi \circ (e\circ\mathcal{H}^I_{w_1})_*$.
\item
If the stabilization is of type W:NE or W:SW, then $\bold{I}$ is a subcomplex of $CFK^-(G')$. The restriction of the above isomorphism to cycles coming from the subcomplex $\bold{I}$ is given by $\pi \circ e_*$.
\end{itemize}
\end{proposition}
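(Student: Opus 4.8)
The plan is to adapt the proof of Proposition~5.4.1 of \cite{gridhom} to the lens-space setting; as the preceding Remark indicates, that argument is local near the stabilization region, so the only real work is to check that the toroidal geometry introduces no new rectangles into the relevant counts. A stabilization of $G$ near a $\bold{w}$-basepoint subdivides one column and one row inside a small disk $D\subset T^2$ (the stabilization square), introducing $\beta'$, $\alpha'$ and the basepoint $w_1$ inside $D$. First I would record the combinatorial consequences of this picture: the bijection $I(G')\leftrightarrow\mathbb{T}_{\boldsymbol{\beta}}\cap\mathbb{T}_{\boldsymbol{\alpha}}$ underlying the module isomorphism $e\colon\bold{I}\to CFK^-(G)[U_0]$ of the stated bidegree, and the block form of $\partial^-$ with respect to $CFK^-(G')=\bold{I}\oplus\bold{N}$. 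Here one uses that any empty rectangle with a corner at $\eta$ (or at the second point of $\beta'\cap\alpha'$), subject to the basepoint constraints of the relevant count, is confined by its remaining corners to a neighborhood of $D$, reducing the analysis to the model in \cite{MOST}: for W:NW and W:SE the mixed component of $\partial^-$ runs $\bold{I}\to\bold{N}$, so $\bold{N}$ is a subcomplex, while for W:NE and W:SW it runs $\bold{N}\to\bold{I}$, so $\bold{I}$ is a subcomplex.

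In each case $CFK^-(G')$ is then the mapping cone of this mixed component. For W:NW and W:SE I would show, again from the local rectangle count, that $\mathcal{H}^I_{w_1}\colon\bold{N}\to\bold{I}$ is a chain homotopy equivalence and that, once $\bold{N}$ and $\bold{I}$ are identified with $CFK^-(G)[U_0]$ via $e\circ\mathcal{H}^I_{w_1}$ and $e$ respectively, the mixed component becomes multiplication by $U_0+U_1$ up to chain homotopy; hence
\[
CFK^-(G')\ \simeq\ \mathrm{Cone}\bigl(U_0+U_1\colon CFK^-(G)[U_0]\to CFK^-(G)[U_0]\bigr).
\]
For W:NE and W:SW the picture is the mirror image: $e$ is already a bigraded chain isomorphism $\bold{I}\xrightarrow{\sim}CFK^-(G)[U_0]$, the subcomplex is $\bold{I}$, the quotient $\bold{N}$ is identified with the second copy of $CFK^-(G)[U_0]$, and the mixed component is again $U_0+U_1$ up to homotopy.

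Since $U_0+U_1$ acts injectively on $HFK^-(G)[U_0]$ with cokernel $HFK^-(G)$, the long exact sequence of the cone collapses to a bigraded $\mathbb{F}[U]$-module isomorphism $HFK^-(G')\xrightarrow{\sim}HFK^-(G)$, which is the first assertion. Tracing the connecting isomorphism through the identifications above: a cycle $z$ in the subcomplex $\bold{N}$ is carried to $\pi\bigl((e\circ\mathcal{H}^I_{w_1})_*[z]\bigr)$, which is the W:NW/W:SE bullet; and in the W:NE/W:SW case a cycle $z$ in the subcomplex $\bold{I}$ is carried to $\pi(e_*[z])$.

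The only genuinely non-formal step is the first one: confirming that the presence of $n$ parallel, high-slope $\boldsymbol{\beta}$-curves on the torus creates no extra empty rectangles in the counts defining $\partial^-|_{\bold{I}}$, $\partial^-|_{\bold{N}}$, the mixed component, $\mathcal{H}^I_{w_1}$, and the chain homotopies. I expect this to run exactly as in \cite{MOST} --- the corner and emptiness constraints force all such rectangles into a disk neighborhood of the stabilization square --- but it is the place where one must reinspect the combinatorics rather than quote \cite{gridhom} wholesale.
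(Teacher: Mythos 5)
Your proposal carries out the same mapping-cone argument from Proposition 5.4.1 of \cite{gridhom} that the paper cites without proof (the preceding Remark defers entirely to that reference). You correctly identify the decomposition $CFK^-(G') = \bold{I}\oplus\bold{N}$, the block structure of $\partial^-$ (so that $\bold{N}$ is a subcomplex for W:NW/W:SE and $\bold{I}$ for W:NE/W:SW), the identification of $CFK^-(G')$ with $\mathrm{Cone}(U_0+U_1)$ via $e$ and $e\circ\mathcal{H}^I_{w_1}$, and the collapse of the long exact sequence since $U_0+U_1$ is injective on $HFK^-(G)[U_0]$ with cokernel $HFK^-(G)$. You also rightly flag the one place where the lens-space setting must be re-inspected rather than quoted — that the emptiness and corner constraints still confine all relevant rectangles to a neighborhood of the stabilization square despite the high-slope $\boldsymbol{\beta}$-curves — which is exactly the ``changing notation'' step the paper's Remark alludes to. This matches the intended proof.
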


\section{The GRID invariants for links in lens spaces}
\label{sec:invariants}

Given a grid diagram $\mathcal{G} = (T^2,\boldsymbol{\alpha},\boldsymbol{\beta},\bold{z},\bold{w})$, consider the diagram $G = (T^2, \boldsymbol{\beta},\boldsymbol{\alpha},\bold{w},\bold{z})$ for $(-L(p,q),K)$, and the generator $\bold{x}^+ \in CFK^- (G)$ (respectively $\bold{x}^-$) having components which are in the upper left (respectively lower right) corners of regions in $T^2 \smallsetminus(\boldsymbol{\beta}\cup\boldsymbol{\alpha})$ containing points of $\bold{w}$. In this section we refer to $G$ as a grid diagram for $K$.

\begin{lemma}
\label{lem:cycle}
The generators $\bold{x}^+,\bold{x}^- \in CFK^- (G)$ are cycles.
\end{lemma}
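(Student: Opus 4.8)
The plan is to show directly that $\partial^-(\bold{x}^+)=\partial^-(\bold{x}^-)=0$ by producing an obstruction to any rectangle contributing. Recall that $\partial^-$ sums over empty rectangles $r$ with $r\cap\bold{w}=\emptyset$ (and the $U_i$-powers it records have no bearing on whether it vanishes), so it suffices to check that, for every generator $\bold{y}$, there is no $r\in Rect^\circ(\bold{x}^+,\bold{y})$ disjoint from $\bold{w}$, and likewise for $\bold{x}^-$. In fact I would prove the slightly stronger and cleaner statement that \emph{every} $r\in Rect(\bold{x}^+,\bold{y})$, empty or not, already meets $\bold{w}$, and symmetrically every $r\in Rect(\bold{x}^-,\bold{y})$ meets $\bold{w}$.

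For $\bold{x}^+$: by construction each of its $n$ components is the upper-left corner of a region of $T^2\smallsetminus(\boldsymbol{\beta}\cup\boldsymbol{\alpha})$ containing a point of $\bold{w}$, so that $\bold{w}$-region is the sector ``below and to the right'' of that corner point. (These regions are parallelograms bounded by the horizontal $\boldsymbol{\alpha}$-curves and the slope-$(-p/q)$ $\boldsymbol{\beta}$-curves, not literal squares, but only the local picture at a corner is used.) Given $r\in Rect(\bold{x}^+,\bold{y})$, exactly two of the four corners of $r$ lie in $\bold{x}^+$, and these are the two diagonally opposite corners of $r$. The orientation convention in the definition of a rectangle -- the portion of $\partial r$ along the $\boldsymbol{\alpha}$-curves running from $\bold{y}$ to $\bold{x}^+$ -- forces this diagonal pair to be the upper-left and lower-right corners of $r$; this is precisely the reason $\bold{x}^+$ is placed at upper-left corners. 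At its upper-left corner, $r$ fills exactly the sector ``below and to the right'', which contains the $\bold{w}$-region having that corner as its upper-left vertex. Hence $r\cap\bold{w}\neq\emptyset$, so no rectangle out of $\bold{x}^+$ contributes to $\partial^-$ and $\partial^-(\bold{x}^+)=0$.

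The corresponding statement for $\bold{x}^-$ is the image of this argument under the orientation-reversing rotation of $T^2$ by $180^\circ$: the components of $\bold{x}^-$ are the lower-right corners of $\bold{w}$-regions, in any $r\in Rect(\bold{x}^-,\bold{y})$ one of the two $\bold{x}^-$-corners is the lower-right corner of $r$, there $r$ fills the sector ``above and to the left'', and that sector contains the corresponding $\bold{w}$-region; hence $\partial^-(\bold{x}^-)=0$ as well. I do not expect a genuine obstacle here. The reasoning is entirely local near the corners of $r$, so the possibility that $r$ is a ``wide'' or ``tall'' rectangle wrapping around $T^2$ is harmless; the only real care needed is to track the orientation conventions so that the two corners of a differential rectangle lying in the initial generator are indeed the upper-left and lower-right corners of that rectangle. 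Granting that, both vanishing statements are immediate. This is the direct toroidal-grid analogue of the corresponding cycle lemma for grid diagrams in $S^3$ from \cite{grid}.
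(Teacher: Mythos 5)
Your proof is correct and matches the paper's argument: both observe that any rectangle contributing to $\partial^-(\bold{x}^+)$ has an $\bold{x}^+$-component at its upper-left corner (forced by the orientation convention, which you spell out and the paper leaves implicit), and the region just below and to the right of that corner, which the rectangle necessarily covers, contains a $\bold{w}$-basepoint. The $\bold{x}^-$ case follows by the same $180^\circ$-rotation symmetry the paper invokes.
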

\begin{proof}
The differential for the complex $CFK^-(G)$ counts parallelograms (Proposition 2.1 of \cite{lensgridcomb}). Suppose that $P$ is a parallelogram contributing to the differential of $\bold{x}^+$, let $x$ denote the component of $\bold{x}^+$ in the top left corner of $P$. The parallelogram $P$ now clearly contains a $\bold{w}$ basepoint. The proof for $\bold{x}^-$ is similar.
\end{proof}

For the case of a grid diagram $G$ representing a link in the three sphere, the following was proven in \cite{grid}:
\begin{theorem}(combination of Theorems 1.1 and 7.1 from \cite{grid})
\label{thm:OST}
If $G$ is a grid diagram for a link $K=K_1\cup K_2\cup\dots\cup K_l \subset S^3$, let $L$ and $T$ be the corresponding oriented Legendrian and transverse representatives of $K$.
The homology class $[x^+]$ in $HFK^-(-S^3,L)$ is an invariant of $L$ and $T$ up to Legendrian and transverse isotopy, respectively.
The class $[x^-]$ is also an invariant of $L$ up to Legendrian isotopy. These invariants are supported in multi-gradings
\begin{align*}
M(\bold{x}^+) = tb(L) - rot(L) +1 = sl(T)+1\\ 
A_{L_i}(\bold{x}^+) = \frac{1}{2}\Big{(}tb(L_i) - rot^i(L) +1\Big{)}= \frac{1}{2}\Big{(}sl^i(T) +1\Big{)}\\
M(\bold{x}^-) = tb(L) + rot(L) +1\quad \quad \text {and }\quad\quad A_{L_i}(\bold{x}^-) = \frac{1}{2}\Big{(}tb(L_i) + rot^i(L) +1\Big{)}.
\end{align*}
\end{theorem}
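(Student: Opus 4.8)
Although Theorem~\ref{thm:OST} is established in \cite{grid}, the plan here is to recall the structure of its combinatorial proof, since the lens-space statement Theorem~\ref{thm:grid} runs along exactly the same lines. The argument has three parts: (i)~$\bold{x}^{\pm}$ are cycles; (ii)~their homology classes are carried to one another by the chain maps attached to the elementary Legendrian grid moves (and, for $[\bold{x}^+]$, also by the extra transverse move); (iii)~the bigradings are computed directly from the diagram. Part~(i) is Lemma~\ref{lem:cycle} verbatim, the argument being indifferent to whether the ambient manifold is $S^3$ or a lens space: an empty rectangle leaving $\bold{x}^+$, having a corner at the northwest vertex of a $\bold{w}$-marked cell, must sweep over that basepoint and so contributes nothing to $\partial^-$ (symmetrically for $\bold{x}^-$). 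By the $S^3$ specialization of Theorem~\ref{thm:leg}, Legendrian isotopy classes are generated by commutations together with (de)stabilizations of the two Legendrian types, so Legendrian invariance of $[\bold{x}^{\pm}]$ reduces to checking these finitely many moves.

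For a commutation one uses the pentagon-counting quasi-isomorphism $P$ (and its row analogue) from Subsection~\ref{subsec:combmaps}: a short enumeration shows that there is exactly one empty pentagon from $\bold{x}^{\pm}$ to the generator of the commuted diagram occupying the analogous corner positions, and that it avoids $\bold{w}$, so $P$ carries $\bold{x}^{\pm}$ to that generator on the nose. For a (de)stabilization one invokes Proposition~\ref{prop:stabcomb}: the generator $\bold{x}^{\pm}$ lies in the subcomplex $\bold{I}$ when the stabilization has type W:NE or W:SW, and in $\bold{N}$ when the type is W:NW or W:SE. In the former case the relevant isomorphism restricts to $\pi\circ e_*$ and one checks that $e$ identifies $\bold{x}^{\pm}$ with the corresponding generator of the destabilized diagram; in the latter it restricts to $\pi\circ(e\circ\mathcal{H}^I_{w_1})_*$, and a direct count of the single empty rectangle hitting exactly the newly inserted $\bold{w}$-basepoint shows that $\bold{x}^{\pm}$ is sent to that generator, or to $U$ times it, according to the type and the sign. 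In particular both Legendrian (de)stabilizations preserve $[\bold{x}^{\pm}]$, giving Legendrian invariance; and since the transverse moves are the Legendrian ones together with W:SE (de)stabilization, for which the computation carries $[\bold{x}^+]$ to the destabilized class with no factor of $U$, the class $[\bold{x}^+]$ descends to a transverse-isotopy invariant of $T$.

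For the gradings, apply the combinatorial formula of \cite{MOST} for the Maslov grading of a generator of $CFK^-$ of a toroidal grid diagram (in the dualized $(T^2,\boldsymbol{\beta},\boldsymbol{\alpha},\bold{w},\bold{z})$ convention) to the explicit generators $\bold{x}^{\pm}$, and likewise the combinatorial formula for each $A_{L_i}$; both emerge as concrete integer combinations of the relative positions of the $\bold{w}$- and $\bold{z}$-basepoints. Comparing with the $p=1$ specializations of the combinatorial formulas for $tb$, $rot$, $rot^i$ of Proposition~\ref{prop:classical}, a bookkeeping match yields $M(\bold{x}^+)=tb(L)-rot(L)+1$, $A_{L_i}(\bold{x}^+)=\tfrac12\big(tb(L_i)-rot^i(L)+1\big)$, and the analogues for $\bold{x}^-$; the identities $sl(T)=tb(L)-rot(L)$ and $sl^i(T)=tb(L_i)-rot^i(L)$ for the positive transverse push-off of a null-homologous Legendrian link, noted after Lemma~\ref{lem:cover}, then produce the self-linking versions. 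The main obstacle is the (de)stabilization step of part~(ii): one must track $\bold{x}^{\pm}$ precisely through the quasi-isomorphism of Proposition~\ref{prop:stabcomb} in the W:NW/W:SE case where the chain homotopy $\mathcal{H}^I_{w_1}$ intervenes, verifying by an explicit small enumeration of rectangles through the new $\bold{w}$-basepoint that the image is exactly the distinguished generator times the correct power of $U$, and likewise that the commutation map lands on the correct generator rather than only modulo a boundary; the grading identifications, by contrast, are routine once the combinatorial $tb$ and $rot$ formulas are in hand.
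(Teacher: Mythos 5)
The paper does not prove Theorem~\ref{thm:OST}; it cites it from \cite{grid} and in fact uses it as a black-box input for the lens-space grading computations (Propositions~\ref{prop:gradingcomparison} and~\ref{prop:agradingcomp} transfer the $S^3$ gradings across the universal cover via \cite{coversmaslov}, Lemma~\ref{lem:cover}, and Lemma~\ref{lem:Acovers}). Your sketch correctly outlines the combinatorial proof from \cite{grid}, and its invariance portion (the cycle property of $\bold{x}^{\pm}$, the pentagon-counting map for commutations, and the $\bold{I}$/$\bold{N}$ dichotomy of Proposition~\ref{prop:stabcomb} together with $\mathcal{H}^I_{w_1}$ for the four stabilization types, sorting W:NE/W:SW as preserving the class and W:NW/W:SE as scaling by $U$) runs exactly parallel to the paper's own proof of the generalization Theorem~\ref{thm:grid} in Section~\ref{sec:invariants}. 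The one place your route genuinely diverges from what this paper does is the gradings: you propose evaluating the combinatorial Maslov and Alexander formulas of \cite{MOST} directly on $\bold{x}^{\pm}$ and matching them against the front-projection formulas specializing Proposition~\ref{prop:classical} at $p=1$, which is self-contained and is indeed how \cite{grid} proceeds; the paper instead sidesteps any direct combinatorial grading computation in the lens-space setting and deduces those gradings from the $S^3$ ones by covering arguments, which is precisely the step for which Theorem~\ref{thm:OST} is taken as input. Both strategies are sound; yours is the more elementary one for $S^3$, while the paper's is what makes the bootstrapping to $L(p,q)$ manageable.
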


\begin{remark}
The conventions used in \cite{grid} differ from ours, the above has been translated to our conventions. In \cite{equiv} their version of $[\bold{x}^+]$ has been shown to agree with the BRAID invariant (also defined in \cite{equiv}). Using the same proof (in the case of $S^3$), the invariant we will construct can also be shown to agree with the BRAID invariant, and in particular the invariant defined in \cite{grid}.
\end{remark}

We establish Theorem \ref{thm:grid} in a sequence of Lemmas and Propositions.

Let $G$ be a grid diagram encoding a Legendrian link $L\subset (L(p,q),\xi_{UT})$.
We begin by showing that $[\bold{x}^+],[\bold{x}^-]\in HFK^- (G)$ are invariants of the oriented Legendrian isotopy class of $L$. In light of Theorem \ref{thm:leg} it suffices to prove invariance under the elementary Legendrian grid moves, which are commutations along with (de)stabilizations of types W:NE and W:SW.

\begin{lemma}
The classes $[\bold{x}^+]$ and $[\bold{x}^-] \in HFK^- (G)$ are invariant under commutations. In particular, if $G'$ is obtained from $G$ by a commutation move then the quasi-isomorphism
\[
P: CFK^-(G)\to CFK^-(G')
\]
sends $\bold{x}^+(G)$ and $\bold{x}^- (G)$ to $\bold{x}^+(G')$ and $\bold{x}^- (G')$, respectively.
\end{lemma}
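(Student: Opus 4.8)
The plan is to compute $P(\bold{x}^+(G))$ directly from the definition of the pentagon map by enumerating the empty pentagons emanating from $\bold{x}^+(G)$ that are disjoint from $\bold{w}$, and to show there is exactly one: a thin pentagon landing on $\bold{x}^+(G')$ with trivial $U$-coefficient. This is the grid-homology counterpart of the cycle computation in Lemma~\ref{lem:cycle}, and mirrors the proof of commutation invariance of the Legendrian invariant of \cite{grid}; see also Chapter~6 of \cite{gridhom} and the adaptation to lens-space grids in \cite{lensgridcomb, lensgridleg}.

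First I would draw $G$ and $G'$ together on $T^2$, writing $\boldsymbol{\gamma} = \boldsymbol{\beta}\smallsetminus\beta_i\cup\gamma_i$ as in Figure~\ref{fig:COMMcomb}, and record the key geometric input: since the two columns being commuted have non-interleaving basepoint pairs (the hypothesis that makes the commutation a legal grid move, cf. Figure~\ref{fig:commutation}), the bigon $B_\theta$ bounded by $\beta_i$ and $\gamma_i$ with top vertex $\theta$ may be taken disjoint from $\bold{w}\cup\bold{z}$. Every component of $\bold{x}^+(G)$ lies at the upper-left corner of a region of $T^2\smallsetminus(\boldsymbol{\beta}\cup\boldsymbol{\alpha})$ containing a point of $\bold{w}$; in particular the component $x_0$ on $\beta_i$ sits at the NW corner of such a region, and the corresponding component $x_0'$ of $\bold{x}^+(G')$ sits at the NW corner of the region of $T^2\smallsetminus(\boldsymbol{\gamma}\cup\boldsymbol{\alpha})$ containing the same $\bold{w}$ basepoint. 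One then checks that there is a thin pentagon $p_0\in Pent^\circ(\bold{x}^+(G),\bold{x}^+(G'))$ supported in a neighborhood of $B_\theta$, with fifth corner $\theta$, moving $x_0$ and one neighboring component to their primed counterparts and fixing all others. Because $p_0$ is contained in a neighborhood of $B_\theta$ it contains no basepoint at all, so it contributes $\bold{x}^+(G')$ to $P(\bold{x}^+(G))$ with coefficient $1$ and no powers of the $U$-variables.

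Finally I would show that no other pentagon contributes. Let $p\in Pent^\circ(\bold{x}^+(G),\bold{y})$ satisfy $p\cap\bold{w}=\emptyset$. Each corner of $\bold{x}^+(G)$ out of which $p$ emanates sits at the NW corner of a region containing a point of $\bold{w}$; arguing exactly as in Lemma~\ref{lem:cycle}, a case analysis on the local shape of the domain of $p$ at these corners shows that, unless $p$ is the thin pentagon $p_0$, the domain of $p$ must cover one of these $\bold{w}$ basepoints, contradicting $p\cap\bold{w}=\emptyset$. Hence $P(\bold{x}^+(G))=\bold{x}^+(G')$, and the statement for $\bold{x}^-$ follows by the same argument with ``NW'' replaced by ``SE'' throughout (equivalently, by applying the $\bold{x}^+$ case to the $180^\circ$-rotated diagrams, which exchanges $\bold{x}^+$ with $\bold{x}^-$ and carries commutations to commutations and the pentagon map to the pentagon map). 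The main obstacle is this last case analysis: one must enumerate the shapes of empty pentagons leaving an NW-corner component and verify that each one other than $p_0$ swallows a $\bold{w}$ basepoint. This is routine in the $S^3$ setting, but in a lens space one must additionally rule out pentagons that ``wrap around'' $T^2$ along the $\boldsymbol{\beta}$-curves; as in \cite{lensgridcomb, lensgridleg}, the fact that the commutation is supported in a small region of $T^2$ keeps this under control.
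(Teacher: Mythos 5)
Your proposal follows essentially the same approach as the paper's proof: exhibit the obvious empty pentagon from $\bold{x}^+(G)$ to $\bold{x}^+(G')$, argue that any other pentagon emanating from a component of $\bold{x}^+(G)$ (which sits at the NW corner of a $\bold{w}$-region) must cover a $\bold{w}$ basepoint, and obtain the $\bold{x}^-$ case by a $180^\circ$ rotation of the picture. One caveat: the justification that the thin pentagon is empty \emph{because} it is ``contained in a neighborhood of $B_\theta$'' is not quite right --- the pentagon also has a pair of corners on some $\beta_j$ away from the bigon, so its emptiness is really a direct inspection of the domain in Figure~\ref{fig:COMMcomb} rather than a consequence of $B_\theta$ being basepoint-free --- but the conclusion and the rest of the argument stand.
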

\begin{proof}

Let $G$ and $G'$ differ be grid diagrams differing by a commutation. Recall that in Subsection \ref{subsec:combmaps} we defined a quasi-isomorphism
\[
P: CFK^-(G)\to CFK^-(G')
\]
which counts empty pentagons.
There is an obvious empty pentagon $p\in Pent^\circ (\bold{x}^+(G),\bold{x}^+(G'))$, see Figure \ref{fig:COMMcomb}. It is easy to see that $p$ is the unique empty pentagon connecting $\bold{x}^+(G)$ to any point of $\mathbb{T}_{\boldsymbol{\gamma}}\cap \mathbb{T}_{\boldsymbol{\alpha}}$. If $p'$ is a pentagon other than $p$ having upper right corner at a component of $\bold{x}^+(G)$, then $p'$ contains a parallelogram of $T^2\smallsetminus \{\boldsymbol{\beta}\cup\boldsymbol{\alpha}\}$ 
having a $\bold{w}$ basepoint, so $p'$ is not empty.
We have established that $P(\bold{x}^+(G))=\bold{x}^+(G')$. The proof that $P(\bold{x}^-(G))=\bold{x}^- (G')$ is similar, the picture is obtained by rotating the diagram of Figure \ref{fig:COMMcomb} by 180 degrees.
\end{proof}

\begin{proposition}
Suppose that $G'$ is obtained from $G$ by applying a destabilization of type W:NE or W:SW. There is an isomorphism 
\[
HFK^-(G')\to HFK^-(G)
\]
sending $[\bold{x}^+(G')]$ and $[\bold{x}^-(G')]$ to $[\bold{x}^+(G)]$ and $[\bold{x}^-(G)]$, respectively.
\end{proposition}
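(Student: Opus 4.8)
The plan is to deduce this from Proposition \ref{prop:stabcomb}. Read backwards, the hypothesis says that $G$ is obtained from $G'$ by a stabilization of type W:NE or W:SW, so Proposition \ref{prop:stabcomb} (with the roles of $G$ and $G'$ there played here by $G'$ and $G$) supplies a bigraded $\mathbb{F}[U]$-module isomorphism $\Phi\colon HFK^-(G)\to HFK^-(G')$; the isomorphism asserted in the statement will be $\Phi^{-1}$. For these two stabilization types, the extra information in Proposition \ref{prop:stabcomb} is that the generators of $CFK^-(G)$ containing the distinguished intersection point $\eta$ span a subcomplex $\bold{I}\subseteq CFK^-(G)$, that there is a bigraded identification $e\colon\bold{I}\to CFK^-(G')[U_0]$, and that $\Phi$, restricted to homology classes represented by cycles in $\bold{I}$, equals $\pi\circ e_*$, where $\pi\colon HFK^-(G')[U_0]\to HFK^-(G')$ is the natural projection. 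So it will suffice to establish: \emph{(i)} the cycles $\bold{x}^+(G)$ and $\bold{x}^-(G)$ (cycles by Lemma \ref{lem:cycle}) lie in $\bold{I}$, and \emph{(ii)} $e$ sends them to $\bold{x}^+(G')\otimes 1$ and $\bold{x}^-(G')\otimes 1$ in $CFK^-(G')[U_0]$.

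For \emph{(i)}, I would examine the local model of a W:NE, respectively W:SW, stabilization near a $\bold{w}$ basepoint (Figure \ref{fig:stab}): it subdivides the old $\bold{w}$-region, and $\bold{x}^\pm(G)$ is obtained from $\bold{x}^\pm(G')$ by adjoining the component of $\bold{x}^\pm$ at the upper-left (for $\bold{x}^+$), respectively lower-right (for $\bold{x}^-$), corner of the newly created $\bold{w}$-region. A case check of the two local pictures should show that in each case this adjoined component is precisely $\eta\in\beta'\cap\alpha'$, so that $\bold{x}^\pm(G)\in I(G')$ and hence in $\bold{I}$. For \emph{(ii)}, under the canonical bijection $I(G')\leftrightarrow\mathbb{T}_{\boldsymbol{\beta}}\cap\mathbb{T}_{\boldsymbol{\alpha}}$ defining $e$ on generators — which deletes the $\eta$-component and replaces the locally modified components by the corresponding ones on the original curves — the generator $\bold{x}^\pm(G)$ manifestly corresponds to $\bold{x}^\pm(G')$. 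Since $e$ is bigraded for stabilizations of type W:NE and W:SW, and since $\bold{x}^\pm(G)$ and $\bold{x}^\pm(G')$ occupy the same Maslov and Alexander gradings (the gradings of Theorem \ref{thm:grid} are unchanged by a destabilization that preserves the Legendrian isotopy class), $e(\bold{x}^\pm(G))$ can carry no positive power of $U_0$; hence $e(\bold{x}^\pm(G)) = \bold{x}^\pm(G')\otimes 1$. Combining, $\Phi[\bold{x}^\pm(G)] = \pi\big(e_*[\bold{x}^\pm(G)]\big) = \pi\big([\bold{x}^\pm(G')\otimes 1]\big) = [\bold{x}^\pm(G')]$, so $\Phi^{-1}$ carries $[\bold{x}^+(G')],[\bold{x}^-(G')]$ to $[\bold{x}^+(G)],[\bold{x}^-(G)]$, as required.

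The step I expect to be the real work is the local verification in \emph{(i)} together with its grading bookkeeping in \emph{(ii)}: one has to line up the paper's naming of the W:NE and W:SW stabilizations and the precise placement of $\eta$ (the intersection point at the top of the relevant bigon) against the defining corner positions of $\bold{x}^+$ and $\bold{x}^-$, and confirm that \emph{both} distinguished generators — not merely one of them — acquire $\eta$ as their new component. This is exactly the feature distinguishing the W:NE and W:SW moves from the W:NW and W:SE moves, for which instead one of $\bold{x}^\pm$ picks up a factor of $U$; it is the lens-space translation of the argument surrounding Proposition 5.4.1 of \cite{gridhom}, and should need only careful case analysis rather than any new input.
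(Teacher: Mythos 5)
Your proposal follows the paper's route: both deduce the result from Proposition~\ref{prop:stabcomb}, observing that $\bold{x}^\pm$ in the higher-index diagram has the distinguished point $\eta$ as a component and so lies in $\bold{I}$, and then computing $\pi\circ e_*$. You are in fact more careful than the paper on one point: the paper's own proof writes ``$\bold{x}^+(G')$ lies in the subcomplex $\bold{I}$ of $CFK^-(G')$'' while the proposition's hypothesis makes $G'$ the \emph{smaller} diagram, so the paper is reusing the $G/G'$ labels of Proposition~\ref{prop:stabcomb} without retranslating; you correctly swap the roles and take the inverse isomorphism.

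One small caution on your step~\emph{(ii)}. You argue that $e(\bold{x}^\pm)$ carries no positive $U_0$-power by appealing to the Maslov/Alexander formulas of Theorem~\ref{thm:grid}, but those formulas are \emph{conclusions} of this section and presuppose the invariance being established, so invoking them here is circular. Fortunately the detour is unneeded: $e$ is defined by extending the generator bijection $\mathbb{F}[U_0,\dots,U_n]$-linearly, so on a generator it produces the corresponding generator tensored with $1$ --- there is no $U_0$-power to rule out. Once the local check in \emph{(i)} identifies the image generator as $\bold{x}^\pm(G')$, the equality $e(\bold{x}^\pm(G))=\bold{x}^\pm(G')\otimes 1$ is immediate and the grading bookkeeping can be dropped.
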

\begin{proof}
Suppose that $G'$ is obtained form $G$ by applying a destabilization of type W:NE. Note that the generator $\bold{x}^+(G')$ lies in the subcomplex $\bold{I}$ of $CFK^- (G')$. The second half of Proposition \ref{prop:stabcomb} tells us that the isomorphism
\[
HFK^-(G')\to HFK^-(G)
\]
induced by destabilization of type W:NE maps $[\bold{x}^+ (G')]$ to $\pi \circ e_* ([\bold{x}^+(G')]) = \pi ([\bold{x}^+(G)]\otimes 1)=[\bold{x}^+(G)]$. The case of a (de)stabilization of type W:SW is similar. 

The proof of invariance for the class $[\bold{x}^-]$ is similar.
\end{proof}

We have shown that if a grid diagram $G$ encodes a Legendrian link $L\subset (L(p,q),\xi_{UT})$ then the classes $[\bold{x}^\pm (G)]\in HFK^- (G)$ are invariants of the oriented Legendrian link $L$ up to Legendrian isotopy, we  denote them by $\lambda^\pm (L)$.

We establish the behavior of the invariants $\lambda^\pm (L)$ under Legendrian stabilizations. 

\begin{proposition}
Let $L^-$ (respectively $L^+$) denote a negative (respectively positive) Legendrian stabilization along some component of $L_i\subset L$ of $L$,  in $(L(p,q),\xi_{UT})$. We have that 
\begin{align*}
\lambda^+ (L^-) = \lambda^+(L) \quad \quad \quad  \quad \lambda^- (L^-) = U\cdot \lambda^-(L)\\
\lambda^+ (L^+) = U\cdot \lambda^+(L) \quad \quad \quad  \quad \lambda^- (L^+) = \lambda^-(L).
\end{align*}
\end{proposition}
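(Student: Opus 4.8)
The plan is to identify the two Legendrian stabilizations with grid stabilizations and to track $[\bold{x}^{+}]$ and $[\bold{x}^{-}]$ through the combinatorial quasi-isomorphisms of Proposition~\ref{prop:stabcomb}. By the lemma identifying W:SE and W:NW destabilizations with negative and positive Legendrian destabilization, a W:SE (resp.\ W:NW) stabilization of a grid diagram $G$ for $L$ along the component $L_{i}$ produces a grid diagram $G'$ for $L^{-}$ (resp.\ $L^{+}$); since $\bold{x}^{+}(G'),\bold{x}^{-}(G')$ are cycles by Lemma~\ref{lem:cycle}, the invariants $\lambda^{+}$ and $\lambda^{-}$ of the stabilized Legendrian are the images of $[\bold{x}^{+}(G')]$ and $[\bold{x}^{-}(G')]$ under the destabilization isomorphism $HFK^{-}(G')\to HFK^{-}(G)$.

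I would first cut the work in half. The $180^{\circ}$ rotation of the torus is a bijection on grid diagrams which conjugates the entire combinatorial package (the differential, the destabilization maps, the projection $\pi$ and the identification $e$) while exchanging upper-left with lower-right corners, hence $\bold{x}^{+}\leftrightarrow\bold{x}^{-}$, and exchanging the W:NW and W:SE stabilization moves. Consequently the two W:NW identities of the proposition follow formally from the two W:SE identities, and it suffices to prove
\[
\lambda^{+}(L^{-})=\lambda^{+}(L),\qquad \lambda^{-}(L^{-})=U\cdot\lambda^{-}(L).
\]

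For this, let $G'$ be a W:SE stabilization of $G$, introducing curves $\alpha',\beta'$, basepoints $w',z'$, and the distinguished intersection point $\eta\in\alpha'\cap\beta'$, with the SE slot basepoint-free. I would draw the local picture and locate the new-curve components of $\bold{x}^{+}(G')$ and $\bold{x}^{-}(G')$ at the upper-left, resp.\ lower-right, corners of the two newly created $\bold{w}$-cells, recording for each whether it equals $\eta$ (so the generator lies in $\bold{I}$) or not (so it lies in the subcomplex $\bold{N}$), and then invoke the appropriate clause of Proposition~\ref{prop:stabcomb}. For a generator lying in $\bold{N}$ the destabilization map is $\pi\circ(e\circ\mathcal{H}^{I}_{w_{1}})_{*}$: one identifies the unique empty rectangle from that generator to a point of $I(G')$ that meets $\bold{w}$ only in $w_{1}$, evaluates $\mathcal{H}^{I}_{w_{1}}$ on it, and pushes forward by $e$ and $\pi$. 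In the $\bold{x}^{+}$ computation this rectangle is disjoint from every $\bold{z}$-basepoint, so the image is $\bold{x}^{+}(G)$ and $\lambda^{+}(L^{-})=\lambda^{+}(L)$; in the $\bold{x}^{-}$ computation it crosses exactly one $\bold{z}$-basepoint $z_{j}$, so the image is $U_{j}\cdot\bold{x}^{-}(G)$, and since every $U_{i}$ acts as $U$ on $HFK^{-}$ we conclude $\lambda^{-}(L^{-})=U\cdot\lambda^{-}(L)$. If one of the two generators instead sits at $\eta$ — so that it lies in the quotient $\bold{I}$ rather than in a subcomplex for these stabilization types — one reduces to a computation in $\bold{N}$, either by observing that it is homologous in $CFK^{-}(G')$ to $U$ times a nearby generator of $\bold{N}$, or by replacing the W:NW and W:SE stabilizations throughout by Z:SE and Z:NW stabilizations (permissible by the lemma comparing stabilization types, since commutations fix $\bold{x}^{\pm}$) and applying the analogous $\bold{z}$-version of Proposition~\ref{prop:stabcomb}. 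Everything is confined to a neighborhood of $L_{i}$, so the number of components of $L$ is immaterial.

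The step I expect to be the main obstacle is exactly this last, local, combinatorial bookkeeping: fixing the stabilization conventions sharply enough to decide which of $\bold{x}^{+}(G')$, $\bold{x}^{-}(G')$ lies in $\bold{N}$ and which in $\bold{I}$, to check that there is a \emph{unique} contributing empty rectangle in each $\mathcal{H}^{I}_{w_{1}}$ (or, in the $\bold{I}$-case, to exhibit the single rectangle through $z'$ realizing the needed homology), and to see precisely when such a rectangle meets a $\bold{z}$-basepoint. No tool beyond Proposition~\ref{prop:stabcomb} enters; the computation is a direct adaptation of the $S^{3}$ arguments of \cite{grid} (compare \cite{gridhom}).
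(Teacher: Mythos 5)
Your approach is essentially the paper's: identify the two Legendrian stabilizations with the W:SE and W:NW grid stabilizations and track $\bold{x}^{\pm}$ through the destabilization map $\pi\circ(e\circ\mathcal{H}^{I}_{w_{1}})_{*}$ of Proposition~\ref{prop:stabcomb}. The one genuine addition is your $180^{\circ}$-rotation reduction of the four identities to two: the rotation is a bijection on lens-space grid diagrams which conjugates the differential, the destabilization package, and the maps $e$, $\pi$, $\mathcal{H}^{I}_{w_{1}}$, and it swaps $\bold{x}^{+}\leftrightarrow\bold{x}^{-}$ and W:NW $\leftrightarrow$ W:SE, so it does cut the work in half. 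The paper does not use this symmetry; it simply exhibits the relevant rectangle for each of $\bold{x}^{\pm}(G^{+})$ and $\bold{x}^{\pm}(G^{-})$ directly (Figure~\ref{fig:legstabinv}). Either way the content is the same local count.

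The gap you flag is real but resolves simply in the paper: for a W:NW or W:SE stabilization, \emph{all four} generators $\bold{x}^{\pm}(G^{+})$, $\bold{x}^{\pm}(G^{-})$ lie in the subcomplex $\bold{N}$ (none have $\eta$ as a component), so the only clause of Proposition~\ref{prop:stabcomb} you ever need is the $\bold{N}$ clause. Your contingency for the $\bold{I}$ case therefore never arises; that is fortunate, because both fallbacks you sketch are unsupported as written — the claimed homology between a generator of $\bold{I}$ and $U$ times a nearby generator of $\bold{N}$ is not exhibited, and the paper does not state a $Z$-basepoint version of Proposition~\ref{prop:stabcomb}, so invoking one would require proving it. Once you commit to the $\bold{N}$ clause for all four cases, the remaining task is exactly what you describe: there is a unique rectangle emanating from $\bold{x}^{\pm}(G^{\pm})$ into $I(G')$ disjoint from $\bold{w}\smallsetminus\{w_{1}\}$; it avoids $\bold{z}$ for $\bold{x}^{+}(G^{-})$ and $\bold{x}^{-}(G^{+})$ (giving no $U$) and crosses exactly one $\bold{z}$-basepoint for $\bold{x}^{+}(G^{+})$ and $\bold{x}^{-}(G^{-})$ (giving the factor of $U$). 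This is precisely the picture the paper records in Figure~\ref{fig:legstabinv}.
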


\begin{proof}
Let $G$ be a grid diagram encoding the Legendrian link $L$. Let $G^-$ ($G^+$) be a grid diagram obtained from $G$ by performing some W:SE (W:NW) stabilization; recall that stabilizations of this type correspond to negative (positive) Legendrian stabilizations.

Recall the notation established in the discussion preceding Proposition \ref{prop:stabcomb}. Note that both $x^\pm (G^-)$ (respectively $x^\pm (G^+)$) lie in the subcomplex $\bold{N}$ of $CFK^-(G^-)$ (respectively $CFK^-(G^+)$).
We will show that the compositions (there are really two such maps, one from the subcomplex of $CFK^-(G^-)$, and one from the subcomplex $CFK^-(G^+)$)
\[
e\circ \mathcal{H}^I_{w_1}:\bold{N}\to CFK^-(G)[U_0]
\]
map the the distinguished generators as follows:
\begin{align*}
\bold{x}^+(G^-)\to x^+(G)\otimes 1\quad \quad\quad \bold{x}^+(G^+)\to \bold{x}^+(G)\otimes U_0\\
\bold{x}^-(G^-)\to x^-(G)\otimes U_0\quad \quad\quad \bold{x}^-(G^-)\to \bold{x}^-(G)\otimes 1.
\end{align*}
The first half of Proposition \ref{prop:stabcomb} will then give the desired result.

\begin{figure}[h]
\def\svgwidth{200pt}
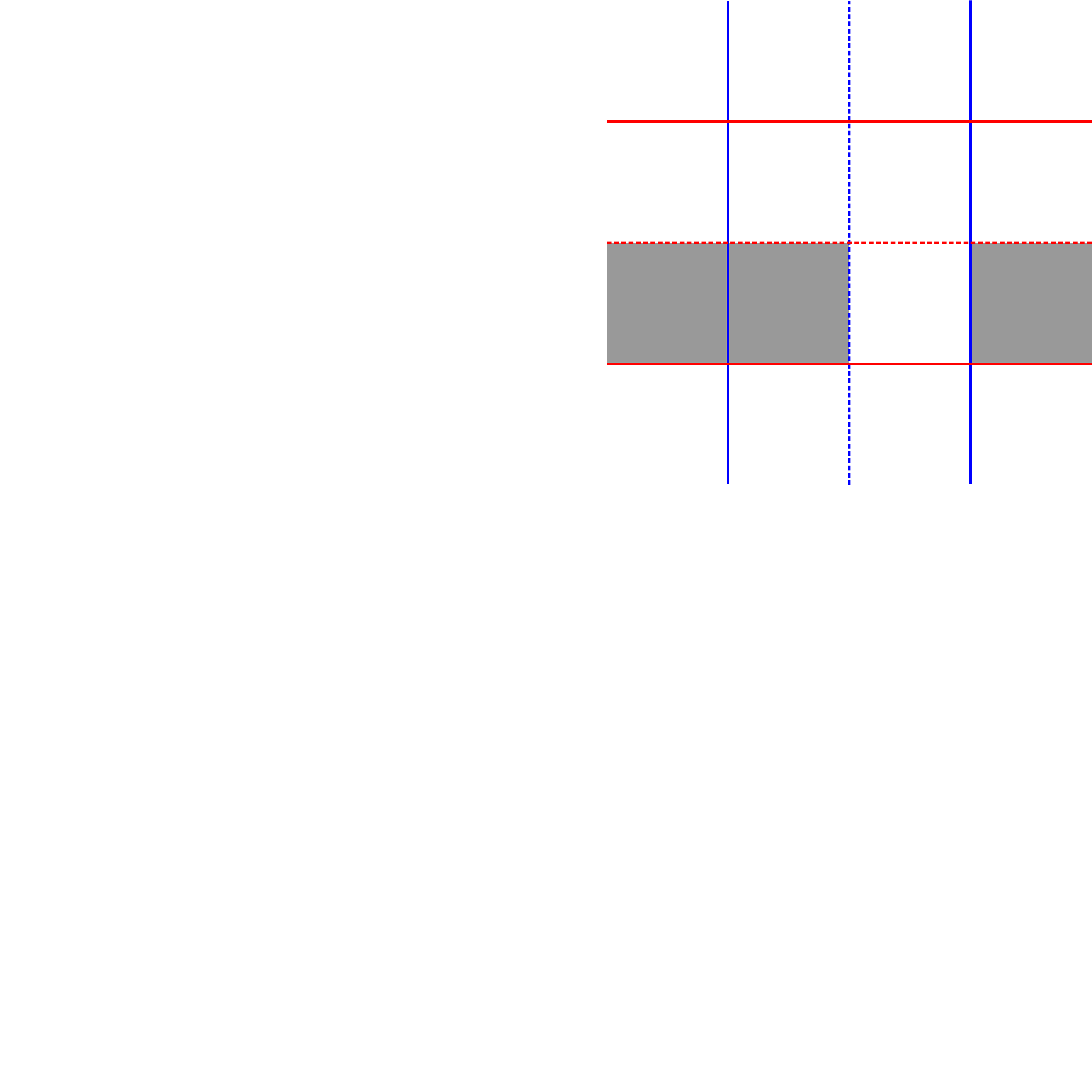
\caption{The domains of rectangles contributing to $\mathcal{H}^I_{w_1}(\bold{x}^\pm(G^+))$ or $\mathcal{H}^I_{w_1}(\bold{x}^\pm(G^-))$ are shaded above. In each case, the orange dots are components of $\bold{x}^\pm (G^\pm)$.}
\label{fig:legstabinv}
\end{figure}

The rectangles illustrated in Figure \ref{fig:legstabinv} are the only ones emanating from $\bold{x}^\pm (G^\pm)$ disjoint from all points of $\bold{w}\smallsetminus w_1$, and hence the only rectangles contributing to $\mathcal{H}^I_{w_1}(\bold{x}^\pm(G^+))$ or $\mathcal{H}^I_{w_1}(\bold{x}^\pm(G^-))$. In each case it is straightforward to post-compose with the map $e$ and check that the composition behaves as desired.

\end{proof}

\begin{proposition}
\label{prop:spincagrees}
The GRID invariants have $Spin^C$ structures agreeing with those of the contact plane fields. In particular
\[
\mathfrak{s}_\bold{w}(\theta) = \mathfrak{s}_\bold{w} (\lambda^+) = \mathfrak{s}_{\xi_{UT}}\quad \quad\text{ and }\quad\quad \mathfrak{s}_\bold{w}(\lambda^-) = \mathfrak{s}_{\overline{\xi_{UT}}}.
\]
\end{proposition}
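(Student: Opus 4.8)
The plan is to dispose of two formal reductions and then reduce everything to pinning down a single $Spin^C$ structure on $-L(p,q)$.

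First, $\mathfrak{s}_\bold{w}(\theta)=\mathfrak{s}_\bold{w}(\lambda^+)$ holds by the very definition $\theta(T)=\lambda^+(L)$. For the $\lambda^-$ statement I would invoke the conjugation symmetry of knot Floer homology: realized by reflecting the toroidal grid across a horizontal axis and interchanging the $\bold{z}$- and $\bold{w}$-basepoints, it identifies $HFK^-(-L(p,q),K,\mathfrak s)$ with $HFK^-(-L(p,q),K,\overline{\mathfrak s})$ and carries $\bold{x}^-$ to $\bold{x}^+$, so $\mathfrak{s}_\bold{w}(\bold{x}^-)=\overline{\mathfrak{s}_\bold{w}(\bold{x}^+)}$. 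Since reversing the co-orientation of a contact plane field conjugates its $Spin^C$ structure, $\mathfrak{s}_{\overline{\xi_{UT}}}=\overline{\mathfrak{s}_{\xi_{UT}}}$, and the $\lambda^-$ identity becomes equivalent to the $\lambda^+$ one. Everything thus reduces to $\mathfrak{s}_\bold{w}(\bold{x}^+)=\mathfrak{s}_{\xi_{UT}}$, an equality in the $\mathbb Z/p$--torsor $Spin^C(-L(p,q))$.

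To establish it I would compute $\mathfrak{s}_\bold{w}(\bold{x}^+)$ through the grid. The combinatorial Maslov grading of $\bold{x}^+$ in $G$ is given explicitly in \cite{lensgridcomb}, and it differs from the absolute $\mathbb Q$--grading by the correction term of the $Spin^C$ summand containing $\bold{x}^+$; independently, passing to the contact universal cover and combining Lemma~\ref{lem:cover}, Theorem~\ref{thm:OST}, and the behaviour of the absolute grading under covers (an analogue of Lemma~\ref{lem:Acovers}) rewrites that same absolute grading in terms of $tb_\mathbb Q(L)$ and $rot_\mathbb Q(L)$. Equating the two, with Cornwell's formulas (Proposition~\ref{prop:classical}) plugged in, computes $d\big(-L(p,q),\mathfrak{s}_\bold{w}(\bold{x}^+)\big)$, which I would then match against $d(p,q,q-1)$, the correction term of the universally tight $Spin^C$ structure on $-L(p,q)$, read off from the surgery description $L(p,q)=(-p/q)$--surgery on the unknot together with Honda's classification \cite{contact1}.

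The main obstacle is promoting an equality of correction terms to an equality of $Spin^C$ structures: correction terms do not separate $Spin^C$ structures on a lens space (conjugate ones coincide, for instance), so I would need extra input --- the already established relation $\mathfrak{s}_\bold{w}(\bold{x}^-)=\overline{\mathfrak{s}_\bold{w}(\bold{x}^+)}$, nonvanishing of the contact invariant $c(\xi_{UT})$ (the universally tight structures being fillable), and careful tracking of the $\mathbb Z/p$ of $Spin^C$ structures under the surgery presentation --- to eliminate the false candidates. A cleaner, more self-contained route, and probably the one actually used, is to connect the grid diagram $G$ by $Spin^C$--preserving Heegaard moves to a Heegaard diagram adapted to the rational open book $(B,\pi)$ --- the braiding of $K$ about $(B,\pi)$ from Proposition~\ref{prop:braiding} and the compatibility of Proposition~\ref{prop:commute} provide the bridge --- in which a distinguished generator represents the contact class and hence lies in $\mathfrak{s}_{\xi_{UT}}$ by construction; verifying that this generator is carried to $\bold{x}^+$ then yields the claim, at the price of the diagrammatic analysis developed later in the paper.
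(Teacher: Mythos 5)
Your reduction $\mathfrak{s}_\bold{w}(\theta)=\mathfrak{s}_\bold{w}(\lambda^+)$ and the reduction of the $\lambda^-$ statement to the $\lambda^+$ one are both legitimate moves (though the paper realizes the latter by a $180$-degree rotation of the grid torus rather than a reflection-plus-basepoint-swap; the reflection reverses the orientation of $T^2$ and turns the $\boldsymbol\beta$-slopes positive, so it does not obviously carry $\bold{x}^-$ to $\bold{x}^+$ without further work, whereas the rotation is an orientation-preserving grid automorphism that visibly swaps the upper-left and lower-right conventions). The real issue is in how you pin down $\mathfrak{s}_\bold{w}(\bold{x}^+)$. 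Your Method A you already flag as incomplete, for the right reason: on $-L(p,q)$ conjugate $Spin^C$ structures share the same correction term, so matching $d$-invariants does not single out $\mathfrak{s}_{\xi_{UT}}$. Your Method B, which you conjecture is the paper's route, would require tracking $\bold{x}^+$ through $Spin^C$-preserving Heegaard moves all the way to the open-book diagram and identifying it with the BRAID generator --- but that identification is precisely the content of Theorem \ref{thm:equivalence}, whose proof (via Lemma \ref{lem:thetagen}) \emph{already invokes} this proposition. So as written, Method B is circular.

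The idea you are missing, and which makes the paper's proof both short and non-circular, is that $\mathfrak{s}_\bold{w}(\bold{x}^+)$ depends only on the $\bold{w}$-data: it is insensitive to the location of the $\bold{z}$-basepoints, and the earlier propositions of Section \ref{sec:invariants} show that the isomorphisms attached to (de)stabilizations carry $\bold{x}^+$ to $\bold{x}^+$ (up to a $U$-power), hence preserve its $Spin^C$ structure. One is therefore free to move the $\bold{z}$-basepoints arbitrarily --- \emph{changing the encoded knot} in the process --- and then destabilize down to an index-one grid diagram whose encoded knot is the binding $B$, all without disturbing $\mathfrak{s}_\bold{w}(\bold{x}^+)$. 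In that index-one diagram a trivial check with Lemma \ref{lemma:relperiodic} shows $\bold{x}^+$ sits at the extremal Alexander grading, so Theorem 1.1 of \cite{QOB} (the contact class is the bottom of the Alexander filtration induced by a fibered knot) forces $[\bold{x}^+]$ to represent $c(\xi_{UT})$ in $\widehat{HF}$, pinning $\mathfrak{s}_\bold{w}(\bold{x}^+)=\mathfrak{s}_{\xi_{UT}}$. This sidesteps both the $d$-invariant ambiguity and any appeal to the later diagrammatic machinery.
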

\begin{proof}
Note that moving the $\bold{z}$ basepoints has no effect on $Spin^C$ structure $\mathfrak{s}_\bold{w}(\bold{x}^+)$. In light of the previous two propositions, stabilizations of any kind have no effect on the $Spin^C$ structure as well. Using these two types of moves (stabilizations and moving $\bold{z}$ basepoints arbitrarily), we may go from an arbitrary grid diagram $G$ to an index one diagram for the binding $B$ of the standard rational open book supporting $\xi_{UT}$. 

For this index one diagram, the generator $\bold{x}^+$ is easily seen to have maximal Alexander grading. (This is trivial to check using Lemma \ref{lemma:relperiodic}). By Theorem 1.1 of \cite{QOB}, it follows that $[\bold{x}^+] \in \widehat{HF}(T^2,\boldsymbol{\beta},\boldsymbol{\alpha},\bold{w})$ represents the contact invariant $c(\xi_{UT})$, in particular $\mathfrak{s}_\bold{w}(\bold{x}^+) = \mathfrak{s}_{\xi_{UT}}$.

Rotating the grid diagram by 180 degrees, noting that this corresponds to co-orientation reversal of the contact structures, and carrying out the same argument gives the analogous result for $\bold{x}^-$.
\end{proof}

Corollary \ref{corollary:grid} follows immediately. Recall that if $T$ is the positive transverse push-off of a Legendrian $L$, then we denote by $\theta (T) = \lambda^+ (L)$ the transverse invariant.

We turn to the computation of the bigradings of $\lambda^\pm (L)$.
Recall that $\Delta_n\in B_n$ denotes the Garside element, so that $\Delta_n ^2$ is the full twist on $n$ strands.

\begin{proposition}
\label{prop:gradingcomparison}
Let $L$ and $T$ denotes Legendrian and transverse links, respectively. The invariants $\lambda^+(L), \lambda^-(L)$ and $\theta(T)$ are supported in Maslov gradings
\begin{align*}
M(\lambda^+(L)) = tb_\mathbb{Q}(L) - rot_\mathbb{Q}(L) +\frac{1}{p} -d(p,q,q-1)\\
M(\lambda^-(L)) = tb_\mathbb{Q}(L) + rot_\mathbb{Q}(L) +\frac{1}{p}-d(p,q,q-1)\\
M(\theta(T)) = sl_{\mathbb{Q}}(T) +\frac{1}{p}-d(p,q,q-1)
\end{align*}
\end{proposition}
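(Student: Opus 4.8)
Only the formula for $\lambda^+(L)$ requires work; the other two follow formally. Granting
\[
M(\lambda^+(L)) = tb_\mathbb{Q}(L) - rot_\mathbb{Q}(L) + \tfrac1p - d(p,q,q-1),
\]
observe that the $180^{\circ}$ rotation of a grid diagram interchanges $\bold{x}^+$ and $\bold{x}^-$ and, by the argument in the proof of Proposition \ref{prop:spincagrees}, amounts to reversing the co-orientation of $\xi_{UT}$ (equivalently, after the relabelling of Corollary \ref{corollary:same}, to reversing the orientation of $L$). Since $tb_\mathbb{Q}$ is orientation-independent while $rot_\mathbb{Q}$ changes sign, and since $d(-L(p,q),\mathfrak{s}_{\overline{\xi_{UT}}}) = d(-L(p,q),\mathfrak{s}_{\xi_{UT}})$ by conjugation symmetry of the correction terms, applying the $\lambda^+$ formula to the rotated diagram yields the $\lambda^-$ formula. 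The transverse formula is then immediate, since $\theta(T) = \lambda^+(L)$ for any Legendrian approximation $L$ of $T$ and $sl_\mathbb{Q}(T) = tb_\mathbb{Q}(L) - rot_\mathbb{Q}(L)$, as recorded just after Lemma \ref{lem:cover}.

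For $\lambda^+$ the plan is to lift everything to $S^3$ through the contact universal cover $\pi\colon(S^3,\xi_{std})\to(L(p,q),\xi_{UT})$ realized by stacking $p$ vertical copies of $\mathcal{G}$ (the construction preceding Lemma \ref{lem:cover}), producing a grid diagram $\mathcal{G}'$ for $K' = \pi^{-1}(K)\subset S^3$ whose induced Legendrian is $L' = \pi^{-1}(L)$. Writing $G$, $G'$ for the associated diagrams for $(-L(p,q),K)$ and $(-S^3,K')$, the distinguished generator $\bold{x}^+(G)$ lifts to $\bold{x}^+(G')$. The behaviour of the absolute $\mathbb{Q}$-valued Maslov grading under this $p$-fold cover mirrors that of the Alexander grading in Lemma \ref{lem:Acovers} (compare Lemma 4.2 of \cite{coversalex}): Maslov indices and basepoint multiplicities of lifted domains are multiplied by $p$, so
\[
M_{G'}(\bold{x}^+(G')) = p\, M_G(\bold{x}^+(G)) + c,
\]
where the constant $c$ depends only on $-L(p,q)$ with the $\Sc$-structure $\mathfrak{s}_{\bold{w}}(\bold{x}^+) = \mathfrak{s}_{\xi_{UT}}$ of Proposition \ref{prop:spincagrees} and not on $\mathcal{G}$. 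Feeding in the $S^3$ computation $M_{G'}(\bold{x}^+(G')) = tb(L') - rot(L') + 1$ of Theorem \ref{thm:OST} together with $tb(L') = p\,tb_\mathbb{Q}(L)$ and $rot(L') = p\,rot_\mathbb{Q}(L)$ from Lemma \ref{lem:cover} gives
\[
M_G(\bold{x}^+(G)) = tb_\mathbb{Q}(L) - rot_\mathbb{Q}(L) + \frac{1-c}{p},
\]
so it remains only to show $c = p\, d(p,q,q-1)$.

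To evaluate $c$ I would specialize to a reference diagram: the index-one grid diagram $G_B$ for the binding $B$ of $(B,\pi)$, which (as in the proof of Proposition \ref{prop:spincagrees}) is connected to every grid diagram by stabilizations and $\bold{z}$-basepoint moves, none of which affect $c$. For $G_B$ the torus with its two curves is the standard genus-one diagram of $-L(p,q)$; there are exactly $p$ generators carrying the $p$ distinct $\Sc$-structures, the differential vanishes, and $\bold{x}^+(G_B)$ is the unique generator in $\mathfrak{s}_{\xi_{UT}}$, representing the contact class in $\widehat{HFK}$ of the rationally fibered knot $B$. Its Maslov grading is then determined by the correction terms $d(-L(p,q),\cdot) = -d(L(p,q),\cdot)$ together with a normalization shift analogous to the one appearing in Lemma \ref{lem:Acovers}, while its lift $\bold{x}^+(G'_B)$ lies in $\widehat{HFK}(-S^3,\pi^{-1}(B))$ with $\widehat{HF}(-S^3) = \mathbb{F}_{(0)}$; equating the two expressions for $M$ pins down $c$. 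The final ingredient is to verify that $\mathfrak{s}_{\xi_{UT}}$ carries the label $q-1$ in Ozsv\'ath--Szab\'o's recursive enumeration of $\Sc$-structures on $L(p,q)$ --- for instance from the surgery presentation $L(p,q) = S^3_{-p/q}(U)$ and the recursion defining $d(p,q,i)$, or by comparison with Honda's model for $\xi_{UT}$. I expect this last step --- correctly matching the Heegaard Floer $\Sc$-labelling, the sign and normalization conventions for correction terms, and the basepoint-dependent grading shift in $CFK^-$ --- to be the only genuine obstacle; everything else is a formal consequence of Theorem \ref{thm:OST}, Lemma \ref{lem:cover}, and the multiplicativity of Floer gradings under covers.
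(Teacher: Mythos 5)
Your plan is essentially the same as the paper's proof: both pass through the contact universal cover $\pi\colon(S^3,\xi_{std})\to(L(p,q),\xi_{UT})$, use Theorem~\ref{thm:OST} for the $S^3$ gradings together with the covering behaviour of Maslov gradings (the paper cites Theorem~2.6 of~\cite{coversmaslov}, which is exactly the affine relation $M_{G'}=pM_G+c$ within a fixed $\Sc$-structure that you invoke), anchor the resulting constant at the index-one diagram for the binding, and derive the remaining two formulas formally from the one computed first. The organizational differences are cosmetic: the paper proves the $\theta$ formula first, works with the braid $\beta\circ\delta^{q/p}$ and its reference $\tau_1\circ\delta^{q/p}$, and obtains $\lambda^-$ by another covers computation, whereas you start with $\lambda^+$ and obtain $\lambda^-$ by the $180^\circ$-rotation/co-orientation-reversal observation (which the paper itself uses in the proof of Proposition~\ref{prop:spincagrees}); your rotation route is slightly cleaner and does exploit conjugation symmetry of the $d$-invariants correctly.

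The one place where you stop short is precisely where you flag "the only genuine obstacle": pinning down the constant, i.e.\ showing the reference grading for the index-one diagram is $-d(p,q,q-1)$ in the Ozsv\'ath--Szab\'o labelling. The paper does not re-derive this; it cites~\cite{dinvt}, where the Maslov grading of the index-one generator (hence of $\tau_1\circ\delta^{q/p}$) was computed to equal $-d(p,q,q-1)$ with the correct $\Sc$-bookkeeping. So your self-assessment of the gap is accurate, and closing it the way you propose (comparison with $\widehat{HF}(-S^3)=\mathbb{F}_{(0)}$, conjugation symmetry, and matching the enumeration of $\Sc$-structures) would amount to re-deriving the content of that reference. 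If you want a complete proof, the economical move is to cite~\cite{dinvt} at that step, as the paper does, rather than re-establish the $\Sc$-labelling from scratch.
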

\begin{proof}

We prove the third equality.
Let $\beta\circ \delta^{q/p}$ be an n-braid representing $T$, and let $\beta_1$ denote the 1-braid. 
Let $G$ be a grid diagram for $\beta\circ \delta^{q/p}$. We may take a p-fold cover of $G$ to get a grid diagram for $\beta^p\circ (\Delta^2)^q$. Likewise, we may cover a grid diagram for the braid $\tau_1\circ \delta^{q/p}$ by a grid diagram for the 1-braid $\tau_1$. 
The gradings of the GRID invariants in $(S^3,\xi_{std})$ have been computed, see Theorem \ref{thm:OST}, we have
\begin{align*}
M(\theta(\beta^p\circ (\Delta^2)^q)) = sl(\beta^p\circ (\Delta^2)^q)+1=pw(\beta)+(qn-1)(n-1)\\
M(\theta(\tau_1))=sl(\tau_1)+1 = 0
\end{align*}
where $w(\beta)$ is the writhe of $\beta$.

Theorem 2.6 of \cite{coversmaslov} allows us to compute the grading difference:
\begin{align*}
M(\theta(\beta\circ \delta^{q/p}))-M(\theta(\beta_1\circ\delta^{q/p}))\\ = \frac{1}{p}(M(\theta(\beta^p\circ (\Delta^2)^q))-M(\theta(\beta_1)))\\ 
=w(\beta)+\frac{1}{p}(qn^2-qn-n+1)\\
= sl_\mathbb{Q}(\beta\circ\delta^{q/p})+\frac{1}{p}
\end{align*}
where the last equality uses Lemma \ref{lem:QSL}.

Note that $\tau_1\circ\delta^{q/p}$ can be encoded with an index one diagram. In this diagram, the Maslov grading $M(\theta(\tau_1\circ\delta^{q/p}))$ has been computed in \cite{dinvt} to be $-d(p,q,q-1)$ where the function $d$ is recursively defined by
\begin{align*}
d(1,0,0) =0\\
d(p,q,i) = \Big{(}\frac{pq-(2i+1-p-q)^2}{4pq}\Big{)}-d(q,r,j)
\end{align*}
where $r$ and $j$ are the reductions of $p$ and $i$ modulo $q$, respectively. The minus sign comes from our orientation conventions being opposite to that of \cite{dinvt}.

Note that the first equality follows from the third, for if $T$ is the positive transverse push-off of a Legendrian $L\subset (L(p,q),\xi_{UT})$ then we have that $sl_{\mathbb{Q}}(T) = tb_\mathbb{Q}(L) - rot_\mathbb{Q}(L)$. 

The second equality follows from the first by another application of \cite{coversmaslov} and \cite{grid}. Let $\tilde{L}$ denote the pre-image of $L$ under the contact universal cover.
\begin{align*}
M(\lambda^+(L))-M(\lambda^-(L))\\ = \frac{1}{p}(M(\lambda^+(\tilde{L}))-M(\lambda^-(\tilde{L}))\\
=\frac{1}{p}((tb)\tilde{L})-rot(\tilde{L})+1) - (tb(\tilde{L})+rot(\tilde{L})+1)\\
=\frac{1}{p}(-2rot\tilde{L}) = -2rot_\mathbb{Q}(L)
\end{align*}

\end{proof}

\begin{proposition}
\label{prop:agradingcomp}
Let $L=L_1\cup\dots\cup L_{l}$ and $T\cup\dots\cup T_{l}$ denote Legendrian and transverse links, respectively. The invariants $\lambda^+(L), \lambda^-(L)$ and $\theta(T)$ are supported in Alexander gradings
\begin{align*}
A_{L_i}(\lambda^+(L)) = \frac{1}{2}\Big{(}tb_\mathbb{Q}(L_i) - rot^i_\mathbb{Q}(L) +1\Big{)}\\
A_{L_i}(\lambda^-(L)) = \frac{1}{2}\Big{(}tb_\mathbb{Q}(L_i) + rot^i_\mathbb{Q}(L) +1\Big{)}\\
A_{T_i}(\theta(T)) = \frac{1}{2}\Big{(}sl^i_\mathbb{Q}(T)  +1\Big{)}.
\end{align*}
\end{proposition}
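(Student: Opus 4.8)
The strategy mirrors the Maslov-grading computation in Proposition~\ref{prop:gradingcomparison}: I will reduce the rational Alexander-grading statement to the known $S^3$ formulas of Theorem~\ref{thm:OST} via the contact universal cover $\pi\colon (S^3,\xi_{std})\to (L(p,q),\xi_{UT})$, using the behavior of the Alexander grading under covers recorded in Lemma~\ref{lem:Acovers} (together with the multiplicativity of the relative grading, Lemma~4.2 of \cite{coversalex}). Concretely, fix a grid diagram $\mathcal G$ for $L$ of index $g$, so that the distinguished cycle $\bold{x}^+(G)$ represents $\lambda^+(L)$. Stacking $p$ copies vertically produces a grid diagram $\mathcal G'$ of index $pg$ for the lift $\widetilde L=\pi^{-1}(L)$, and by construction of the cycles in terms of upper-left corners of $\bold{w}$-regions, the generator $\bold{x}^+(\mathcal G')$ is precisely the lift $\widetilde{\bold{x}^+(\mathcal G)}$ of $\bold{x}^+(\mathcal G)$ (the $\bold{w}$-basepoints of $\mathcal G'$ are exactly the lifts of those of $\mathcal G$, one in each lifted region).

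The core computation is then a single application of Lemma~\ref{lem:Acovers} to the component $L_i$, whose order in $H_1(L(p,q);\Z)$ I will call $r_i$. It reads
\[
A_{L_i}(\bold{x}^+) \;=\; \frac{1}{p}\,A_{\widetilde{L_i}}\big(\widetilde{\bold{x}^+}\big) \;+\; \frac12\Big(1-\frac{1}{r_i}\Big).
\]
Here $\widetilde{L_i}=\pi^{-1}(L_i)$ has $p/r_i$ components $K_1,\dots,K_{p/r_i}$, and by additivity of the Alexander grading over components together with Theorem~\ref{thm:OST} applied to the $S^3$ grid diagram $\mathcal G'$,
\[
A_{\widetilde{L_i}}\big(\widetilde{\bold{x}^+}\big)
=\sum_{j=1}^{p/r_i} A_{K_j}\big(\widetilde{\bold{x}^+}\big)
=\sum_{j=1}^{p/r_i}\tfrac12\Big(tb(K_j)-rot^j(\widetilde L)+1\Big)
=\tfrac12\Big(tb(\widetilde L_i)-rot^i(\widetilde L)+\tfrac{p}{r_i}\Big),
\]
where I am writing $tb(\widetilde L_i)=\sum_j tb(K_j)$ and $rot^i(\widetilde L)=\sum_j rot^j(\widetilde L)$. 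Now I invoke Lemma~\ref{lem:cover} in its per-component form, $tb_\mathbb{Q}(L_i)=\tfrac1p\,tb(\widetilde L_i)$ and $rot^i_\mathbb{Q}(L)=\tfrac1p\,rot^i(\widetilde L)$ — I should double-check that Lemma~\ref{lem:cover}'s per-component statement is literally stated for $tb^i$, or else derive the needed identity $tb(\widetilde L_i)=p\cdot tb_\mathbb{Q}(L_i)$ directly from the definition of rational linking number and the fact that the lift of a uniform rational Seifert surface for $L_i$ computes $r_i$ times the linking data upstairs; this bookkeeping of the factor $r_i$ versus $p$ is the one genuinely fiddly point. Substituting,
\[
A_{L_i}(\bold{x}^+)=\frac1p\cdot\frac12\Big(p\,tb_\mathbb{Q}(L_i)-p\,rot^i_\mathbb{Q}(L)+\tfrac{p}{r_i}\Big)+\frac12\Big(1-\frac1{r_i}\Big)
=\frac12\Big(tb_\mathbb{Q}(L_i)-rot^i_\mathbb{Q}(L)+1\Big),
\]
as the $\tfrac{1}{2r_i}$ terms cancel. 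The formula for $A_{L_i}(\lambda^-)$ is identical after replacing $\bold{x}^+$ by $\bold{x}^-$, which on lifts corresponds to $rot^i\mapsto -rot^i$; and the transverse formula follows from $\theta(T)=\lambda^+(L)$ together with $sl^i_\mathbb{Q}(T)=tb_\mathbb{Q}(L_i)-rot^i_\mathbb{Q}(L)$, which is the per-component version of the standard identity already noted after Lemma~\ref{lem:cover}.

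The main obstacle is not any deep geometric input but rather the careful matching of normalizations: the rational invariants $tb_\mathbb{Q}, rot^i_\mathbb{Q}$ are defined via a uniform rational Seifert surface wrapping $r$ times (the lcm of the component orders) around each component, while Lemma~\ref{lem:Acovers} is stated in terms of the individual order $r_i$, and Theorem~\ref{thm:OST}'s $S^3$ formulas are per-component with no normalization factor at all. I will want to state explicitly at the start of the proof which Seifert surface is being used to compute each Alexander grading (the preferred one being the lift $\widetilde F=\pi^{-1}(F)$, exactly as in the proof of Lemma~\ref{lem:Acovers}) and verify that $\bold{x}^{\pm}(\mathcal G')=\widetilde{\bold{x}^{\pm}(\mathcal G)}$ under the stacking construction, so that the grading identities of Lemmas~\ref{lem:Acovers} and~\ref{lem:cover} apply to the same pair of generators. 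Once those identifications are pinned down, the computation above is purely algebraic.
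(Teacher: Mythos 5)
Your proposal is correct and takes exactly the approach the paper indicates: the paper's entire published proof of Proposition~\ref{prop:agradingcomp} is the single sentence ``This is a straightforward combination of Lemmas~\ref{lem:cover} and~\ref{lem:Acovers} with Theorem~\ref{thm:OST},'' and you have carried out precisely that combination, including the correct observation that the stacked grid diagram's distinguished cycle is the lift of the original one so that Lemma~\ref{lem:Acovers} applies to the relevant pair of generators. The one normalization you flag --- whether $tb(\widetilde L_i)$ should be read as $\sum_j tb(K_j)$ so that $tb_\mathbb{Q}(L_i)=\tfrac1p\sum_j tb(K_j)$ --- is a genuine convention-matching point that the paper's terse proof also leaves implicit; the cleanest route around it is to run the whole argument in terms of $sl^i$ rather than $tb-rot^i$, since Lemma~\ref{lem:cover} does give the per-component identity $sl^i_\mathbb{Q}(T)=\tfrac1p\,sl^i(\widetilde T)$ directly, and then invoke the per-component identity $sl^i=tb(L_i)-rot^i(L)$ from Theorem~\ref{thm:OST} upstairs, which is how your own $r_i$-cancellation already plays out.
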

\begin{proof}
This is a straightforward combination of Lemmas \ref{lem:cover} and \ref{lem:Acovers} with Theorem \ref{thm:OST}.
\end{proof}

This completes the proof of Theorem \ref{thm:grid}.

\begin{proposition}
\label{prop:nontorsion}
For any Legendrian link $L$ or transverse link $T$ in $(L(p,q),\xi_{UT})$ the homology classes $\lambda^+ (L)$ and $\theta(T)$ do not vanish; the classes are non U-torsion, i.e. for any $n\ge 1$ we have that $U^n \cdot \lambda^+(L)\ne 0$ and $U^n \cdot \theta (T)\ne 0$
\end{proposition}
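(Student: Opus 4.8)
The plan is to reduce everything to the non-vanishing of $\lambda^+$, and then to detect that non-vanishing inside the Heegaard Floer homology of the ambient manifold, where $\bold{x}^+$ represents the (non-zero) contact invariant. \emph{Reduction.} Since $\theta(T)=\lambda^+(L)$ whenever $T$ is the positive transverse push-off of a Legendrian $L$, and every transverse link arises this way, it suffices to treat $\lambda^+$. By the stabilization formula of Theorem \ref{thm:grid}, if $L^{n+}$ denotes $L$ positively (Legendrian) stabilized $n$ times, then $U^n\cdot\lambda^+(L)=\lambda^+(L^{n+})$. Hence if I can show $\lambda^+(L')\neq 0$ for \emph{every} Legendrian link $L'$, then applying this to $L'=L^{n+}$ gives $U^n\lambda^+(L)=\lambda^+(L^{n+})\neq 0$ for all $n\ge 1$, so $\lambda^+(L)$ — and therefore $\theta(T)$ — is non-$U$-torsion. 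Thus the whole statement follows once we prove $\lambda^+$ never vanishes.

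\emph{A comparison map to $\widehat{HF}$ of the manifold.} Fix a grid diagram $G=(T^2,\boldsymbol{\beta},\boldsymbol{\alpha},\bold{w},\bold{z})$ for $L$, of index $n$. Specializing every formal variable $U_i$ to $1$, i.e.\ tensoring $CFK^-(G)$ over $\mathbb{F}[U_0,\dots,U_{n-1}]$ with $\mathbb{F}$ along $U_i\mapsto 1$, yields an isomorphism of chain complexes onto $\widehat{CF}(T^2,\boldsymbol{\beta},\boldsymbol{\alpha},\bold{w})$, the complex computing $\widehat{HF}(-L(p,q))$ with $n$ basepoints: under both conventions the differential counts precisely the empty rectangles disjoint from $\bold{w}$, and the map sends $\bold{x}^+$ to $\bold{x}^+$. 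Consequently the image of $\lambda^+(L)=[\bold{x}^+]$ under the induced map $HFK^-(-L(p,q),K)\to\widehat{HF}(T^2,\boldsymbol{\beta},\boldsymbol{\alpha},\bold{w})$ is $[\bold{x}^+]$; in particular, $[\bold{x}^+]\neq 0$ in $\widehat{HF}(T^2,\boldsymbol{\beta},\boldsymbol{\alpha},\bold{w})$ would force $\lambda^+(L)\neq 0$.

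\emph{Identifying $[\bold{x}^+]$ with the contact invariant.} In $\widehat{CF}(T^2,\boldsymbol{\beta},\boldsymbol{\alpha},\bold{w})$ the $\bold{z}$-basepoints are invisible, so moving them does not affect the class $[\bold{x}^+]$; combining this with the observation from the proof of Proposition \ref{prop:spincagrees} that grid stabilizations together with $\bold{z}$-moves connect $G$ to the index-one diagram of the binding $B$, and with the fact that every elementary grid move induces an isomorphism of $\widehat{HF}(T^2,\boldsymbol{\beta},\boldsymbol{\alpha},\bold{w})$ carrying $\bold{x}^+$ to $\bold{x}^+$ (the grid-diagram incarnation of the invariance of the contact class), I conclude that $[\bold{x}^+]$ agrees with the class it represents for the binding diagram, namely the contact invariant $c(\xi_{UT})$ of Theorem 1.1 of \cite{QOB}. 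Since $\xi_{UT}$ is Stein fillable, $c(\xi_{UT})\neq 0$; hence $[\bold{x}^+]\neq 0$, and by the previous paragraph $\lambda^+(L)\neq 0$. With the reduction, this proves the proposition.

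\emph{Main obstacle.} The crux is this last step: checking that $[\bold{x}^+]$ is genuinely the contact invariant for an \emph{arbitrary} grid diagram, not only for the binding diagram. This requires verifying that each elementary grid move — in particular the stabilizations not among the Legendrian moves — induces a map on $\widehat{HF}$ of the underlying manifold sending $\bold{x}^+$ to $\bold{x}^+$, which is the invariance argument for the contact class carried out at the chain level for grids, and that moving the $\bold{z}$-basepoints, which changes the smooth knot type but not the triple $(T^2,\boldsymbol{\beta},\boldsymbol{\alpha},\bold{w})$, really leaves $\bold{x}^+$ and its homology class unchanged. One also invokes the standard fact that universally tight contact structures on lens spaces are fillable and hence have non-vanishing contact invariant. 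The rest is formal.
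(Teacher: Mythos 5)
Your proof is correct and shares the crucial mechanism with the paper, but it takes two detours that the paper avoids, and it is worth seeing why both are unnecessary once you set $U_i=1$.

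The paper's proof is just: form the quotient $C'(G) = CFK^-(G)/\{U_i=1\}$ (exactly your specialization to $\widehat{CF}(T^2,\boldsymbol{\beta},\boldsymbol{\alpha},\bold{w})$); observe that in this quotient all grid moves and all relocations of $\bold{z}$-basepoints preserve $[\bold{x}^+]$; go to an index-one diagram $D$; note that $C'(D)$ has \emph{no} differential, so $[\bold{x}^+(D)]\neq 0$ — and therefore $[\bold{x}^+(G)]\neq 0$ in $H_*(C'(G))$. That is the whole argument. Both non-vanishing and non-$U$-torsion fall out simultaneously, because the projection $CFK^-(G)\to C'(G)$ sends $U^n\cdot\bold{x}^+$ to $\bold{x}^+$ (every $U_i$ becomes $1$), so $[\bold{x}^+]\neq 0$ in $H_*(C'(G))$ already forces $U^n\lambda^+(L)\neq 0$ for every $n\geq 0$. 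Your preliminary reduction through the stabilization formula $U^n\lambda^+(L)=\lambda^+(L^{n+})$ is therefore correct but redundant: you had already built the machine that kills $U$-torsion, and there is no need to trade $U$-powers for Legendrian stabilizations before using it. Likewise, your final step — identifying $[\bold{x}^+]$ in the index-one diagram with $c(\xi_{UT})$ via Theorem 1.1 of \cite{QOB} and then invoking Stein fillability of $\xi_{UT}$ — proves more than needed: once you reach the index-one diagram, $C'(D)$ is a finitely generated $\mathbb{F}$-vector space with zero differential, so \emph{every} generator represents a non-zero class. The identification with the contact class and its non-vanishing (which indeed requires knowing fillability and the Ozsv\'ath--Szab\'o non-vanishing theorem) is legitimate external input, but the paper gets away with an entirely self-contained observation. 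In short: right idea, right invariance checks (which you correctly flag as the technical crux), but with extra scaffolding on both ends of the argument that the $U_i\mapsto 1$ quotient already renders unnecessary.
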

\begin{proof}

Let $L\subset (L(p,q),\xi_{UT})$ be an arbitrary Legendrian encoded by a grid diagram $G$. Consider the complex
\[
C' (G) = CFK^- (G)/ \{U_i = 1\}_{i=0}^{n-1}.
\]
It suffices to show that the homology class $[x^+]$ is non-zero in $H_* (C'(G))$. Note that the $\bold{z}$ basepoints do not have any effect on the differential, so we may move them as we like. The isomorphisms associated to each (de)stabilization preserve the class $[x^+]$. 

It is easy to go from $G$ to some index one diagram $D$ using some sequence of relocations of the $\bold{z}$ basepoints and destabilizations. There is an induced isomorphism
\[
H_* (C'(G))\to H_* (C'(D))
\]
taking $[x^+(L)]$ to $[x^+(D)]$. Since $D$ is a index one diagram, the complex $C'(D)$ has no differential and $[x^+(D)]$ is non-zero. 
\end{proof}

\section{The BRAID invariant}
\label{sec:braidinvt}

In this section we review the definition of the BRAID invariant, defined in \cite{equiv}. The definition is reminiscent of the definition of the contact invariant given in \cite{HKM}. 

Let $(B,\pi)$ be an open book supporting $(Y,\xi)$. Let $(S,\phi)$ be the abstract open book corresponding to $(B,\pi)$, and let $g$ be the genus of a fiber. If $K$ is an index $k$ braid with respect to $(B,\pi)$, $K$ is specified by a lift $\widehat{\phi} \in MCG(S\smallsetminus \{p_1,\dots , p_k\}, \partial S)$ of $\phi$.

\begin{definition}
A \emph{basis of arcs} $\{a_i\}_1 ^{2g+k-1}\subset S\smallsetminus \{p_1,\dots, p_k\}$ is a collection of properly embedded disjoint arcs which cut $S\smallsetminus \{p_1,\dots, p_k\}$ into $k$ discs, each having precisely one of the $p_i$.
\end{definition}
Let $\{b_i\}_1^{2g+k-1}$ be another basis of arcs, where $b_i$ is obtained by slightly moving the endpoints of $a_i$ in the oriented direction of $\partial S$, and isotoping in $S\smallsetminus \{p_1,\dots, p_k\}$ so that $b_i$ intersects $a_i$ transversely in a single point.

Let $\Sigma$ denote the surface $S_{1/2}\cup -S_0$. For each $i$, let 
\begin{align*}
\alpha_i = a_i\times \{0,1/2\} \\
\beta_i = b_i\times \{1/2\}\cup \widehat{\phi}(b_i)\times \{0\}\\
z_i = p_i \times \{0\}\\
\text{and } w_i = p_i \times \{1/2\}
\end{align*}
Let $\boldsymbol{\alpha} = \{\alpha_1,\dots,\alpha_k\}$, $\boldsymbol{\beta} = \{\beta_1,\dots, \beta_k\}$, $\bold{z} = \{z_1,\dots,z_k\}$, and $\bold{w} = \{w_1,\dots,w_k\}$. Then $\mathcal{H} = (\Sigma,\boldsymbol{\beta},\boldsymbol{\alpha},\bold{w},\bold{z})$ is a multi-pointed Heegaard diagram encoding $(-Y,K)$.

Each $\alpha_i$ intersects $\beta_i$ in a single point in the region $S_{1/2}$ denoted $x_i$. Let $\bold{x} \in \mathbb{T}_{\boldsymbol{\beta}} \cap \mathbb{T}_{\boldsymbol{\alpha}}$ denote the generator having component $x_i$ on $\alpha _i$. The homology class $[\bold{x}] \in HFK^- (-Y,K)$ is an invariant of the transverse isotopy class of $K$ (Theorem 3.1 of \cite{equiv}). The invariant is denoted $t(K)$, and we refer to it as the BRAID invariant.


\subsection{The transverse invariant of a braid and its axis}
\label{subsec:binding}

Suppose that $(B,\pi)$ is an open book decomposition, with $B$ having $n$ components, supporting $(Y,\xi)$. Let $(S,\phi)$ be the abstract open book corresponding to $(B,\pi)$, where $S$ has genus $g$. As discussed in subsection \ref{subsec:contact} the binding $B$ is naturally a transverse link that may be braided about the open book via a transverse isotopy. Abusing notation, we denote the resulting $n$-braid $B$.

$B$ is specified by a lift $\widehat{\phi} \in MCG( S\smallsetminus \{p_1,\dots,p_n\})$ of $\phi$. Thinking of $\phi$ as fixing a collar neighborhood $\nu(\partial S)$ of the boundary, one obtains $\widehat{\phi}$ by composing $\phi$ with $n$ push maps supported in $\nu (\partial S)$. See Figure \ref{fig:one} for the push maps and a basis of arcs $\{a_i\}_1^{2g+n-1}\cup \{a_{2,i}\}_{2g+1}^{2g+n-1}$ for $S\smallsetminus \{p_1,\dots,p_n\}$.

\begin{figure}[h]

\def\svgwidth{250pt}
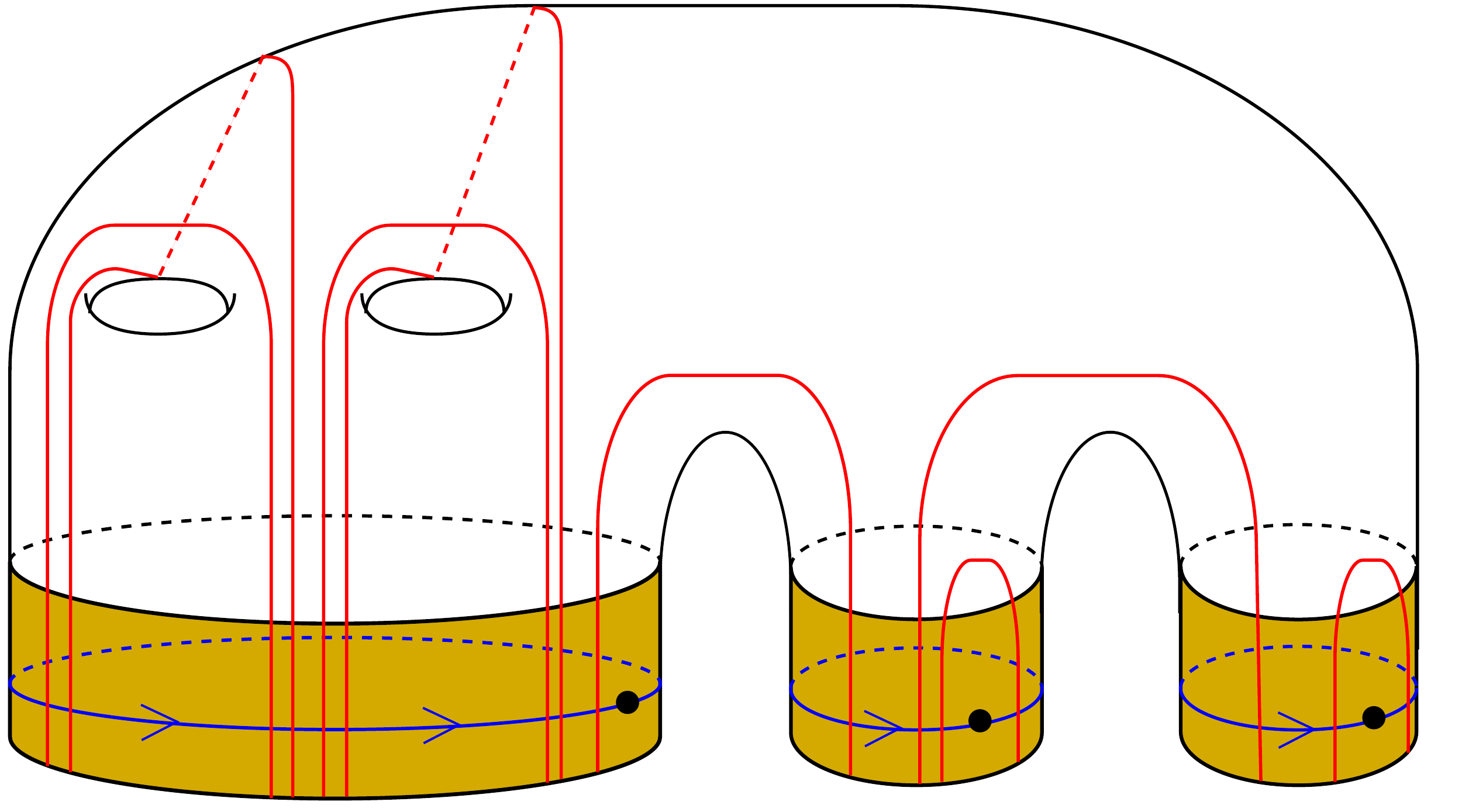
\caption{The basis of arcs $\{a_i\}_1^{2g+n-1}\cup \{a_{2,i}\}_{2g+1}^{2g+n-1}$ for the case $n=3$ and $g=2$ is depicted in red. The push maps, supported in the shaded neighborhood $\nu (\partial S)$, go in the orientation of $\partial S$ and are depicted in blue.}
\label{fig:one}
\end{figure}

The basis of arcs along with $\widehat{\phi}$ specify a Heegaard diagram $D= (\Sigma, \boldsymbol{\beta}, \boldsymbol{\alpha}, \bold{w}_B, \bold{z}_B) $, along with a generator $\bold{x}_D$, shown in Figure $\ref{fig:two}$ for $(-Y,B)$, as described in Section \ref{sec:braidinvt}. The homology class $[\bold{x}_D]\in HFK^-(D)$ is the braid invariant $\widehat{t}(B)$. The labelling of the basis arcs induces a labelling of the $\boldsymbol{\beta}$ and $\boldsymbol{\alpha}$ curves.

\begin{figure}[h]
\def\svgwidth{250pt}
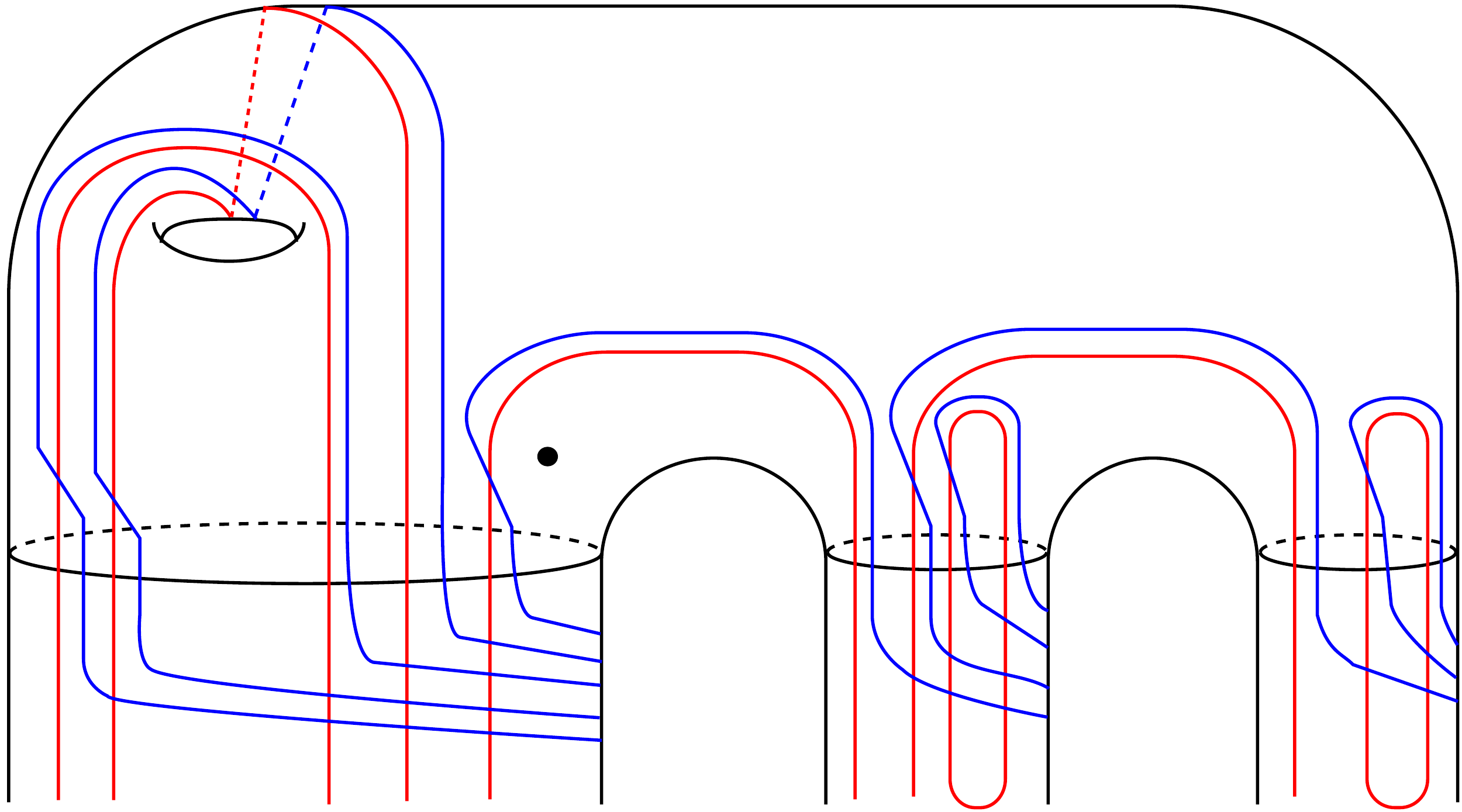
\caption{A portion of the Heegaard diagram $D$ for $(-Y,B)$ in the case $g=1$ and $n=3$. The $\bold{w}_B$ and $\bold{z}_B$ basepoints are depicted with solid and hollow dots, respectively. The homology class of the generator depicted by orange dots, in $\widehat{HFK}(D)$, is equal to the transverse invariant $\widehat{t}(B)$. The indexing of the basis in Figure \ref{fig:one} induces an indexing of the $\alpha$ and $\beta$ curves in $D$.}
\label{fig:two}
\end{figure}

By applying an isotopy to $D$, we obtain the Heegaard diagram $\mathcal{D}$ pictured in Figure \ref{fig:three}.

\begin{figure}[h]
\def\svgwidth{250pt}
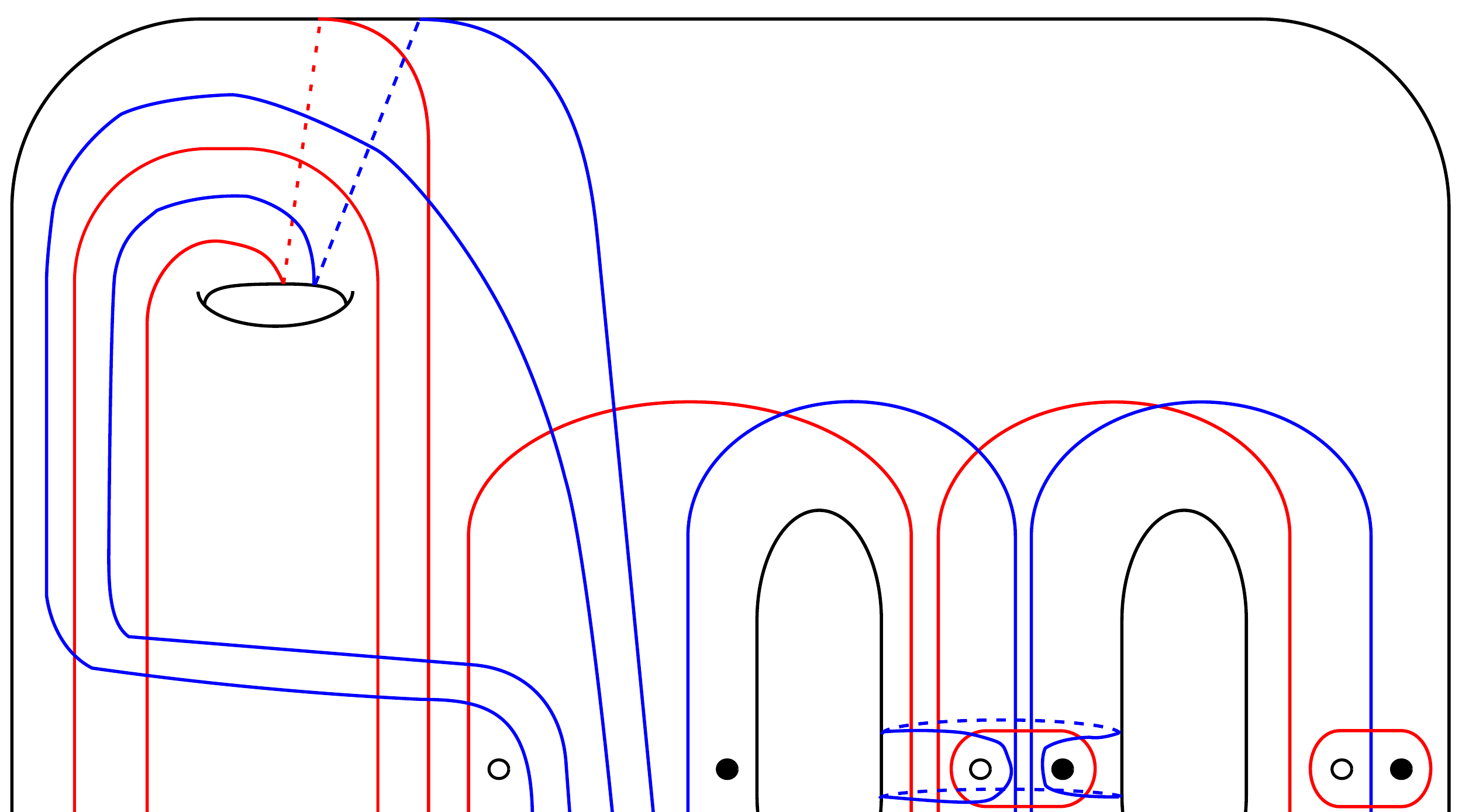
\caption{A portion of the Heegaard diagram $\mathcal{D}$  obtained from $D$ via and isotopy. The generator $\bold{x}_\mathcal{D}$, whose homology class is $\widehat{t}(B)$, is depicted by orange dots. The purple multi-curve depicts an oriented (as $\partial S$) longitude for $B$.}
\label{fig:three}
\end{figure}

Now let $K$ be a link braided about $B$ having braid index $k$. We may add $k$ pairs of basepoints $\bold{w}$, $\bold{z}$, and curves to $\mathcal{D}$ to obtain a new diagram $\mathcal{H} = (\Sigma, \boldsymbol{\beta},\boldsymbol{\alpha}, \bold{w}\cup \bold{w}_B,\bold{z}\cup \bold{z}_B)$, see Figure \ref{fig:nine}, which encodes $(-Y,K\cup B)$. We reindex the $\boldsymbol{\beta}$ and $\boldsymbol{\alpha}$ curves for our convenience. 

$\mathcal{H}$ is isotopic to the usual diagram appearing in the definition of the braid invariant when considering a particular basis of arcs for $S\smallsetminus \{p_1,\dots,p_{n+k}\}$. The generator $\bold{x}_{\mathcal{H}}$ pictured in the figure has homology class $\widehat{t}(K)\in \widehat{HFK}(\mathcal{H})$.

 \begin{figure}[h]
\def\svgwidth{250pt}
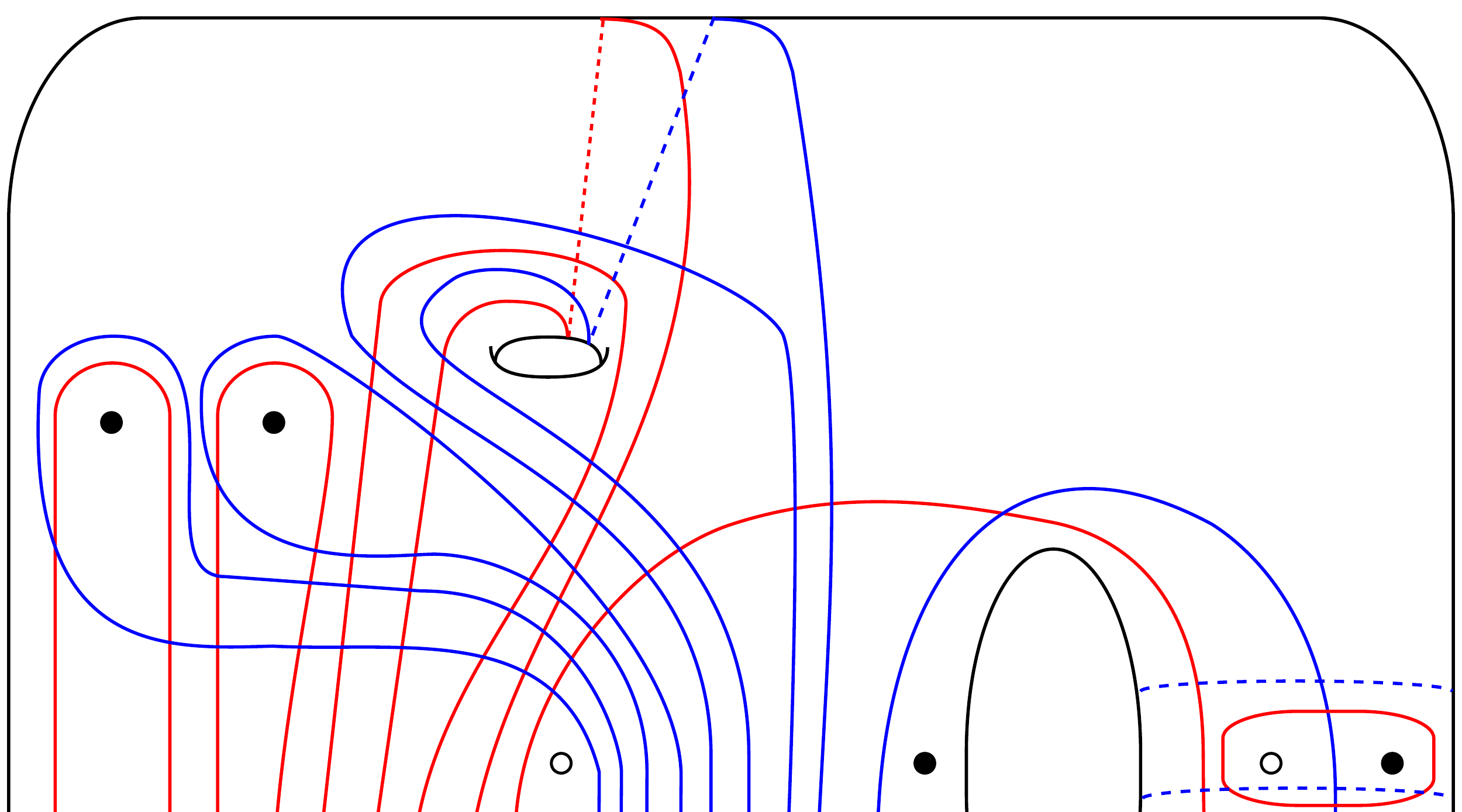
\caption{A portion of the diagram $\mathcal{H}$ in the case $k=2$, $g=1$, and $n=2$. The $k$ pairs of alpha/beta curves introduced to encode $K$ are indexed $1,\dots,k$ left to right. The rest of the curves are indexed as before (see Figures \ref{fig:one} and \ref{fig:two}) with a shift of $k$ in the first coordinate. The orange dots are components of $\bold{x}_\mathcal{H}$. The $\bold{w}_B\cup \bold{w}$ and $\bold{z}_B$ basepoints are depicted with solid and hollow dots, respectively.}
\label{fig:nine}
\end{figure}

In earlier work we use the diagrams $\mathcal{D}$ and $\mathcal{H}$ to prove Theorem \ref{thm:nonvanishing}, which plays an important role in the next section.

\begin{theorem} (Theorem 1.1 of \cite{braiddynamics})
\label{thm:nonvanishing}
Let $(B,\pi)$ be an open book supporting $(Y,\xi)$. If $K$ is braided about $B$, then $\widehat{t}(B\cup K)\in \widehat{HFK}(-Y,B\cup K)$ is nonzero.
\end{theorem}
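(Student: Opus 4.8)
The plan is to use the distinguished Heegaard diagram $\mathcal{H} = (\Sigma,\boldsymbol{\beta},\boldsymbol{\alpha},\bold{w}\cup\bold{w}_B,\bold{z}\cup\bold{z}_B)$ for $(-Y,B\cup K)$ built above (Figure \ref{fig:nine}) together with its distinguished generator $\bold{x}_{\mathcal{H}}$, which represents $\widehat{t}(B\cup K)$ by construction. The central observation is that $B\cup K$ is a \emph{fibered} link in $Y$: since $K$ is braided about $B$, the exterior $Y\smallsetminus\nu(B\cup K)$ is the mapping torus of $\widehat{\phi}$ acting on the punctured page $F = S\smallsetminus\{p_1,\dots,p_k\}$, so $F$ is a fiber surface for $B\cup K$. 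I would take $F$ as the preferred Seifert surface and prove that $\bold{x}_{\mathcal{H}}$ is the \emph{unique} generator of $\widehat{CFK}(\mathcal{H})$ realizing the extremal value of the total Alexander grading $A^F_{B\cup K}$. Once that is known, nonvanishing of $[\bold{x}_{\mathcal{H}}]$ is formal.

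The first step is to exhibit a relative periodic domain $\mathcal{P}$ in $\mathcal{H}$ representing (the homology class of) $F$. Concretely, $\mathcal{P}$ is the ``page region'' of $\Sigma = S_{1/2}\cup -S_0$, i.e.\ one of the two copies $S_{1/2}$, $-S_0$ of the page, corrected by a combination of the $\boldsymbol{\alpha}$ and $\boldsymbol{\beta}$ curves so that $\partial\mathcal{P}$ becomes the sum of the longitudes of the components of $B\cup K$ drawn in Figures \ref{fig:three} and \ref{fig:nine} (oriented as $\partial S$) together with $\boldsymbol{\alpha}$- and $\boldsymbol{\beta}$-terms; that $\partial\mathcal{P}$ has exactly this shape is precisely the statement that the exterior fibers with fiber $F$. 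With $\mathcal{P}$ in hand, Lemma \ref{lemma:relperiodic} reduces the comparison of Alexander gradings to comparing the local multiplicities $n_{\bold{y}}(\mathcal{P})$. Since $\mathcal{P}$ is supported on (a correction of) one copy of the page, $n_{\bold{y}}(\mathcal{P})$ is, up to an additive constant independent of $\bold{y}$, the number of components of $\bold{y}$ lying on that copy. The generator $\bold{x}_{\mathcal{H}}$ has all of its components at the points $x_i$, which lie in $S_{1/2}$; the substantive part of the argument is then the combinatorial claim that \emph{every} other generator has a component on the other copy of the page, obtained by tracing the arcs $\alpha_i$, $\beta_i$ and the images $\widehat{\phi}(b_i)$ through the collar $\nu(\partial S)$ in which the push maps defining $\widehat{\phi}$ are supported. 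Granting this, $\bold{x}_{\mathcal{H}}$ realizes the extremal $A^F_{B\cup K}$-grading, and does so uniquely.

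To conclude, I would pass to the fully blocked link Floer complex (counting holomorphic disks avoiding all of $\bold{z}\cup\bold{z}_B\cup\bold{w}\cup\bold{w}_B$), on which the differential preserves the Alexander multigrading, and hence the total grading $A^F_{B\cup K}$. Uniqueness of $\bold{x}_{\mathcal{H}}$ at the extreme makes the corresponding graded subquotient one-dimensional with vanishing differential; equivalently, applying Juh\'asz's surface decomposition to the fiber $F$ (in the same spirit as the Thurston-norm detection of Ni recorded in Theorem \ref{thm:genusdetection}), this graded piece is the sutured Floer homology of the product sutured manifold $F\times[0,1]$, which is $\mathbb{F}$, generated by $\bold{x}_{\mathcal{H}}$. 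Being extremal for the total Alexander grading is unaffected by collapsing the multigrading and by the passage between $\widehat{HFL}$ and $\widehat{HFK}^-$, so $\widehat{t}(B\cup K) = [\bold{x}_{\mathcal{H}}]$ is nonzero in $\widehat{HFK}(-Y,B\cup K)$ as well. (Alternatively one argues directly that $\bold{x}_{\mathcal{H}}$ cannot be a boundary: the Alexander grading is monotone along the differential once all basepoints are taken into account, and $\bold{x}_{\mathcal{H}}$ alone occupies the extreme level, so any $\phi\in\pi_2(\bold{y},\bold{x}_{\mathcal{H}})$ contributing to the differential would force $\bold{y}=\bold{x}_{\mathcal{H}}$, contradicting $\mu(\phi)=1$.)

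The main obstacle is the combinatorial uniqueness step, which must hold for an \emph{arbitrary} monodromy $\phi$ and an \emph{arbitrary} braid $K$: one has to control all of the intersection points $\alpha_i\cap\beta_i$ created as $\widehat{\phi}(b_i)$ winds through $\nu(\partial S)$ and around the punctures $p_j$, as well as the interaction with the $k$ additional curve pairs encoding $K$ (indexed $1,\dots,k$ in Figure \ref{fig:nine}), and rule out a generator that ties the extremal grading by placing a component at such a winding intersection while keeping its remaining components on the $S_{1/2}$ side. Making this analysis uniform is essentially where all of the content lies; the reduction to a relative periodic domain, the invocation of Lemma \ref{lemma:relperiodic}, and the concluding homological algebra are routine by comparison.
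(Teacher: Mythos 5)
This theorem is imported from \cite{braiddynamics} and is not proved in the present paper, so there is no internal argument to compare against; I'll assess your proposal on its own terms. Your high-level plan --- exploit the fact that $B\cup K$ is fibered with fiber $F = S\smallsetminus\{p_1,\dots,p_k\}$, show $\bold{x}_{\mathcal{H}}$ sits at the extreme of the associated Alexander grading, and then invoke Ni/Juh\'asz for rank one at the extremal level --- is a sensible route. But the step on which everything hinges is not only unfinished, the heuristic you offer for it is concretely wrong.

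You claim that ``every other generator has a component on the other copy of the page,'' so that $\bold{x}_{\mathcal{H}}$ would be the unique generator supported in $S_{1/2}$. This is false, and the paper itself demonstrates it: in the unnumbered Lemma following Lemma \ref{lem:alexbot}, the subcomplex $\mathcal{F}_b^{-B}(\mathcal{H}_0)$ of generators supported entirely in $S_{1/2}$ is identified with $(V_1\otimes\cdots\otimes V_k)\otimes\mathcal{F}_r^{-B}(\widetilde{\mathcal{D}})$, where each $V_i$ has \emph{two} generators $x_i,y_i$, both lying on $\alpha_i\cap\beta_i$ in $S_{1/2}$. So at least $2^k$ generators are supported in $S_{1/2}$. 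Relatedly, the claim that ``$n_{\bold{y}}(\mathcal{P})$ is, up to an additive constant, the number of components of $\bold{y}$ lying on that copy'' also fails: a relative periodic domain $\mathcal{P}$ for the fiber of $B\cup K$ must carry the longitudes of the $K$-components in its boundary, and those longitudes live \emph{inside} $S_{1/2}$, so $\mathcal{P}$ has non-constant multiplicity across the regions of $S_{1/2}$ cut out by the $K$-curves. In particular, $n_{\bold{y}}(\mathcal{P})$ genuinely depends on whether a component sits at $x_i$ or $y_i$, which your count does not see.

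To rescue the strategy you would have to (i) verify, via the basepoints $z_i$ encoding $K$ rather than the coarse $S_{1/2}$-vs-$-S_0$ dichotomy, that $x_i$ and $y_i$ occupy different levels of the total Alexander grading so that $\bold{x}_{\mathcal{H}}$ is extremal within $\mathcal{F}_b^{-B}$, and then (ii) rule out a tie in total grading with a generator that trades a drop in the $B$-grading (by moving a component into $-S_0$) against a rise in the $K$-grading. Step (ii) is nontrivial and is exactly what your sketch does not address. You flag the uniqueness step as ``where all of the content lies,'' which is accurate, but what you have written for it is not a gap to be filled in so much as an incorrect claim that needs replacing.
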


\subsection{A Reformulation of the BRAID invariant $t$}
\label{subsec:reformulation}

Let $(B,\pi)$ be an open book decomposition supporting $(Y,\xi)$, where the binding $B$ has $n$ components. Let $(S,\phi)$ be the abstract open book corresponding to $(B,\pi)$, where $S$ has genus $g$. Let $K \subset Y$ be an index $k$ braid about $B$ having $m$ components.

In this section we reformulate the transverse invariant $t(K)$ in terms of the Alexander filtration induced by $-B$ on $CFK^- (-Y,K)$.

Let $\mathcal{H}=(\Sigma, \boldsymbol{\beta},\boldsymbol{\alpha}, \bold{w}\cup \bold{w}_B,\bold{z}\cup \bold{z}_B)$ be the diagram for $(-Y,K\cup B)$ of Figure \ref{fig:nine}. Let $\bold{w}_B = \bold{z}_{-B}$ and $\bold{z}_B = \bold{w}_{-B}$ as sets. Swapping these sets of basepoints corresponds to reversing the orientation of $B$, so $\mathcal{\widetilde{H}} = (\Sigma, \boldsymbol{\beta}, \boldsymbol{\alpha}, \bold{w}\cup \bold{w}_{-B}, \bold{z} \cup \bold{z}_{-B})$ is a diagram for $(-Y,K\cup -B)$.

$\mathcal{H}_0=  (\Sigma, \boldsymbol{\beta}, \boldsymbol{\alpha}, \bold{w} , \bold{z} \cup \bold{z}_{-B})$ is a diagram for $(-Y,K)$ with $n$ free basepoints. Let $\bold{x}_0 \in CFK^{-,n}(\mathcal{H}_0)$ be the generator corresponding to $\bold{x}_{\mathcal{H}}$ of Figure \ref{fig:nine}. $-B$ induces an Alexander filtration on $CFK^{-,n} (\mathcal{H}_0)$:

\[
\emptyset = \mathcal{F}_i^{-B}(\mathcal{H}_0) \subset \mathcal{F}_{i+1}^{-B}(\mathcal{H}_0)  \subset \dots \subset \mathcal{F}_l^{-B}(\mathcal{H}_0)  =CFK^{-,n} (\mathcal{H}_0)
\]

Let
\[
b= min \{j | H_*(\mathcal{F}_j^{-B}(\mathcal{H}_0) ) \ne 0\}.
\]

Let $\mathcal{D} = (\Sigma, \{\beta_{k+1},\dots,\beta_{2g+2n+k-2}\}, \{\alpha_{k+1},\dots,\alpha_{2g+2n+k-2}\}, \bold{w}_B,\bold{z}_B)$ be the diagram for $(-Y,B)$ from the previous section, where we have preemptively reindexed the curves and basepoints, so that each entry of the tuple for $\mathcal{D}$ is a subset of the analagous entry for $\mathcal{H}$.

Consider the Heegaard diagram $\widetilde{\mathcal{D}} = (\Sigma, \{\beta_{k+1},\dots,\beta_{2g+2n+k-2}\}, \{\alpha_{k+1},\dots,\alpha_{2g+2n+k-2}\}, \bold{z}_{-B})$ for $-Y$ with $n$ basepoints.

As above, $-B$ induces a filtration on $\widehat{CF}(\widetilde{\mathcal{D}})$. 
Let
\[
r= min \{j | H_*(\mathcal{F}_j^{-B}(\widetilde{\mathcal{D}}) ) \ne 0\}.
\]

Note that $r=-g-n+1$.

\begin{lemma}
\label{lem:alexbot}
A generator of $CFK^{-,n} (\mathcal{H}_0)$ lies in $\mathcal{F}_b ^{-B}(\mathcal{H}_0) $ if and only if each of its components is in the region $S_{1/2}$.
Likewise, a generator of $\widehat{CFK}(\widetilde{\mathcal{D}})$ lies in $\mathcal{F}_r ^{-B}(\widetilde{\mathcal{D}})$ if and only if each of its components is in the region $S_{1/2}$.
\end{lemma}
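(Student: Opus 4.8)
The $-B$-filtration level of a generator of $CFK^{-,n}(\mathcal H_0)$ differs only by an overall additive constant from its Alexander grading $A_{-B}$, computed with respect to a page of the open book as the preferred Seifert surface for $-B$; note the page, as an embedded surface, wraps once around each component of $B$, so the order appearing in Lemma~\ref{lemma:relperiodic} is $1$. Thus the assertion reduces to two points: (a) $A_{-B}$ is minimized precisely by the generators all of whose components lie in $S_{1/2}$, and (b) the minimal level carries nonzero homology, so that it equals $b$ (resp.\ $r$). The discussion for $\widehat{CF}(\widetilde{\mathcal D})$ is identical.

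For (a) I would produce an explicit relative periodic domain $\mathcal P$ for $-B$ from the page. In $\Sigma=S_{1/2}\cup(-S_0)$, push the oriented binding longitude $\lambda_{-B}$ of Figure~\ref{fig:three} slightly onto the $-S_0$ side of the equator $\partial S_{1/2}=\partial S_0$ and delete the resulting collar; the remainder of $-S_0$ is then a union of regions of $\Sigma\smallsetminus(\boldsymbol\alpha\cup\boldsymbol\beta)$, and taking it with multiplicity $1$ gives a $2$-chain $\mathcal P$ with $\partial\mathcal P=\lambda_{-B}+\sum n_i\alpha_i+\sum m_i\beta_i$, where the $\alpha$- and $\beta$-terms only record the finitely many arcs where those curves cross the collar. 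By construction $n_p(\mathcal P)=0$ for every $p$ in $S_{1/2}$ and $n_p(\mathcal P)=1$ for every $p$ in the interior of $-S_0$, and $\mathcal P$ is homologous to a page. Every component of a generator is an intersection point of some $\alpha_i$ with some $\beta_i$, and each such point lies either in $S_{1/2}$ (multiplicity $0$) or in the interior of $-S_0$ (multiplicity $1$). Writing $\bold x_0$ for any generator with all components in $S_{1/2}$ --- for instance $\bold x_{\mathcal H}$ of Figure~\ref{fig:nine}, or $\bold x_{\mathcal D}$ in the case of $\widetilde{\mathcal D}$ --- Lemma~\ref{lemma:relperiodic} gives
\[
A_{-B}(\bold x)-A_{-B}(\bold x_0)=n_{\bold x}(\mathcal P)-n_{\bold x_0}(\mathcal P)=n_{\bold x}(\mathcal P),
\]
which equals the number of components of $\bold x$ lying in the interior of $-S_0$. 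This is $\ge 0$ and vanishes if and only if every component of $\bold x$ lies in $S_{1/2}$, proving (a), with the minimal level equal to $A_{-B}(\bold x_0)$.

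For (b), the subcomplex at the level $A_{-B}(\bold x_0)$ is spanned by the finitely many generators supported in $S_{1/2}$, and I claim its induced differential vanishes: a holomorphic disk counted in $\partial^-$ between two such generators has non-negative domain avoiding all $\bold z$- and $\bold z_{-B}$-basepoints, and since its two ends lie in $S_{1/2}$ while the $\bold z_{-B}$-basepoints lie in $S_{1/2}$, its domain is pinned into the local configuration of Figures~\ref{fig:three} and~\ref{fig:nine}, where the only such disk is constant --- the same computation that shows the braid generator is a cycle. (Alternatively, nonvanishing of the homology at the bottom level can be read off from Theorem~\ref{thm:nonvanishing}.) Hence $b=A_{-B}(\bold x_0)$, so $\mathcal F_b^{-B}(\mathcal H_0)$ is exactly the span of the $S_{1/2}$-generators, which is the claim; the same argument handles $\widetilde{\mathcal D}$, where there is a single $S_{1/2}$-generator, and evaluating its Alexander grading --- either from the page by an Euler-characteristic count, or via the known grading of the contact element, which $[\bold x_{\mathcal D}]$ represents --- yields $r=-g-n+1$.

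The step I expect to be the main obstacle is the construction of $\mathcal P$ in (a): one must check that pushing $\lambda_{-B}$ off the equator and correcting along the $\boldsymbol\alpha$- and $\boldsymbol\beta$-curves genuinely produces a relative periodic domain in the homology class of a page, and in particular that these corrections introduce no positive multiplicity on the $S_{1/2}$ side, so that $n_{\bold x_0}(\mathcal P)=0$ really holds. Granting that, the inequality in (a) and the vanishing of the bottom-level differential in (b) are routine.
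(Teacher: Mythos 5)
Your part (a) is essentially the paper's approach. The paper's own proof is a two-sentence affair: it observes that $S_{1/2}$ is a relative periodic domain for $B$ and cites Lemma~\ref{lemma:relperiodic}; you instead build the complementary domain on the $-S_0$ side for $-B$, which is the same argument up to an overall sign in the filtration. That part is fine, and your care with the construction and with the order $r=1$ is reasonable.

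The problem is your primary argument for (b). You claim the induced differential on the span of $S_{1/2}$-generators in $CFK^{-,n}(\mathcal{H}_0)$ vanishes because disks ``avoid all $\bold z$- and $\bold z_{-B}$-basepoints.'' You have the basepoint roles backwards. In $\mathcal{H}_0 = (\Sigma, \boldsymbol{\beta}, \boldsymbol{\alpha}, \bold{w}, \bold{z}\cup\bold{z}_{-B})$, the fourth entry $\bold{w}$ plays the role of the generic ``$\bold z$''-basepoints that disks must avoid, while $\bold{z}\cup\bold{z}_{-B}$ plays the role of the generic ``$\bold w\cup\bold w_F$'' tracked by $U$-variables. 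Passing to $CFK^{-,n}$ kills only the free variables at $\bold{z}_{-B}$. So disks in $\partial^-$ avoid $\bold{w}$ and $\bold{z}_{-B}$ but are perfectly free to cross the linked $\bold{z}$-basepoints, which they record with powers of $U_i$. The bottom-level subcomplex therefore does \emph{not} have vanishing differential: the very next lemma in the paper computes it as $(V_1\otimes\cdots\otimes V_k)\otimes\mathcal{F}^{-B}_r(\widetilde{\mathcal{D}})$ with $\partial y_i = (U_i + U_{\sigma(i)})x_i$, coming from two bigons through $z_i$ and $z_{\sigma(i)}$. (Over $\mathbb{F}_2$ this only dies when $\sigma(i)=i$, which need not hold.) Your ``pinned domain'' reasoning would at most show a fixed generator like $\bold{x}_\mathcal{H}$ is a cycle, not that the whole subcomplex is acyclic.

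Your parenthetical alternative --- deducing nonvanishing of the bottom-level homology from Theorem~\ref{thm:nonvanishing} --- is the correct route, and is in effect what the paper does in Proposition~\ref{prop:rank} (there via $\widehat{t}(B)\neq 0$ and the rank-one statement for top $\widehat{HFK}$ of a fibered link). The $\widetilde{\mathcal{D}}$ case is unproblematic since there is a unique $S_{1/2}$-generator. So the conclusion survives if you drop the vanishing-differential claim and lean only on the alternative; as written, the step you flagged as the ``main obstacle'' (the domain construction) is actually fine, and the real gap is in the differential computation.
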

\begin{proof}
In both cases, The portion of the diagram $S_{1/2}$ is a relative periodic domain for $B$. The result following immediately by applying Lemma \ref{lemma:relperiodic}.

\end{proof}

\begin{lemma}
As complexes we have $\mathcal{F}_b ^{-B}(\mathcal{H}_0) \simeq  (V_1\otimes V_2\otimes \dots \otimes V_k) \otimes {\mathcal{F}_r}^{-B} (\widetilde{\mathcal{D}})$, where each $V_i$ is a free rank two $\mathbb{F}[U_1,\dots , U_m]$-module with basis $\{x_i,y_i\}$. Let $\sigma \in S_k$ denote the permutation of the points $\{p_1,\dots,p_k\}$ given by the monodromy $\widehat{\phi}$. The differential on $V_i$ is as follows
\[
\partial x_i=0 \ \ \ \ \ \ \ \ \ \ \ \ \ 
\partial y_i = (U_i + U_{\sigma (i)})x_i
\]
\end{lemma}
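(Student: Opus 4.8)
The plan is to reduce everything to a local analysis inside the subsurface $S_{1/2}$ and then recognize the $k$ braid strands as contributing $k$ ``monodromy-twisted linked $0/3$'' factors. First, by Lemma \ref{lem:alexbot}, $\mathcal{F}_b^{-B}(\mathcal{H}_0)$ is the span of those generators of $CFK^{-,n}(\mathcal{H}_0)$ all of whose coordinates lie in $S_{1/2}$, and likewise $\mathcal{F}_r^{-B}(\widetilde{\mathcal{D}})$ is the span of the generators of $\widehat{CF}(\widetilde{\mathcal{D}})$ with all coordinates in $S_{1/2}$. Next I would examine the portion of $\mathcal{H}_0$ near the braid points $p_1,\dots,p_k$: within $S_{1/2}$ the pair $\alpha_i,\beta_i$ ($1\le i\le k$) meets in exactly two points $x_i,y_i$, where $x_i$ is the coordinate of $\mathbf{x}_{\mathcal{H}}$ on this pair, and these $k$ local configurations are pairwise disjoint and disjoint from every curve of $\widetilde{\mathcal{D}}$ inside $S_{1/2}$. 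Hence a generator of $\mathcal{F}_b^{-B}(\mathcal{H}_0)$ is precisely a choice of $x_i$ or $y_i$ for each $i$ together with a generator of $\mathcal{F}_r^{-B}(\widetilde{\mathcal{D}})$; this already gives the claimed identification of $\mathbb{F}[U_1,\dots,U_m]$-modules, with $V_i$ free of rank two on $\{x_i,y_i\}$.

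It then remains to match the differentials. I would first argue that the differential respects the tensor decomposition: if $\phi$ is a homotopy class of Maslov index one connecting two generators of $\mathcal{F}_b^{-B}(\mathcal{H}_0)$ and avoiding the basepoints forbidden by $\partial^-$, then positivity of the domain together with the planarity of each local configuration near $p_i$ forces $\phi$ either to change only the $\widetilde{\mathcal{D}}$-coordinates, with domain supported away from the $k$ local configurations and thus reproducing the differential of $\mathcal{F}_r^{-B}(\widetilde{\mathcal{D}})$, or to change only a single coordinate $x_i\leftrightarrow y_i$ with all other coordinates fixed. Consequently $\partial$ on the tensor product is the Leibniz-type sum $\sum_i 1\otimes\cdots\otimes\partial_{V_i}\otimes\cdots\otimes 1 \;+\; 1\otimes\cdots\otimes 1\otimes\partial_{\widetilde{\mathcal{D}}}$, and the problem reduces to identifying $\partial_{V_i}$.

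For $\partial_{V_i}$: the point $x_i$ is a coordinate of the braid-invariant generator, and no positive index-one domain emanates from it inside the local configuration, so $\partial x_i=0$. To compute $\partial y_i$ I would enumerate the domains from $y_i$ to $x_i$. Because $\beta_i = b_i\times\{1/2\}\cup\widehat{\phi}(b_i)\times\{0\}$ and the lift $\widehat{\phi}$ of the monodromy carries the push-off arc near $p_i$ to one near $p_{\sigma(i)}$, exactly two such domains survive: the one that sweeps, on the $S_{1/2}$-side, across the $\mathbf{z}$-basepoint indexed by strand $i$, contributing $U_i$, and the one that sweeps, along the $\widehat{\phi}(b_i)$-portion of $\beta_i$, across the $\mathbf{z}$-basepoint indexed by strand $\sigma(i)$, contributing $U_{\sigma(i)}$; neither passes over a forbidden $\mathbf{w}$-basepoint, and a standard index and positivity count rules out any further domains. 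This yields $\partial y_i = (U_i + U_{\sigma(i)})x_i$. Equivalently, adjoining the $i$-th braid strand to $\widetilde{\mathcal{D}}$ is a monodromy-twisted version of a linked index $0/3$ stabilization, so $V_i$ is the mapping cone of multiplication by $U_i+U_{\sigma(i)}$, exactly as in the treatment of linked index $0/3$ stabilizations in Section \ref{sec:HFK}.

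I expect the enumeration underlying the last two paragraphs --- showing there are no ``mixed'' disks and exactly the two asserted domains $y_i\to x_i$ --- to be the crux; the only genuinely non-formal point is that the second coefficient is $U_{\sigma(i)}$ rather than a freshly introduced variable, and this is dictated precisely by how $\widehat{\phi}$ permutes the push-off arcs among the braid points.
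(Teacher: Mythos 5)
Your overall plan is the one the paper follows: use Lemma \ref{lem:alexbot} to restrict attention to generators supported in $S_{1/2}$, identify the $k$ braid coordinates as each taking one of two values $x_i,y_i$, and then enumerate domains to pin down the internal differential of each $V_i$. The enumeration of the two bigons from $y_i$ to $x_i$ --- one passing over $z_i$ via the $S_{1/2}$ side, the other over $z_{\sigma(i)}$ via the $\widehat{\phi}(b_i)$ portion of $\beta_i$ --- is exactly what the paper does, and the ``monodromy-twisted linked $0/3$ stabilization'' picture you end with is the right heuristic.

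There is, however, one genuine imprecision in the generator-level identification. You assert that ``these $k$ local configurations are pairwise disjoint and disjoint from every curve of $\widetilde{\mathcal{D}}$ inside $S_{1/2}$,'' and then immediately conclude that a generator of $\mathcal{F}_b^{-B}(\mathcal{H}_0)$ must choose a point of $\alpha_i\cap\beta_i$ on each $\alpha_i$. In the actual diagram this disjointness fails: inside $S_{1/2}$ the curve $\alpha_i$ meets $\beta_j$ for every $j\le i$ (not just $j=i$), so a priori a coordinate on $\alpha_i$ could lie on $\alpha_i\cap\beta_j$ for some $j<i$. The paper closes this gap with a triangularity argument: since $\alpha_1$ meets only $\beta_1$ in $S_{1/2}$, the $\alpha_1$-coordinate is forced onto $\beta_1$; then $\alpha_2$ meets only $\beta_1,\beta_2$, and $\beta_1$ is already occupied, so the $\alpha_2$-coordinate is forced onto $\beta_2$; and so on inductively. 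This is what actually yields $(\bold{y})_i=(\bold{y})^i$ for $1\le i\le k$, and hence the tensor-product description of generators. Your disjointness claim, taken literally, is false and needs to be replaced by this nested/upper-triangular intersection pattern; once that substitution is made, the rest of your proposal (no mixed disks, $\partial x_i=0$, and the two domains yielding $U_i+U_{\sigma(i)}$) matches the paper's argument.
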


\begin{proof}
For $\bold{y}\in \mathbb{T}_{\boldsymbol{\beta}}\cap\mathbb{T}_{\boldsymbol{\alpha}}$ let $(\bold{y})_i$, $(\bold{y})^i$, denote the component of $\bold{y}$ on $\alpha_i$, $\beta_i$, respectively. Note that in the region $S_{1/2}$, for $1\le i \le k$, the curve $\alpha _i$ intersects only the curves $\beta_j$ with $j\le i$.
If generator $\bold{y}$ lies in $\mathcal{F}_b ^{-B}(\mathcal{H}_0)$, it must be the case that $(\bold{y})_i = (\bold{y})^i$ for each $1\le i \le k$.

Let $x_i$, $y_i$, denote the point of $\alpha_i\cap \beta_i$ in the region $S_{1/2}$ of higher, respectively lower, Maslov grading.
For each $i\le k$ there are no disks leaving $x_i$ which contribute to the differential. There are exactly two disks from $y_i$ to $x_i$. One disks passes through the point $z_i$, the other through $z_{\sigma (i)}$.

\end{proof}

\begin{proposition}
\label{prop:rank}
If $Y$ is a $\mathbb{Q}HS^3$, then $H_{top}(\mathcal{F}_b ^{-B}(\mathcal{H}_0)) \simeq \mathbb{F}[U_1,\dots , U_m]$ and is generated by the homology class of $\bold{x}_0 \in \mathbb{T}_{\boldsymbol{\beta}} \cap \mathbb{T}_{\boldsymbol{\alpha}}$.
\end{proposition}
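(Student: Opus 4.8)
The plan is to compute $H_*(\mathcal{F}_b^{-B}(\mathcal{H}_0))$ outright, using the splitting proved in the preceding lemma, and then isolate the summand of top Maslov grading. By that lemma, $\mathcal{F}_b^{-B}(\mathcal{H}_0)\simeq(V_1\otimes\cdots\otimes V_k)\otimes\mathcal{F}_r^{-B}(\widetilde{\mathcal{D}})$, and, chasing through the construction of $\bold{x}_0$ from Figure~\ref{fig:nine}, under this identification $\bold{x}_0$ corresponds to $x_1\otimes\cdots\otimes x_k\otimes\bold{x}_{\widetilde{\mathcal{D}}}$, where $\bold{x}_{\widetilde{\mathcal{D}}}$ is the generator of $\mathcal{F}_r^{-B}(\widetilde{\mathcal{D}})$ all of whose components lie in $S_{1/2}$ (Lemma~\ref{lem:alexbot}). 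Since $M(x_i)=M(y_i)+1$ for every $i$, $\bold{x}_0$ is the unique generator of $\mathcal{F}_b^{-B}(\mathcal{H}_0)$ in the maximal Maslov grading. First I would dispense with the binding factor: because the arcs used to build $\widetilde{\mathcal{D}}$ form a basis of arcs for the page, the familiar Honda--Kazez--Mati\'c type argument (exactly as in \cite{equiv}) shows that $\bold{x}_{\widetilde{\mathcal{D}}}$ is the \emph{only} generator with all components in $S_{1/2}$; hence $\mathcal{F}_r^{-B}(\widetilde{\mathcal{D}})$ is one-dimensional over $\mathbb{F}$ with vanishing differential, and $H_*(\mathcal{F}_r^{-B}(\widetilde{\mathcal{D}}))\cong\mathbb{F}$, concentrated in a single Maslov grading and generated by $[\bold{x}_{\widetilde{\mathcal{D}}}]$. (This is consistent with the value $r=-g-n+1$ noted above.) Tensoring with a field in a single grading only shifts gradings, so it remains to compute $H_*(V_1\otimes\cdots\otimes V_k)$.

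For this, work over the polynomial ring $R$ generated by the variables attached to the $k$ strand basepoints, so that $\partial y_i=(U_i+U_{\sigma(i)})x_i$ holds verbatim, and group the factors according to the cycles of $\sigma$. Since the cycles of $\sigma$ are precisely the components of $K$, a cycle $c$ of length $\ell$ gives the subcomplex $\bigotimes_{i\in c}V_i$, which is the Koszul complex on the $\ell$ linear forms $\{U_i+U_{\sigma(i)}\}_{i\in c}$. These forms sum to zero (we are in characteristic two) and any $\ell-1$ of them form a regular sequence, so this Koszul complex has homology $\overline{R}_c\oplus\overline{R}_c[1]$, where $\overline{R}_c$ is $R$ with the variables of the strands of $c$ identified; the degree-zero summand is generated by $\bigotimes_{i\in c}x_i$ and sits one Maslov grading above the shifted summand. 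Distinct cycles involve disjoint sets of variables, so the relevant $\mathrm{Tor}$ groups vanish, and tensoring over the cycles gives, as graded $\mathbb{F}[U_1,\dots,U_m]$-modules,
\[
H_*(V_1\otimes\cdots\otimes V_k)\;\cong\;\mathbb{F}[U_1,\dots,U_m]\otimes_{\mathbb{F}}\Lambda^{\bullet}(\epsilon_1,\dots,\epsilon_m),
\]
the ring $\mathbb{F}[U_1,\dots,U_m]$ arising exactly as $R$ modulo the identifications of strand variables within each component. Here each $\epsilon_j$ lies strictly below the top Maslov grading, so the summand $\Lambda^{0}=\mathbb{F}[U_1,\dots,U_m]\langle[x_1\otimes\cdots\otimes x_k]\rangle$ is free of rank one and is the direct summand of maximal rational Maslov grading.

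Combining the two steps (K\"unneth over the field $\mathbb{F}$ introduces no correction terms), $H_{top}(\mathcal{F}_b^{-B}(\mathcal{H}_0))\cong\mathbb{F}[U_1,\dots,U_m]$, free of rank one and generated by $[x_1\otimes\cdots\otimes x_k\otimes\bold{x}_{\widetilde{\mathcal{D}}}]=[\bold{x}_0]$, as claimed. I expect the main obstacle to be the second paragraph: pinning down the Koszul identification (in particular that the permutation $\sigma$ coming from the monodromy has cycles equal to the components of $K$, which is what makes the forms $U_i+U_{\sigma(i)}$ behave like a regular sequence modulo one relation) and verifying cleanly that the top Maslov-grading piece of the homology is the asserted free rank-one summand and is not entangled with the exterior-algebra tail; a smaller point is justifying the uniqueness of the bottom generator $\bold{x}_{\widetilde{\mathcal{D}}}$ of the binding diagram.
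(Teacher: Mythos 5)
Your argument is correct but takes a genuinely different route from the paper's at the first step. To see that $H_*(\mathcal{F}_r^{-B}(\widetilde{\mathcal{D}}))\cong\mathbb{F}$, the paper argues entirely at the level of homology: it invokes Theorem~\ref{thm:nonvanishing} (Theorem 1.1 of \cite{braiddynamics}) to conclude that $\widehat{t}(B)=[\bold{x}_\mathcal{D}]$ is nonzero, and then cites Proposition 2.2 of \cite{linkgenus} — that $\widehat{HFK}$ of a fibered link is rank one in the extremal Alexander grading — to deduce that the homology of the bottom filtration level is exactly $\mathbb{F}$, generated by $[\bold{x}_{\widetilde{\mathcal{D}}}]$. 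You instead argue at the chain level, asserting the HKM-style fact that $\bold{x}_{\widetilde{\mathcal{D}}}$ is the unique generator supported in $S_{1/2}$, so that $\mathcal{F}_r^{-B}(\widetilde{\mathcal{D}})$ is itself one-dimensional with vanishing differential. That claim is plausible and standard for HKM-type diagrams, but the paper's homology-level argument has two advantages: it is manifestly insensitive to the isotopy that takes $D$ (Figure~\ref{fig:two}) to $\mathcal{D}$ (Figure~\ref{fig:three}), and it carries over verbatim to the rational open book setting of Section~\ref{sec:rationalchar}, where no comparably explicit chain-level description of the bottom filtration level is available. For the remaining factor, your Koszul computation of $H_*(V_1\otimes\cdots\otimes V_k)$ — grouping strands by cycles of $\sigma$, using that $\sum_{i\in c}(U_i+U_{\sigma(i)})=0$ in characteristic two, and identifying the top-grading piece as the free rank-one summand generated by $[x_1\otimes\cdots\otimes x_k]$ — is correct and considerably more explicit than the paper's terse remark that there are obvious positive--Maslov-index domains from $\bold{x}_0$ to every other generator of $V_1\otimes\cdots\otimes V_k$. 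In summary, your proof is more self-contained (it avoids the nonvanishing theorem and the fibered rank-one theorem) and fills in Koszul algebra the paper elides, at the cost of resting on a chain-level uniqueness assertion that the paper deliberately sidesteps in favor of a more robust invariance argument.
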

\begin{proof}

$\widehat{t}(B)=[\bold{x}_\mathcal{D}] \in \widehat{HFK}(-Y,B,-r)$ is nonzero (Theorem \ref{thm:nonvanishing}).
By Proposition 2.2 of \cite{linkgenus} $\widehat{HFK}(-Y,B,-r)$ is rank one.

It follows that $H_* (\mathcal{F}_r ^{-B}(\widetilde{\mathcal{D}}))\simeq \widehat{HFK}(-Y,-B,r)$ is generated by $[\bold{x}_{\widetilde{\mathcal{D}}}]$, the homology class of a generator having components same as $\bold{x}_\mathcal{D}$ of Figure \ref{fig:three}.

In the previous lemma $\bold{x}_0$ is identified with $(x_1\otimes \dots \otimes x_k)\otimes \bold{x}_{\widetilde{\mathcal{D}}}$. Note that there are obvious domains (unions of index one disks) having positive Maslov index from the intersection point $\bold{x}_0$ to any other point of $V_1\otimes V_2\otimes \dots \otimes V_k$.

Thus $[\bold{x}_0]$ generates $H_{top}(\mathcal{F}_b ^{-B}(\mathcal{H}_0)) \simeq \mathbb{F}[U_1,\dots , U_m]$.

\end{proof}

\begin{remark}
\label{remark:maslov}
The assumption that $Y$ is a $\mathbb{Q}HS^3$ is in place so that the absolute Maslov $\mathbb{Q}$ grading is defined. This technical assumption may be replaced with the assumption that $\mathfrak{s}_{\xi}$ is torsion. Alternatively, one may consider the basepoint action $\psi_{w}$ on $H_*(\mathcal{F}_b ^{-B}(\mathcal{H}_0))$ for each $w \in \bold{w}$. Then $[\bold{x}_0]$ generates $\bigcap_{w\in \bold{w}} coker(\psi_{w})\simeq \mathbb{F}[U_1,\dots , U_m]$.

\end{remark}

We now relate the class $[\bold{x}_0]$ to $t(K)$.\ 

Consider the triple diagram $(\Sigma, \boldsymbol{\beta}', \boldsymbol{\beta}, \boldsymbol{\alpha}, \bold{w}, \bold{z} \cup \bold{z}_{-B})$ shown in Figure \ref{fig:triangleone}. The set of curves $\boldsymbol{\beta}'$ is handleslide equivalent to $\boldsymbol{\beta}$ in the complement of all basepoints. 
For \begin{align*}
2\le i\le 2g+k\hspace{1cm}\text{ and}\\
2g+k+1\le j\le 2g+n+k-1,
\end{align*} $\beta_i '$, $\beta_{2,j}'$, is gotten by applying a small isotopy to $\beta_i$, $\beta_{2,j}$, respectively. 
Let $\mathcal{H}_1 = (\Sigma, \boldsymbol{\beta}', \boldsymbol{\alpha}, \bold{w} , \bold{z} \cup \bold{z}_{-B})$. Let $\boldsymbol{\Theta}$ denote the generator of $CFK^{-,n}(\Sigma,\boldsymbol{\beta}',\boldsymbol{\beta},  \bold{w} , \bold{z} \cup \bold{z}_{-B})$ in the top Maslov grading.\\


\begin{figure}[h]
\def\svgwidth{300pt}
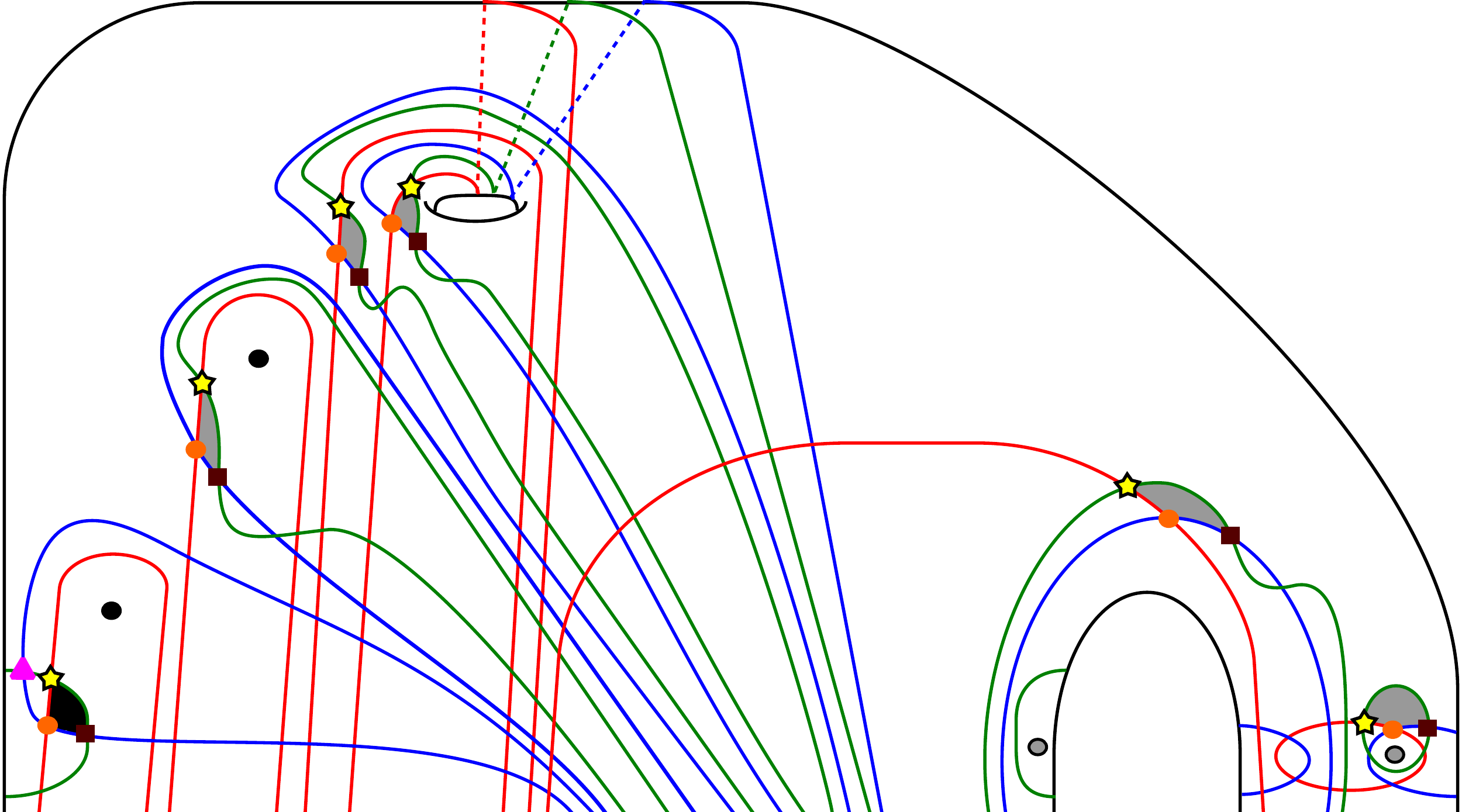
\caption{The $\boldsymbol{\alpha}$,$\boldsymbol{\beta}$ and $\boldsymbol{\beta '}$ curves are red, blue, and green, respectively. The generators $\bold{x}_0$, $\boldsymbol{\Theta}$ and $\bold{x}_1$ are represented by orange dots, brown squares, and yellow stars, respectively. The $\bold{w}$ and $\bold{z}_{-B}$ basepoints are depicted with solid and grey dots, respectively.}
\label{fig:triangleone}
\end{figure}

Consider the triple diagram $(\Sigma, \boldsymbol{\beta}', \boldsymbol{\alpha}, \boldsymbol{\alpha}',\bold{w} , \bold{z} \cup \bold{z}_{-B})$ shown in Figure \ref{fig:triangletwo}. The set of curves $\boldsymbol{\alpha}'$ is handleslide equivalent to $\boldsymbol{\alpha}$ in the complement of all basepoints. 
For 
\begin{align*}
2\le i\le 2g+k\hspace{1cm} \text{ and}\\
2g+k+1\le j\le 2g+n+k-1,
\end{align*} $\alpha_i '$, $\alpha_{2,j}'$, is gotten by applying a small isotopy to $\alpha_i$, $\alpha_{2,j}$, respectively. Let $\mathcal{H}_2 = (\Sigma, \boldsymbol{\beta}', \boldsymbol{\alpha}', \bold{w} , \bold{z} \cup \bold{z}_{-B})$. Abusing notation, let $\boldsymbol{\Theta}$ denote the generator of $CFK^{-,n}(\Sigma,\boldsymbol{\alpha},\boldsymbol{\alpha}',  \bold{w} , \bold{z} \cup \bold{z}_{-B})$ in the top Maslov grading.

\begin{figure}[h]
\def\svgwidth{300pt}
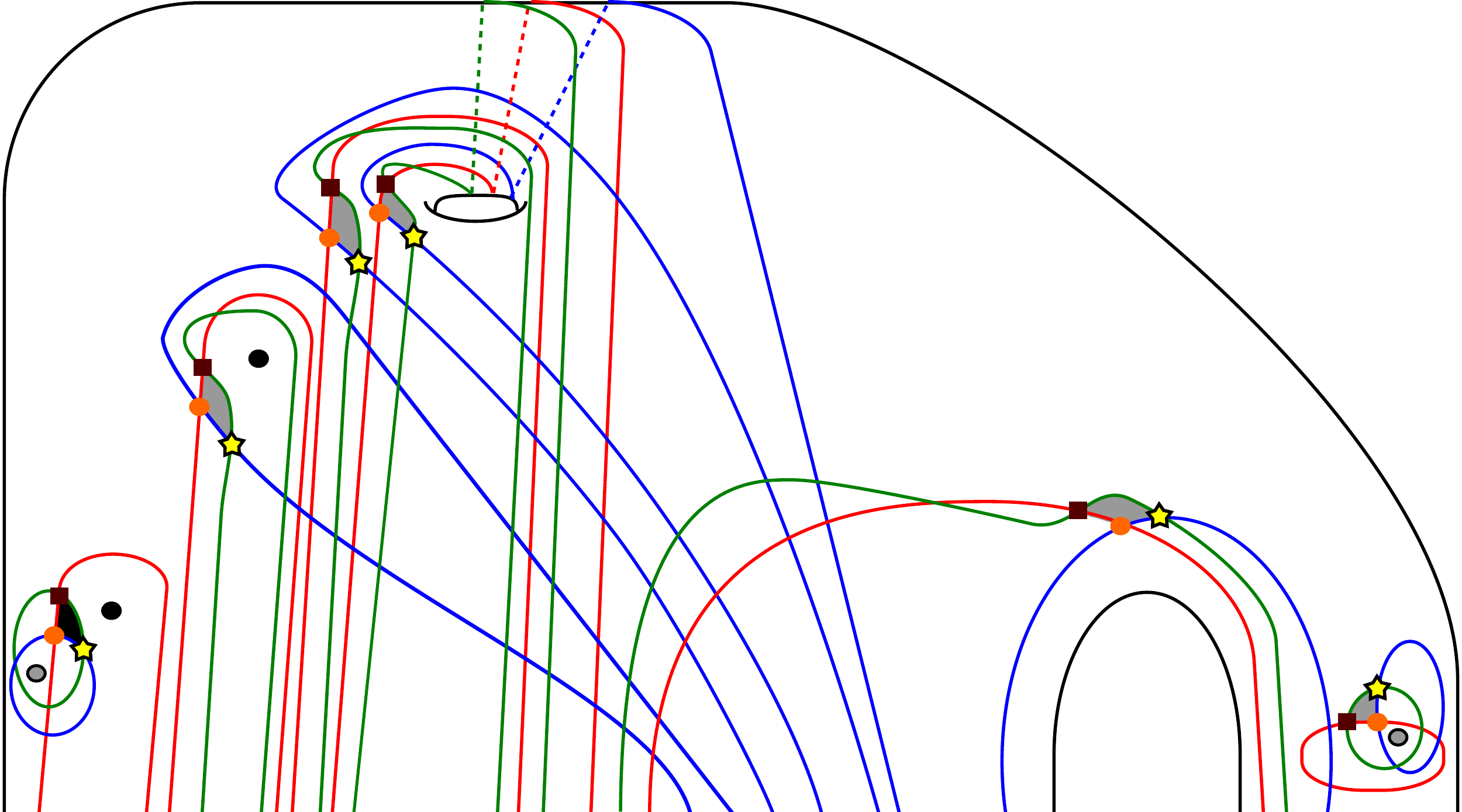
\caption{The $\boldsymbol{\alpha}$,$\boldsymbol{\beta} '$ and $\boldsymbol{\alpha} '$ curves are red, blue, and green, respectively. The generators $\bold{x}_1$, $\boldsymbol{\Theta}$ and $\bold{x}_2$ are represented by orange dots, brown squares, and yellow stars, respectively. To make the diagram simpler, we have performed an isotopy of $\boldsymbol{\alpha}$ and $\boldsymbol{\beta}'$ curves and moved the leftmost basepoint $z_{-B}$ in the complement of said curves (before drawing the $\boldsymbol{\alpha}'$ curves).}
\label{fig:triangletwo}
\end{figure}

\begin{proposition}
\label{prop:maps}
Let
\[
\begin{split}
F_{0,1} : HFK^{-,n}(\mathcal{H}_0) \xrightarrow {\simeq} HFK^{-,n}(\mathcal{H}_1) &\\
F_{1,2} : HFK^{-,n}(\mathcal{H}_1) \xrightarrow {\simeq} HFK^{-,n}(\mathcal{H}_2)
\end{split}
\]
denote the isomorphisms induced by the triple diagrams above.
The composition $F_{0,2} = F_{1,2}\circ F_{0,1}$ sends the class $[\bold{x}_0]$ to the class $[\bold{x}_2]$.
\end{proposition}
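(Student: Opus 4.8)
\emph{Proof proposal.} The plan is to establish the stronger chain-level statements $F_{0,1}(\bold{x}_0) = \bold{x}_1$ and $F_{1,2}(\bold{x}_1) = \bold{x}_2$, which immediately give $F_{0,2}(\bold{x}_0) = \bold{x}_2$ and hence $F_{0,2}[\bold{x}_0] = [\bold{x}_2]$. Both $F_{0,1}$ and $F_{1,2}$ are the holomorphic-triangle maps attached to the respective triple diagrams, counted against the top generator $\boldsymbol{\Theta}$; explicitly, $F_{0,1}$ is defined on generators by counting Maslov index zero triangles $\psi \in \pi_2(\bold{x}_0, \boldsymbol{\Theta}, \bold{y})$ with $D(\psi)\ge 0$ and $n_z(\psi) = 0$ for all linked basepoints $z\in\bold{z}\cup\bold{z}_{-B}$, weighted by $\prod_{w\in\bold{w}} U_w^{n_w(\psi)}$, with triangles crossing a free basepoint contributing zero; similarly for $F_{1,2}$ using the triple diagram of Figure \ref{fig:triangletwo}. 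This is exactly the setup used for the $S^3$ case in \cite{equiv}, and the argument here will parallel theirs.

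First I would identify the distinguished ``small'' triangle. Since $\boldsymbol{\beta}'$ is obtained from $\boldsymbol{\beta}$ by a sufficiently small Hamiltonian isotopy, with each $\beta_i'$ a thin pushoff of $\beta_i$ meeting it in two points whose top one is the $\boldsymbol{\Theta}$-component $\theta_i$, there is a canonical class $\psi_0 \in \pi_2(\bold{x}_0, \boldsymbol{\Theta}, \bold{x}_1)$ whose domain is a disjoint union of small triangles, one supported in a small neighbourhood of each curve $\beta_i$ — these are the shaded regions near the intersection points in Figure \ref{fig:triangleone}, with the indicated local multiplicities. By construction of the perturbation $D(\psi_0)$ avoids all basepoints, linked and free, so $\psi_0$ contributes with coefficient $U^0 = 1$, and $\mu(\psi_0) = 0$; by the standard nearest-point count for small triangles (as in the handleslide-invariance arguments of \cite{MOST}) it has a unique holomorphic representative. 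Hence $\psi_0$ contributes $1\cdot\bold{x}_1$ to $F_{0,1}(\bold{x}_0)$.

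Next I would rule out every other contribution. Suppose $\psi \in \pi_2(\bold{x}_0, \boldsymbol{\Theta}, \bold{y})$ has $D(\psi)\ge 0$, $\mu(\psi) = 0$, $n_z(\psi) = 0$ for all $z\in\bold{z}\cup\bold{z}_{-B}$, and does not cross a free basepoint. Restricting $D(\psi)$ to a neighbourhood of each perturbed curve $\beta_i$, the local multiplicities are pinned down by the boundary conditions along $\boldsymbol{\alpha},\boldsymbol{\beta},\boldsymbol{\beta}'$ together with nonnegativity — this is the content of the labelled local picture in Figure \ref{fig:triangleone} — and the only possibility that does not force a positive multiplicity at one of the forbidden linked $\bold{z}$-basepoints is the local piece of $\psi_0$. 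Therefore $\psi = \psi_0$ and $F_{0,1}(\bold{x}_0) = \bold{x}_1$. The identical analysis, applied to the triple diagram $(\Sigma, \boldsymbol{\beta}', \boldsymbol{\alpha}, \boldsymbol{\alpha}', \bold{w}, \bold{z}\cup\bold{z}_{-B})$ of Figure \ref{fig:triangletwo} with $\boldsymbol{\alpha}$ and $\boldsymbol{\alpha}'$ now playing the roles of $\boldsymbol{\beta}$ and $\boldsymbol{\beta}'$, produces a small triangle $\psi_0' \in \pi_2(\bold{x}_1, \boldsymbol{\Theta}, \bold{x}_2)$ which is the unique contributing class, so $F_{1,2}(\bold{x}_1) = \bold{x}_2$. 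Composing gives $F_{0,2}(\bold{x}_0) = F_{1,2}(\bold{x}_1) = \bold{x}_2$.

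The hard part will be the second step: verifying that the small triangle is the only contribution. One must choose the isotopies defining $\boldsymbol{\beta}'$ and $\boldsymbol{\alpha}'$ small enough that any competing positive triangle domain is forced to acquire positive multiplicity at one of the linked $\bold{z}$-basepoints — which the triangle count forbids — while keeping the triple diagrams admissible, and the local bookkeeping has to be run for each of the $2g+2n+k-2$ curves, using precisely the multiplicity labels recorded in Figures \ref{fig:triangleone} and \ref{fig:triangletwo}. This is the direct analogue of the corresponding step in \cite{equiv}, and that argument carries over essentially verbatim to the present setting.
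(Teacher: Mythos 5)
Your overall strategy matches the paper's---identify the triangle class $\psi_0\in\pi_2(\bold{x}_0,\boldsymbol{\Theta},\bold{x}_1)$ and show it is the unique contributing class---but the justification for uniqueness has two substantive errors. First, you assert that $\boldsymbol{\beta}'$ is obtained from $\boldsymbol{\beta}$ by a ``sufficiently small Hamiltonian isotopy, with each $\beta_i'$ a thin pushoff of $\beta_i$ meeting it in two points,'' and then invoke the nearest-point count for small triangles. The paper actually states that $\boldsymbol{\beta}'$ is \emph{handleslide equivalent} to $\boldsymbol{\beta}$, and it singles out only a proper subset of indices for which $\beta_i'$ (and $\beta_{2,j}'$) is a small isotopy; the remaining curves are genuine handleslides of the originals, with a more complicated intersection pattern. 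The small-isotopy uniqueness heuristic does not apply wholesale in that situation, and the uniqueness of $\psi_0$ has to be extracted from explicit multiplicity bookkeeping rather than from smallness of the perturbation.

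Second, your description of how competing triangles are ruled out does not match what actually closes the argument. Under the convention used throughout this paper (the third entry of the Heegaard tuple is the avoided set and the fourth entry is $U$-weighted), the diagram $\mathcal{H}_0 = (\Sigma,\boldsymbol{\beta},\boldsymbol{\alpha},\bold{w},\bold{z}\cup\bold{z}_{-B})$ has the $\bold{w}$ basepoints forbidden and the $\bold{z}$ basepoints $U$-weighted, with $\bold{z}_{-B}$ free and therefore killed in $CFK^{-,n}$; your proposal has this inverted. The paper's proof forces zero local multiplicity in one region because it contains a $\bold{w}$ basepoint and in another because it contains a $z_{-B}$ basepoint, then uses the corner conditions at the black triangle to get $b+d=a+1$ and $b=a+c+1$, and deduces $c=d=0$ from positivity. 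Crucially, there is a further step: because the point $p$ (the pink triangle in Figure \ref{fig:triangleone}) is not a corner of $D(u)$ and three of its four adjacent regions are already known to vanish, the multiplicity in the region labelled $R$ must also vanish; only then can the vanishing pattern be propagated along the remaining shaded triangles (including those associated to the handleslid curves). This propagation via the region $R$ is precisely what the small-isotopy heuristic cannot supply, and it is absent from your proposal; deferring to ``the argument in \cite{equiv} carries over verbatim'' does not fill this in.
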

\begin{proof}
The isomorphisms $F_{0,1}$ and $F_{1,2}$ are induced by pseudo-holomorphic triangle counting maps $f_{0,1}$ and $f_{1,2}$ respectively. 
We first prove that $f_{0,1} (\bold{x}_0) = \bold{x}_1$.

Suppose that $u\in \pi_2 (\boldsymbol{\Theta},\bold{x}_0,\bold{y})$ is a Whitney triangle of Maslov index one which admits a pseudo-holomorphic representative. We claim that $\bold{y} = \bold{x}_1$ and that the domain $D(u)$ is a disjoint union of small triangles pictured in Figure \ref{fig:triangleone}. In this case $u$ has a unique pseudo-holomorphic representative. We prove this claim by analyzing the multiplicities of $D(u)$ near the generators $\bold{x}_0$ and $\boldsymbol{\Theta}$. First, we analyze the multiplicities near the small triangle shaded black in Figure \ref{fig:triangleone}. 

The diagram in the upper right of the figure shows the local multiplicities in the regions near the triangle. The region just outside the triangle adjacent to the $\boldsymbol{\beta}'$ curve contains a basepoint of $\bold{w}$, so the local multiplicity in this region is zero. The region opposite to the triangle at the corner having a component of $\bold{x}_0$ contains a basepoint $z_{-B}$, so the local multiplicity is zero there as well. Since $\boldsymbol{\Theta}$ and $\bold{x}_0$ are corners of $D(u)$ it follows that $b+d = a+1$ and $b=a+c+1$. Subtracting the second equation from the first we get that $d=-c$. Because $u$ admits a pseudo-holomorphic representative all multiplicities of $D(u)$ must be non-negative; it follows that $d=c=0$ and $b=a+1$. If the component of $\bold{x}_1$ on the vertex of this small triangle is not a corner of $D(u)$, it follows that $b+e =0$, which in turn implies $b=e=0$ and $a=-1$, a contradiction. 

Let $p\in \Sigma$ be the point denoted by a pink triangle in Figure \ref{fig:triangleone}. We have already shown that the multiplicities in three of the regions (all but $R$) which have a corner at $p$ are equal to zero. Since $p$ is not a corner of $D(u)$, it follows that the multiplicity in the region labeled $R$ is also equal to zero.

The multiplicities of regions near all the shaded small triangles now are identical to that of the multiplicities near the black triangle studied above. The claim follows.

To prove that $f_{1,2} (\bold{x}_1) = \bold{x}_2$ one uses a similar argument to show that the multiplicities of the domain of any triangle $v\in \pi_2 (\bold{x}_1,\Theta,\bold{y})$ admitting a pseudo-holomorphic representative are equal to $1$ in each shaded triangle of Figure \ref{fig:triangletwo} and zero elsewhere. As before, one must study multiplicities around the black triangle first to see that the multiplicity in the region labelled $R$ must be zero.

\end{proof}


Observe that in $\mathcal{H}_2$ we have small configurations about each point of $\bold{z}_{-B}$. 
By performing $n$ free index 0/3 destabilizations to remove all points of $\bold{z}_{-B}$ we obtain a new diagram $\mathcal{T}$ and generator $\bold{x}$, see Figure \ref{fig:diagramT}. Let $T$ denote the diagram used in the definition of the BRAID invariant when using the basis for $S\smallsetminus \{p_1,\dots, p_k\}$ pictured on the left half of Figure \ref{fig:basisanddiagram}. We modify $T$ to obtain $\mathcal{T}$ by applying some finger moves along the $\beta$-curves in the region $-S_0$, which in turn may be realized via an isotopy of $\widehat{\phi}$. Since $t(K)$ is invariant under isotopy of $\widehat{\phi}$ (\cite{equiv}) we have that $[\bold{x}]=t(K)\in HFK^{-}(\mathcal{T})$.

 \begin{figure}[h]
\def\svgwidth{200pt}
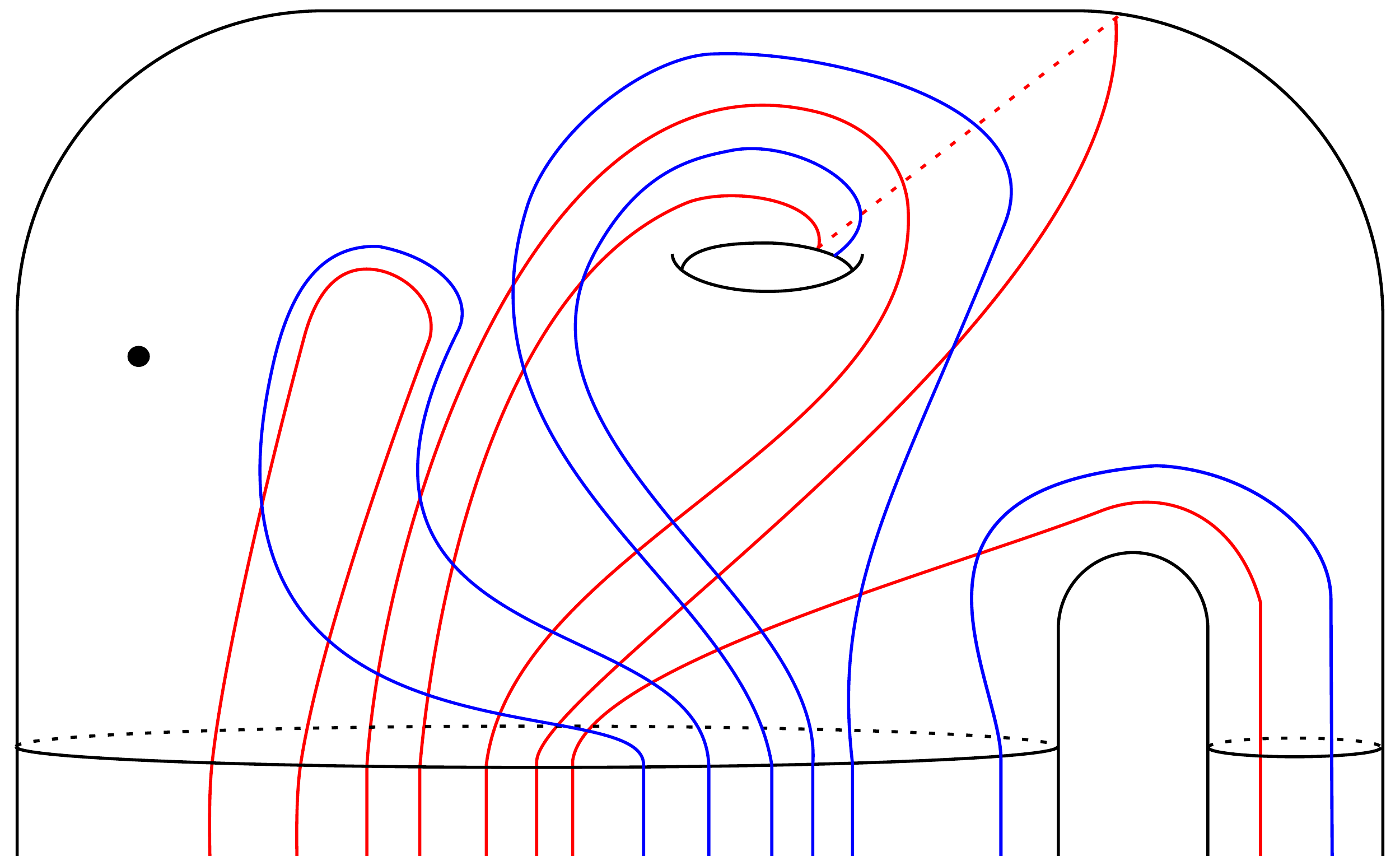
\caption{The diagram $\mathcal{T}$. The generator $\bold{x}$ is depicted with orange dots.}
\label{fig:diagramT}
\end{figure}

 \begin{figure}[h]
\def\svgwidth{250pt}
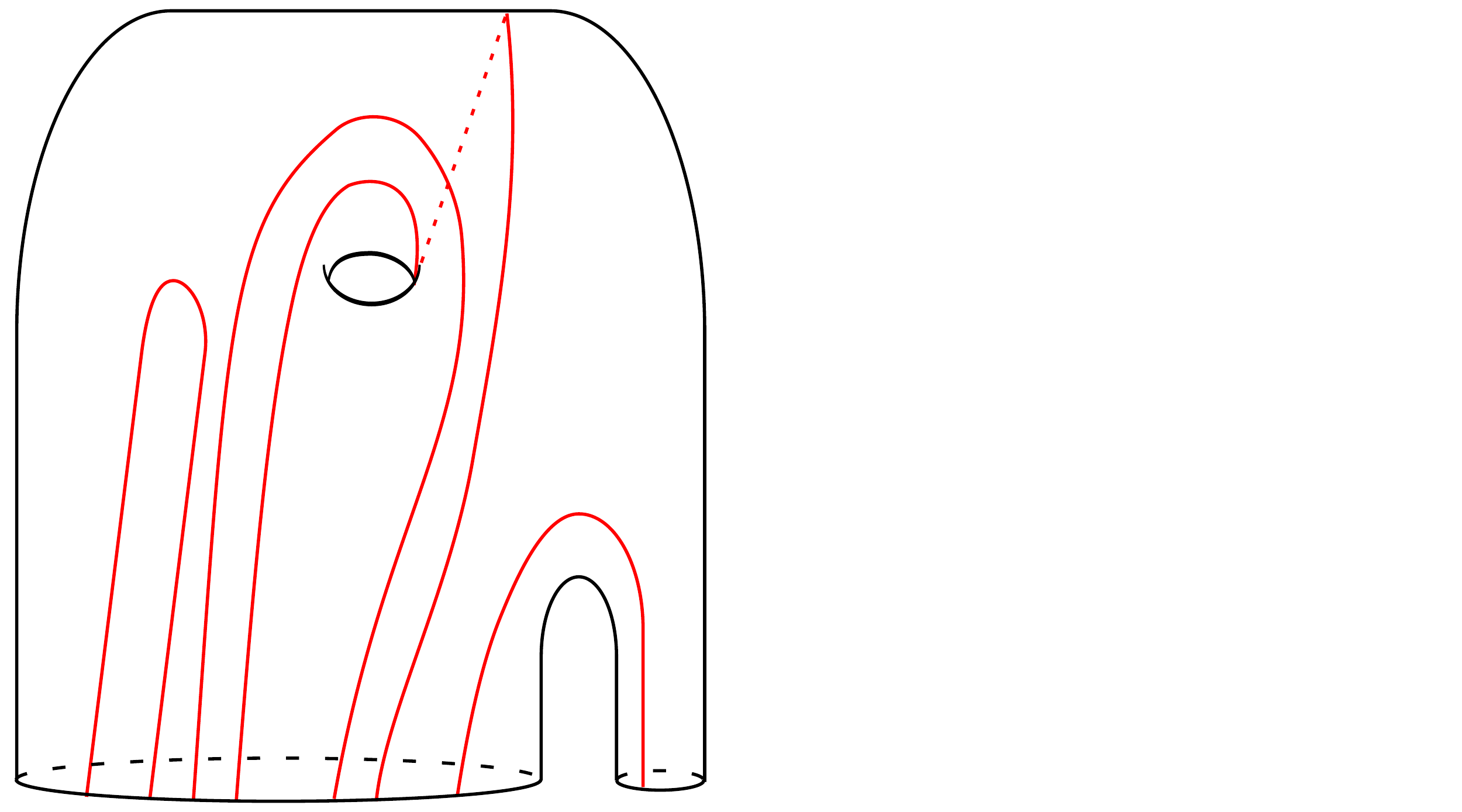
\caption{To the basis on the left we may associate the diagram on the right. Applying finger moves along the $\boldsymbol{\beta}$ curves in the region $-S_0$, which corresponds to isotopy of the monodromy, results in the diagram of Figure \ref{fig:diagramT}.}
\label{fig:basisanddiagram}
\end{figure}

The compositions of projection and inclusion maps
\[
\begin{split}
j^n : CFK^{-,n} (\mathcal{H}_2)\to CFK^- (\mathcal{T}) & \\  i^n : CFK^- (\mathcal{T})  \to  CFK^{-,n} (\mathcal{H}_2)
\end{split}
\] 
defined in subsection \ref{subsec:HFK}, send generators $\bold{x}_2$ to $\bold{x}$ and $\bold{x}$ to $\bold{x}_2$, respectively. In light of Proposition \ref{prop:maps} the following is evident:
\begin{proposition}
\label{prop:composition}
The compositions
\[
\begin{split}
(j^n)_*\circ F_{0,2} : HFK^{-,n} (\mathcal{H}_0)\to HFK^- (\mathcal{T}) & \\  
 F_{0,2}^{-1}\circ(i^n)_* : HFK^- (\mathcal{T})  \to  HFK^{-,n} (\mathcal{H}_0)
\end{split}
\] 
send $[\bold{x}_0]$ to $[\bold{x}] = t(K)$ and $[\bold{x}]=t(K)$ to $[\bold{x}_0]$, respectively. In particular, $[\bold{x}_0]$ lies in the summand $\bigcap_{z\in \bold{z}_{-B}} coker (\psi _z)$.
\end{proposition}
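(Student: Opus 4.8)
The plan is to obtain the proposition as a formal consequence of facts already in place: Proposition~\ref{prop:maps}, which gives $F_{0,2}([\bold{x}_0]) = [\bold{x}_2]$; the chain-level identities $i^n(\bold{x}) = \bold{x}_2$ and $j^n(\bold{x}_2) = \bold{x}$ recorded just above; and the equality $[\bold{x}] = t(K)$ in $HFK^{-}(\mathcal{T})$, established there using the isotopy invariance of the BRAID invariant \cite{equiv}. With all of these in hand, the proof reduces to chasing the distinguished classes through the maps.

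Concretely, I would first observe that since $i^n$ and $j^n$ are chain maps and $\bold{x}$, $\bold{x}_2$, $\bold{x}_0$ are cycles, the chain-level identities descend to homology: $(i^n)_*([\bold{x}]) = [\bold{x}_2]$ and $(j^n)_*([\bold{x}_2]) = [\bold{x}] = t(K)$. Precomposing with $F_{0,2}$ and using Proposition~\ref{prop:maps} then gives $(j^n)_* \circ F_{0,2}([\bold{x}_0]) = (j^n)_*([\bold{x}_2]) = t(K)$, and, $F_{0,2}$ being an isomorphism, $F_{0,2}^{-1}\circ (i^n)_*([\bold{x}]) = F_{0,2}^{-1}([\bold{x}_2]) = [\bold{x}_0]$. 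The Maslov grading shifts $[n]$ carried by $(i^n)_*$ and $(j^n)_*$ play no role, since only the images of the distinguished classes are being tracked. For the final claim that $[\bold{x}_0]$ lies in $\bigcap_{z\in\bold{z}_{-B}} coker\ \psi_z$, I would use that $\mathcal{H}_2$ arises from $\mathcal{T}$ by $n$ free index $0/3$ stabilizations at the points of $\bold{z}_{-B}$: the discussion of free $0/3$ (de)stabilization in Subsection~\ref{subsec:HFK} identifies the image of $(i^n)_*$ with $\big(\bigcap_{z\in \bold{z}_{-B}} coker\ \psi_z\big)[n]$, so $[\bold{x}_2] = (i^n)_*([\bold{x}])$ lies in this summand. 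Since the triangle maps inducing $F_{0,1}$ and $F_{1,2}$ are handleslides and isotopies performed away from all basepoints of $\mathcal{H}_0$ --- so the free basepoints $\bold{z}_{-B}$ are neither created nor destroyed --- $F_{0,2}$ commutes with the actions $\psi_z$, $z\in\bold{z}_{-B}$, and hence carries $\bigcap_{z\in\bold{z}_{-B}} coker\ \psi_z$ in $HFK^{-,n}(\mathcal{H}_0)$ isomorphically onto the corresponding summand of $HFK^{-,n}(\mathcal{H}_2)$; as $F_{0,2}([\bold{x}_0]) = [\bold{x}_2]$ lies in the latter, $[\bold{x}_0]$ lies in the former.

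I do not expect any real obstacle: the only substantive input --- that the holomorphic triangle maps carry $\bold{x}_0 \mapsto \bold{x}_1 \mapsto \bold{x}_2$ --- was already handled in Proposition~\ref{prop:maps}, and everything else is bookkeeping. The single point deserving a moment's attention is the assertion that $F_{0,2}$ intertwines the free basepoint actions $\psi_z$ for $z\in\bold{z}_{-B}$; this is exactly the general fact, noted in Subsection~\ref{subsec:HFK}, that $\psi_z$ commutes with the maps induced by any Heegaard move that does not add or remove the basepoint $z$.
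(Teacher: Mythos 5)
Your argument is correct and is exactly what the paper intends: the paper declares the proposition ``evident'' from Proposition~\ref{prop:maps} together with the chain-level identities $i^n(\bold{x})=\bold{x}_2$, $j^n(\bold{x}_2)=\bold{x}$ and the equality $[\bold{x}]=t(K)$, and your proof simply spells out that evidence, including the correct observation that $(i^n)_*$ has image $\bigl(\bigcap_{z\in\bold{z}_{-B}}\mathrm{coker}\ \psi_z\bigr)[n]$ and that $F_{0,2}$, being induced by handleslides and isotopies away from the free basepoints $\bold{z}_{-B}$, commutes with those basepoint actions.
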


\section{The BRAID invariant $t$ and Rational Open Books}
\label{sec:rationalchar}

In this short section we show that the reformulation of $t(K)$ of the previous section generalizes to braids about rational open books having connected binding.
Let $(B,\pi)$ denote a rational open book decomposition for $Y$ with connected binding, such an open book supports a unique contact structure (see Theorem 1.7 \cite{CCSMCM}) $\xi$.
Let $K\subset Y$ denote a link braided about $B$ with $m$ components.
As in the integral case, $K$ is naturally a transverse link in $(Y,\xi)$.\\

We may choose a diagram \[
(\Sigma,\boldsymbol{\beta_0}',\boldsymbol{\alpha}_0,w_{-B},z_{-B}) \text{ for }(-Y,-B)
\] 
such that $\beta_0\subset \boldsymbol{\beta_0}'$ is a meridian for $-B$, and such that $\beta_0$ intersects only $\alpha_0$ among all $\boldsymbol{\alpha}_0$ curves, and does so in a single point. In proving Proposition 3.1 of \cite{QOB}, Hedden and Plamenevskaya perform some finger moves to $\beta_0'$, getting a new curve $\beta_0$, this gives us a new Heegaard diagram
\[
\mathcal{B}=(\Sigma,\boldsymbol{\beta_0},\boldsymbol{\alpha}_0,w_{-B},z_{-B}) \text{ for }(-Y,-B)
\] 
such that replacing $w_{-B}$ with another point $w_{-B'}$ results in a diagram 
\[\mathcal{B}' = (\Sigma, \boldsymbol{\beta}_0,\boldsymbol{\alpha}_0,w_{-B'},z_{-B}) \text{ for } (-Y,-B'),
\] 
where $B'$ is some $(P,Pn+1)$ genuinely fibered cable of $B$. Using relative periodic domains for longitudes of $B$ and $B'$ to study the Alexander gradings, Hedden and Plamenevskaya identify the complexes $\widehat{CFK}(\mathcal{B},bot)$ and $\widehat{CFK}(\mathcal{B}',bot')$ with each other. 

By starting with a diagram \[
(\Sigma,\boldsymbol{\beta}',\boldsymbol{\alpha},\bold{w} \cup w_{-B}, \bold{z} \cup z_{-B}) \text{ for }(-Y,K\cup -B)
\] 
such that $\beta_0\subset \boldsymbol{\beta}'$ is a meridian for $-B$, and such that $\beta_0$ intersects only $\alpha_0$ among all $\boldsymbol{\alpha}$ curves, and does so in a single point, we may perform the same finger moves to $\beta_0$ as in the proof of Proposition 3.1 of \cite{QOB}, getting a diagram
\[\widetilde{\mathcal{D}}=(\Sigma, \boldsymbol{\beta}, \boldsymbol{\alpha},\bold{w} \cup w_{-B}, \bold{z} \cup z_{-B})\text{ for }(-Y,K\cup -B)\]
such that replacing $w_{-B}$ with another point $w_{-B'}$ results in a diagram 
\[\widetilde{\mathcal{D}}' =(\Sigma,\boldsymbol{\beta},  \boldsymbol{\alpha},\bold{w} \cup w_{-B}', \bold{z} \cup z_{-B})\text{ for }(-Y,K\cup -B').\]
Let $\mathcal{D}=(\Sigma,\boldsymbol{\beta},  \boldsymbol{\alpha},\bold{w} , \bold{z} \cup z_{-B})$. Both $-B$ and $-B'$ induce filtrations on $CFK^{-,1}(\mathcal{D})$. Copying the proof of Proposition 3.1 of \cite{QOB}, with minor changes in notation, shows that $\mathcal{F}_{bot}^{-B}(\mathcal{D}) = \mathcal{F}_{bot'}^{-B'}(\mathcal{D})$ as complexes.

\begin{proposition}
\label{prop:characterization}
Let $B\subset Y$ be a rationally fibered knot, with $Y$ a $\mathbb{Q}HS^3$. Let $K$ be a link braided about $B$ with $m$ components. 

Let $\widetilde{\mathcal{G}} = (\Sigma, \boldsymbol{\beta}, \boldsymbol{\alpha}, \bold{w} \cup w_{-B}, \bold{z} \cup z_{-B})$ be a Heegaard diagram for $(-Y,K\cup -B)$ with a pair of basepoints encoding $-B$. $\mathcal{G}_0= (\Sigma, \boldsymbol{\beta}, \boldsymbol{\alpha}, \bold{w} , \bold{z}\cup z_{-B})$ is a Heegaard diagram for $(-Y,K)$ with $1$ free basepoint. $-B$ induces an Alexander filtration on the complex $CFK^{-,1}(\mathcal{G}_0)$
\[
\emptyset = \mathcal{F}_i^{-B}(\mathcal{G}_0) \subset \mathcal{F}_{i+1}^{-B}(\mathcal{G}_0)  \subset \dots \subset \mathcal{F}_j^{-B}(\mathcal{G}_0)  =CFK^{-,1} (\mathcal{G}_0)
\]

$H_{top} (\mathcal{F}_{bot}^{-B}(\mathcal{G}_0))$ is a rank one $\mathcal{F}[U_1,\dots,U_m]$-module.

\end{proposition}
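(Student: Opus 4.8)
The plan is to reduce the statement to the integral case established in Section~\ref{sec:braidinvt} and Subsection~\ref{subsec:reformulation} by passing to the genuinely fibered cable $B'$ of $B$ furnished by the Hedden--Plamenevskaya construction recalled above. Recall that $B'$ is a $(P,Pn+1)$--cable of $B$; since $\gcd(P,Pn+1)=1$ the knot $B'$ is connected, and it is the binding of an honest (integral) open book decomposition of $Y$. The preceding paragraph (which copies the proof of Proposition~3.1 of \cite{QOB} with the braid $K$ present) identifies, \emph{on the nose}, the two subcomplexes $\mathcal{F}_{bot}^{-B}(\mathcal{G}_0)$ and $\mathcal{F}_{bot'}^{-B'}(\mathcal{G}_0)$ of $CFK^{-,1}(\mathcal{G}_0)$; hence their homologies, the induced absolute Maslov gradings, and therefore their top summands agree. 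Since the filtered chain homotopy type of $\big(CFK^{-,1}(-Y,K),\mathcal{F}^{-B}\big)$ is an invariant of $(Y,K,B)$ and, $Y$ being a $\mathbb{Q}HS^3$, carries a well-defined absolute $\mathbb{Q}$--grading, we may from the start take $\widetilde{\mathcal{G}}$ to be the specific diagram $\widetilde{\mathcal{D}}$ built via the finger moves, so $\mathcal{G}_0=\mathcal{D}$. Thus it suffices to prove that $H_{top}\big(\mathcal{F}_{bot'}^{-B'}(\mathcal{G}_0)\big)$ is a rank one $\mathbb{F}[U_1,\dots,U_m]$--module.

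Next I would check that $K$ is a braid about $B'$ with the same $m$ components. A braid about $B$ is disjoint from a tubular neighbourhood $\nu(B)$ of the binding and is positively transverse to the interiors of the pages there; the modification turning $(B,\pi)$ into the integral open book with binding $B'$ is supported inside $\nu(B)$, and outside $\nu(B)$ its pages agree (up to isotopy) with those of $(B,\pi)$. Hence $K$ is positively transverse to the pages of the $B'$ open book and is a braid about $B'$ on the same $m$ strands.

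Now, with $B'$ an honest open book binding and $K$ a braid about it, the entire apparatus of Subsection~\ref{subsec:reformulation} applies with $B'$ in place of $B$ and $n=1$: one builds the open-book--adapted Heegaard diagram $\mathcal{H}_0^{B'}$ for $(-Y,K)$ with a single free basepoint encoding $-B'$, and Proposition~\ref{prop:rank} (whose hypothesis is exactly that $Y$ is a $\mathbb{Q}HS^3$, as in Remark~\ref{remark:maslov}) gives
\[
H_{top}\big(\mathcal{F}_{bot'}^{-B'}(\mathcal{H}_0^{B'})\big)\;\cong\;\mathbb{F}[U_1,\dots,U_m],
\]
generated by $[\bold{x}_0]$. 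Invoking again the invariance of the filtered chain homotopy type (the Heegaard moves relating $\mathcal{H}_0^{B'}$ and $\mathcal{G}_0$ induce filtration-respecting maps, and, by the facts recalled in Subsection~\ref{subsec:HFK}, intertwine the relevant basepoint actions), we obtain
\[
H_{top}\big(\mathcal{F}_{bot'}^{-B'}(\mathcal{G}_0)\big)\;\cong\;H_{top}\big(\mathcal{F}_{bot'}^{-B'}(\mathcal{H}_0^{B'})\big)\;\cong\;\mathbb{F}[U_1,\dots,U_m].
\]
Combining this with the on-the-nose identification $\mathcal{F}_{bot}^{-B}(\mathcal{G}_0)=\mathcal{F}_{bot'}^{-B'}(\mathcal{G}_0)$ of the first paragraph yields $H_{top}\big(\mathcal{F}_{bot}^{-B}(\mathcal{G}_0)\big)\cong\mathbb{F}[U_1,\dots,U_m]$, as claimed.

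The bookkeeping — matching up free basepoints, noting $\gcd(P,Pn+1)=1$, and verifying that the finger-move construction places $\mathcal{G}_0$ in the setting of Subsection~\ref{subsec:reformulation} for $B'$ — is routine. The one place requiring care, and the natural candidate for an obstruction, is the two-fold input that $K$ genuinely remains a braid about the cable $B'$ and that the nonvanishing result Theorem~\ref{thm:nonvanishing} together with the rank-one computation (Proposition~2.2 of \cite{linkgenus}) apply to the pair $B'\cup K$; but these are precisely the ingredients already used inside Proposition~\ref{prop:rank}, so once the reduction to the genuinely fibered $B'$ is in hand there is nothing essentially new to prove.
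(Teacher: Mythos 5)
Your proposal is correct and follows essentially the same route as the paper: reduce to the Hedden--Plamenevskaya diagram $\mathcal{D}$ (you phrase this as WLOG via filtered invariance, the paper via explicit filtration-respecting chain maps $g$ and $h$), identify $\mathcal{F}_{bot}^{-B}(\mathcal{D})=\mathcal{F}_{bot'}^{-B'}(\mathcal{D})$, pass to the open-book--adapted diagram for the integrally fibered cable $B'$, and invoke Proposition~\ref{prop:rank}. Your explicit check that $K$ remains an $m$--component braid about $B'$ is a detail the paper leaves implicit.
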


\begin{proof}


The diagram $\widetilde{\mathcal{G}_0}$ is related to $\widetilde{\mathcal{D}}$ by a sequence of handleslides and isotopies avoiding all basepoints, together with index 1/2 (de)stabilizations and linked index 0/3 (de)stabilizations involving only the basepoints in $\bold{w} \cup \bold{z}$. This sequence of moves give rise to chain maps, the composition of which 
\[
g:CFK^{-,1}(\mathcal{G}_0)\to CFK^{-,1}(\mathcal{D})
\]
respects the Maslov grading and the filtration induced by $-B$.\\ 
\indent Similarly, the diagram $\widetilde{\mathcal{D}}'$ is related to $\widetilde{\mathcal{H}_0}$ of the previous section by a sequence of handleslides and isotopies avoiding all basepoints, together with index 1/2 (de)stabilizations and linked index 0/3 (de)stabilizations involving only the basepoints in $\bold{w} \cup \bold{z}$. This again gives rise to a chain map
\[
h: CFK^{-,1}(\mathcal{D}) \to CFK^{-,1}(\mathcal{H}_0)
\]
which also respects the Maslov grading and the filtration induced by $-B'$.\\ 
\indent 
The isomorphism
\[
(h\circ g)* :H_{top} (\mathcal{F}_{bot}^{-B}(\mathcal{G}_0))\to H_{top}(\mathcal{F}_{bot'} ^{-B'}(\mathcal{H}_0))
\]
combined with Proposition \ref{prop:rank}, shows that $H_{top} (\mathcal{F}_{bot}^{-B}(\mathcal{G}_0))$ is rank 1.
\end{proof}

The following is now evident:
\begin{lemma}
\label{lem:rationalmap}
Let
\[
Q : HFK^{-,1}(\mathcal{G}_0) \to HFK^{-,1}(\mathcal{H}_0)
\] 
denote the isomorphism induced on homology by $h\circ g$.
Suppose that $[\bold{x}^{G}_0]$ generates $H_{top} (\mathcal{F}_{bot}^{-B}(\mathcal{G}_0))$, then $Q ([\bold{x}^{G}_0])=([\bold{x}_0])$.
\end{lemma}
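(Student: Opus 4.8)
The plan is to show that this is a bookkeeping consequence of Propositions~\ref{prop:rank} and \ref{prop:characterization} together with the fact that the chain maps $g$ and $h$ behave well with respect to the relevant structures. First I would recall, from the construction in the proof of Proposition~\ref{prop:characterization}, that $g$ is a composition of chain maps induced by handleslides and isotopies in the complement of all basepoints, index $1/2$ (de)stabilizations, and linked index $0/3$ (de)stabilizations involving only the basepoints in $\bold{w}\cup\bold{z}$, and likewise for $h$. Every such map preserves the Maslov grading, is equivariant for the actions of $U_1,\dots,U_m$ (the variables attached to the basepoints $\bold{w}$ encoding $K$), and respects the Alexander filtration induced by $-B$ in the case of $g$ and by $-B'$ in the case of $h$. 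Since copying the proof of Proposition~3.1 of \cite{QOB} identifies $\mathcal{F}_{bot}^{-B}(\mathcal{D})$ with $\mathcal{F}_{bot'}^{-B'}(\mathcal{D})$ as complexes, the composition $h\circ g$ carries $\mathcal{F}_{bot}^{-B}(\mathcal{G}_0)$ into $\mathcal{F}_{bot'}^{-B'}(\mathcal{H}_0)$ and restricts there to an $\mathbb{F}[U_1,\dots,U_m]$-equivariant, Maslov-grading-preserving quasi-isomorphism.

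Passing to homology and then to the summand of maximal Maslov grading, it follows that $Q$ restricts to a graded $\mathbb{F}[U_1,\dots,U_m]$-module isomorphism
\[
Q\colon H_{top}\big(\mathcal{F}_{bot}^{-B}(\mathcal{G}_0)\big)\xrightarrow{\ \simeq\ } H_{top}\big(\mathcal{F}_{bot'}^{-B'}(\mathcal{H}_0)\big).
\]
By Proposition~\ref{prop:characterization} the source is a rank-one free $\mathbb{F}[U_1,\dots,U_m]$-module, and by hypothesis $[\bold{x}^{G}_0]$ is a generator of it. By Proposition~\ref{prop:rank}, applied to the diagram $\mathcal{H}_0$ and the genuinely fibered cable $B'$ as in Section~\ref{sec:rationalchar}, the target is a rank-one free $\mathbb{F}[U_1,\dots,U_m]$-module generated by $[\bold{x}_0]$.

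Finally I would invoke the elementary fact that a rank-one free graded module over $\mathbb{F}[U_1,\dots,U_m]$ with $\mathbb{F}=\mathbb{Z}_2$ has a \emph{unique} generator: any two generators differ by a unit of $\mathbb{F}[U_1,\dots,U_m]$, and the only unit is $1$. Since a module isomorphism sends generators to generators, $Q([\bold{x}^{G}_0])$ must be the unique generator $[\bold{x}_0]$ of the target, which is the assertion. (As a consistency check, one can also see this directly from the grading: $Q$ preserves the Maslov grading, $[\bold{x}^{G}_0]$ and $[\bold{x}_0]$ each lie in the top Maslov grading of their respective rank-one free module, being $U_i$-multiples of nothing, and a nonzero element in that single grading is forced to be the generator over $\mathbb{Z}_2$.) There is no real obstacle here; the only point requiring care is the claim that $g$ and $h$ genuinely restrict to quasi-isomorphisms of the bottom filtration levels for matching filtrations, and that has already been arranged in the statements and proofs of Propositions~\ref{prop:rank} and \ref{prop:characterization} and the material imported from \cite{QOB}.
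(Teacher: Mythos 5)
Your proof is correct and follows essentially the same route the paper has in mind; the paper simply says the lemma ``is now evident'' after establishing the isomorphism $(h\circ g)_*\colon H_{top}(\mathcal{F}_{bot}^{-B}(\mathcal{G}_0))\to H_{top}(\mathcal{F}_{bot'}^{-B'}(\mathcal{H}_0))$ of rank-one free $\mathbb{F}[U_1,\dots,U_m]$-modules (via Propositions \ref{prop:rank} and \ref{prop:characterization}), which is exactly the setup you reconstruct, and your observation that $\mathbb{F}_2[U_1,\dots,U_m]$ has trivial unit group (so generator maps to generator) is the correct way to make the ``evident'' step precise.
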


Because the free index 0/3 (de)stabilization maps commute with all of the maps above, the reformulation of the BRAID invariant of the previous section extends to the case of braids about rational open books having connected binding.

\begin{remark}

The assumption that $Y$ is a $\mathbb{Q}HS^3$ can be weakened, see Remark \ref{remark:maslov}. 
The case of disconnected rational binding can be dealt with similarly, although it is a bit more work. We have no use for this in the present paper, so we do not pursue this here.

\end{remark}


\section{A diagram for lens space braids}

\label{sec:diagram}

In this section we construct a Heegaard diagram $\mathcal{T}^G$ for $(-L(p,q),K)$ for a link $K$ braided about about the standard rational open book $(B,\pi)$ supporting $(L(p,q),\xi_{UT})$ described in Subsection \ref{subsec:contact}. We use the reformulation of the transverse invariant to identify a generator $\bold{x}^G \in CFK^{-}(\mathcal{T}^G)$ whose homology class is $t(K)$.

Recall that the rational open book $(B,\pi)$ has disk fibers, the monodromy is a counterclockwise $2\pi q/p$ boundary twist, and that this rational open book is obtained from the standard open book for $S^3$ having disk pages by performing $-p/q$ surgery on the unknot $U\subset S^3$. The binding $B$ is the core of the filling torus.

Let $K' \subset S^3$ denote the pre-image of $K$ under the Dehn-surgery described above. $K'$ is braided about $U$, suppose that $K'$ has braid index $k$. Consider the Heegaard diagram pictured on the left side of Figure \ref{fig:2diagrams} for $(-S^3,K'\cup U)$, this is just the diagram from Figure \ref{fig:nine} in the case of $g=0, n=1$ where $K'$ is a trivial 3-braid (in general the $\boldsymbol{\beta}$ curves may look different in $-S_0$, the bottom half of the diagram.).

\begin{figure}[h]
\def\svgwidth{400pt}
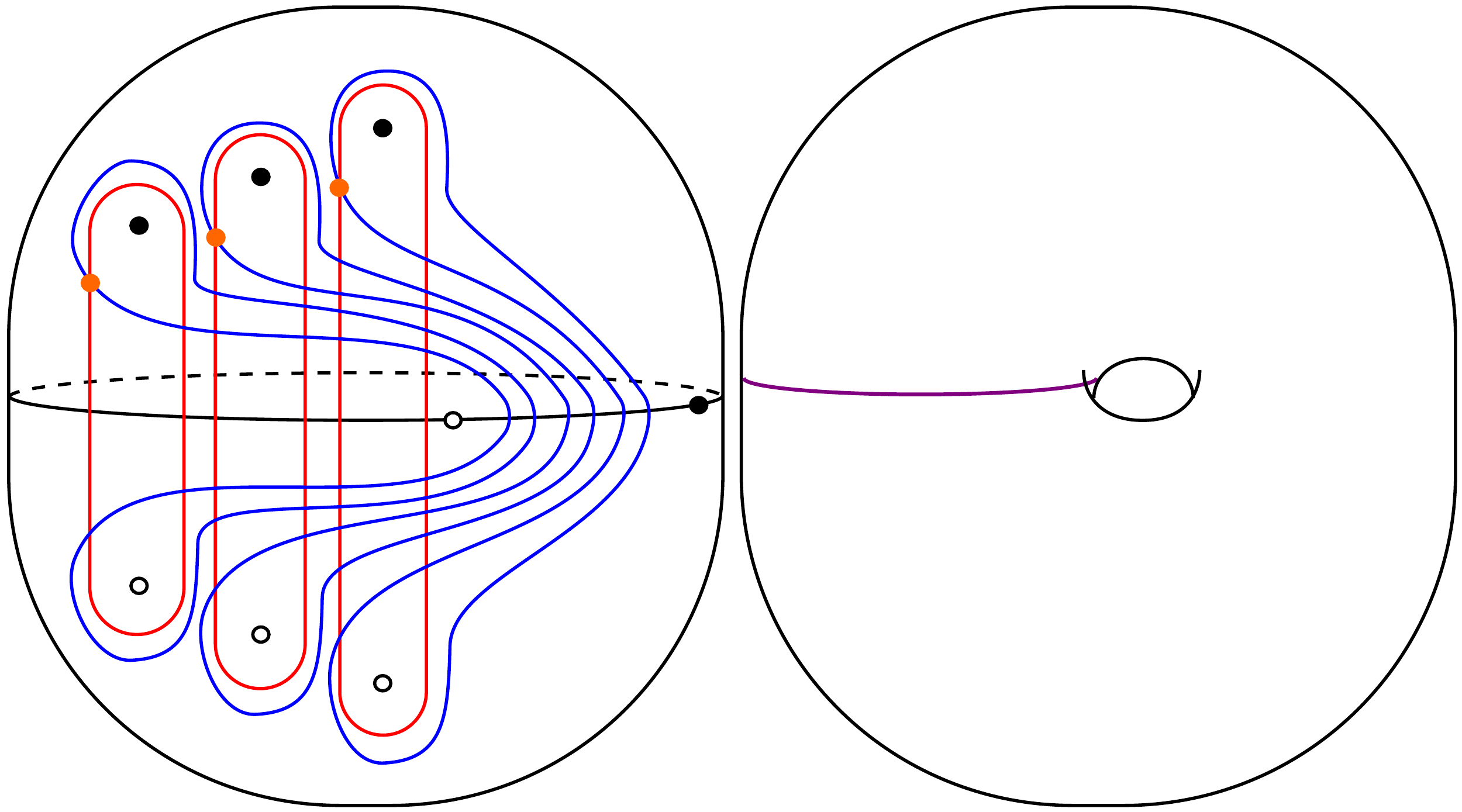
\caption{Two diagrams for $(-S^3,K\cup U)$. The second is obtained from the first by stabilization and handlesliding all the old beta curves over the new beta curve $\mu$. The $\bold{w}\cup w_U$ and $\bold{z}\cup z_U$ basepoints are depicted with solid and hollow dots, respectively.}
\label{fig:2diagrams}
\end{figure}

Stabilizing the diagram and performing a series of handle-slides we obtain the diagram pictured on the right side of Figure \ref{fig:2diagrams}. A longitude for $U$, denoted $\lambda _U$ is also pictured in the diagram. The curves $\alpha_0$ and $\lambda_U$ divide the Heegaard torus into two large regions, let $A$ denote the top region and $\overline{A}$ the bottom region, these correspond to the regions $S_{1/2}$ and $-S_0$, respectively. For a nontrivial braid, the $\boldsymbol{\beta}$ curves will look different in the region $\overline{A}$.

\begin{figure}[h]
\def\svgwidth{400pt}
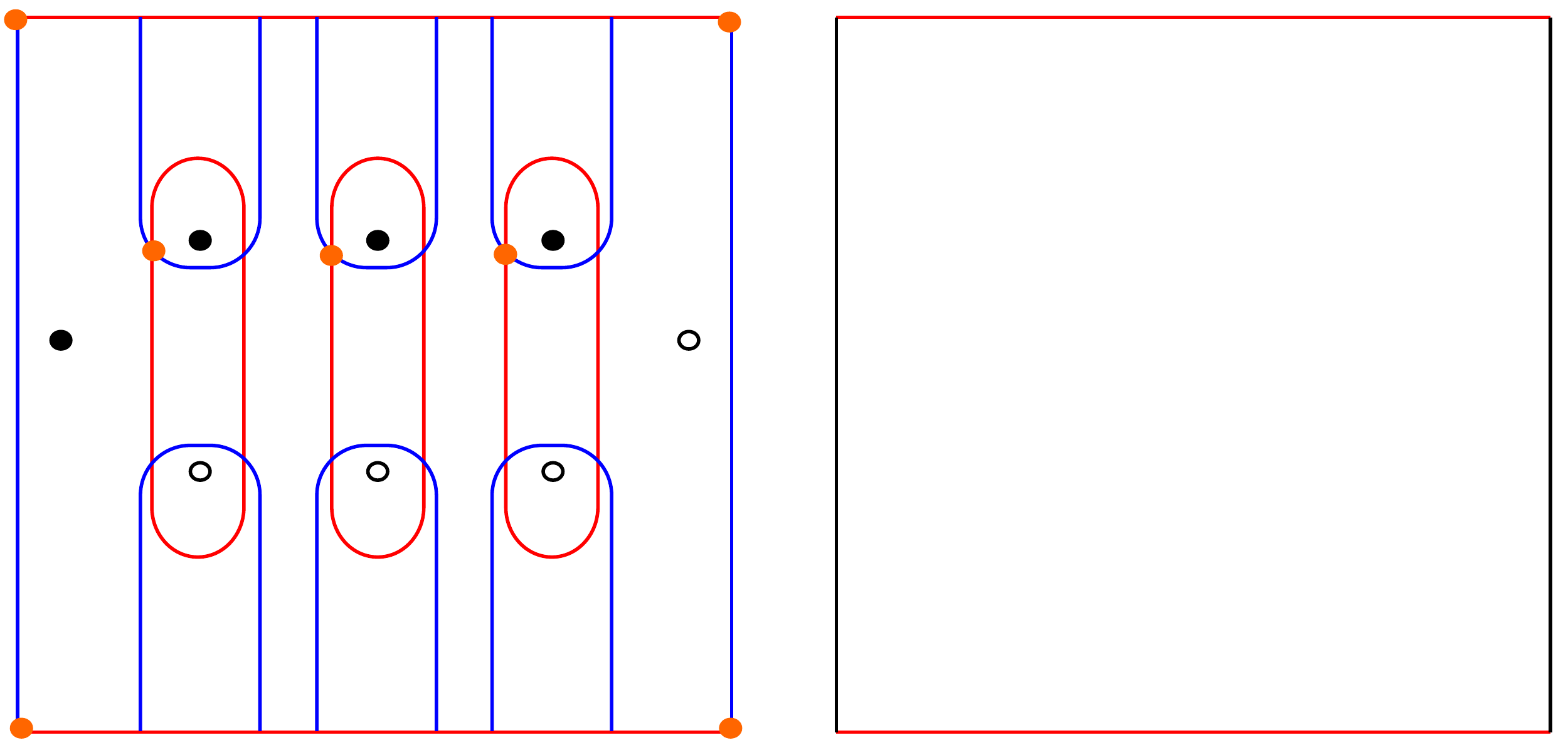
\caption{A diagram for a 3-braid in $L(3,1)$ is pictured on the right. A longitude $\lambda$ for $-B$ is pictured in purple.}
\label{fig:planardiagrams}
\end{figure}


\begin{remark} 
The generator pictured on the right half of Figure \ref{fig:2diagrams} can be shown to represent $t(K'\cup U)$ using the reformulation of section \ref{subsec:reformulation}. Forgetting the basepoints $z_U$ and $w_U$, along with the pair of curves $\beta_1$ and $\alpha_1$ we get a diagram for $(-S^3,K')$. The generator representing $t(K')$ is easily identified as well, again using the reformulation. 
\end{remark}

We prefer to draw the diagram on a fundamental domain for the torus, see left side of Figure \ref{fig:planardiagrams}. The curve $\mu$ is a meridian for $U$. We may obtain a Heegaard diagram for $(-L(p,q),B\cup K)$ by replacing $\mu$ with another curve, $\beta_0$, and replacing basepoints $z_U$ and $w_U$ with $z_B$ and $w_B$, see right side of Figure \ref{fig:planardiagrams}.

The curves $\beta_0$, $\alpha_0$ and $\lambda_U$ cut $\Sigma$ into a large region $\Delta$ lying in $A$, a large region $\overline{\Delta}$ lying in $\overline A$, and a number of smaller regions. Depending on the braid, the $\boldsymbol{\beta}$ curves may look different in the region $\overline{\Delta}$.


Setting $z_{-B}=w_{B}$ and $w_{-B}=z_{B}$ we reverse the orientation of $B$. Forgetting the basepoint $w_{-B}$ we obtain a diagram $\mathcal{G}_0 = (\Sigma, \boldsymbol{\beta},\boldsymbol{\alpha}, \bold{w},\bold{z} \cup z_{-B})$ for $(-Y,K)$ with a single free basepoint $z_{-B}$. Let $\bold{x}_0^G \in CFK^{-}(\mathcal{G}_0)$ denote the generator depicted by orange dots. The following is inspired by Proposition 3.4 of \cite{QOB}.

\begin{proposition}
\label{prop:gen}
$[\bold{x}^{G}_0]$ generates $H_{top} (\mathcal{F}_{bot}^{-B}(\mathcal{G}_0))$.

\end{proposition}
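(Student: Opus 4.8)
The plan is to prove Proposition \ref{prop:gen} by hand, following the proof of Proposition 3.4 of \cite{QOB}: I will show that $\bold{x}_0^G$ is a cycle lying in the bottom filtration level $\mathcal{F}_{bot}^{-B}(\mathcal{G}_0)$, that it realizes the maximal Maslov grading there, and that its homology class is nonzero; the conclusion will then be immediate from Proposition \ref{prop:characterization}.

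First I would identify the bottom filtration level. As in the proof of Lemma \ref{lem:alexbot} (and of Proposition 3.4 of \cite{QOB}), the region $A\subset\Sigma$ of Section \ref{sec:diagram} (the part of the Heegaard surface coming from the page-half $S_{1/2}$, which contains $\Delta$) assembles, together with the adjacent small regions, into a relative periodic domain $\mathcal{P}$ for $-B$ whose homology class is that of a rational fiber of $(B,\pi)$. By Lemma \ref{lemma:relperiodic} the relative Alexander grading $A_{-B}$ is then governed by the local multiplicities $n_{\bold{y}}(\mathcal{P})$, so a generator $\bold{y}$ lies in $\mathcal{F}_{bot}^{-B}(\mathcal{G}_0)$ exactly when all of its components lie in $A$. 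The generator $\bold{x}_0^G$ (the orange dots of Figure \ref{fig:planardiagrams}) has all components in $A$, hence lies in $\mathcal{F}_{bot}^{-B}(\mathcal{G}_0)$; and, exactly as in the proof of Lemma \ref{lem:cycle} and the structure lemma preceding Proposition \ref{prop:rank}, any empty domain leaving $\bold{x}_0^G$ that avoids every $\bold{z}$-basepoint necessarily crosses a $\bold{w}$-basepoint, so $\bold{x}_0^G$ is a cycle in $CFK^{-}(\mathcal{G}_0)$.

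Next, within $A$ the $\boldsymbol{\alpha}$- and $\boldsymbol{\beta}$-arcs form the standard ``page'' configuration and $\bold{x}_0^G$ is its top generator: from $\bold{x}_0^G$ to any other generator supported in $A$ there is a positive domain of positive Maslov index, built as a union of small bigons and rectangles contained in $A$. Hence $\bold{x}_0^G$ achieves the maximal Maslov grading among generators of $\mathcal{F}_{bot}^{-B}(\mathcal{G}_0)$, so if $[\bold{x}_0^G]\neq 0$ then it lies in $H_{top}(\mathcal{F}_{bot}^{-B}(\mathcal{G}_0))$. For nonvanishing I would pass to an honest open book: following the proof of Proposition 3.1 of \cite{QOB} imported in Section \ref{sec:rationalchar}, the complex $\mathcal{F}_{bot}^{-B}(\mathcal{G}_0)$ is identified, respecting the Maslov grading, with the corresponding bottom filtration level for a genuinely fibered cable $B'$ of $B$, under which $\bold{x}_0^G$ is carried to the generator $\bold{x}_0$ of Proposition \ref{prop:rank} for the integral open book supported by $B'$ with $K$ braided about it. By Theorem \ref{thm:nonvanishing} together with Proposition 2.2 of \cite{linkgenus}, that class is nonzero and generates the corresponding $H_{top}$, exactly as in the proof of Proposition \ref{prop:rank}; therefore $[\bold{x}_0^G]\neq 0$.

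Finally, Proposition \ref{prop:characterization} gives that $H_{top}(\mathcal{F}_{bot}^{-B}(\mathcal{G}_0))$ is a rank-one free $\mathbb{F}[U_1,\dots,U_m]$-module, and since we work over $\mathbb{F}=\mathbb{Z}_2$ and this module is graded, any nonzero element in its top Maslov grading is a generator; hence $[\bold{x}_0^G]$ generates $H_{top}(\mathcal{F}_{bot}^{-B}(\mathcal{G}_0))$. The main obstacle is the diagrammatic bookkeeping in the first two steps: pinning down the relative periodic domain $\mathcal{P}$ in the concrete planar diagram of Figure \ref{fig:planardiagrams} (whose small regions depend on the braid) and checking carefully that $\bold{x}_0^G$ is the Maslov-maximal generator supported in $A$ and is matched with $\bold{x}_0$ under the cabling identification; once this is in place, the remaining steps are formal consequences of the machinery already developed.
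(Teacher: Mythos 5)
Your high-level plan (locate $\bold{x}_0^G$ in $\mathcal{F}_{bot}^{-B}(\mathcal{G}_0)$, show it sits in top Maslov grading, show it is a nonzero cycle, and then quote Proposition~\ref{prop:characterization} for rank one) matches the paper's, but there is a genuine gap in the first step and a circularity in the last. Unlike the integral case of Lemma~\ref{lem:alexbot}, the relative periodic domain here has boundary $\alpha_0 + b\beta_0 + p\lambda$, so its multiplicities jump by $b$ across $\beta_0$ and by $p$ across $\lambda$; thus the bottom filtration level is \emph{not} characterized by ``all components lie in $A$.'' In fact the points of $\alpha_0\cap\beta_0$ all sit on $\partial A$, so that condition says nothing about the $\alpha_0$-component of a generator. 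The paper's proof has two nontrivial steps you omit: first, that a minimizing generator must have its $\alpha_0$-component on $\beta_0$ (and not on some $\beta_i$, $i>0$, which can happen since those curves enter $\overline{\Delta}$), established by a chain-of-replacements argument; and second, identifying which of the $p$ points of $\alpha_0\cap\beta_0$ actually minimizes, which the paper proves via a separate lemma introducing a Seifert cable $\widetilde{B}$ of $-B$ and invoking the contact-class characterization from \cite{QOB}. That lemma is the real content here and cannot be dismissed as diagrammatic bookkeeping.

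Your nonvanishing step is also circular: the claim that $\bold{x}_0^G$ ``is carried to the generator $\bold{x}_0$ of Proposition~\ref{prop:rank}'' under the cabling identification is exactly Lemma~\ref{lem:rationalmap}, whose hypothesis is that $[\bold{x}_0^G]$ already generates $H_{top}(\mathcal{F}_{bot}^{-B}(\mathcal{G}_0))$ --- precisely what you are trying to prove. The paper avoids this by establishing the structure of $\mathcal{F}_{bot}^{-B}(\mathcal{G}_0)$ directly as $V_1\otimes\dots\otimes V_k$ with $\bold{x}_0^G$ in top degree, from which nonvanishing follows on grading grounds without an appeal to the cabling map.
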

\begin{proof}

The curve $\beta_0$ is a meridian for $B$. We may draw a longitude $\lambda$ for $-B$ on $\Sigma$ that is supported in a neighborhood of $\beta_0$, and intersects $\beta_0$ transversely in a single point as pictured in Figure \ref{fig:planardiagrams}. The curve $\beta_0$ is homologous to $-p\mu +q\alpha_0$. $\lambda$ is homologous to $b\mu +a\alpha_0$ for $a$ and $b$ satisfying $pa+qb=-1$. Note that $b\beta_0 +p\lambda$ is homologous to $-\alpha_0$, so we may consider a relative periodic domain $\mathcal{P}$ whose homology class is negative that of the fiber (of the rational open book), having boundary $\alpha_0 +b\beta_0+p\lambda$. The multiplicity of $\mathcal{P}$ is 1 in the region $\overline{\Delta}$ and $0$ in the region $\Delta$.

Lemma \ref{lemma:relperiodic} tells us that a generator $\bold{y} \in \mathbb{T}_{\boldsymbol{\beta}} \cap \mathbb{T}_{\boldsymbol{\alpha}}$ lies in $\mathcal{F}_{bot}^{-B}(\mathcal{G}_0)$ if and only if $n_\bold{y} (\mathcal{P})$ is minimized.

For a generator $\bold{y}$, we let $(\bold{y})_i$, $(\bold{y})^i$ denote the component of $\bold{y}$ on $\alpha_i$, $\beta_i$, respectively.

We first claim that any generator $\bold{y} \in \mathbb{T}_{\boldsymbol{\beta}} \cap \mathbb{T}_{\boldsymbol{\alpha}}$ minimizing $n_\bold{y} (\mathcal{P})$ must satisfy $(\bold{y})_0 = (\bold{y})^0$. Seeking a contradiction, suppose that $\bold{y}$ is a generator minimizing $n_\bold{y} (\mathcal{P})$ such that $(\bold{y})^0 = (\bold{y})_i$ for some $i>0$.

The four regions surrounding $(\bold{y})_0$ have multiplicities $m,m,m+b$ and $m+b$, this is because $\partial \mathcal{P}$ contains $\beta_0$ with multiplicity $b$ and does not contain $\alpha_i$.
There is an arc $\kappa \subset \beta_0$ from $(\bold{y})_0$ to a point $y_0\in\alpha_0\cap\beta_0$ which does not intersect $\alpha_0$ nor $\lambda$ in its interior. Because all of the alpha curves $\kappa$ intersects in its interior have multiplicity zero in $\partial \mathcal{P}$, it is clear that the multiplicities of $\mathcal{P}$ in the four regions surrounding $y_0$ are $m-1,m, m+b-1$ and $m+b$.

It must be the case that $(\bold{y})_0 = (\bold{y})^{j_1}$ for some $j_1 >0$. If $j_1 \ne i$ (in general this can happen, as the beta curves can be twisted in the region $\overline{\Delta}$), it follows that $(\bold{y})_{j_1} = (\bold{y})^{j_2}$ for some $j_2 \ne j_1$. In this way we construct a sequence $\{j_1, j_2,\dots j_t\}$ having length at most $k$ such that for $1\le s< t$, $(\bold{y})_{j_s} = (\bold{y})^{j_{s+1}}$ and $(\bold{y})_{j_t} = (\bold{y})^{i}$. Note that all of these intersections are in $\overline{\Delta}$, and thus have multiplicities equal to 1.

Let $\bold{y}'$ be obtained from $\bold{y}$ by replacing $(\bold{y})_0$ with $y_0$, $(\bold{y})_i$ with $(\bold{x}^{G}_0)_i$, and $(\bold{y})_{j_s}$ with $(\bold{x}^{G}_0)_{j_s}$ for each $1\le s \le t$. We conclude that $n_{\bold{y}} (\mathcal{P}) - n_{\bold{y}'} (\mathcal{P}) = 1+t$, proving the claim.

In the following lemma we will show that $(\bold{x}^{G}_0)^0$ contributes minimally to $n_\bold{y} (\mathcal{P})$ among all points of $\beta_0\cap\alpha_0$. Assuming the lemma for now, we proceed with the proof.

For $n_\bold{y} (\mathcal{P})$ to be minimized, the other components of $\bold{y}$ must be in the region $\Delta$ (otherwise a component is in $\overline{\Delta}$, contributing strictly higher multiplicity). Let $x_i$ and $y_i$ denote the intersections between $\beta_i$ and $\alpha_i$ in the region $\Delta$ having higher and lower Maslov grading, respectively. It is now clear that $\mathcal{F}_{bot}^{-B}(\mathcal{G}_2) \simeq V_1 \otimes V_2 \otimes \dots \otimes V_k$ where each $V_i$ is a free rank two $\mathcal{F}[U_1,\dots U_m]$-module, generated by $x_i$ and $y_i$, such that $\partial x_i = 0$. Since $(\bold{x}^{G}_0)^i=x_i$, the claim is proven.

\end{proof}

\begin{lemma}
$(\bold{x}^{G}_0)^0$ contributes minimally to $n_\bold{y} (\mathcal{P})$ among all points of $\beta_0\cap\alpha_0$.
\end{lemma}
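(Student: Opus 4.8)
The plan is to read the four local multiplicities of $\mathcal{P}$ at each point of $\beta_0\cap\alpha_0$ directly off Figure~\ref{fig:planardiagrams} and to compare them. First I would reduce minimality of the relevant contribution to minimality of a single multiplicity. The contribution of a component $x$ to $n_\bold{y}(\mathcal{P})$ is $n_x(\mathcal{P})$, the average of the four local multiplicities of $\mathcal{P}$ at $x$. Since $\partial\mathcal{P}=\alpha_0+b\beta_0+p\lambda$ and a generic point $x\in\beta_0\cap\alpha_0$ does not lie on $\lambda$, crossing $\alpha_0$ at $x$ changes the multiplicity of $\mathcal{P}$ by $1$ and crossing $\beta_0$ by $b$; after fixing orientations, the four regions meeting $x$ therefore carry multiplicities $n(x)$, $n(x)+1$, $n(x)+b$, $n(x)+b+1$, where $n(x)$ is the multiplicity on the region occupying a fixed one of the four corners. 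Hence $n_x(\mathcal{P})=n(x)+\tfrac{1+b}{2}$, and it suffices to prove that $n(x)$ is minimized at $x=(\bold{x}^{G}_0)^0$.

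Next I would record two facts visible in Figure~\ref{fig:planardiagrams}: the region $\Delta$, on which $\mathcal{P}$ has multiplicity $0$, is one of the four regions meeting $(\bold{x}^{G}_0)^0$, and it sits at exactly the distinguished corner, so $n\big((\bold{x}^{G}_0)^0\big)=0$. It then remains to show $n(x)\ge 0$ for every $x\in\beta_0\cap\alpha_0$. For this I would track the multiplicity of $\mathcal{P}$ in the regions bordering one fixed side of $\beta_0$ as one traverses $\beta_0$ once: this is a locally constant $\mathbb{Z}$-valued function jumping by $\pm1$ at each point of $\beta_0\cap\alpha_0$ and by $\pm p$ at the unique point of $\beta_0\cap\lambda$. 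Using that the finger moves producing $\beta_0$ are supported away from $\Delta$ (so the extra $\beta_0\cap\alpha_0$ crossings they create, and the twisting of the other $\boldsymbol{\beta}$ curves, occur over the region $\overline{\Delta}$, where $\mathcal{P}$ has multiplicity $1$) and that $\lambda$ runs parallel to $\beta_0$ away from its single jog, one sees that the minimum of this function along $\beta_0$ is the value $0$ taken on $\Delta$, attained on the arc between the $\lambda$-crossing and the adjacent intersection $(\bold{x}^{G}_0)^0$ with $\alpha_0$. This yields $n(x)\ge 0$, with equality at $(\bold{x}^{G}_0)^0$, completing the proof.

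The step I expect to be the main obstacle is the last one. In the integral-binding situation of \cite{QOB} one may take $\mathcal{P}$ with all local multiplicities equal to $0$ or $1$, which makes non-negativity immediate; here $\partial\mathcal{P}$ carries $\lambda$ with coefficient $p$, so the multiplicity function along $\beta_0$ has a large jump and an undisciplined traversal could dip below $0$. Excluding this requires a careful orientation-by-orientation accounting of the signs of the crossings of $\beta_0$ with $\alpha_0$ and of the location of the finger-move excursions relative to $\Delta$ and $\overline{\Delta}$ — a concrete but slightly delicate inspection of the diagram, modeled on the analysis in the proof of Proposition~3.4 of \cite{QOB}. Everything else is a routine unwinding of the relation $\partial\mathcal{P}=\alpha_0+b\beta_0+p\lambda$.
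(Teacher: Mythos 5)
The paper takes a genuinely different and more indirect route than your proposed direct multiplicity count, and it does so precisely to avoid the ``slightly delicate inspection'' you flag as the main obstacle. Rather than analyzing $n_\bold{y}(\mathcal{P})$ directly, the paper (i) cites Proposition 3.4 of \cite{QOB} for the fact that there is a \emph{unique} generator $y$ of $\beta_0\cap\alpha_0$ in minimal filtration level; (ii) observes $\gcd(b,p)=1$ and constructs a cable $\widetilde{B}$ of $B$ homologous to $-\mu$, which by Theorem 1.8 of \cite{CCSMCM} is the binding of a rational open book supporting the same $\xi$; (iii) uses Theorem 1 of \cite{QOB} to identify the unique bottom-filtration generator with the contact class $c(\xi)$ for both $-B$ and $-\widetilde{B}$, so the two fibrations single out the same point $y$; and (iv) reads off $y=(\bold{x}^G_0)^0$ from the relative periodic domain $\widetilde{\mathcal{P}}$ of $-\widetilde{B}$, whose boundary $q\alpha_0-\beta_0+p\widetilde{\lambda}$ produces multiplicities that are \emph{trivial} to analyze (Figure~\ref{fig:multiplicity}). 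Your approach would be self-contained if it worked, but it is doing exactly the hard part that the paper's argument is designed to sidestep.

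Two concrete issues with your sketch. First, there are no ``finger moves producing $\beta_0$'' in the diagram of Section~\ref{sec:diagram}: there $\beta_0$ is simply a curve of slope $-p/q$ on the Heegaard torus, obtained by replacing $\mu$; you appear to be importing the Hedden--Plamenevskaya finger moves of Section~\ref{sec:rationalchar}, which concern a different diagram and a different curve. Likewise, the other $\boldsymbol{\beta}$-curves twisting in $\overline{\Delta}$ are irrelevant to $\mathcal{P}$, since $\partial\mathcal{P}=\alpha_0+b\beta_0+p\lambda$ does not involve them. Second, your central claim — that the side-of-$\beta_0$ multiplicity function stays $\ge 0$, with minimum $0$ on the arc adjacent to $\Delta$ — is not obviously true and needs real justification. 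Along $\beta_0$ one encounters $p$ jumps of $\pm 1$ at the $\alpha_0$-crossings and a single jump of $\pm p$ at the $\lambda$-crossing; whether the running total dips negative depends on the position of that single large jump relative to the $\alpha_0$-crossings and on orientations, and relative periodic domains of this type genuinely do acquire negative multiplicities — the paper's own $\widetilde{\mathcal{P}}$ for $L(5,3)$ in Figure~\ref{fig:multiplicity} has multiplicities ranging from $-5$ to $3$. So ``$n(x)\ge 0$'' cannot be asserted as an a priori structural fact; it would have to be extracted from the precise placement of $\lambda$ in Figure~\ref{fig:planardiagrams}, which is exactly the delicate verification the paper avoids by switching to $\widetilde{\mathcal{P}}$ and the contact-class characterization. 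Your reduction to a single corner multiplicity $n(x)$ is fine, but as written the argument has a gap at this step.
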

\begin{proof}

Let $\mathcal{D} = (\Sigma, \beta_0,\alpha_0,z_{-B})$. As usual, $-B$ induces a filtration on $\widehat{CFK}(\mathcal{D})$. We aim to prove that $(\bold{x}^{G}_0)^0$ has minimal filtration level among all generators. 

Proposition 3.4 of \cite{QOB} tells us that there is a unique generator $y$ having minimal filtration level, however they do not specify which generator. 

Note that $pa+qb =-1 \implies gcd(b,p) = 1$. Thus we may find some $(r,s)$ cable, with $r>0$, $-\widetilde{B}$ of $-B$ which is homologous to $- \mu$. 

$\widetilde{B}$ is also the binding of some rational open book for $L(p,q)$, moreover Theorem 1.8 of \cite{CCSMCM} tell us that the contact structure supported by this new rational open book is contactomorphic to $\xi$.

$-\widetilde{B}$ also induces a filtration on $\widehat{CFK}(\mathcal{D})$; we claim that $(\bold{x}^{G}_0)^0$ has minimal filtration level among all generators. It will then follow from Theorem 1 of \cite{QOB} that $[(\bold{x}^{G}_0)^0] = [c(\xi)]= [y]$, hence $(\bold{x}^{G}_0)^0 = y$.

Consider the longitude $\widetilde{\lambda}$ for $-\widetilde{B}$ pictured in Figure \ref{fig:multiplicity}. There is a relative periodic domain $\widetilde{P}$, having homology class that of a negative fiber, with boundary $q\alpha_0 -\beta_0 +p\widetilde{\lambda}$. Analyzing the multiplicities of this domain is trivial, it is clear that $n_{(\bold{x}^{G}_0)^0} (\widetilde{P})$ is minimal.

\begin{tiny}
\begin{figure}[h]
\def\svgwidth{150pt}
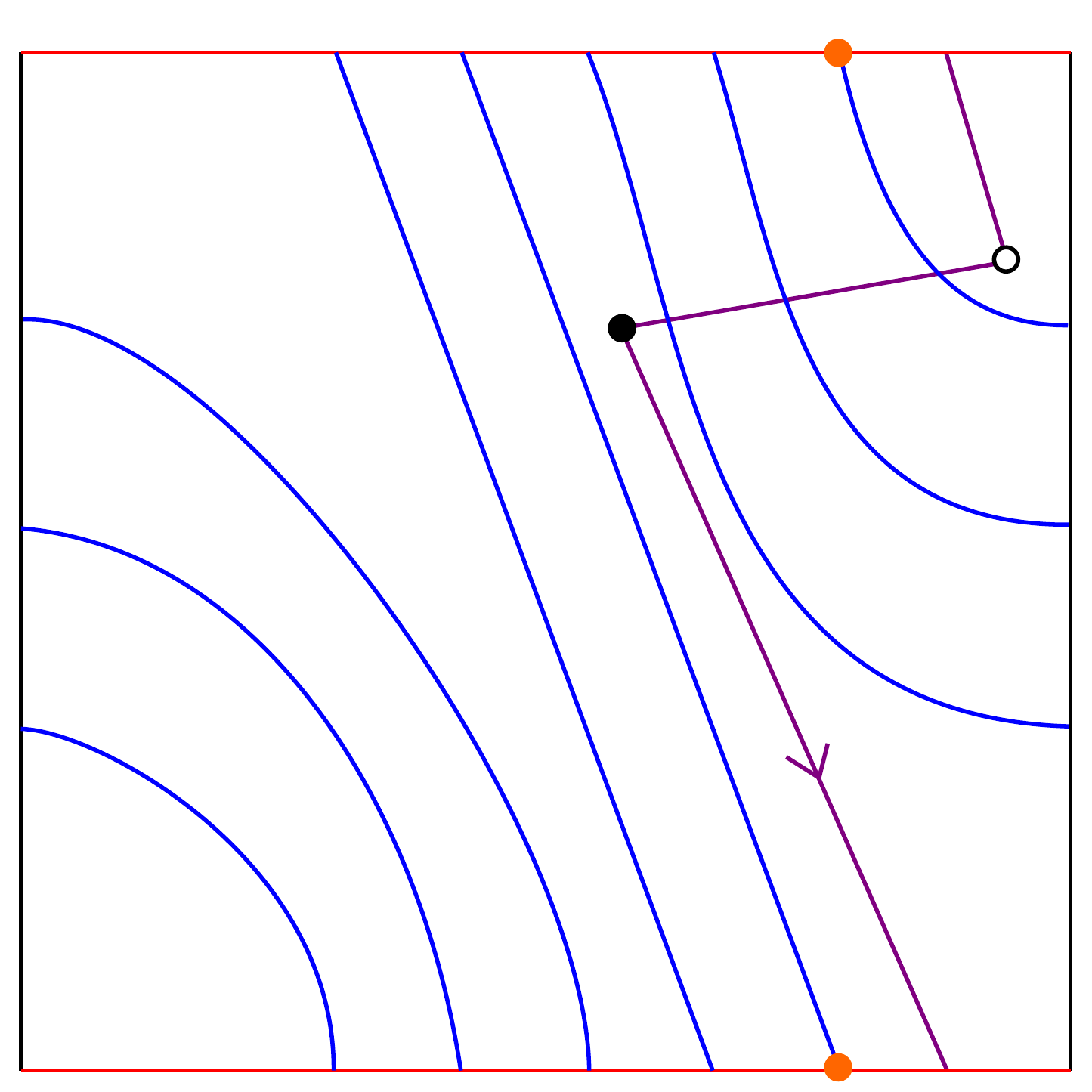
\caption{The diagram $\mathcal{D}$ for $L(5,3)$ is pictured. A longitude $\widetilde{\lambda}$ for $-\widetilde{B}$ is pictured in purple. The multiplicities of $\widetilde{P}$ in each region are shown. The basepoints $w_{-\widetilde{B}}$ and $z_{-B}$ are depicted with solid and hollow dots, respectively.}

\label{fig:multiplicity}
\end{figure}
\end{tiny}
\end{proof}

\begin{figure}[h]
\def\svgwidth{400pt}
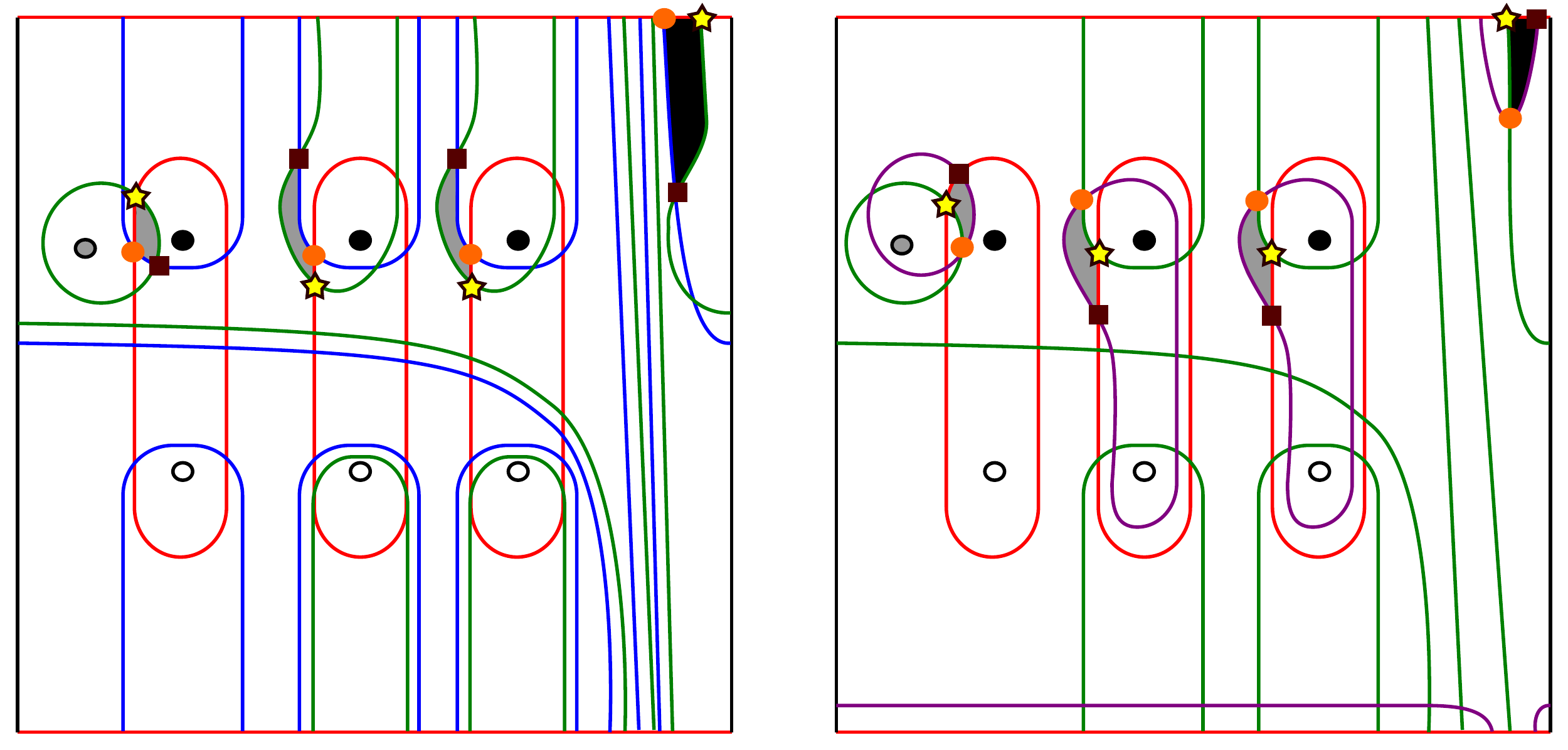
\caption{The triple diagrams $(\Sigma, \boldsymbol{\beta}', \boldsymbol{\beta}, \boldsymbol{\alpha}, \bold{w} , \bold{z} \cup z_{-B})$ and $(\Sigma, \boldsymbol{\beta}', \boldsymbol{\alpha}, \boldsymbol{\alpha}', \bold{w} , \bold{z} \cup z_{-B})$ are shown on the left and right respectively. The $\boldsymbol{\alpha}, \boldsymbol{\alpha}', \boldsymbol{\beta}$ and $\boldsymbol{\beta}'$ curves are drawn red, purple, blue and green respectively. The generators $\bold{x}^{G}_0$ and $\bold{x}^{G}_2$ are depicted with orange dots. The generators $\boldsymbol{\Theta}$ are depicted with brown squares. The generator $\bold{x}^{G}_{1}$ is depicted with yellow stars. The $\bold{w}$, $\bold{z}$, and $\bold{z}_{-B}$ basepoints are depicted with solid, hollow, and grey dots, respectively.}
\label{fig:somecounts}
\end{figure}

Consider the triple diagrams $(\Sigma, \boldsymbol{\beta}', \boldsymbol{\beta}, \boldsymbol{\alpha}, \bold{w} , \bold{z} \cup z_{-B})$ and $(\Sigma, \boldsymbol{\beta}', \boldsymbol{\alpha}, \boldsymbol{\alpha}', \bold{w} , \bold{z} \cup z_{-B})$ shown in Figure \ref{fig:somecounts}.
The sets of curves $\boldsymbol{\beta}, \boldsymbol{\alpha}$, are handleslide equivalent to $\boldsymbol{\beta}',\boldsymbol{\alpha}'$, respectively, in the complement of all basepoints.

For $i\ne 1$ the curve $\beta_i '$ is obtained from $\beta_i$ by a small isotopy. $\beta_1 '$ is obtained by sliding $\beta_1$ over other $\beta$ curves. The curves $\boldsymbol{\alpha}'$ are obtained from $\boldsymbol{\alpha}$ via handleslides in the same way.

Let $\mathcal{G}_1 = (\Sigma, \boldsymbol{\beta}', \boldsymbol{\alpha}, \bold{w} , \bold{z} \cup z_{-B})$ and $\mathcal{G}_2 = (\Sigma, \boldsymbol{\beta}', \boldsymbol{\alpha}', \bold{w} , \bold{z} \cup z_{-B})$. Abusing notation, let $\boldsymbol{\Theta}$ denote both the top graded generator of $CFK^{-,1}(\Sigma,\boldsymbol{\beta}',\boldsymbol{\beta}, \bold{w},\bold{z}\cup z_{-B})$ and $CFK^{-,1}(\Sigma,\boldsymbol{\alpha},\boldsymbol{\alpha}', \bold{w},\bold{z}\cup z_{-B})$. Let $\bold{x}^{G}_1\in CFK^{-,1}(\mathcal{G}_1)$ denote the intersection point depicted with stars in Figure \ref{fig:somecounts}, and let $\bold{x}^{G}_2\in CFK^{-,1}(\mathcal{G}_2)$ denote the intersection point depicted with orange dots on the right half of the figure.

\begin{proposition}
\label{prop:relate}
Let
\[
\begin{split}
G_{0,1} : HFK^{-,1}(\mathcal{G}_0) \xrightarrow {\simeq} HFK^{-,1}(\mathcal{G}_1) &\\
G_{1,2} : HFK^{-,1}(\mathcal{G}_1) \xrightarrow {\simeq} HFK^{-,1}(\mathcal{G}_2)
\end{split}
\]
denote the isomorphisms induced by the triple diagrams above.
The composition $G_{0,2}=G_{1,2}\circ G_{0,1}$ sends the class $[\bold{x}^{G}_0]$ to the class $[\bold{x}^{G}_2]$.
\end{proposition}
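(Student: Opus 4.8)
The plan is to follow the proof of Proposition~\ref{prop:maps} essentially verbatim; the triple diagrams in Figure~\ref{fig:somecounts} have been arranged precisely so that the same local multiplicity analysis applies. The isomorphisms $G_{0,1}$ and $G_{1,2}$ are induced by pseudo-holomorphic triangle counting maps $g_{0,1}$ and $g_{1,2}$ associated to the triple diagrams $(\Sigma,\boldsymbol{\beta}',\boldsymbol{\beta},\boldsymbol{\alpha},\bold{w},\bold{z}\cup z_{-B})$ and $(\Sigma,\boldsymbol{\beta}',\boldsymbol{\alpha},\boldsymbol{\alpha}',\bold{w},\bold{z}\cup z_{-B})$. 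I would prove $g_{0,1}(\bold{x}^{G}_0)=\bold{x}^{G}_1$ and $g_{1,2}(\bold{x}^{G}_1)=\bold{x}^{G}_2$ as genuine equalities of chains (the relevant small triangles miss all $\bold{w}$ basepoints, so no $U$-powers appear), and then $G_{0,2}([\bold{x}^{G}_0])=[\bold{x}^{G}_2]$ follows by composing and passing to homology.

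For the first identity, let $u\in\pi_2(\boldsymbol{\Theta},\bold{x}^{G}_0,\bold{y})$ be a Maslov index one Whitney triangle admitting a pseudo-holomorphic representative. As in Proposition~\ref{prop:maps}, one shows that $D(u)$ is a disjoint union of the small triangles visible in the left panel of Figure~\ref{fig:somecounts}, forcing $\bold{y}=\bold{x}^{G}_1$ and $u$ to have a unique representative. The key step is the local multiplicity count around one such small triangle: the region just outside it along the $\boldsymbol{\beta}'$ curve carries a basepoint of $\bold{w}$ and the region opposite the $\bold{x}^{G}_0$-corner carries $z_{-B}$, so both of these multiplicities vanish; the corner conditions at $\boldsymbol{\Theta}$ and $\bold{x}^{G}_0$ then pin down the remaining local multiplicities, and non-negativity (from the existence of a holomorphic representative) excludes all other configurations, in particular forcing the multiplicity of the ambient large region to be zero since the relevant point is not a corner of $D(u)$. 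The only feature not present in Proposition~\ref{prop:maps} is that $\beta_1'$ (resp.\ $\alpha_1'$) is obtained by sliding $\beta_1$ (resp.\ $\alpha_1$) over the other curves rather than by a small isotopy; but this handleslide takes place in the region $\Delta$ away from all basepoints, and by Proposition~\ref{prop:gen} the components of $\bold{x}^{G}_0$ already lie in $\Delta$, so the winding of the $\boldsymbol{\beta}$-curves in $\overline{\Delta}$ plays no role and the small-triangle picture is unchanged. The identity $g_{1,2}(\bold{x}^{G}_1)=\bold{x}^{G}_2$ follows from the identical argument applied to the right panel, now with triangles built from the $\boldsymbol{\alpha}$--$\boldsymbol{\alpha}'$ handleslide, again starting from the small triangle where a basepoint forces a vanishing multiplicity.

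The main obstacle I anticipate is the bookkeeping: checking that the handleslid curves $\beta_1'$ and $\alpha_1'$ introduce no extra small triangles and do not spoil the multiplicity count, and that the ``ambient region has multiplicity zero'' step survives in the lens space diagram, where the $\boldsymbol{\beta}$-curves may wind in $\overline{\Delta}$. Since Proposition~\ref{prop:gen} and the lemma preceding this proposition have already shown that the relevant generators are confined to $\Delta$ with $(\bold{x}^{G}_0)^0$ pinned on $\beta_0$, this winding is harmless, and the analysis reduces to the planar picture already treated in Subsection~\ref{subsec:reformulation}.
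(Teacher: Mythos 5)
Your proposal correctly identifies the structure of the argument (triangle-count maps, multiplicity analysis near the small triangles, reduction to Proposition~\ref{prop:maps}), and the claim $g_{0,1}(\bold{x}^G_0)=\bold{x}^G_1$, $g_{1,2}(\bold{x}^G_1)=\bold{x}^G_2$ is the right one to prove. However, there is a gap precisely where the lens space topology enters: the multiplicity argument borrowed from Proposition~\ref{prop:maps} only pins down the components $(\bold{y})_i$ for $i>0$, i.e.\ the components on the curves encoding $K$. It does not determine the component $(\bold{y})_0$ on $\alpha_0$, because $\alpha_0$ meets the nearby $\boldsymbol{\beta}'$-meridian in $p$ points rather than in a single one, and there is no adjacent $\bold{w}$-basepoint to force the relevant local multiplicities to vanish as there is next to each $K$-component; a priori the domain of $u$ could wrap around the region $\overline{\Delta}$ and land on any of the $p$ candidates. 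The paper closes this gap by observing that the triple diagram represents the identity cobordism, so the induced map preserves $\mathrm{Spin}^C$-structure, which singles out $\bold{x}^G_1$ among the $p$ possible generators and, once $\bold{y}$ is fixed, the only Whitney triangle with no negative multiplicities is the union of small shaded triangles. Your write-up appeals to Proposition~\ref{prop:gen} to say $(\bold{x}^G_0)^0$ is pinned on $\beta_0$ and that therefore ``this winding is harmless,'' but that controls a known generator, not the unknown endpoint $\bold{y}$ of the triangle; the $\mathrm{Spin}^C$ step is what actually kills the winding ambiguity, and you should supply it.

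A secondary point: you describe the handleslide of $\beta_1'$ (resp.\ $\alpha_1'$) as ``the only feature not present in Proposition~\ref{prop:maps},'' but that earlier triple diagram is also handleslide-equivalent (not merely isotopic) to the original, so this is not actually new; the genuinely new feature is the $p$-fold intersection pattern on $\alpha_0$ discussed above.
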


\begin{proof}
The isomorphisms $G_{0,1}$ and $G_{1,2}$ are induced by pseudo-holomorphic triangle counting maps $g_{0,1}$ and $g_{1,2}$ respectively. 
We outline the proof that $g_{0,1} (\bold{x}^{G}_0) = \bold{x}^{G}_1$, proving that $g_{1,2} (\bold{x}^{G}_1) = \bold{x}^{G}_2$ requires an similar argument.

Let $u\in \pi_2 (\boldsymbol{\Theta},\bold{x}^{G}_0,\bold{y})$ be a Whitney triangle having corner at some generator $\bold{y}\in \mathbb{T}_{\boldsymbol{\beta}'}\cap\mathbb{T}_{\boldsymbol{\alpha}}$ which misses the basepoints $\bold{w}$. We claim that $u$ has domain equal to the union of small gray and black triangles pictured in Figure \ref{fig:somecounts}, in which case it has a unique holomorphic representative. We count the multiplicities of the domain of $u$. Using the method presented in the proof of Proposition \ref{prop:maps}, it is immediate that for all $i>0$ we have that $(\bold{y})_i = (\bold{x}^{G}_1)_i$ and that the domain of $u$ contains the small gray triangles.  Because the triple diagram corresponds to the identity cobordism, the induced triangle counting map should preserve $Spin^C$ structure. The only such generator $\bold{y}$ having the correct $Spin^C$ structure is $\bold{x}^{G}_1$, and the only Whitney triangle $u\in \pi_2 (\boldsymbol{\Theta},\bold{x}^{G}_0,\bold{x}^{G}_1)$ having no negative multiplicities is the one desired.

\end{proof}

Note that the diagram $\mathcal{G}_2$ has a small configuration about the point $z_{-B}$. Let $\mathcal{T}^G$ be the diagram obtained by performing the corresponding free 0/3-index destablization. The maps 
\[
\begin{split}
j : CFK^{-,1} (\mathcal{G}_2)\to CFK^- (\mathcal{T}^G) & \\  i : CFK^- (\mathcal{T}^G)  \to  CFK^{-,1} (\mathcal{G}_2)
\end{split}
\] 
are defined in subsection \ref{subsec:HFK}. Let $\bold{x}^G$ denote $j(\bold{x}^G_2)$. 

\begin{theorem}
\label{thm:diagram}
The generator $\bold{x}^G$ has homology class $t(K)$, i.e. $[\bold{x}^G] = t(K)\in HFK^- (-L(p,q),K)$.
\end{theorem}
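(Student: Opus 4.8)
The plan is to combine the generation statement of Proposition~\ref{prop:gen} with the reformulation of $t(K)$ established in Section~\ref{sec:rationalchar}, and then to transport the distinguished homology class along the holomorphic‑triangle maps of Figure~\ref{fig:somecounts} followed by a free $0/3$ destabilization, mirroring the argument of Subsection~\ref{subsec:reformulation} step for step.

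First, Proposition~\ref{prop:gen} says that $[\bold{x}^G_0]$ generates $H_{top}(\mathcal{F}^{-B}_{bot}(\mathcal{G}_0))$, which by Proposition~\ref{prop:characterization} is a rank‑one $\mathbb{F}[U_1,\dots,U_m]$‑module. By Lemma~\ref{lem:rationalmap} the isomorphism $Q$ carries $[\bold{x}^G_0]$ to the class $[\bold{x}_0]$ appearing in the reformulation (with $K$ regarded as a braid about a genuinely fibered cable $B'$ of $B$), and by Proposition~\ref{prop:composition} the composition $(j^n)_*\circ F_{0,2}$ carries $[\bold{x}_0]$ to $t(K)$. Hence $t(K)$ is the image of the generator $[\bold{x}^G_0]$ under the natural composite $HFK^{-,1}(\mathcal{G}_0)\to HFK^-(-L(p,q),K)$ furnished by the reformulation. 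The grading bookkeeping is automatic here: the Maslov grading of $\bold{x}^G_0$ is already pinned down by a relative periodic domain via Lemma~\ref{lemma:relperiodic} and every map in sight is grading preserving, so gradings are not where the content lies.

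Next, on the diagram side: Proposition~\ref{prop:relate} gives $G_{0,2}([\bold{x}^G_0])=[\bold{x}^G_2]$, and since $\mathcal{G}_2$ carries a small configuration about $z_{-B}$, the free $0/3$ destabilization map $j$ of Subsection~\ref{subsec:HFK} sends the generator $\bold{x}^G_2$ to $\bold{x}^G$. Thus $j_*\circ G_{0,2}$ carries $[\bold{x}^G_0]$ to $[\bold{x}^G]$, as elements of $HFK^-(\mathcal{T}^G)=HFK^-(-L(p,q),K)$.

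Finally, one must identify these two routes from $HFK^{-,1}(\mathcal{G}_0)$ to $HFK^-(-L(p,q),K)$, at least on the class $[\bold{x}^G_0]$; this is the main obstacle, since a composite of Heegaard‑move maps between two diagrams for the same pointed link is not canonical a priori. I would handle it exactly as in Subsection~\ref{subsec:reformulation}: the triple diagrams of Figure~\ref{fig:somecounts} are the rational analogues of those in Figures~\ref{fig:triangleone} and~\ref{fig:triangletwo}; the holomorphic‑triangle multiplicity computations verifying $g_{0,1}(\bold{x}^G_0)=\bold{x}^G_1$ and $g_{1,2}(\bold{x}^G_1)=\bold{x}^G_2$ have the same form as those in Proposition~\ref{prop:maps} and are recorded in Proposition~\ref{prop:relate}; and the free $0/3$ (de)stabilization maps commute with the handleslide, isotopy and triangle maps, and with the chain maps $g,h$ of Section~\ref{sec:rationalchar}. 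Consequently the entire chain of identifications computing $t(K)$ in the reformulation can be routed through $\mathcal{G}_0\to\mathcal{G}_1\to\mathcal{G}_2\to\mathcal{T}^G$ in place of $\mathcal{H}_0\to\mathcal{H}_1\to\mathcal{H}_2\to\mathcal{T}$; equivalently, the proof of Proposition~\ref{prop:composition} runs verbatim with $\mathcal{H}_\bullet$ replaced by $\mathcal{G}_\bullet$, $\bold{x}_\bullet$ by $\bold{x}^G_\bullet$, and $\mathcal{T}$ by $\mathcal{T}^G$, the only genuinely new inputs being Propositions~\ref{prop:gen} and~\ref{prop:relate}. This identifies the distinguished generator representing $t(K)$ in the terminal diagram as $\bold{x}^G$, i.e.\ $[\bold{x}^G]=t(K)$.
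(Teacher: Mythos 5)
Your proposal is correct and follows the paper's proof in all essentials: combine Proposition~\ref{prop:gen}, Proposition~\ref{prop:characterization}, and Lemma~\ref{lem:rationalmap} to get $Q([\bold{x}^G_0])=[\bold{x}_0]$, feed this into Proposition~\ref{prop:composition} to hit $t(K)$, and separately use Proposition~\ref{prop:relate} and the free $0/3$ destabilization to get from $[\bold{x}^G_0]$ to $[\bold{x}^G]$. The one place you slightly overcomplicate matters is the closing paragraph: you do not actually need to argue that the two composites out of $HFK^{-,1}(\mathcal{G}_0)$ are ``the same route'' or that the proof of Proposition~\ref{prop:composition} reruns verbatim for $\mathcal{G}_\bullet$; it suffices to compose $\bigl((j^n)_*\circ F_{0,2}\circ Q\bigr)\circ\bigl(j_*\circ G_{0,2}\bigr)^{-1}$, which is manifestly an isomorphism $HFK^-(\mathcal{T}^G)\to HFK^-(\mathcal{T})$ induced by Heegaard moves carrying $[\bold{x}^G]$ to $t(K)$ --- this is exactly the single chain $(j)_*\circ F_{0,2}\circ Q\circ G_{0,2}^{-1}\circ(i)_*$ the paper writes down, with the commutativity of all maps with the free $0/3$ (de)stabilization the only extra observation needed, and one you also invoke.
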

\begin{proof}
We are now in position to apply the reformulation of section \ref{sec:rationalchar}. Let $Q$ be the map defined in Lemma \ref{lem:rationalmap}. Let $\bold{x}, F_{0,2}, \mathcal{H}_2$ and $\mathcal{T}$ be as in Propositions \ref{prop:maps} and \ref{prop:composition}. Combining that lemma and those propositions with Propositions \ref{prop:characterization} and \ref{prop:gen} we have that the composition
\[
F_{0,2}\circ Q\circ G_{0,2}^{-1}: HFK^{-} (\mathcal{G}_2)\to HFK^{-} (\mathcal{H}_2)
\]
is an isomorphism mapping $[\bold{x}^G_2]$ to $[\bold{x}_2]$.

Moreover, since the maps above commute with the free 0/3 (de)stablization maps, the composition
\[
(j)_*\circ F_{0,2}\circ Q\circ G_{0,2}^{-1}\circ (i)_* : HFK^{-} (\mathcal{T}^G)\to HFK^{-} (\mathcal{T})
\]
is an isomorphism mapping $[\bold{x}^G]$ to $[\bold{x}] = t(K)$.
\end{proof}

We will refer to $\mathcal{T}^\mathcal{G}$ as the \textbf{standard braid diagram} for $K$.

\begin{lemma}
\label{lem:trivial}
Let $\tau_n\in B_n$ denote the trivial braid having index $n$. The Maslov gradings of the GRID and BRAID invariants agree for $\tau_n \circ \delta^{q/p}$. i.e.
\[
M(\theta(\tau_n \circ \delta^{q/p}))=M(t(\tau_n \circ \delta^{q/p}))
\]
\end{lemma}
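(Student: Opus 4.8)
The plan is to compute $M(t(\tau_n\circ\delta^{q/p}))$ by the covering argument already used for the GRID invariant in the proof of Proposition \ref{prop:gradingcomparison}, and to observe that it produces the same value because over $S^3$ the BRAID and GRID invariants coincide. On the GRID side the value is already recorded: by Corollary \ref{corollary:grid} and Lemma \ref{lem:QSL} (with $w(\tau_n)=0$),
\[
M(\theta(\tau_n\circ\delta^{q/p})) = sl_{\mathbb{Q}}(\tau_n\circ\delta^{q/p}) + \tfrac{1}{p} - d(p,q,q-1) = \tfrac{1}{p}\bigl(qn^2-qn-n+1\bigr) - d(p,q,q-1),
\]
so it suffices to show $t(\tau_n\circ\delta^{q/p})$ is supported in the same grading.

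For the BRAID side I would first pass to the contact universal cover $\pi\colon(S^3,\xi_{std})\to(L(p,q),\xi_{UT})$. Exactly as in the proof of Lemma \ref{lem:QSL}, $\tau_n\circ\delta^{q/p}$ lifts to $\tau_n^{\,p}\circ\Delta_n^{2q}=\Delta_n^{2q}$, and the $1$-braid $\tau_1\circ\delta^{q/p}$ lifts to the trivial $1$-braid $\tau_1\subset S^3$. I would then check that the standard braid diagram $\mathcal{T}^{\mathcal{G}}$ of Section \ref{sec:diagram}, together with the distinguished generator $\bold{x}^{\mathcal{G}}$ representing $t$ (Theorem \ref{thm:diagram}), is built sufficiently functorially from the open book that pulling back along $\pi$ yields the standard braid diagram and distinguished generator of the lifted braid; this is the same cover-compatibility bookkeeping already used for $\theta$ in Proposition \ref{prop:gradingcomparison}. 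Granting it, Theorem 2.6 of \cite{coversmaslov} gives
\[
M(t(\tau_n\circ\delta^{q/p})) - M(t(\tau_1\circ\delta^{q/p})) = \tfrac{1}{p}\bigl(M(t(\Delta_n^{2q})) - M(t(\tau_1))\bigr).
\]
Over $S^3$ the BRAID invariant agrees with the invariant of \cite{grid} (by \cite{equiv}), so Theorem \ref{thm:OST} gives $M(t(\Delta_n^{2q})) = sl(\Delta_n^{2q})+1 = qn^2-qn-n+1$ and $M(t(\tau_1)) = sl(\tau_1)+1 = 0$.

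It remains to anchor the case $n=1$, i.e.\ to compute $M(t(\tau_1\circ\delta^{q/p}))$. The braid $\tau_1\circ\delta^{q/p}$ represents the transverse binding $B$, which is a simple knot, so both $t(\tau_1\circ\delta^{q/p})$ and $\theta(\tau_1\circ\delta^{q/p})$ can be computed from an index-one diagram for $(-L(p,q),B)$; such a diagram has vanishing differential and exactly one generator in each $\mathrm{Spin}^c$ structure, so both invariants are carried by the unique generator lying in $\mathfrak{s}_{\xi_{UT}}$ (Proposition \ref{prop:spincagrees}). Hence $M(t(\tau_1\circ\delta^{q/p})) = M(\theta(\tau_1\circ\delta^{q/p})) = -d(p,q,q-1)$ by \cite{dinvt}, and substituting into the displayed covering identity yields $M(t(\tau_n\circ\delta^{q/p})) = -d(p,q,q-1) + \tfrac{1}{p}(qn^2-qn-n+1)$, which is exactly the value found for $M(\theta(\tau_n\circ\delta^{q/p}))$ above. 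The hard part is the cover-functoriality step of the second paragraph --- pinning down that $\pi^{-1}(\mathcal{T}^{\mathcal{G}})$ and $\pi^{-1}(\bold{x}^{\mathcal{G}})$ are genuinely the standard braid diagram and distinguished generator of $\Delta_n^{2q}$, so that \cite{coversmaslov} applies --- but this runs entirely parallel to the analysis already carried out for $\theta$ in Proposition \ref{prop:gradingcomparison} and for $\mathcal{T}^{\mathcal{G}}$ in Section \ref{sec:diagram}; the remaining steps are routine grading arithmetic and index counts.
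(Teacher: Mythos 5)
Your proof takes a genuinely different route from the paper's, and it has a real gap which you acknowledge but (I think) significantly underestimate.

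The paper's proof is a short, local, diagrammatic argument: for the trivial braid $\tau_n$ the $\boldsymbol{\beta}$-curves of the standard braid diagram $\mathcal{T}^G$ are so simple that two handleslide steps convert $\mathcal{T}^G$ into an index-$n$ grid diagram, and the accompanying triple-diagram triangles $u\in\pi_2(\bold{x}^G,\boldsymbol{\Theta},\bold{x}')$ and $u'\in\pi_2(\boldsymbol{\Theta},\bold{x}',\bold{x})$ have Maslov index zero (see Figure \ref{fig:maslovtriple}). This directly yields $M(\bold{x}^G)=M(\bold{x})$, i.e.\ $M(t)=M(\theta)$, with no reference to covers, no reference to the $d$-invariant, and no reliance on the reformulation machinery of Sections \ref{subsec:reformulation}--\ref{sec:rationalchar}.

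Your argument instead runs the covering strategy that the paper uses for $\theta$ in Proposition \ref{prop:gradingcomparison} and anchors the $n=1$ case via a $\textup{Spin}^c$-structure count on an index-one diagram. The anchor is fine: since $t$ and $\theta$ both live in $\mathfrak{s}_{\xi_{UT}}$ and the index-one complex has exactly one generator per $\textup{Spin}^c$ structure, $M(t(\tau_1\circ\delta^{q/p}))=M(\theta(\tau_1\circ\delta^{q/p}))$ is immediate. The gap is the cover-functoriality step: you need to know that the $p$-fold cover of the standard braid diagram $\mathcal{T}^G$ is again a standard braid diagram (for $\Delta_n^{2q}\subset S^3$) and that $\bold{x}^G$ lifts to the distinguished BRAID-representing generator, so that the absolute-grading covering formula of \cite{coversmaslov} lands on $M(t(\Delta_n^{2q}))$. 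For grid diagrams this is literally a stacking observation, but $\mathcal{T}^G$ is produced by the chain of moves in Section \ref{sec:diagram}: a surgery replacement $\mu\mapsto\beta_0$, two triple-diagram isomorphisms (Proposition \ref{prop:relate}), a free $0/3$-destabilization, and the reformulation results of Sections \ref{subsec:reformulation} and \ref{sec:rationalchar} to pin down the generator. Showing that every one of these commutes with the $p$-fold cover is not bookkeeping; it would effectively require re-proving a cover-equivariant version of Theorem \ref{thm:diagram}, which is more work than Lemma \ref{lem:trivial} itself warrants and which the paper sidesteps entirely with the Maslov-index-zero triangle count. If you wanted to salvage the covering route, it would be much easier to first handleslide $\mathcal{T}^G$ to a grid diagram (as the paper does), at which point the covering argument for grids is available -- but at that point you've already done the paper's proof.
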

\begin{proof}
The $\boldsymbol{\beta}$ curves of the standard braid diagram for $\tau_n\circ\delta^{q/p}$ are particularly simple, and we can handleslide to a grid diagram having index $n$. 
Consider the triple diagrams $(\Sigma,\boldsymbol{\beta},\boldsymbol{\alpha},\boldsymbol{\alpha}',\bold{w},\bold{z})$ and $(\Sigma,\boldsymbol{\beta}',\boldsymbol{\beta},\boldsymbol{\alpha'},\bold{w},\bold{z})$ pictured in Figure \ref{fig:maslovtriple}. Here, we have initially isotoped the diagram $\mathcal{T}^G = (\Sigma,\boldsymbol{\beta},\boldsymbol{\alpha}, \bold{w},\bold{z})$ so that the final diagram appears more grid like.

\begin{figure}[h]
\def\svgwidth{400pt}
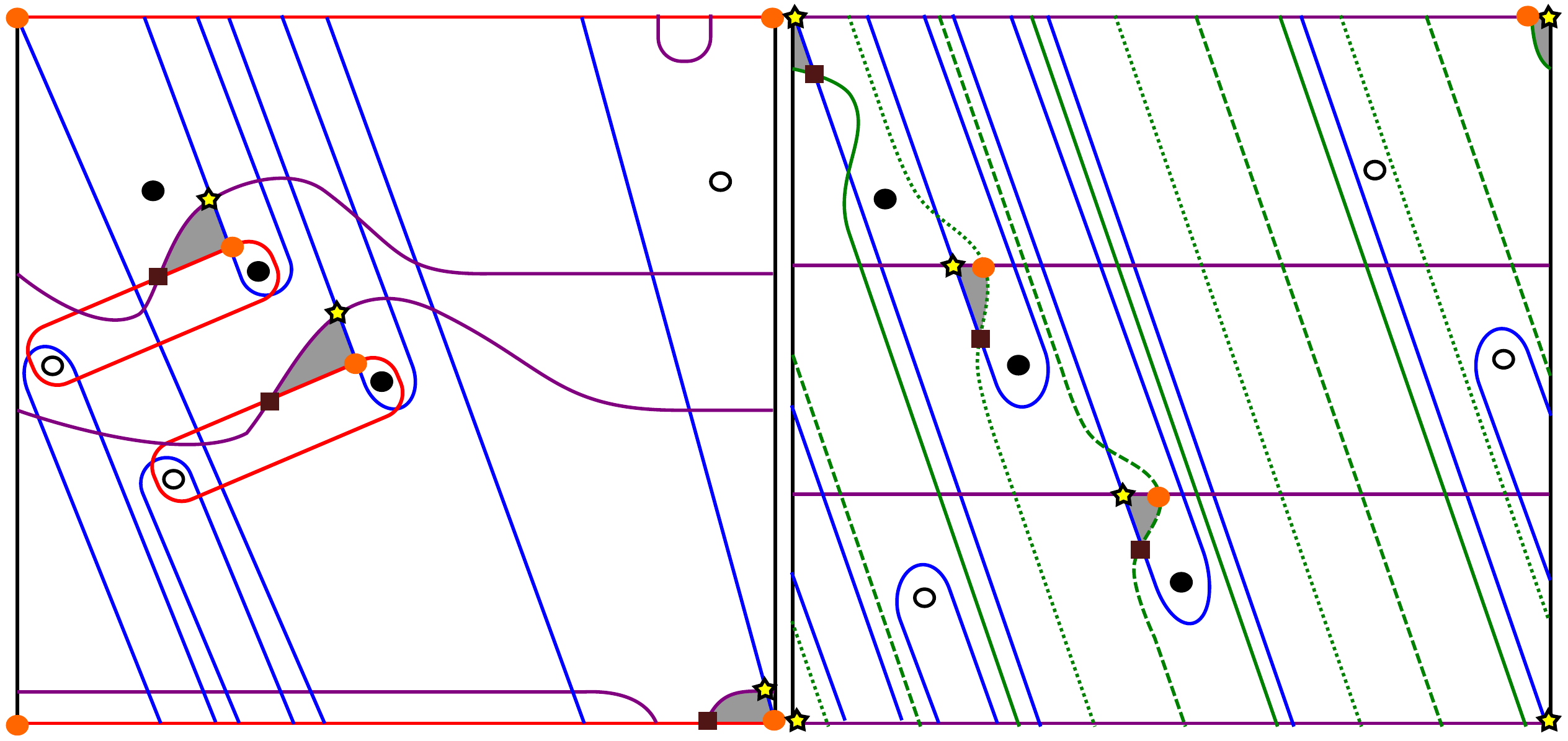
\caption{The triple diagrams $(\Sigma,\boldsymbol{\beta},\boldsymbol{\alpha},\boldsymbol{\alpha}',\bold{w},\bold{z})$ and $(\Sigma,\boldsymbol{\beta}',\boldsymbol{\beta},\boldsymbol{\alpha'},\bold{w},\bold{z})$ pictured left and right. The $\boldsymbol{\beta}',\boldsymbol{\beta},\boldsymbol{\alpha}$ and $\boldsymbol{\alpha}'$ are drawn green, blue, red and purple, respectively. The $\bold{w}$ and $\bold{z}$ basepoints are depicted with solid and hollow dots, respectively.. The generators $\bold{x}^G$ and $\bold{x}$ are depicted with orange dots. The generators $\boldsymbol{\Theta}$ and $\bold{x}'$ are depicted with brown squares and yellow stars, respectively.}
\label{fig:maslovtriple}
\end{figure}

Let $G = (\Sigma,\boldsymbol{\beta}',\boldsymbol{\alpha'},\bold{w},\bold{z})$ and $G' = (\Sigma,\boldsymbol{\beta},\boldsymbol{\alpha'},\bold{w},\bold{z})$. Let $\bold{x}$ and $\bold{x}'$ denote the generators pictured in the figure.
Let $\boldsymbol{\Theta}$ denote both the top graded generator in $CFK^- (\Sigma,\boldsymbol{\beta}',\boldsymbol{\beta},\bold{w},\bold{z})$ and $CFK^- (\Sigma,\boldsymbol{\alpha},\boldsymbol{\alpha}',\bold{w},\bold{z})$.

There is a Maslov index zero Whitney triangle $u \in \pi _2 (\bold{x}^G,\boldsymbol{\Theta},\bold{x}')$ and $u' \in \pi_2 (\boldsymbol{\Theta},\bold{x}',\bold{x})$, whose domain is shaded in Figure \ref{fig:maslovtriple}. It follows that 
\[
M(t(\beta_n\circ\delta^{q/p}))=M(\bold{x}^G)=M(\bold{x})=M(\theta(\beta_n\circ\delta^{q/p}))
\]
\end{proof}

\begin{proposition}
\label{prop:needaname}
For a transverse braid $K\subset (L(p,q),\xi_{UT})$, the Maslov gradings of the GRID and BRAID invariants agree. i.e.
\[
M(\theta(K))=M(t(K)).
\]
\end{proposition}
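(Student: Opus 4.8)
The plan is to prove the stronger statement that
\[
M(t(K)) = sl_{\mathbb{Q}}(K) + \tfrac{1}{p} - d(p,q,q-1),
\]
which by Corollary \ref{corollary:grid} (equivalently Proposition \ref{prop:gradingcomparison}) is exactly $M(\theta(K))$. Write $K = \beta\circ\delta^{q/p}$ for some $\beta\in B_n$. Using Lemma \ref{lem:QSL} the target becomes $M(t(K)) = w(\beta) + \tfrac{1}{p}(qn^2-qn-n) + \tfrac{1}{p} - d(p,q,q-1)$, and the base case $\beta=\tau_n$ (so $w(\tau_n)=0$) is precisely Lemma \ref{lem:trivial} combined with Corollary \ref{corollary:grid}. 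Hence it suffices to establish the grading difference
\[
M\bigl(t(\beta\circ\delta^{q/p})\bigr) - M\bigl(t(\tau_n\circ\delta^{q/p})\bigr) = w(\beta).
\]

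To prove this I would mimic the argument of Proposition \ref{prop:gradingcomparison} and pass to the contact universal cover $\pi\colon (S^3,\xi_{std})\to (L(p,q),\xi_{UT})$, under which $\beta\circ\delta^{q/p}$ lifts to the $S^3$-braid $\beta^p\circ\Delta_n^{2q}$ about the unknot and $\tau_n\circ\delta^{q/p}$ lifts to $\Delta_n^{2q}$. The crucial point is naturality of the BRAID invariant under this cover: the $p$-fold cover of the standard braid diagram $\mathcal{T}^G$ of Theorem \ref{thm:diagram} is, after a sequence of handleslides fixing the distinguished intersection points, a standard braid diagram for the lifted braid, so the generator $\bold{x}^G$ with $[\bold{x}^G]=t(K)$ lifts to the generator computing $t(\beta^p\circ\Delta_n^{2q})$ — this is the analogue for $\mathcal{T}^G$ of the elementary observation, used for grid diagrams in Proposition \ref{prop:gradingcomparison}, that stacking $p$ copies of a grid diagram produces a grid diagram. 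Granting this, Theorem 2.6 of \cite{coversmaslov} (applicable since all of these classes lie in the torsion $Spin^c$ structure $\mathfrak{s}_{\xi_{UT}}$, so the correction term relating absolute gradings in a diagram and its cover is the same for both braids) yields
\[
M\bigl(t(\beta\circ\delta^{q/p})\bigr) - M\bigl(t(\tau_n\circ\delta^{q/p})\bigr) = \tfrac{1}{p}\bigl(M(t(\beta^p\circ\Delta_n^{2q})) - M(t(\Delta_n^{2q}))\bigr).
\]
Finally, in $S^3$ the BRAID and GRID invariants coincide \cite{equiv}, so by Theorem \ref{thm:OST} one has $M(t(\gamma)) = sl(\gamma)+1 = w(\gamma)-n+1$ for any $\gamma\in B_n$; since the full twist $\Delta_n^2$ has $n(n-1)$ positive crossings, $w(\beta^p\circ\Delta_n^{2q}) - w(\Delta_n^{2q}) = pw(\beta)$, and the right-hand side above is $w(\beta)$, as required.

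The main obstacle is the naturality claim in the second paragraph: verifying that, under the induced $p$-fold covering of Heegaard diagrams, the generator $\bold{x}^G$ of Theorem \ref{thm:diagram} lifts to the generator computing the BRAID invariant of the lifted braid. This amounts to unwinding the construction of $\mathcal{T}^G$ in Section \ref{sec:diagram} — which itself begins with an $S^3$ braid diagram for $\pi^{-1}(K)$ about the unknot — through the cover and checking that the handleslides involved respect the relevant intersection points. I expect this to be routine but somewhat tedious, and it is the one place where genuine care is needed. (If one wished to avoid the cover entirely, an alternative would be to extend the triangle-counting argument of Lemma \ref{lem:trivial} to a general braid by confining the change in the $\boldsymbol{\beta}$-curves to the region $\overline{\Delta}$, away from the support of $\bold{x}^G$; this seems no simpler.)
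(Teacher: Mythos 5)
Your reduction is exactly the paper's: both arguments boil down, via Lemma \ref{lem:trivial} together with the known GRID formula of Corollary \ref{corollary:grid}, to establishing
\[
M\bigl(t(\beta\circ\delta^{q/p})\bigr) - M\bigl(t(\tau_n\circ\delta^{q/p})\bigr) = w(\beta).
\]
Where you diverge is in how this difference is proved. The paper does it with a single Whitney triangle in the triple diagram $(T^2,\boldsymbol{\gamma},\boldsymbol{\beta},\boldsymbol{\alpha})$, where $(T^2,\boldsymbol{\gamma},\boldsymbol{\alpha})$ and $(T^2,\boldsymbol{\beta},\boldsymbol{\alpha})$ are the standard braid diagrams for $\beta\circ\delta^{q/p}$ and $\tau_n\circ\delta^{q/p}$, and the key observation is that $(T^2,\boldsymbol{\gamma},\boldsymbol{\beta})$ is a diagram for $t(\beta)\in HFK^-(-S^3,\beta)$ connect-summed with the standard $S^1\times S^2$ diagram; a triangle of Maslov index $1-n$ with corners at $t(\beta)\otimes\Theta$, $t(\tau_n\circ\delta^{q/p})$, and $t(\beta\circ\delta^{q/p})$ then gives the relation directly. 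This never leaves $L(p,q)$.

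Your route, passing to the universal cover and citing \cite{coversmaslov}, instead depends on the naturality claim that the $p$-fold cover of $\mathcal{T}^G$ is, after handleslides that respect the distinguished generator, a standard braid diagram for $\pi^{-1}(K)$ with $\bold{x}^G$ lifting to the generator representing $t(\pi^{-1}(K))$. I would characterize this as a genuine gap rather than a routine verification. The construction of $\mathcal{T}^G$ in Section \ref{sec:diagram} is itself a chain of pseudo-holomorphic triangle-counting isomorphisms (Propositions \ref{prop:relate} and \ref{prop:maps}), a surgery on the $\mu$ curve, and a free index-$0/3$ destabilization; there is no statement in the paper that any of these commute with the cyclic covering, and the analogue of the elementary ``stacking'' picture that makes the GRID version of this argument (Proposition \ref{prop:gradingcomparison}) work for grid diagrams is simply not available for $\mathcal{T}^G$. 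If one did want to pursue your route, it would likely be cleaner to prove naturality under covers of the filtration-level \emph{reformulation} of $t$ from Sections \ref{sec:braidinvt}--\ref{sec:rationalchar} rather than of the explicit diagram $\mathcal{T}^G$; but either way this is a lemma in its own right, and the paper's single triangle count sidesteps it entirely.
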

\begin{proof}

Let $\beta\in B_n$ be an arbitrary index $n$ braid. One easily computes (as in the proof of Proposition \ref{prop:gradingcomparison}) that $M(\theta(\beta\circ\delta^{q/p}))-M(\theta(\tau_n\circ\delta^{q/p}))=w(\beta)$, the writhe. We will show that Maslov grading of the BRAID invariant satisfies the same equation, the result will then follow from Lemma \ref{lem:trivial}.

We compare the Maslov gradings of $t(\beta\circ\delta^{q/p})$ and $t(\tau_n\circ\delta^{q/p})$.

Let $(T^2,\boldsymbol{\gamma},\boldsymbol{\alpha},\bold{w},\bold{z})$ and $(T^2,\boldsymbol{\beta},\boldsymbol{\alpha},\bold{w},\bold{z})$ denote the standard braid diagrams for $\beta\circ\delta^{q/p}$ and $\tau_n\circ\delta^{q/p}$, respectively. We draw all three sets of curves on a single torus (as in Figure \ref{fig:gradingshift}) getting the triple diagram $(T^2,\boldsymbol{\gamma},\boldsymbol{\beta},\boldsymbol{\alpha},\bold{w},\bold{z})$. Note that the diagram $(T^2,\boldsymbol{\gamma},\boldsymbol{\beta},\bold{w},\bold{z})$ can be identified with a diagram used to define $t(\beta)\in HFK^-(-S^3,\beta)$ connect sum the standard Heegaard diagram for $S^1\times S^2$. Let $\Theta\in HF^-(S^1\times S^2)$ denote the generator in top Maslov grading.

Let $u$ denote the Whitney triangle having Maslov index $1-n$ whose domain is shaded in Figure \ref{fig:gradingshift}. 
This Whitney triangle has corners at generators having homology classes $t(\beta)\otimes \Theta, t(\tau_n\circ\delta^{q/p})$ and $t(\beta\circ\delta^{q/p})$.
Using $u$ to compare Maslov gradings as in \cite{absgrading}, we see that
\[
M(t(\beta))+M(t(\tau_n\circ\delta^{q/p}))-M(t(\beta\circ\delta^{q/p}))=1-n.
\]
$M(t(\beta))$ has been computed in \cite{equiv} to equal $sl(\beta)+1 = w(\beta)-n+1$. Combining these equations gives
\[
M(t(\beta\circ\delta^{q/p}))-M(t(\tau_n\circ\delta^{q/p}))=w(\beta).
\]

\begin{figure}[h]
\def\svgwidth{200pt}
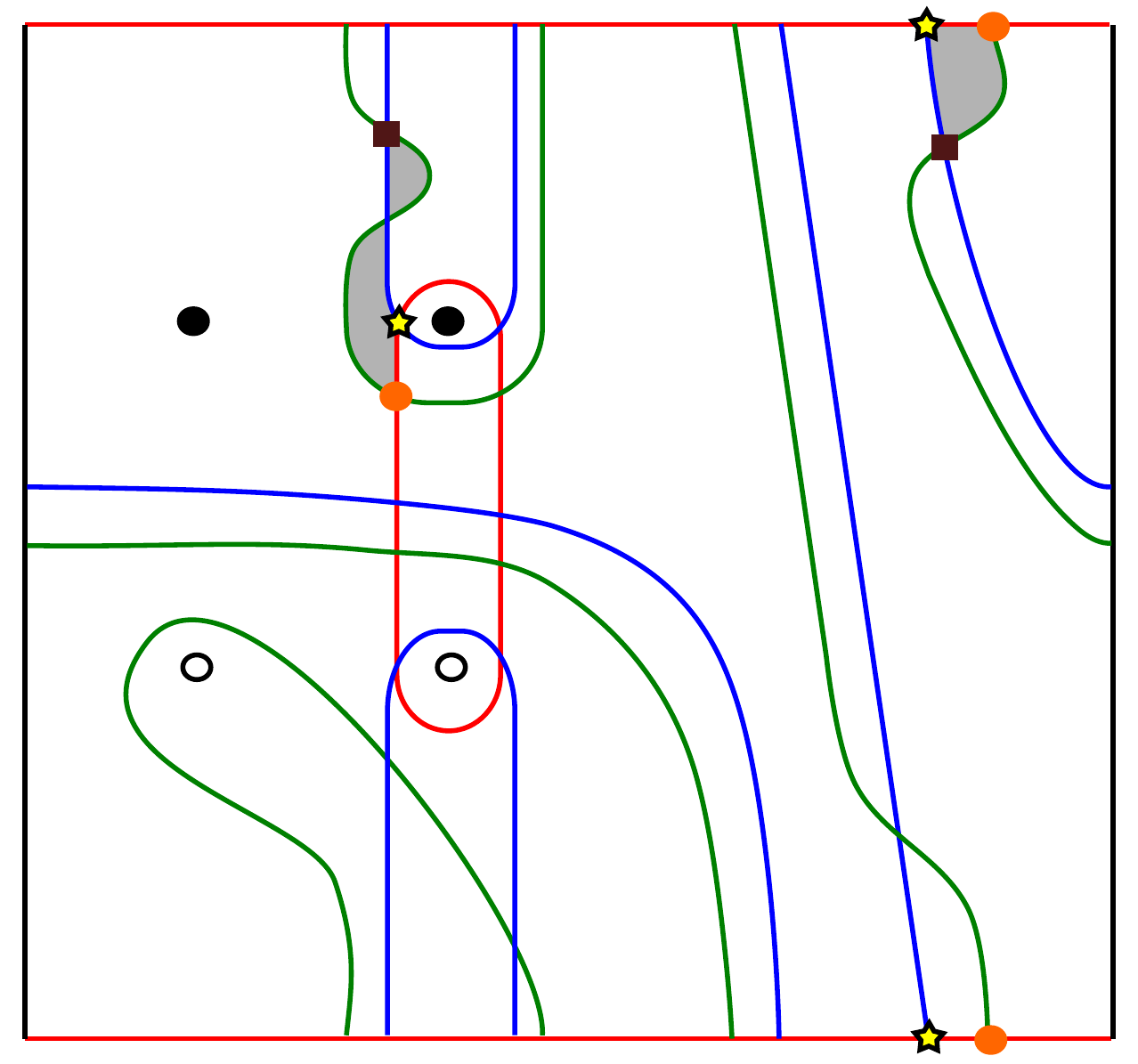
\caption{The homology classes of generators depicted by brown squares, yellow stars and orange dots are $t(\beta)\otimes \Theta,t(\tau_n\circ\delta^{p/q})$ and $t(\beta\circ\delta^{p/q})$, respectively. In this example $(p,q)=(2,1)$ and $\beta = \sigma_1$.}
\label{fig:gradingshift}
\end{figure}

\end{proof}

\section{Another reformulation of the BRAID invariant for lens space braids}
\label{sec:altchara}

Let $(B,\pi)$ denote the rational open book decomposition supporting $(L(p,q),\xi_{UT})$ studied in the previous section, and $K$ an index $k$ braid about $(B,\pi)$. Let $U\subset L(p,q)$ denote the Seifert cable of $B$; this is the cable specified by how a fiber $D$ of $(B,\pi)$ meets the boundary of a solid torus neighborhood of $B$. The braid $K$ intersects $D$ in $k$ points. 

In this section we reformulate the invariant $t(K)\in HFK^{-}(-L(p,q),K)$ in terms of the Alexander filtration induced by $-U$ on the knot Floer chain complex. We will use this alternate reformulation in a subsequent section to prove that the GRID invariant for transverse links in $(L(p,q),\xi_{UT})$ is equivalent to $t(K)$. 

\begin{remark}
In the following reformulation we use two pairs of basepoints to encode $-U$. One may easily formulate and prove a formulation using one pair of basepoints, but two pairs is better suited towards proving GRID = BRAID.
\end{remark}

By adding a basepoint $w_{-U}$ to the diagram $\mathcal{G}_0$ of the previous section, and relabelling $z_{-B}$ as $z_{-U}$ we obtain a Heegaard diagram for $(L(p,q),K\cup -U)$. Adding an extra pair of and curves and basepoints for $-U$ we obtain the diagram $\mathcal{D}$ pictured in Figure \ref{fig:altchara}. Denote the new curves $\alpha^s$ and $\beta^s$. Forgetting the basepoints $\bold{w}_{-U}$ we obtain a diagram \[\mathcal{D}_0 = (\Sigma,\boldsymbol{\beta},\boldsymbol{\alpha}, \bold{w},\bold{z}\cup\bold{z}_{-U})\text{ for }(-L(p,q),K)\] with two free basepoints. Let $\bold{x}_0^D$ denote the generator pictured in the figure.

Let $(\mathcal{F}_{bot}^{-U} (\mathcal{D}_0), \mathfrak{s}_{\xi})$ denote the summand of $\mathcal{F}_{bot}^{-U} (\mathcal{D}_0)$ whose generators $\bold{x}$ satisfy $\mathfrak{s}_{\bold{w}}(\bold{x})=\mathfrak{s}_{\xi}$.
\begin{figure}[h]
\def\svgwidth{200pt}
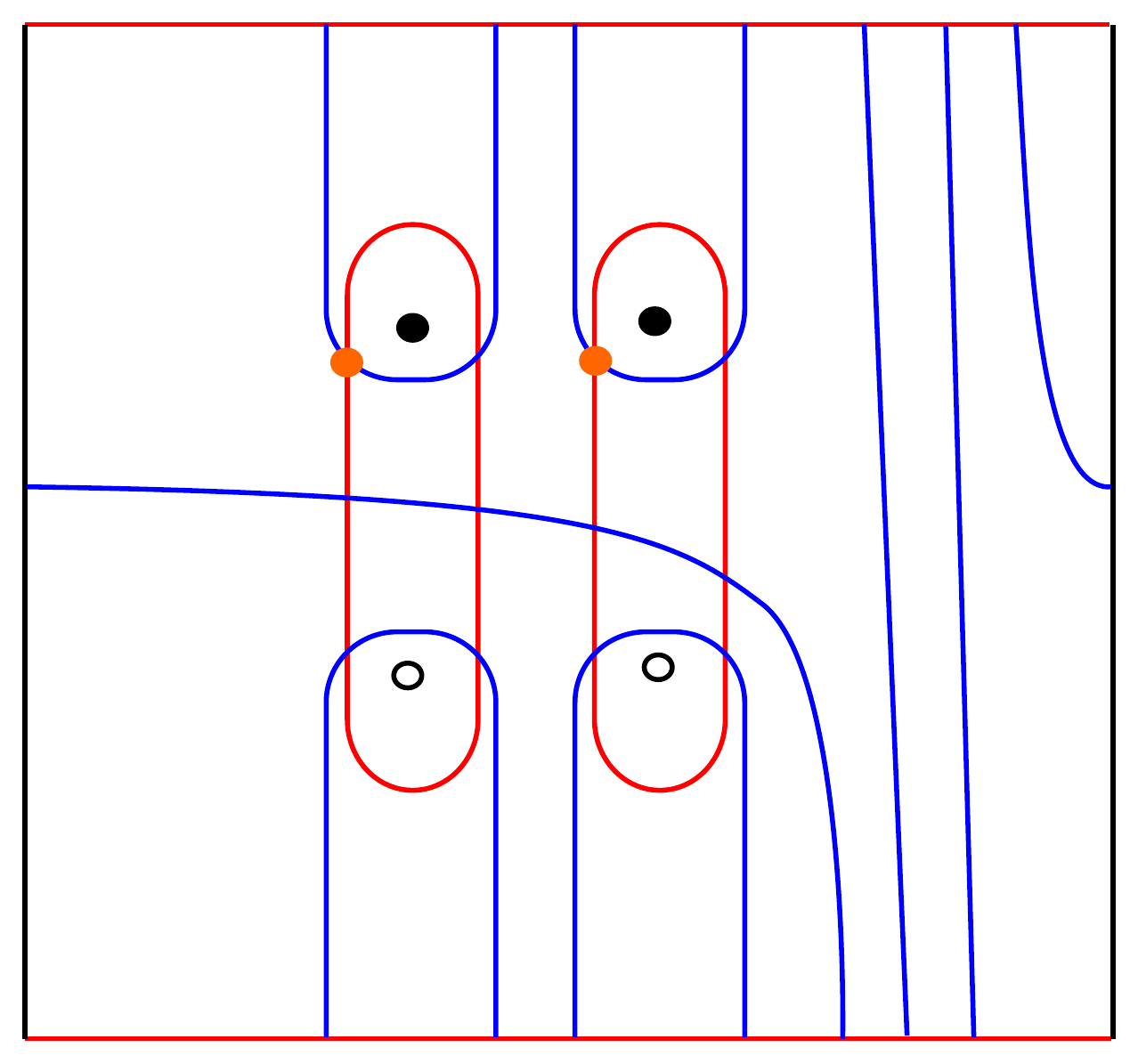
\caption{A Heegaard diagram for $(-L(3,1),K\cup -U)$. A longitude $\lambda$ for $-U$ is pictured in purple. $\mathcal{D}_0 = (\Sigma,\boldsymbol{\beta},\boldsymbol{\alpha},\bold{w},\bold{z}\cup \bold{z}_{-U})$. The components of $\bold{x}_0^D$ are orange dots.}
\label{fig:altchara}
\end{figure}

\begin{lemma}
\label{lem:needaname}
$\mathcal{F}_{bot}^{-U} (\mathcal{D}_0) \simeq V_1 \otimes \dots \otimes V_k \oplus \mathbb{F}_p$ and $(\mathcal{F}_{bot}^{-U} (\mathcal{D}_0), \mathfrak{s}_{\xi}) \simeq V_1 \otimes \dots \otimes V_k$, where each $V_i$ is a free rank two $\mathcal{F}[U_1,\dots U_m]$-module, generated by $x_i$ and $y_i$, such that $\partial x_i = 0$.
\end{lemma}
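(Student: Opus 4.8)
The plan is to run the same strategy used for $\mathcal{F}_b^{-B}(\mathcal{H}_0)$ in Section~\ref{sec:braidinvt} and in the proof of Proposition~\ref{prop:gen}, now with $-U$ and its genuine disk Seifert surface $D$ playing the role of $-B$ and its rational fiber. First I would fix a longitude $\lambda$ for $-U$ drawn on $\Sigma$ supported near the curves $\beta_0$ and $\beta^s$ encoding $B$ and $U$ (as in Figure~\ref{fig:altchara}), meeting $\beta^s$ transversely in a single point, and build a relative periodic domain $\mathcal{P}$ whose homology class is that of $-D$; its local multiplicities can be computed region by region exactly as in the proof of Proposition~\ref{prop:gen}, being $0$ throughout the page region $\Delta$ and strictly positive in $\overline{\Delta}$. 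Lemma~\ref{lemma:relperiodic} then identifies $\mathcal{F}_{bot}^{-U}(\mathcal{D}_0)$ with the subcomplex spanned by the generators $\bold{y}$ minimizing $n_\bold{y}(\mathcal{P})$, so the whole statement reduces to enumerating these minimizers and reading off the induced differential.

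Next I would separate the components of a minimizing generator into the $k$ ``braid'' components (on $\beta_i,\alpha_i$ with $1\le i\le k$) and the ``axis'' components (on $\beta_0$, $\alpha_0$, $\beta^s$, $\alpha^s$, together with whatever is forced by $\lambda$). For the braid components the argument is verbatim the $V_i$-computation following the splitting of $\mathcal{F}_b^{-B}(\mathcal{H}_0)$: a chain of forced equalities $(\bold{y})_{j_s}=(\bold{y})^{j_{s+1}}$ among components — the same chasing argument as in Proposition~\ref{prop:gen} — shows each braid component must sit in $\Delta$, where $\alpha_i$ meets only $\beta_j$ with $j\le i$; hence it occupies one of the two points $x_i$ (higher Maslov grading) or $y_i$ (lower), no rectangle out of $x_i$ contributes to $\partial$, and the two rectangles from $y_i$ to $x_i$ carry the basepoint weights $U_i,U_{\sigma(i)}$ recorded in the statement. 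This produces the free rank two factors $V_i$ with $\partial x_i=0$. Since the braid curves and the axis curves are separated on $\Sigma$ by $\alpha_0\cup\lambda$, the complex $\mathcal{F}_{bot}^{-U}(\mathcal{D}_0)$ factors, up to filtered quasi-isomorphism, as $V_1\otimes\cdots\otimes V_k$ together with a model for the bottom filtration level of the ``axis'' sub-diagram built from $\beta_0,\alpha_0,\beta^s,\alpha^s$ (with $\bold{w}_{-U}$ forgotten); I would show that the homology of this axis piece is $\widehat{HF}(-L(p,q))$, of total $\mathbb{F}$-rank $p$, which is the source of the $\mathbb{F}_p$ summand, while intersecting with the summand where $\mathfrak{s}_\bold{w}=\mathfrak{s}_\xi$ selects its unique $Spin^c$-component and leaves exactly $V_1\otimes\cdots\otimes V_k$.

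The step I expect to be the main obstacle is this last identification of the axis piece: pinning down which point of $\beta_0\cap\alpha_0$ (and of $\beta^s\cap\alpha^s$) a minimizing generator can occupy, and verifying that the $\mathfrak{s}_\xi$-component is the anticipated rank one module. To do this I would reuse the cabling trick from the lemma following Proposition~\ref{prop:gen}: since the relevant cabling data has greatest common divisor one there is an $(r,s)$-cable $\widetilde{U}$ of $U$, with $r>0$, homologous to a meridian and such that the rational open book with binding $\widetilde{U}$ again supports $\xi$ (Theorem~1.8 of \cite{CCSMCM}); analysing the obvious relative periodic domain for $\widetilde{U}$ together with Theorem~1 of \cite{QOB} identifies the minimizing $\beta_0$-component with a representative of the contact class $c(\xi)$, and combining this with the nonvanishing input of Theorem~\ref{thm:nonvanishing} and the rank-one statement for extremal knot Floer homology of a fibered knot (cf.\ Proposition~2.2 of \cite{linkgenus}) should fix the axis piece in both asserted forms. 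The delicate bookkeeping is the familiar one — keeping the braid-strand analysis and the axis analysis cleanly separated in the twisted region $\overline{\Delta}$, and checking that the proposed model is correct as a filtered complex and not merely on homology.
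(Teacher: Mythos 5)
Your plan lands on the right answer but badly misjudges where the difficulty lies, and as written it would send you chasing a non-issue. The cabling machinery you want to reuse — the $(r,s)$-cable $\widetilde U$, Theorem 1.8 of \cite{CCSMCM}, Theorem 1 of \cite{QOB}, Theorem \ref{thm:nonvanishing}, Proposition 2.2 of \cite{linkgenus} — exists in Proposition \ref{prop:gen} precisely to tame a rationally fibered binding: there the relative periodic domain for $B$ has boundary $\alpha_0 + b\beta_0 + p\lambda$, its multiplicities are subtle, and singling out the correct point of $\beta_0\cap\alpha_0$ genuinely requires all that input. Here $U$ is the Seifert cable of $B$, hence an honest unknot bounding an honest disk (a fiber of the open book). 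The paper therefore takes $\mathcal{P}$ to be the obvious disk bounded by a longitude for $U$, notes that minimizing the $-U$-filtration is the same as maximizing the $U$-filtration, and reads everything off of Lemma \ref{lemma:relperiodic}: each $(\bold y)_i=(\bold y)^i$, the $\alpha^s\cap\beta^s$ component is forced, and all $p$ points of $\alpha_0\cap\beta_0$ occur — one per $Spin^c$ structure. No chain-chasing, no cable, no appeal to fiberedness-detection. The whole proof is one short paragraph.

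The concrete wobble in your outline is the step where you propose to ``pin down which point of $\beta_0\cap\alpha_0$ $\dots$ a minimizing generator can occupy'' by running the cabling trick to identify it with a representative of $c(\xi)$. That is at cross-purposes with the statement: for the $-U$-filtration the $\alpha_0$-component is \emph{not} determined — all $p$ choices are minimizers, and that multiplicity of choices is exactly what produces the $\mathbb{F}_p$ factor, with the $Spin^c$-decomposition doing the sorting. If you carried out the cabling argument literally you would either rediscover that there is no unique minimizer (so the effort was wasted) or convince yourself of something false. A secondary issue: the multiplicity pattern you assert, modeled on the $B$-periodic domain of Proposition \ref{prop:gen} (``$0$ in $\Delta$, strictly positive in $\overline{\Delta}$''), is the pattern for the rational fiber, with boundary $\alpha_0+b\beta_0+p\lambda$; the disk domain for $U$ is simpler and does not need that computation at all. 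The ``delicate separation of braid and axis analysis in $\overline{\Delta}$'' that you flag as the main obstacle dissolves entirely once the honest disk is used.
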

\begin{proof}
Let $\mathcal{P}$ denote the obvious disk bounded by a longitude for $U$ on the Heegaard diagram in Figure \ref{fig:altchara}. Orientation reversal corresponds to inverting the Alexander grading, up to an overall shift. In order to minimize the Alexander grading induced by $-U$, we maximize the grading induced by $U$. Lemma \ref{lemma:relperiodic} tells us that a generator $\bold{y}$ lies in $\mathcal{F}_{bot}^{-U} (\mathcal{G}_0)$ if $n_{\bold{y}} (\mathcal{P})$ is maximized. If $n_{\bold{y}} (\mathcal{P})$ is maximal, it is immediate that $(\bold{y})_i = (\bold{y})^i$ for each $i$, and that the component of $\bold{y}$ on $\alpha^s\cap\beta^s$ is fixed. For each $i>0$ there are two possible values of $(\bold{y})_i$, $x_i$ and $y_i$, let $x_i$ denote the intersection point contributing greater Maslov grading. The component of $\bold{y}$ on $\alpha_0$ is determined by the $Spin^C$ structure. There are $p$ possible values for $(\bold{y})_0$, corresponding to the different $Spin^C$ structures on $-L(p,q)$. 
\end{proof}

It follows that $H_{top}(\mathcal{F}_{bot}^{-U} (\mathcal{D}_0), \mathfrak{s}_{\xi})$ is generated by $[\bold{x}_0^D]$. Next, we perform an isotopy of $\alpha^s$ to get a diagram $\mathcal{D}_1$, followed by a free 0/3-index destabilization to obtain the diagram $\mathcal{G}_0$ of the previous section, and then relate $[\bold{x}_0^D]$ to $[\bold{x}_0^G]$.

Consider the triple diagram $(\Sigma,\boldsymbol{\beta},\boldsymbol{\alpha},\boldsymbol{\alpha}',\bold{w},\bold{z}\cup \bold{z}_{-U})$ shown in Figure \ref{fig:triple}. The set $\boldsymbol{\alpha}'$ is obtained by isotoping $\alpha^s$ to intersect only $\beta^s$, and $\alpha_i '$ is obtained by applying a small isotopy to $\alpha_i$. Let $\mathcal{D}_1 = (\Sigma,\boldsymbol{\beta},\boldsymbol{\alpha}',\bold{w},\bold{z}\cup \bold{z}_{-U})$, and
\[
D_{0,1}:HFK^{-,2}(\mathcal{D}_0) \xrightarrow {\simeq}HFK^{-,2}(\mathcal{D}_1)
\]
denote the isomorphism induced by the triple diagram. 
Let $\bold{x}_1^D$ denote the generator of $CFK^{-,2}(\mathcal{D}_1)$ whose components are pictured in Figure \ref{fig:triple}.

\begin{proposition}
\label{prop:name2}
$D_{0,1} ([\bold{x}_0^D]) = [\bold{x}_1^D]$.
\end{proposition}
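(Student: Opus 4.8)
The plan is to follow the template already established in Proposition \ref{prop:maps} and Proposition \ref{prop:relate}: the isomorphism $D_{0,1}$ is induced by a pseudo-holomorphic triangle counting map $d_{0,1}$ associated to the triple diagram $(\Sigma,\boldsymbol{\beta},\boldsymbol{\alpha},\boldsymbol{\alpha}',\bold{w},\bold{z}\cup\bold{z}_{-U})$, and it suffices to show that $d_{0,1}(\bold{x}_0^D)=\bold{x}_1^D$ on the chain level. Concretely, I would argue that the only Whitney triangle $u\in\pi_2(\bold{x}_0^D,\boldsymbol{\Theta},\bold{y})$ of Maslov index zero admitting a pseudo-holomorphic representative and missing all basepoints of $\bold{w}$ has $\bold{y}=\bold{x}_1^D$ and domain equal to a disjoint union of the small triangles visible in Figure \ref{fig:triple} (one at the $\alpha^s$--$\beta^s$ corner, and one small triangle near each $\alpha_i$--$\alpha_i'$ pair for $i>0$), each of which carries a unique holomorphic representative.

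First I would set up the local multiplicity analysis exactly as in the proof of Proposition \ref{prop:maps}: since $\bold{x}_0^D$ and $\boldsymbol{\Theta}$ are corners of $D(u)$, and since the regions adjacent to the relevant small triangles contain basepoints of $\bold{w}$ (forcing multiplicity zero there) or basepoints of $\bold{z}_{-U}$ (for the $z_{-U}$-adjacent region, again forcing multiplicity zero because $d_{0,1}$ counts triangles disjoint from $\bold{z}_{-U}$ as well in the relevant complex, or alternatively because the triple diagram represents an identity cobordism and one tracks the filtration), the non-negativity of multiplicities of $D(u)$ pins down the domain near each corner. Because $\boldsymbol{\alpha}'$ is obtained from $\boldsymbol{\alpha}$ by small isotopy (with $\alpha^s$ isotoped to intersect only $\beta^s$), the component of $\bold{y}$ on each $\alpha_i'$ must coincide with the nearest component of $\bold{x}_0^D$ on $\alpha_i$, and similarly the component on $\alpha^s$ is forced; the $Spin^C$-structure constraint (the triple diagram corresponds to the identity cobordism, hence the induced map preserves $\mathfrak{s}_{\bold{w}}$) pins down the component on $\alpha_0$, exactly as in Proposition \ref{prop:relate}. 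This identifies $\bold{y}=\bold{x}_1^D$ and shows the domain is the expected union of small triangles.

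Second, I would invoke Lemma \ref{lem:needaname} (or rather the computation behind it) to note that $\bold{x}_0^D$ has minimal Alexander filtration level for $-U$ and lies in $(\mathcal{F}_{bot}^{-U}(\mathcal{D}_0),\mathfrak{s}_\xi)$, so that all competing triangle contributions either raise the filtration level or change $Spin^C$ and hence do not survive; combined with the multiplicity analysis this yields $d_{0,1}(\bold{x}_0^D)=\bold{x}_1^D$ with coefficient $1$, and passing to homology gives $D_{0,1}([\bold{x}_0^D])=[\bold{x}_1^D]$.

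The main obstacle I expect is the bookkeeping around the extra pair of curves $\alpha^s,\beta^s$ and the two pairs of basepoints $\bold{z}_{-U},\bold{w}_{-U}$: one must be careful that the small triangle at the $\alpha^s$--$\beta^s$ corner is genuinely empty (its interior disjoint from $\bold{x}_0^D$, $\boldsymbol{\Theta}$, and $\bold{x}_1^D$) and contributes multiplicity exactly $1$ without stray multiplicity leaking into the large region $\Delta$ — this is the analogue of the ``region labelled $R$'' argument in Propositions \ref{prop:maps} and \ref{prop:relate}, and I would handle it the same way, by showing the four regions meeting a suitable marked point all have multiplicity zero, forcing the remaining region's multiplicity to vanish as well. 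Everything else is routine adaptation of the already-proven propositions.
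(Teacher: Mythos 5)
Your first paragraph is essentially the paper's proof: the paper proves Proposition~\ref{prop:name2} by the same local multiplicity analysis used in Propositions~\ref{prop:maps} and~\ref{prop:relate}, concluding that for $i>0$ the component $(\bold{y})_i$ is forced by non-negativity of multiplicities near each small triangle, that the domain is the union of small triangles in Figure~\ref{fig:triple}, and that the $Spin^C$ constraint (identity cobordism preserves $\mathfrak{s}_\bold{w}$) pins down $(\bold{y})_0$. So far this is the same approach.

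The second paragraph, however, does not hold up as stated and is not needed. The claim that ``all competing triangle contributions either raise the filtration level or change $Spin^C$ and hence do not survive'' conflates two things: the triangle map $d_{0,1}$ does not obviously decrease the $-U$-Alexander filtration (the top-graded cycle $\boldsymbol{\Theta}$ for the pair $(\boldsymbol{\alpha},\boldsymbol{\alpha}')$ carries its own filtration contribution, and the triangle domain need not be concentrated in the bottom level), and even if a contribution lands in a higher filtration level that does not make it vanish --- it simply produces extra terms that you would still need to rule out. The paper sidesteps this entirely: the multiplicity analysis already forces the domain uniquely, and the $Spin^C$ argument finishes the job, so there are no ``competing contributions'' left to dismiss by filtration considerations. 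Your first paragraph together with the $Spin^C$ step is complete; the appeal to Lemma~\ref{lem:needaname} should be dropped or, if retained, replaced by the precise statement that the only Whitney triangle in $\pi_2(\bold{x}_0^D,\boldsymbol{\Theta},\bold{x}_1^D)$ with non-negative multiplicities is the union of the shaded small triangles, as in the final sentence of the proof of Proposition~\ref{prop:relate}.
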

\begin{proof}
The proof is essentially that of Proposition \ref{prop:relate}.
Let $\boldsymbol{\Theta}$ denote the top graded generator of $CFK^{-,2}(\Sigma,\boldsymbol{\alpha},\boldsymbol{\alpha}', \bold{w},\bold{z_K}\cup \bold{z}_{-B})$.
Let $u\in \pi_2 (\bold{x}_0^D, \boldsymbol{\Theta},\bold{y})$ by a Whitney triangle having corner at some generator $\bold{y}\in \mathbb{T}_{\boldsymbol{\beta}}\cap\mathbb{T}_{\boldsymbol{\alpha}'}$. As before, by studying the possible multiplicities of $u$, one can show that $u$ has domain equal to the union of small gray and black triangles pictured in Figure \ref{fig:triple}. It is immediate that for each $i>0$, $(\bold{y})_i = (\bold{x}^D_1)_i$. The $Spin^C$ structure fixes $(\bold{y})_0 = (\bold{x}^D_1)_0$.

\end{proof}

\begin{figure}[h]
\def\svgwidth{200pt}
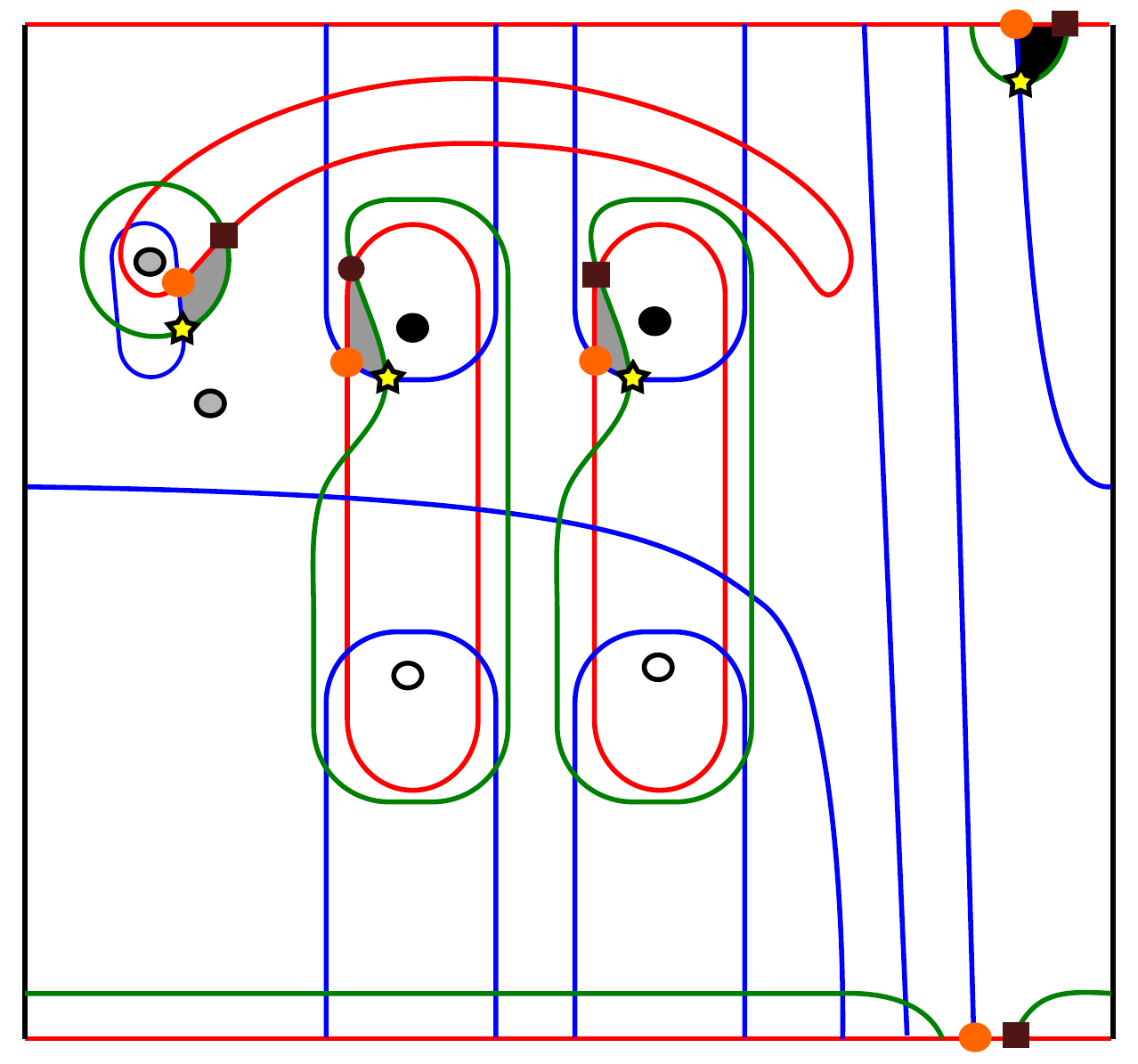
\caption{The triple diagram $(\Sigma,\boldsymbol{\beta},\boldsymbol{\alpha},\boldsymbol{\alpha}',\bold{w},\bold{z}\cup \bold{z}_{-U})$ encoding a 3-braid in $-L(3,1)$. The $\boldsymbol{\beta},\boldsymbol{\alpha}$ and $\boldsymbol{\alpha}'$ curves are blue, red, and green, respectively. The components of $\bold{x}_0^D, \bold{x}_1^D$, and $\boldsymbol{\Theta}$ are orange dots, stars, and brown squares, respectively.}
\label{fig:triple}
\end{figure}

The diagram $\mathcal{D}_1$ has a small configuration about one of the $\bold{z}_{-U}$ basepoints. Performing the index 0/3 free destabilization we see that $j (\bold{x}^D_1) = \bold{x}^G _0$ (where this generator is defined in the previous section). Proposition \ref{prop:relate} and Theorem \ref{thm:diagram} relate $[\bold{x}^G_0]$ to the BRAID invariant $t(K)$.

\section{A Reformulation of the GRID invariant $\theta$}
\label{sec:gridchara}
In this section we show that the GRID invariant $\theta$ can be reformulated in terms of the filtration on the knot Floer complex of a braid induced by the Seifert cable of the braid axis. This reformulation is the same as that of Section \ref{sec:altchara} for the BRAID invariant $t$, and we will use this to show that the two invariants are equivalent.

Let $U$ denote the Seifert cable of the binding $(B,\pi)$ of the standard rational open book for $(L(p,q),\xi_{UT})$.
Let $K\subset (L(p,q),\xi_{UT})$ be the transverse link encoded by a grid diagram $G$. Fixing a fundamental domain for the Heegaard torus, $G$ gives rise to a rectilinear braided projection of $K$ onto the fundamental domain missing the left and right boundaries of the fundamental domain, see Figure \ref{fig:rectproj}. This rectilinear projection may be altered and enhanced to one for $K\cup -U$. We may encode this projection with a grid diagram $ (T^2,\boldsymbol{\beta},\boldsymbol{\alpha},\bold{w}\cup\bold{w}_{-U},\bold{z}\cup\bold{z}_{-U})$ encoding $K\cup -U$ having index at most
\[
n+2k+2
\]
where $n$ is the index of $G$ and $k$ is the braid index of $K$. 
Consider the diagram $\mathcal{S}_0 =(T^2,\boldsymbol{\beta},\boldsymbol{\alpha},\bold{w},\bold{z}\cup\bold{z}_{-U})$ for $K$ with two free basepoints $\bold{z}_{-U} = \{z_0,z_1\}$. Let $\bold{x}^{S}_0\in \mathbb{T}_{\boldsymbol{\beta}}\cap\mathbb{T}_{\boldsymbol{\alpha}}$ denote the generator having components in the upper left corners of parallelograms containing points of $\bold{w}\cup\bold{z}_{-U}$.

\begin{figure}[h]
\def\svgwidth{200pt}
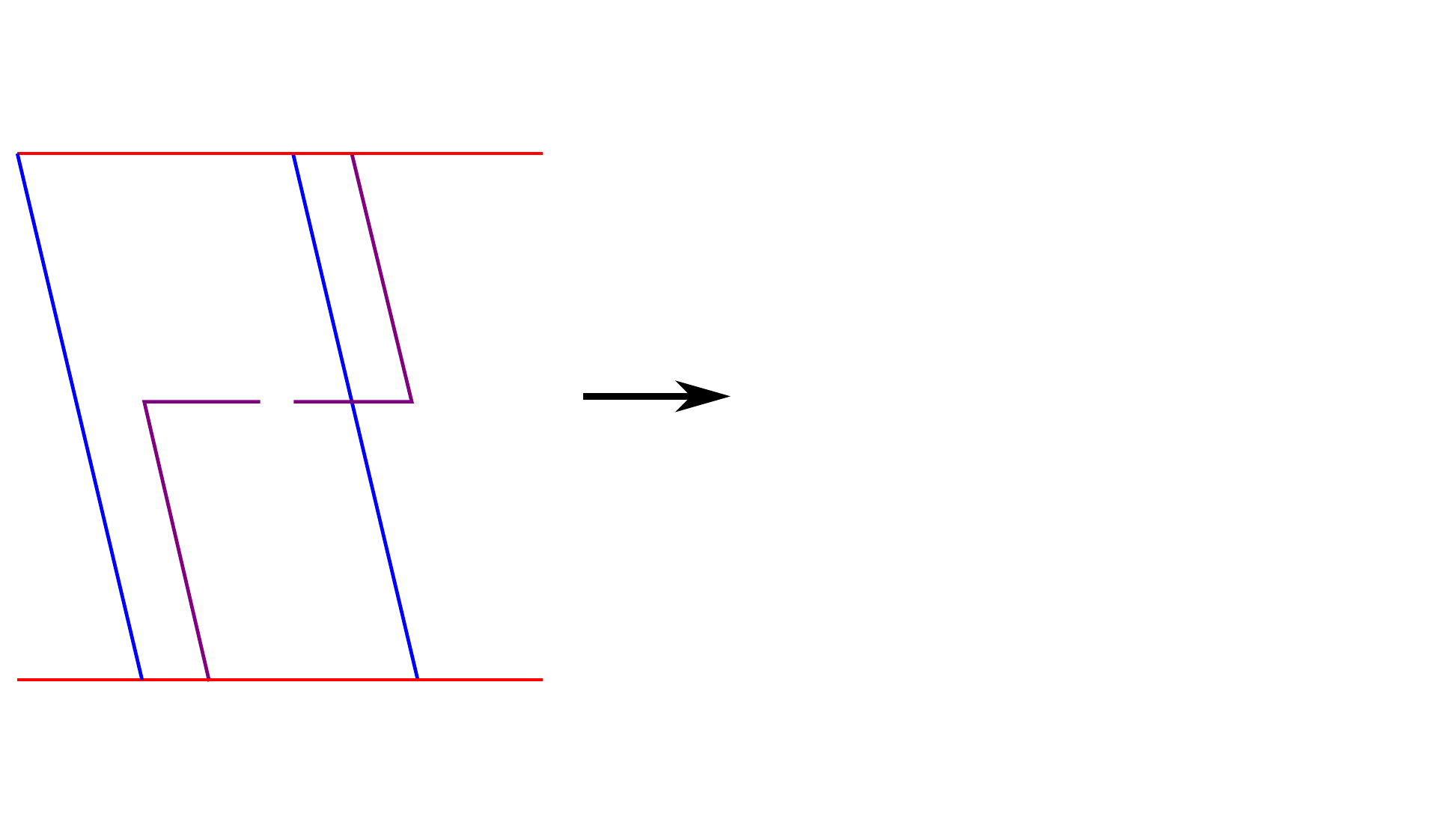
\caption{A rectilinear projection for $K=\sigma_1 ^{-1} \circ \delta ^{1/4}$ coming from an index one diagram is pictured on the left. On the right we have a rectilinear projection of $K\cup -U$.}
\label{fig:rectproj}
\end{figure}

\begin{lemma}
\label{lem:thetagen}
The class $[\bold{x}^{S}_0]$ generates $H_{top} (\mathcal{F}^{-U}_{bot}(\mathcal{S}_0), \mathfrak{s}_{\xi})$.
\end{lemma}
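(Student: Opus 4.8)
The plan is to mimic the proof of Proposition \ref{prop:gen} and Lemma \ref{lem:needaname} in the grid setting, replacing Heegaard-diagram reasoning by direct parallelogram/rectangle counts. First I would observe that the Seifert cable $-U$ gives rise to an obvious relative periodic domain $\mathcal{P}$ on $T^2$ bounded by (a multiple of) the longitude $\lambda$ for $-U$ drawn in Figure \ref{fig:rectproj}, together with $\boldsymbol{\alpha}$ and $\boldsymbol{\beta}$ curves, whose homology class is that of a rational Seifert surface for $U$. As in Lemma \ref{lem:needaname}, reversing orientation corresponds to inverting the Alexander grading up to an overall shift, so minimizing the filtration level induced by $-U$ is the same as maximizing $n_{\bold{y}}(\mathcal{P})$ over generators $\bold{y}\in\mathbb{T}_{\boldsymbol{\beta}}\cap\mathbb{T}_{\boldsymbol{\alpha}}$; this is exactly what Lemma \ref{lemma:relperiodic} gives us.

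Next I would carry out the combinatorial analysis identifying which generators lie in $\mathcal{F}^{-U}_{bot}(\mathcal{S}_0)$. The key point, just as in the proofs of Lemma \ref{lem:needaname} and Proposition \ref{prop:gen}, is that for $n_{\bold{y}}(\mathcal{P})$ to be maximal the components of $\bold{y}$ are forced: on each of the $k$ pairs of curves encoding the braid strands the component must be the unique vertex of the parallelogram lying in the "large" region (so $(\bold{y})_i=(\bold{y})^i$ for these indices, with two choices $x_i,y_i$ per strand), the component on the extra $\alpha^s\cap\beta^s$ pair (or, in the grid picture, the pair coming from the enhancement to $-U$) is pinned down, and the component responsible for the $Spin^c$-structure ranges over the $p$ possibilities as in Lemma \ref{lem:needaname}. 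Restricting to the summand $\mathfrak{s}_{\xi}$ via $\mathfrak{s}_{\bold{w}}$ kills all but one of those $p$ choices, and by Proposition \ref{prop:spincagrees} the relevant choice is precisely the one containing the distinguished component of $\bold{x}^{S}_0$. This yields an isomorphism of complexes $(\mathcal{F}^{-U}_{bot}(\mathcal{S}_0),\mathfrak{s}_{\xi})\simeq V_1\otimes\cdots\otimes V_k$ with each $V_i$ free of rank two over $\mathbb{F}[U_1,\dots,U_m]$, generated by $x_i$ and $y_i$ with $\partial x_i=0$ (the differential on $V_i$ counts the two parallelograms from $y_i$ to $x_i$, each through one $\bold{z}_{-U}$ basepoint, exactly as in the earlier lemma).

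Finally, since $\bold{x}^{S}_0$ is identified with $x_1\otimes\cdots\otimes x_k$ and every $x_i$ is a cycle (Lemma \ref{lem:cycle}-type argument: any parallelogram out of the top vertex contains a $\bold{w}$ basepoint), the class $[\bold{x}^{S}_0]$ is the generator in the top Maslov grading of $H_*(V_1\otimes\cdots\otimes V_k)\simeq\mathbb{F}[U_1,\dots,U_m]$. One checks, via the obvious positive-Maslov-index parallelogram domains from $\bold{x}^{S}_0$ to any other vertex of $V_1\otimes\cdots\otimes V_k$ (unions of index-one parallelograms, each moving a single $y_i$ down to $x_i$), that $[\bold{x}^{S}_0]$ indeed maximizes the Maslov grading, so it generates $H_{top}(\mathcal{F}^{-U}_{bot}(\mathcal{S}_0),\mathfrak{s}_{\xi})$.

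The main obstacle I anticipate is the bookkeeping for the analogue of the technical Lemma embedded in the proof of Proposition \ref{prop:gen}: namely showing that the particular component $(\bold{x}^S_0)^0$ on the curve responsible for the $Spin^c$-structure (equivalently, the $\beta_0$/$\alpha_0$ intersection in the Heegaard model) contributes minimally to $n_{\bold{y}}(\mathcal{P})$, so that the generator singled out by $\mathfrak{s}_\xi$ really is the one appearing in $\bold{x}^S_0$. In the present grid set-up this should follow from Proposition \ref{prop:spincagrees}, which already pins down $\mathfrak{s}_{\bold{w}}(\theta)=\mathfrak{s}_{\xi_{UT}}$, combined with the fact that $\bold{x}^S_0$ restricts (after forgetting the $-U$ data) to the GRID generator $\bold{x}^+$; but making the comparison of filtration levels clean — rather than re-running a sequence of twisted-$\boldsymbol{\beta}$-curve multiplicity computations as in Section \ref{sec:diagram} — will require some care in choosing the rectilinear projection and the longitude $\lambda$ so that $\mathcal{P}$ has transparently monotone multiplicities across the columns of the diagram.
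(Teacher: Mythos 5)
Your proposal diverges substantially from the paper's argument, and the divergence matters: the central combinatorial claim you are resting on does not hold in the grid-diagram setting.

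You propose to show directly that
\[
(\mathcal{F}^{-U}_{bot}(\mathcal{S}_0),\mathfrak{s}_{\xi})\;\simeq\;V_1\otimes\cdots\otimes V_k,
\]
mirroring the tensor-product decomposition of Lemma \ref{lem:needaname} and Proposition \ref{prop:gen}. But those decompositions were specific to the carefully constructed braid-type Heegaard diagrams $\mathcal{D}_0$ and $\mathcal{G}_0$, where each strand of the braid is encoded by its own small $(\alpha_i,\beta_i)$-configuration that is decoupled from the others in the region of maximal multiplicity, so generators of the bottom filtration level factor strand-by-strand. In $\mathcal{S}_0$ the situation is completely different: it is a \emph{grid diagram}, so all $\boldsymbol{\alpha}$ curves are parallel copies of one curve, all $\boldsymbol{\beta}$ curves are parallel copies of another, and each $\alpha$ meets each $\beta$ in $p$ points. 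Generators are elements of $S_N\oplus\mathbb{Z}/p\mathbb{Z}$ (with $N$ the grid index), and there is no pairing of one $\alpha$ with one $\beta$ "per strand." When you write "on each of the $k$ pairs of curves encoding the braid strands the component must be the unique vertex … so $(\bold{y})_i=(\bold{y})^i$," you are importing structure from $\mathcal{D}_0$ that simply is not present in $\mathcal{S}_0$. The bottom filtration level of $\mathcal{S}_0$ will in general contain many more generators than $2^k\cdot p$, and it is not obvious (and would require a genuinely new argument) that its top Maslov piece is free of rank one as an $\mathbb{F}[U_1,\dots,U_m]$-module.

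The paper avoids this combinatorial analysis entirely. After using Proposition \ref{prop:spincagrees} to place $\bold{x}^S_0$ in the $\mathfrak{s}_\xi$ summand, and Lemma \ref{lemma:relperiodic} with an obvious disk periodic domain to place it in $\mathcal{F}^{-U}_{bot}$ — these two first steps you do reproduce — the paper uses three inputs you have not invoked: (i) Proposition \ref{prop:nontorsion} to see that $[\bold{x}^S_0]$ is a nonzero cycle (non-vanishing in the ambient complex passes to non-vanishing in the subcomplex); (ii) the triangle counts of Proposition \ref{prop:propS} and the Maslov-grading comparison of Proposition \ref{prop:needaname} to show that the Maslov grading of $\bold{x}^S_0$ equals $sl_\mathbb{Q}(L)+\tfrac1p-d(p,q,q-1)-2$, which by the results of Sections \ref{sec:braidinvt}--\ref{sec:altchara} is exactly the value of $top$; and (iii) \emph{invariance} of the filtered, graded data — which is what lets the rank-one computation of Lemma \ref{lem:needaname}, performed on the braid-type diagram $\mathcal{D}_0$, transfer to $\mathcal{S}_0$ even though the chain-level picture in $\mathcal{S}_0$ looks nothing like $V_1\otimes\cdots\otimes V_k$. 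Your "main anticipated obstacle" about pinning down the $(\bold{x}^S_0)^0$ component is actually a symptom of the deeper problem: there is no single "$\alpha_0$" curve in a grid diagram to argue about, so the multiplicity bookkeeping you hope to reuse from Proposition \ref{prop:gen} has no meaning here. If you want a grid-internal argument you will have to build the bridge through invariance rather than trying to reproduce the tensor decomposition.
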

\begin{proof}
Proposition \ref{prop:spincagrees} tells us that $\mathfrak{s}_{\bold{w}}(\bold{x}_0^S) = \mathfrak{s}_{\xi}$.

Using Lemma \ref{lemma:relperiodic} it is easy to see that the generator $\bold{x}^S_0$ is in the bottom-most filtration level, as there is an obvious disk relative periodic $D$ domain for $U$, and the generator $\bold{x}^S_0$ has maximal multiplicity $n_{\bold{x}^S_0} (D)$.
The class is non-zero by Proposition \ref{prop:nontorsion}. 

The triangle counts in the following propositions relate the class $[\bold{x}^S_0]$ to the invariant $\theta(K)$, in particular, we will see that the Maslov grading of the generator $\bold{x}^S_0$ is $sl_\mathbb{Q} (L) + \frac{1}{p} - d(p,q,q-1) -2$.
It follows by the discussion following Lemma \ref{lem:needaname}, Proposition \ref{prop:needaname} and the results of the previous section that $top = sl_\mathbb{Q} (L) + \frac{1}{p} - d(p,q,q-1) -2$.  

By the discussion following Lemma \ref{lem:needaname}, $H_{top} (\mathcal{F}^{-U}_{bot}(\mathcal{S}_0), \mathfrak{s}_{\xi})$ is rank one.
\end{proof}

We wish to relate the class $[\bold{x}^{S}_0]$ to $\theta (K)$. We can perform two sequences of handleslides followed by two free index-0/3 destabilizations to go from $\mathcal{S}_0$ to a grid diagram encoding $K$.

We have labelled $\bold{z}_{-U} = \{z_0,z_1\}$. Suppose that $z_0$ lies in the $i_0^{th}$ column and $j_0^{th}$ row of $\mathcal{S}_0$, and $z_1$ lies in the $i_1^{th}$ column and $j_2^{th}$ row. Consider the triple diagram $(T^2,\boldsymbol{\beta},\boldsymbol{\alpha},\boldsymbol{\alpha}',\bold{w},\bold{z}\cup\bold{z}_{-U})$ pictured in Figure \ref{fig:finaltriple}. For $r\ne j_0+1$ or $j_1+1$ the curve $\boldsymbol{\alpha}' _r$ is a small perturbation of the curve $\boldsymbol{\alpha}_r$. For $r= j_0+1$ or $j_1+1$ the curve $\boldsymbol{\alpha}' _r$ is obtained by handlesliding $\boldsymbol{\alpha}_r$ over $\boldsymbol{\alpha}_{r-1}$.

Also consider the triple diagram $(T^2,\boldsymbol{\beta}',\boldsymbol{\beta},\boldsymbol{\alpha}',\bold{w},\bold{z}\cup\bold{z}_{-U})$. For $r\ne i_0$ or $i_1$, the curve $\boldsymbol{\beta}'_r$ is a small perturbation of the curve $\boldsymbol{\beta}_r$. For $r= i_0$ or $i_1$, the curve $\boldsymbol{\beta}'_r$ is obtained by handlesliding $\boldsymbol{\beta}_r$ over $\boldsymbol{\beta}_{r+1}$.

\begin{figure}[h]
\def\svgwidth{400pt}
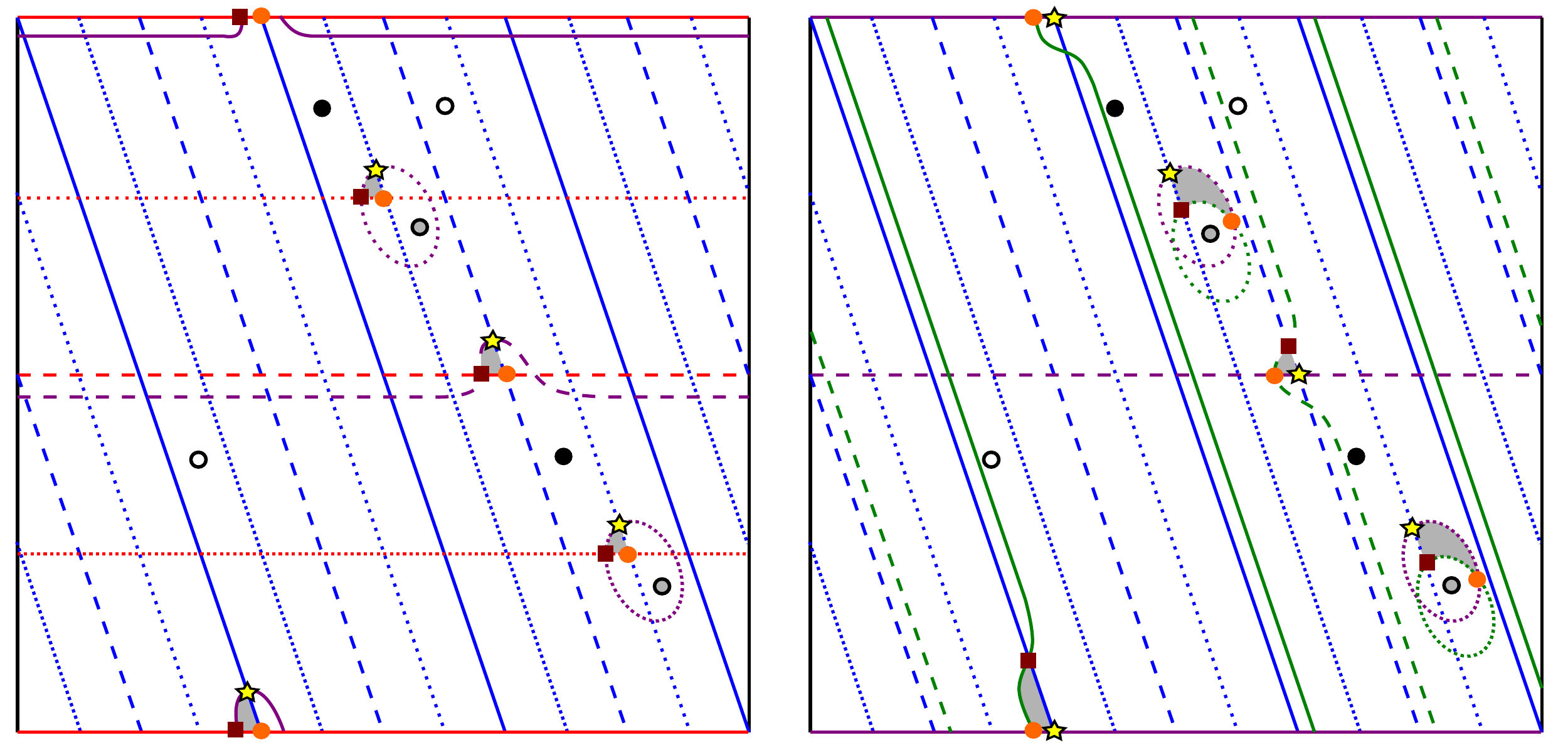
\caption{The case of a one braid $\tau_1\circ \delta^{1/3}$. The $\boldsymbol{\beta},\boldsymbol{\beta}',\boldsymbol{\alpha}$ and $\boldsymbol{\alpha}'$ curves are drawn blue, green, red and purple, respectively. The $\bold{w},\bold{z}$ and $\bold{z}_{-U}$ are solid, hollow and grey dots, respectively. The generators $\bold{x}_0^S$ and $\bold{x}_2^S$ are depicted with orange dots. The generator $\bold{x}_1$ is depicted with yellow stars. The brown squares depict $\boldsymbol{\theta}$.}
\label{fig:finaltriple}
\end{figure}

We let 
\begin{align*}
\mathcal{S}_1 =(T^2,\boldsymbol{\beta},\boldsymbol{\alpha}',\bold{w},\bold{z}\cup\bold{z}_{-U})\\
\mathcal{S}_2 =(T^2,\boldsymbol{\beta}',\boldsymbol{\alpha}',\bold{w},\bold{z}\cup\bold{z}_{-U}),
\end{align*}
and the generators $\bold{x}^S _1 \in CFK^{-,2} (\mathcal{S}_1)$ and $\bold{x}^S_2 \in CFK^{-,2} (\mathcal{S}_2)$ be as pictured in Figure \ref{fig:finaltriple}. 

\begin{proposition}
\label{prop:propS}
Let \begin{align*}
S_{0,1}:HFK^{-,2} (\mathcal{S}_0)\to HFK^{-,2} (\mathcal{S}_1)\\
S_{1,2}:HFK^{-,2} (\mathcal{S}_1)\to HFK^{-,2} (\mathcal{S}_2)
\end{align*}
denote the isomorphisms induced by the triple diagrams above. The composition $S_{0,2}=S_{1,2}\circ S_{0,1}$ sends the class $[\bold{x}^S _0]$ to $[\bold{x}^S_2]$.
\end{proposition}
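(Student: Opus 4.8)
The plan is to follow the triangle-counting arguments of Propositions \ref{prop:maps}, \ref{prop:relate}, and \ref{prop:name2} more or less verbatim. The isomorphisms $S_{0,1}$ and $S_{1,2}$ are induced by the pseudo-holomorphic triangle counting maps $s_{0,1}$ and $s_{1,2}$ associated to the triple diagrams $(T^2,\boldsymbol{\beta},\boldsymbol{\alpha},\boldsymbol{\alpha}',\bold{w},\bold{z}\cup\bold{z}_{-U})$ and $(T^2,\boldsymbol{\beta}',\boldsymbol{\beta},\boldsymbol{\alpha}',\bold{w},\bold{z}\cup\bold{z}_{-U})$; since each such triple diagram corresponds to the identity cobordism, the induced maps respect the splittings of the complexes over $\mathrm{Spin}^c$ structures. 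It therefore suffices to check, on the chain level, that $s_{0,1}(\bold{x}^S_0)=\bold{x}^S_1$ and $s_{1,2}(\bold{x}^S_1)=\bold{x}^S_2$; composing then gives $S_{0,2}([\bold{x}^S_0])=[\bold{x}^S_2]$.

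For the first identity, let $u\in\pi_2(\boldsymbol{\theta},\bold{x}^S_0,\bold{y})$ be a Maslov index zero Whitney triangle with $\bold{y}\in\mathbb{T}_{\boldsymbol{\beta}}\cap\mathbb{T}_{\boldsymbol{\alpha}'}$ whose domain $D(u)$ is disjoint from $\bold{w}$ and has everywhere non-negative multiplicities. As in the ``black triangle'' analysis in the proof of Proposition \ref{prop:maps}, one computes the local multiplicities of $D(u)$ in the regions meeting the corners at $\boldsymbol{\theta}$ and at the components of $\bold{x}^S_0$: the basepoints of $\bold{w}$ force the multiplicity to vanish in the region adjacent to each small triangle along a $\boldsymbol{\beta}$ arc, and the basepoints of $\bold{z}\cup\bold{z}_{-U}$ force it to vanish in the region opposite the small triangle at a component of $\bold{x}^S_0$. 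Solving the resulting system of equations and then propagating the vanishing to the large region of $T^2$ containing no basepoints forces $D(u)$ to be the disjoint union of the small grey and black triangles shaded in Figure \ref{fig:finaltriple}, so that $\bold{y}=\bold{x}^S_1$ and $u$ has a unique holomorphic representative. The component of $\bold{x}^S_1$ on $\alpha_0$, and on every row not involved in a handleslide, is pinned down by the $\mathrm{Spin}^c$ constraint; in the rows $j_0+1$ and $j_1+1$, where $\alpha'_r$ is obtained from $\alpha_r$ by a genuine handleslide, the relevant small triangle carries its third corner at $\boldsymbol{\theta}$ and the local picture is the one already handled for the commutation map in Subsection \ref{subsec:combmaps}.

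The proof that $s_{1,2}(\bold{x}^S_1)=\bold{x}^S_2$ is symmetric, with the roles of rows and columns exchanged: here $\boldsymbol{\beta}'$ differs from $\boldsymbol{\beta}$ by handleslides only in columns $i_0$ and $i_1$. One again analyzes the multiplicities around the black triangle first, forces vanishing in the adjacent large region using a $\bold{w}$-basepoint, propagates to conclude that the remaining small triangles are forced, and finally invokes $\mathrm{Spin}^c$-preservation to see that $\bold{x}^S_2$ is the only possible target.

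The main obstacle I expect is the bookkeeping needed to exclude ``long'' Whitney triangles. Because $\mathcal{S}_0$ may have index as large as $n+2k+2$ and the $\boldsymbol{\beta}$-curves can be twisted in the part of the diagram corresponding to $-S_0$, one must rule out triangles whose domains wind through that twisted region --- the analogue of the index sequences $\{j_1,\dots,j_t\}$ that appear in the proof of Proposition \ref{prop:gen}. The key observation, exactly as there, is that every intersection point in the twisted part of the diagram lies in a region whose multiplicity is already forced to be positive by the basepoints, so any such long triangle would be forced to carry a negative multiplicity somewhere; combined with $\mathrm{Spin}^c$-preservation this leaves $\bold{x}^S_1$ (respectively $\bold{x}^S_2$) as the unique admissible target.
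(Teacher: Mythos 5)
Your plan is to reproduce the local ``black triangle'' multiplicity analysis from Proposition \ref{prop:maps}, together with the $\text{Spin}^c$-preservation argument from Propositions \ref{prop:relate} and \ref{prop:name2}. This is a genuinely different route from the paper's, and it has a gap. The propagation step in Proposition \ref{prop:maps} works because the diagram $\mathcal{H}_0$ contains a single large basepoint-free region $R$ into which the forced vanishing can be pushed; the diagram $\mathcal{S}_0$ here is built from a grid diagram, so it is tiled by small parallelograms and has no such region, and the propagation you invoke does not transfer. Your proposed fix for the ``long'' triangles --- borrowing the index-sequence analysis from Proposition \ref{prop:gen} --- also does not apply, because that argument controls the multiplicity of a \emph{relative periodic domain} at a generator (a fact about the Alexander grading), not the multiplicities of a Whitney triangle's domain; the claim that every intersection point in the twisted region ``lies in a region whose multiplicity is already forced to be positive by the basepoints'' does not follow from the triangle's corner conditions. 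You also never address admissibility, which is a real concern on a grid-like torus where domains can wrap.

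The paper's argument handles exactly these issues and is structured quite differently. It first proves weak admissibility of the triple diagram $(T^2,\boldsymbol{\beta},\boldsymbol{\alpha},\boldsymbol{\alpha}',\bold{w},\bold{z}\cup\bold{z}_{-U})$ by describing the doubly and triply periodic domains explicitly. It then uses a global domain-subtraction argument for uniqueness: for any other triangle $u\in\pi_2(\bold{x}^S_0,\boldsymbol{\theta},\bold{y})$, one writes $D(u)-D(u_0)=\mathcal{P}'+D$ with $\mathcal{P}'$ a doubly periodic domain and $D\in\pi_2(\bold{x}^S_1,\bold{y})$ a Whitney disk, observes that $D$ has a negative multiplicity, and --- this is the key geometric input missing from your proposal --- that $D(u_0)+\mathcal{P}$ does not fully cover any region of $T^2\smallsetminus(\boldsymbol{\beta}\cup\boldsymbol{\alpha}')$ for any periodic $\mathcal{P}$, so the negative multiplicity of $D$ persists in $D(u)$. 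That observation is what actually excludes winding domains. The paper's proof also does not use $\text{Spin}^c$-preservation at all; the target generator is forced purely by the domain analysis.
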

\begin{proof}
The isomorphisms $S_{0,1}$ and $S_{0,2}$ are induced by pseudo-holomorphic triangle counts $s_{0,1}$ and $s_{1,2}$, respectively. 
We will show that $s_{0,1}(\bold{x}^S_0) = \bold{x}^S_1$, the proof that $s_{1,2}(\bold{x}^S_1) = \bold{x}^S_2$ is similar.

We argue that the triple diagram $(T^2,\boldsymbol{\beta},\boldsymbol{\alpha},\boldsymbol{\alpha}',\bold{w},\bold{z}\cup\bold{z}_{-U})$ is weakly admissible. Let $n$ denote the number of $\boldsymbol{\beta}$ curves.
Any doubly periodic domain of $(T^2,\boldsymbol{\alpha},\boldsymbol{\alpha}',\bold{w},\bold{z}\cup\bold{z}_{-U})$ missing all basepoints is a linear combination of periodic domains $\mathcal{P}_0,\dots,\mathcal{P}_{n-1}$, where 
\begin{align*}
\partial \mathcal{P}_r = \alpha _r \cup \alpha_r '         \quad\quad \text{ for } r \ne j_0 +1, j_1+1\\
\partial \mathcal{P}_r = \alpha_{r}\cup\alpha_{r-1}'\cup \alpha_r '         \quad\quad \text { for } r = j_0+1,j_1+1.
\end{align*}
Each of these has positive and negative coefficients, and it is easy to see that any linear combination also has this property. This establishes weak admissibility of $(T^2,\boldsymbol{\alpha},\boldsymbol{\alpha}',\bold{w},\bold{z}\cup\bold{z}_{-U})$. Any triply periodic domain $\mathcal{P}$ of $(T^2,\boldsymbol{\beta},\boldsymbol{\alpha},\boldsymbol{\alpha}',\bold{w},\bold{z}\cup\bold{z}_{-U})$ missing all basepoints will either be a doubly periodic domain of $(T^2,\boldsymbol{\alpha},\boldsymbol{\alpha}',\bold{w},\bold{z}\cup\bold{z}_{-U})$, in which case it has both positive and negative coefficients, or it will have some $\boldsymbol{\beta}$ curve in its boundary. This $\boldsymbol{\beta}$ curve must intersect a curve in $\boldsymbol{\alpha}\cup\boldsymbol{\alpha}'$, and near this intersection point multiplicities of both signs will appear.

Let $\boldsymbol{\theta}$ denote the top graded generator of $CFK^- (T^2,\boldsymbol{\alpha},\boldsymbol{\alpha}',\bold{w},\bold{z}\cup\bold{z}_{-U})$. We argue that the Whitney triangle $u_0 \in \pi_2 (\bold{x}_0^S,\boldsymbol{\theta},\bold{x}_1^S)$ whose domain $D(u_0)$ is shaded in Figure \ref{fig:finaltriple} is the unique triangle contributing to $s_{0,1}(\bold{x}^S_0)$. Let $u_0\ne u\in \pi_2(\bold{x}_0^S,\boldsymbol{\theta},\bold{y})$ be a Whitney triangle for some $\bold{y}\in\mathbb{T}_{\boldsymbol{\beta}}\cap\mathbb{T}_{\boldsymbol{\alpha}'}$. The domain $D(u)-D(u_0)$ has boundary consisting of arcs along the $\boldsymbol{\beta}$ and $\boldsymbol{\alpha}'$ curves and some total number of $\boldsymbol{\alpha}$ curves. It follows that for some doubly periodic domain $\mathcal{P}'$ of $(T^2,\boldsymbol{\alpha},\boldsymbol{\alpha}',\bold{w},\bold{z}\cup\bold{z}_{-U})$ the domain
\[
D=D(u)-D(u_0)-\mathcal{P}'
\]
has boundary consisting only of arcs along the $\boldsymbol{\beta}$ and $\boldsymbol{\alpha}'$ curves. $D\in\pi_2 (\bold{x}^S_1,\bold{y})$ is a Whitney disk. Any such disk can easily be seen to have some negative multiplicities. For any doubly periodic domain $\mathcal{P}$ of $(T^2,\boldsymbol{\alpha},\boldsymbol{\alpha}',\bold{w},\bold{z}\cup\bold{z}_{-U})$ the domain 
$D(u_0)+\mathcal{P}$ does not fully cover any region of $T^2\smallsetminus \{\boldsymbol{\beta}\cup\boldsymbol{\alpha}'\}$, in particular $D(u_0)+\mathcal{P}'$ does not cover the region in which $D$ has a negative multiplicity.  

It follows that $D(u)$ must have a negative multiplicity in the same region, and that $u$ can not admit a holomorphic representative. In summary, $u_0$ is the unique Whitney triangle having corners at $\bold{x}^S_0$ and $\boldsymbol{\theta}$ which admits a holomorphic representative. We conclude that $s_{0,1}(\bold{x}^S_0) = \bold{x}^S_1$. 
\end{proof}

We see that the diagram $\mathcal{S}_2$ has small configurations about both of the $\bold{z}_{-U}$ basepoints. Let $\mathcal{S}$ be the diagram obtained by performing both free index 0/3 destabilizations.

The compositions of projection and inclusion maps
\[
\begin{split}
j^2 : CFK^{-,2} (\mathcal{S}_2)\to CFK^- (\mathcal{S}) & \\  i^n : CFK^- (\mathcal{S})  \to  CFK^{-,2} (\mathcal{S}_2)
\end{split}
\] 
defined in subsection \ref{subsec:HFK}, send generators $\bold{x}^S_2$ to $\bold{x}^+$ and $\bold{x}^+$ to $\bold{x}_2 ^S$, respectively.

\subsection{GRID=BRAID}
\label{subsec:equiv}
In this subsection we prove Theorem \ref{thm:equivalence}. 

\begin{proof}
We inherit the notations of Propositions \ref{prop:relate}, \ref{prop:name2}, \ref{prop:propS} and Theorem \ref{thm:diagram}.

If we include the $\bold{w}_{-U}$ basepoints in the diagrams $\mathcal{D}_0$ and $\mathcal{S}_0$, both are diagrams for the link $K\cup -U \subset -L(p,q)$. It follows that $\mathcal{D}_0$ may be obtained from $\mathcal{S}_0$ by a sequence of isotopies and handleslides avoiding all basepoints, together with index 1/2 (de)stabilizations and linked index 0/3 (de)stabilizations not involving the basepoints $\bold{w}_{-U}\cup \bold{z}_{-U}$. Associated to this sequence of moves is a chain map which induces an isomorphism on homology
\[
F: HFK^{-,2}(\mathcal{S}_0)\to HFK^{-,2}(\mathcal{D}_0).
\]
Since the chain map respects the filtrations on both complexes induced by $-U$, it follows from the proof of Lemma \ref{lem:needaname} and Lemma \ref{lem:thetagen} that $F([\bold{x}^S_0])=[\bold{x}^D_0]$, since both of these generate $H_{top}(\mathcal{F}^{-U}_{bot}, \mathfrak{s}_{\xi})$.

By Propositions \ref{prop:relate}, \ref{prop:name2}, \ref{prop:propS} and Theorem \ref{thm:diagram}, the composition
\[
HFK^- (\mathcal{S})\xrightarrow[]{S_{0,2}^{-1}\circ(i^2)_*} HFK^{-,2}(S_0)\xrightarrow[]{F } HFK^{-,2}(D_0)\xrightarrow[]{(j)_* \circ D_{0,1} } HFK^{-,1} (\mathcal{G}_0)\xrightarrow[]{(j)_*\circ G_{0,2} } HFK^- (\mathcal{T}^G) 
\]
is a graded isomorphism of $\mathbb{F}[U_1,\dots,U_m]$-modules mapping the class $\theta(K) = [\bold{x}^S]$ to $[\bold{x}^G] = t(K)$.
\end{proof}

\section{Proof of Theorem \ref{thm:gridmirror}}
\label{sec:gridmirror}
As mentioned in the introduction, the forward implication is immediate. Here we prove the reverse implication.
Suppose that $K\subset L(p,q)$ admits a surgery to the 3-sphere. Let $K'\subset L(p,q)$ denote the simple knot in the same homology class as $K$, i.e. $[K']=[K]\in H_1(L(p,q))$.
By the proof of Theorem 1.3 of \cite{realization}, there is an (Alexander, $Spin^C$)-graded isomorphism 
\[
\widehat{HFK}(-L(p,q),K)\simeq \widehat{HFK}(-L(p,q),K').
\]

Let $\mathcal{G}$ be a grid diagram for $K$, and let $\mathcal{G}_*$ denote the dual diagram as constructed in Subsection \ref{subsec:dual}. This pair of diagrams gives rise to a pair of Legendrians $L_0$ and $L_1$, where the $L_1$ is the topologically the mirror of $L_0$. 
Likewise, let $S_0$ and $S_1$ be the Legendrians induced by the index one diagram $\mathcal{H}$ for $K'$ and its dual.

We wish to show that if the quadruple of invariants $\widehat{\lambda}^{+}(L_0),\widehat{\lambda}^{-}(L_0),  \widehat{\lambda}^{+}(L_1),\widehat{\lambda}^{-}(L_1)$ do not vanish, then the diagram $\mathcal{G}$ has index one.
Clearly, $\widehat{\lambda}^{+}(S_0),\widehat{\lambda}^{-}(S_0)\ne 0$, because the complex used to define the pair of invariants has trivial differential.
Suppose that $\widehat{\lambda}^{+}(L_0),\widehat{\lambda}^{-}(L_0)\ne 0$.
By Proposition \ref{prop:spincagrees} and the fact that $K'$ is Floer-simple, 
the $Spin^C$-graded isomorphism of $\widehat{HFK}$ groups maps $\widehat{\lambda}^{+}(L_0)$ to $\widehat{\lambda}^{+}(S_0)$, as these generate
the summand \[\widehat{HFK}(-L(p,q),K,\mathfrak{s}_{\xi_{UT}})\simeq \widehat{HFK}(-L(p,q),K',\mathfrak{s}_{\xi_{UT}})\simeq \mathbb{F}.\] Likewise, $\widehat{\lambda}^{-}(L_0)$ is mapped to $\widehat{\lambda}^{-}(S_0)$, as both generate the summand with grading $\mathfrak{s}_{\overline{\xi_{UT}}}$.

 In particular, the Alexander gradings of the invariants must agree:
\begin{align*}
\frac{1}{2}\Big{(}tb_\mathbb{Q}(L_0) - rot_\mathbb{Q}(L_0) +1 \Big{)} = A(\widehat{\lambda}^{+}(L_0)) =  A(\widehat{\lambda}^{+}(S_0)) = \frac{1}{2}\Big{(}tb_\mathbb{Q}(S_0) - rot_\mathbb{Q}(S_0) +1 \Big{)}\\
\frac{1}{2}\Big{(}tb_\mathbb{Q}(L_0) + rot_\mathbb{Q}(L_0) +1 \Big{)} = A(\widehat{\lambda}^{-}(L_0)) =  A(\widehat{\lambda}^{-}(S_0)) = \frac{1}{2}\Big{(}tb_\mathbb{Q}(S_0) + rot_\mathbb{Q}(S_0) +1 \Big{)}.
\end{align*}
Adding the two equations, we see that $tb_\mathbb{Q}(L_0) = tb_\mathbb{Q}(S_0)$.

Assuming that $\widehat{\lambda}^{+}(L_1),\widehat{\lambda}^{-}(L_1)\ne 0$ and applying the above argument, one concludes that $tb_\mathbb{Q}(L_1) = tb_\mathbb{Q}(S_1)$. Let $g$ denote the index of $\mathcal{G}$. Applying Proposition \ref{prop:index} twice gives the desired result:
\[
-g = tb_{\mathbb{Q}}(L_0)+tb_{\mathbb{Q}}(L_1) = tb_{\mathbb{Q}}(S_0)+tb_{\mathbb{Q}}(S_1) = -1.
\]

\nocite{IKess,IKquasi,automatic,VVbind,StipV,LOSS, equiv, Pla, QOB, torsionob, comultcontact, comultgrid, sigmapositive, geomorder, lensgridleg, lensgridcomb, HKM, contactsurgery,CCSMCM, contactclass, contact1, transverseapprox, openbooks, legtransverse, torsionob, trivialbraid, coversmaslov, gridhom, MOST, dinvt, braiddynamics, absgrading, coversalex, turaev, relspinc, ras, disksgenus, holknots,genusdetection, linkgenus}

\newpage

\thispagestyle{empty}
{\small
\markboth{References}{References}
\bibliographystyle{myalpha}
\bibliography{mybib}{}
}

\end{document}